\newtheorem{thm}{Theorem}[section]
\newtheorem{cor}[thm]{Corollary}
\newtheorem{lem}[thm]{Lemma}
\newtheorem{prop}[thm]{Proposition}
\theoremstyle{definition}
\theoremstyle{remark}
\newtheorem{exe}[thm]{\bf Example}
\numberwithin{equation}{section}
\begin{document}
\title[Topological transitive abelian subgroups of GL($\mathbf{n}, \mathbb{R})$
\dag]{Topological transitive abelian subgroups of GL($\mathbf{n}, \mathbb{R})$ \dag}

\author{Adlene Ayadi, Habib Marzougui, Ezzeddine Salhi}

\address{Adlene Ayadi$^{1}$,  University of Gafsa, Faculty of Science of Gafsa, Department of Mathematics, Tunisia; Habib Marzougui$^{2}$, \
University  of  7$^{th}$  November  at  Carthage, Faculty of Science of Bizerte, Department of Mathematics, Zarzouna. 7021. Tunisia.;  Ezzeddine
Salhi$^{3}$,  University of Sfax, Faculty of Science of
Sfax, Department of Mathematics, \, B.P. 802, 3018 Sfax, Tunisia}

\email{$$ \begin{array}{l} adleneso@yahoo.fr; \ \
habib.marzouki@fsb.rnu.tn;\ \ Ezzeddine.Salhi@fss.rnu.tn.
\end{array}$$}

\thanks{$\dag$ This work is supported by the research unit: syst\`emes dynamiques et combinatoire:
99UR15-15 and it was done within the framework of the Associateship Scheme of the
Abdus Salam ICTP, Trieste, Italy. This paper is a new version with major revision of a previous one which
was circulated in ICTP preprint \cite{aAhMeS} in 2006.}

\subjclass[2000]{Primary: 37C85 \\ Key words and
phrases: linear action, abelian group, locally dense orbit,
dense, topological transitive}

 \begin{abstract}
We give a complete characterization of abelian subgroups of
GL($n,\ \mathbb{R})$ with a locally dense (resp. dense) orbit in  $\mathbb{R}^{n}$.  For finitely generated subgroups, this
characterization is explicit and it is used to show that no abelian subgroup of
GL($n,\ \mathbb{R})$ generated by $[\frac{n+1}{2}]$ matrices can
have a dense orbit in  $\mathbb{R}^{n}$. $($[\ ] denotes the integer part$)$
 \end{abstract}
\maketitle
\bigskip

\section{\bf Introduction}
Let $M_{n}(\mathbb{R})$  be the vector space of square matrices over $\mathbb{R}$ of
order  $n\geq 1$, GL($n,
\mathbb{R})$  be the group of all invertible elements of
$M_{n}(\mathbb{R})$  and let  $G$  be an abelian subgroup of
GL($n, \mathbb{R})$.  If  $G$  has a finite set of generators, then it is said to be
\emph{finitely generated}. There is a linear natural action  GL($n,
\mathbb{R})\times \mathbb{R}^{n}\longrightarrow \mathbb{R}^{n }:
(A, v)\longmapsto Av$. For a vector  $v\in \mathbb{R}^{n}$,
denote by  $G(v) = \{Av:\  A\in G \}\subset \mathbb{\mathbb{R}}^{n}$
the \emph{orbit} of $G$ through  $v$. The orbit  $G(v)$  is
\textit{locally dense} in  $\mathbb{R}^{n}$  if the closure
$\overline{G(v)}$ has no empty interior. It is dense in
$\mathbb{R}^{n}$  if $\overline{G(v)} = \mathbb{R}^{n}$. In this
paper, we are concerned with the existence of a dense orbit for
the linear natural action of $G$  on $\mathbb{R}^{n}$ in which case, $G$
is said to be \textit{topological transitive.}

The authors gave in \cite{aAhM06} a characterization of abelian
subgroups of GL($n, \mathbb{C})$ that are topologically transitive.
This paper can be viewed as a continuation of that work for the
real case.

We give in Section 8, examples of abelian topological transitive subgroup of GL($2, \mathbb{R})$
(resp. GL$(4,\ \mathbb{R}))$. Examples of non abelian topological transitive subgroups of GL($2,
\mathbb{R})$ were constructed in
\cite{fDanS00}, \cite{J}, (see also \cite{msK04}): Dal'bo and Starkov gave in \cite{fDanS00}, an
example of an infinitely generated {\it non abelian} subgroups of the special linear group SL($2, \ \mathbb{R})$
(i.e. consisting of matrices having determinant 1)
 with a dense orbit in  $\mathbb{R}^{2}$.
However, there are no {\it abelian} finitely generated subgroup of  SL($2, \ \mathbb{R})$
which are topological transitive. (See Corollary ~\ref{C:8}).

Notice that if  $G$  is a subgroup of the orthogonal group
U$_{n}(\mathbb{R})$ (consisting of matrices preserving the usual
inner product)
then every orbit $G(v)$ of  $G$  is contained in a sphere and so,
G($v)$ is not dense in  $\mathbb{R}^{n}$.

To state our main results, we need to introduce the following
notations and definitions:

 A subset $E \subset \mathbb{R}^{n}$  is called \emph{$G$-invariant} if
$A(E)\subset E$  for any  $A\in G$; that is  $E$  is a union of
orbits. If  $U$  is an open  $G$-invariant set, the orbit  $G(v)\subset U$
is called \emph{minimal in  $U$}  if \ $\overline{G(v)}\cap
U = \overline{G(w)}\cap U$  for every  $w\in \overline{G(v)}\cap
U$.
\
\\
\\
Denote by
\\
\textbullet \ $\mathbb{K} =
\mathbb{R}$ \textrm{or}  $\mathbb{C}$
\
\\
\textbullet \ $\mathbb{K}^{*}= \mathbb{K}\backslash\{0\}$ and
$\mathbb{N}_{0}= \mathbb{N}\backslash\{0\}$.
\
\\
Let
$n\in\mathbb{N}_{0}$  be fixed. For each $m=1,2,\dots, n$,
denote by
\\
\\
\textbullet \; $\mathbb{T}_{m}(\mathbb{K})$ the set of matrices over $\mathbb{K}$ of the form
$$\begin{bmatrix}
  \mu &  &   & 0 \\
  a_{2,1} & \mu &   &  \\
  \vdots &  \ddots & \ddots &  \\
  a_{m,1} & \dots & a_{m,m-1} & \mu
\end{bmatrix} \qquad (1)$$
\\
\\
\textbullet \; $\mathbb{T}_{m}^{\ast}(\mathbb{K})$  the group of matrices of the form $(1)$ with
$\mu\neq 0$.
\\
\\
\textbullet \; $\mathbb{T}_{m}^{+}(\mathbb{R})$ the group of
matrices over $\mathbb{R}$ of the form $(1)$ with  $\mu > 0$.
\
\\
\\
For each $1\leq m\leq \frac{n}{2}$, denote by
\\
\\
$\bullet$ \; $\mathbb{B}_{m}(\mathbb{R})$ the set of matrices of
$M_{2m}(\mathbb{R })$ of the form
$$\begin{bmatrix}
  C &  &   & 0 \\
  C_{2,1} & C &   &   \\
  \vdots &  \ddots & \ddots & \\
  C_{m,1} & \dots & C_{m,m-1} & C
\end{bmatrix}: \  C, \ C_{i,j}\in \mathbb{S }, \ 2\leq i\leq m, 1\leq j\leq m-1 \ \qquad (2)$$
where  $\mathbb{S}$ is the set of matrices over $\mathbb{R}$ of the form $$\begin{bmatrix}
  \alpha & \beta \\
  -\beta & \alpha \\
  \end{bmatrix}.$$
\\
\\
$\bullet$ \; $\mathbb{B}^{*}_{m}(\mathbb{R}):= \mathbb{B}_{m}(\mathbb{R})\cap \textrm{GL}(2m, \mathbb{R})$, it is a subgroup of $\textrm{GL}(2m, \mathbb{R})$.
\
\\
\\
Let  $r, \ s\in \mathbb{N}$ and $$\eta =
 \begin{cases}
 (n_{1},\dots,n_{r};\  m_{1},\dots,m_{s}) & \textrm{ if } rs\neq  0, \\
(m_{1},\dots,m_{s}) & \textrm{ if } r= 0, \\
(n_{1},\dots,n_{r}) & \textrm{ if } s=0
\end{cases}$$ be a sequence of positive integers such that $$(n_{1}+\dots + n_{r}) +2(m_{1}+\dots + m_{s})=n. \qquad  (3)$$
In particular, $r+2s\leq n$.
\
\\
Write
\
\\

\textbullet \; \ $\mathcal{K}_{\eta,r,s}(\mathbb{R}): = \mathbb{T}_{n_{1}}(\mathbb{R})\oplus\dots \oplus \mathbb{T}_{n_{r}}(\mathbb{R})\oplus
\mathbb{B}_{m_{1}}(\mathbb{R })\oplus\dots \oplus \mathbb{B}_{m_{s}}(\mathbb{R}).$
In particular:
\
\\
- If  $r=1, \ s=0$ then $\mathcal{K}_{\eta,1,0}(\mathbb{R}) = \mathbb{T}_{n}(\mathbb{R})$  and  $\eta=(n)$.
\
\\
- If $r=0, \ s=1$  then $\mathcal{K}_{\eta,0,1}(\mathbb{R}) =
\mathbb{B}_{m}(\mathbb{R})$  and  $\eta=(m)$,  $n=2m$.
\
\\
- If $r=0, \ s>1$  then $\mathcal{K}_{\eta,0,s}(\mathbb{R}) =
\mathbb{B}_{m_{1}}(\mathbb{R})\oplus\dots\oplus\mathbb{B}_{m_{s}}(\mathbb{R})$ and $\eta=(m_{1},\dots,m_{s})$.
\
\\
\textbullet \; $\mathcal{K}^{*}_{\eta,r,s}(\mathbb{R}): =
\mathcal{K}_{\eta,r,s}(\mathbb{R})\cap \textrm{GL}(n, \ \mathbb{R})$.
\\
\\
\textbullet \;  $\mathcal{K}^{+}_{\eta,r,s}(\mathbb{R}): = \mathbb{T}^{+}_{n_{1}}(\mathbb{R})\oplus\dots \oplus \mathbb{T}^{+}_{n_{r}}(\mathbb{R})\oplus
\mathbb{B}^{*}_{m_{1}}(\mathbb{R })\oplus\dots \oplus \mathbb{B}^{*}_{m_{s}}(\mathbb{R}).$
\\
\\
For a vector  $v\in \mathbb{C}^{n}$,  write\\
\textbullet \; Re($v$), Im($v)\in \mathbb{R}^{n}$ so that  $v = \textrm{Re}(v) +
i \textrm{Im}(v)$.
\\
\textbullet \; $\textrm{exp} :\ \mathbb{M}_{n}(\mathbb{R})
\longrightarrow\textrm{GL}(n, \mathbb{R})$  is the matrix
exponential map; set $\textrm{exp}(M) = e^{M}$.\\

Given any abelian subgroup  $G\subset \textrm{GL}(n, \mathbb{R})$,
there always exists a $P\in \textrm{GL}(n, \mathbb{R})$ and a
partition $\eta$ of $n$ such that $\widetilde{G}=P^{-1}GP\subset
\mathcal{K}^{*}_{\eta,r,s}(\mathbb{R})$. (See Proposition ~\ref{p:1}).
For such a choice of matrix $P$,  we let
\\
 - $\mathrm{g} = \textrm{exp}^{-1}(G)\cap \left[
P(\mathcal{K}_{\eta,r,s}(\mathbb{R}))P^{-1}\right]$. If $G\subset \mathcal{K}^{*}_{\eta,r,s}(\mathbb{R})$, then we have $\mathrm{g} =
\textrm{exp}^{-1}(G)\cap \mathcal{K}_{\eta,r,s}(\mathbb{R})$.
\
\\
- $\mathrm{g}_{u} = \{Bu: \ B\in \mathrm{g}\}, \ u\in\mathbb{R}^{n}.$
\
\\
- $G^{+}:= G\cap\mathcal{K}^{+}_{\eta,r,s}(\mathbb{R})$, if $r\geq 1$ and $G^{+}=G$ if $r=0$, it is a subgroup of $G$.
\
\\
- $G^{2} =\{A^{2}: {A\in G}\}$
\
\\
- $\mathrm{g}^{2} = \textrm{exp}^{-1}(G^{2})\cap \left[
P(\mathcal{K}_{\eta,r,s}(\mathbb{R}))P^{-1}\right]$
\
\\
\\
\textbullet \; Let $M\in G$, one can write $ \widetilde{M}:= P^{-1}M P=\mathrm{diag}(M_{1},\dots,M_{r}$; \ $\widetilde{M}_{1},\dots,\widetilde{M}_{s})\in
\mathcal{K}^{*}_{\eta,r,s}(\mathbb{R}).$ Let $\mu_{k}$ be the eigenvalue of $M_{k}$, $k=1,\dots,r$, and define the {\it index}
ind$(\widetilde{G})$ of $\widetilde{G}$ to be
\
\\
 $$\textrm{ind}(\widetilde{G}):= \begin{cases}

0, \  if \ \ r=0 \\
\\
\left\{\begin{array}{c}
1, \ \ \mathrm{if} \ \ \ \mathrm{\exists} \widetilde{M}\in \widetilde{G}\ \ \mathrm{with} \ \mu_{1}<0 \\
0, \  \ \mathrm{otherwise}\ \ \ \ \ \ \ \ \ \ \ \ \ \ \ \ \ \ \ \ \ \
\end{array}
\right., \ if \ \ r=1\\
\\
 \mathrm{card}\left\{k\in\{1,\dots,r\}: \ \exists \widetilde{M}\in \widetilde{G}, \ \mathrm{with} \ \mu_{k} < 0,
  \ \mu_{i} >0, \ \forall\  i\neq k\right\}, \ if \ \ r\notin\{0,\ 1\}.
 \end{cases}$$
 ($\mathrm{card}(E)$ denotes the number of elements of a subset $E$ of $\mathbb{N}$).\
 \\
In particular,
\\
- If  $\widetilde{G}\subset \mathcal{K}^{+}_{\eta,r,s}(\mathbb{R})$, then  $\textrm{ind}(\widetilde{G})=0\neq r$.
\\
- If $\widetilde{G}\subset \mathbb{B}^{*}_{m}(\mathbb{R})$, then ind($\widetilde{G})=0$ (since $r=0$).

 We define the {\it index} of $G$ to be  $\textrm{ind}(G): =\textrm{ind}(\widetilde{G})$. Obviously, this definition does not depend on $P$.
\\
\\
Denote by
\
\\
\textbullet \;  $\mathcal{B}_{0} = (e_{1},\dots,e_{n})$ the canonical
basis of $\mathbb{K}^{n}$  and by  $I_{n}$  the identity matrix.
\\
\textbullet \; $u_{0} = [e_{1,1},\dots,e_{r,1}; f_{1,1},\dots,
f_{s,1}]^{T}\in \mathbb{R}^{n}$ where for $k=1,\dots, r, \ l=1,\dots, s$,
 $$e_{k,1} = [1,0,\dots,
0]^{T}\in \mathbb{R}^{n_{k}}$$
and
$$f_{l,1} = [1,0,\dots, 0]^{T}\in
\mathbb{R}^{2m_{l}}.$$
\\
\textbullet \;   $v_{0} = Pu_{0}$.
\
\\
\\
\\
  \textbullet \; $f^{(l)} = [
0,\dots,0,f^{(l)}_{1},\dots,
  f^{(l)}_{s}]^{T}\in \mathbb{R}^{n}$

where for  $i=1,\dots, r; \ j=1,\dots, s$:
$$f^{(l)}_{j} =
  \begin{cases}
    0\in \mathbb{R}^{2m_{j}} & \mathrm{if}\ \ j\neq l, \\
       [0,1,0,\dots,0]^{T}\in
\mathbb{R}^{2m_{l}} & \mathrm{if}\ \ j=l .
  \end{cases}$$
\
\\
An equivalent formulation is $f^{(1)}=e_{t_{1}}$, $\dots, f^{(l)}=e_{t_{l}}$
where $t_{1}=\underset{j=1}{\overset{r}{\sum}}n_{j}+2,$ \ $t_{l}= \underset{j=1}{\overset{r}{\sum}}n_{j}+
2\underset{j=1}{\overset{l-1}{\sum}}m_{j}+2,$ \ \
 $l=2,\dots, s$.
\bigskip

For a \textit{finitely generated} subgroup $G\subset \textrm{GL}(n,
\mathbb{R})$, let introduce the following property. Consider the following rank condition on a collection of
matrices
 $A_{1},\dots,A_{p}\in \mathcal{K}_{\eta,r,s}(\mathbb{R})$:  We
say that  $A_{1},\dots,A_{p}$  satisfy the \emph{density property}
 if there exist  $B_{1},\dots,B_{p}\in
\mathcal{K}_{\eta,r,s}(\mathbb{R})$  such that  $A_{1}^{2} =
e^{B_{1}},\dots,A_{p}^{2} = e^{B_{p}}$  and for every
$(s_{1},\dots,s_{p}; t_{1},\dots,t_{s})\in
\mathbb{Z}^{p+s}\backslash \{0\}$:
$$\mathrm{rank}\left(\left[\begin{array}{cccccc}
  B_{1}u_{0}, & \dots, & B_{p}u_{0} &  2\pi f^{(1)}, & \dots & 2\pi f^{(s)} \\
  s_{1}, & \dots, & s_{p} & t_{1}, & \dots & t_{s}
\end{array}\right]\right) = n+1$$
\medskip
\bigskip

Our principal results can now be stated as follows: \medskip

\begin{thm} \label{T:1}Let  $G$  be an abelian subgroup of  GL$(n,
\ \mathbb{R})$. The following properties are equivalent:
\begin{enumerate}
    \item [(i)]  $G$  has a locally dense orbit in  $\mathbb{R}^{n}$
    \item [(ii)]  The orbit  $G(v_{0})$  is locally dense in  $\mathbb{R}^{n}$
    \item [(iii)]  $\mathrm{g}_{v_{0}}$  is an additive subgroup dense in  $\mathbb{R}^{n}$
\end{enumerate}
\end{thm}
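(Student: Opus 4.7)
The plan is to first invoke Proposition~\ref{p:1} to conjugate $G$ into canonical form so that $\widetilde{G} = P^{-1}GP \subset \mathcal{K}^{*}_{\eta,r,s}(\mathbb{R})$. Since local density of orbits, as well as the set $\mathrm{g}_{v_0}$, transform covariantly under this conjugation---the orbit of $v_0 = Pu_0$ under $G$ corresponds via $P^{-1}$ to the orbit of $u_0$ under $\widetilde{G}$, and $\mathrm{g}_{v_0} = P(\widetilde{\mathrm{g}}_{u_0})$---we may assume outright that $G\subset \mathcal{K}^{*}_{\eta,r,s}(\mathbb{R})$ and $v_0 = u_0$. The implication (ii)$\Rightarrow$(i) is then immediate, so the content lies in (i)$\Rightarrow$(ii) and in the equivalence (ii)$\Leftrightarrow$(iii).

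For (i)$\Rightarrow$(ii), the key fact is that $u_0$ is a distinguished ``cyclic'' vector for the canonical-form algebra: the map $A\mapsto Au_0$ from $\mathcal{K}^{*}_{\eta,r,s}(\mathbb{R})$ to $\mathbb{R}^n$ has open dense image, as can be checked directly by solving the block-triangular system column by column. If $G(w)$ is locally dense for some $w$, one selects a point $w'$ in the interior of $\overline{G(w)}$ that lies in the open set $\mathcal{K}^{*}_{\eta,r,s}(\mathbb{R})u_0$, writes $w' = Au_0$ with $A\in \mathcal{K}^{*}_{\eta,r,s}(\mathbb{R})$, and transports the open neighborhood of $w'$ contained in $\overline{G(w)} = \overline{G(w')}$ back to an open neighborhood of $u_0$ inside $\overline{G(u_0)}$, exploiting that the block structure of $\mathcal{K}^{*}_{\eta,r,s}(\mathbb{R})$ makes the action of $A$ commute sufficiently with $G$ for the transport to preserve orbit closures. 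In effect, the locally dense orbits form a single canonical orbit up to the action of $\mathcal{K}^{*}_{\eta,r,s}(\mathbb{R})$.

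For (ii)$\Leftrightarrow$(iii), the bridge is the exponential map. For $B\in \mathrm{g}$ one has $e^B\in G$, so the map $\Phi\colon B\mapsto e^B u_0$ sends $\mathrm{g}$ into $G(u_0)$ and has the Taylor expansion $\Phi(B) = u_0 + Bu_0 + \tfrac{1}{2}B^2 u_0 + \cdots$ around $B=0$. For (iii)$\Rightarrow$(ii): density of the additive subgroup $\mathrm{g}_{u_0}$ in $\mathbb{R}^n$ supplies $B\in\mathrm{g}$ with $Bu_0$ taking values dense in any ball, and because $\mathrm{g}_{u_0}$ is a group it contains arbitrarily small elements, so for these $B$ the higher-order terms in $e^B u_0 - u_0$ are negligible and $G(u_0)$ is dense near $u_0$, hence locally dense. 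For (ii)$\Rightarrow$(iii) one reverses the argument: local density of $G(u_0)$ near $u_0$ yields $g_k\in G$ with $g_k u_0\to u_0 + v$ for any prescribed $v$ in a ball; passing to logarithms $B_k\in\mathrm{g}$ (well defined up to the structure-controlled ambiguity in $\mathcal{K}_{\eta,r,s}(\mathbb{R})$) and linearizing yields $B_k u_0\to v$, and the subgroup property propagates density from a neighborhood of $0$ to all of $\mathbb{R}^n$.

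The principal obstacle is controlling the higher-order terms $B^k u_0$ in the exponential series: $Bu_0$ small does not by itself force $\|B\|$ small, so the tail $\sum_{k\ge 2}\tfrac{1}{k!}B^k u_0$ cannot be dismissed naively. To overcome this one exploits the block-triangular structure of $\mathcal{K}_{\eta,r,s}(\mathbb{R})$: the entries of $Bu_0$ read off precisely the first column of each triangular block of $B$, and an inductive argument on block size shows that smallness of $Bu_0$, combined with the group property of $\mathrm{g}_{u_0}$ that allows replacing $B$ by integer multiples and sums, suffices to make $\|B^k u_0\|$ small uniformly. This block-by-block analysis---essentially a version of Kronecker's density theorem adapted to nilpotent perturbations of the semisimple action along the diagonal---will likely constitute the technical heart of the proof in the subsequent sections.
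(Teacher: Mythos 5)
Your reduction to $G\subset\mathcal{K}^*_{\eta,r,s}(\mathbb{R})$ and $v_0=u_0$, and your observation that (ii)$\Rightarrow$(i) is immediate, both match the paper. But both of your substantive arguments have gaps that the paper fills with machinery you have not identified.

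For (i)$\Rightarrow$(ii), your transport mechanism does not work as described: if $w'=Au_0$ with $A\in\mathcal{K}^*_{\eta,r,s}(\mathbb{R})$, conjugating an open piece of $\overline{G(w')}$ by $A^{-1}$ gives an open piece of $\overline{(A^{-1}GA)(u_0)}$, not of $\overline{G(u_0)}$, and $A$ need not commute with $G$ even though both lie in the same block-triangular algebra. What actually makes this implication go through is the \emph{minimality of every orbit in the open invariant set} $U$ (Proposition~\ref{p:6}), which rests on the nontrivial dynamical fact (Lemmas~\ref{L:8} and~\ref{L:9}) that $\lim_m A_m v=w$ in $U$ forces $\lim_m A_m^{-1}w=v$; combined with Lemma~\ref{L:10} this shows $\overline{G(u_0)}\cap U=\overline{G(w)}\cap U$ for any $w$ with locally dense orbit. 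You should replace your ``transport'' step by an appeal to this minimality.

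For (ii)$\Leftrightarrow$(iii) you correctly pinpoint the obstruction---smallness of $Bu_0$ does not obviously control $\|B\|$, hence the tail of $e^Bu_0$---but your proposed remedy (an inductive block-by-block estimate using the group structure of $\mathrm{g}_{u_0}$) is not an argument: you can add and subtract elements of $\mathrm{g}_{u_0}\subset\mathbb{R}^n$, but you cannot thereby shrink the off-diagonal entries of a specific $B\in\mathrm{g}$ whose image $Bu_0$ happens to be small. The paper's resolution is structural rather than analytic. When $\mathrm{g}_{u_0}$ (or $G(u_0)$) is locally dense, the rank condition $\mathrm{rank}(F_{\mathrm{g}})=n-1$ holds (Lemma~\ref{L:13}), and then Proposition~\ref{p:8} produces a linear isomorphism $\psi:\mathbb{R}^n\to\mathcal{C}(\mathrm{g})$ with $\psi(v)e_1=v$, whence $\psi(Bu_0)=B$ for every $B$ in the centralizer, so in particular for every $B\in\mathrm{g}$. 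This means $B\mapsto Bu_0$ \emph{is} a linear isomorphism on the relevant finite-dimensional space, and the issue of higher-order terms never arises: one sets $h=\psi^{-1}\circ\exp\circ\psi$, shows it is a local diffeomorphism (Corollary~\ref{C:5}, Theorem~\ref{Th.3}) with $h(\mathrm{g}_{u_0})=G^+(u_0)$, $f^{-1}(G^+(u_0))=\mathrm{g}_{u_0}$, and $h(\mathbb{R}^n)=C_{u_0}$, and density is transported by this open continuous map in both directions without any estimate on $e^B u_0 - u_0 - B u_0$. Identifying and parametrizing the centralizer $\mathcal{C}(\mathrm{g})$ is the idea your proposal is missing.
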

\medskip

\begin{cor} \label{C:1} Let  $G$  be an abelian subgroup of  GL($n, \mathbb{R})$ and $P\in \textrm{GL}(n, \mathbb{R})$ so that
 $P^{-1}GP\subset\mathcal{K}^{*}_{\eta,r,s}(\mathbb{R})$, for some $1\leq r,s\leq n$. The following properties are equivalent:
 \begin{itemize}
   \item [(i)] $G$ is topological transitive.
   \item [(ii)] G($v_{0})$ is dense in $\mathbb{R}^{n}$.
   \item [(iii)]  $\mathrm{g}_{v_{0}}$  is an additive subgroup dense in
$\mathbb{R}^{n}$ and  $\textrm{ind}(G)=r$.
 \end{itemize}
\end{cor}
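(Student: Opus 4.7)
Conjugate so that $G \subset \mathcal{K}^{*}_{\eta,r,s}(\mathbb{R})$ (Proposition~\ref{p:1}), so $v_{0} = u_{0}$. The key object is the sign homomorphism $\sigma : G \to (\{\pm 1\}^{r}, \cdot)$ sending $\widetilde{M} = \mathrm{diag}(M_{1},\dots,M_{r};\widetilde{M}_{1},\dots,\widetilde{M}_{s})$ to $(\mathrm{sgn}(\mu_{1}),\dots,\mathrm{sgn}(\mu_{r}))$; it is well-defined and multiplicative since $\mathcal{K}^{*}_{\eta,r,s}$ is closed under products and the constant diagonals $\mu_{k}$ multiply block-by-block. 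For $\epsilon \in \{\pm 1\}^{r}$ let $C_{\epsilon}$ be the open cone of vectors whose first coordinate in the $k$-th real block has sign $\epsilon_{k}$, and write $C^{+} = C_{(+,\dots,+)}$. Then $v_{0} \in C^{+}$; $\mathbb{R}^{n} \setminus \bigcup_{\epsilon} C_{\epsilon}$ is a finite union of coordinate hyperplanes; and the block lower-triangular shape yields $\widetilde{M}(C_{\epsilon'}) = C_{\sigma(\widetilde{M})\epsilon'}$. In particular $G^{+} = \sigma^{-1}\{(+,\dots,+)\}$ preserves $C^{+}$, and $\mathrm{ind}(G) = r$ is equivalent to $\sigma$ being surjective, since the $r$ ``one-flip'' sign patterns appearing in the definition of $\mathrm{ind}(G)=r$ generate $(\{\pm 1\}^{r},\cdot)$.

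The implication (ii)$\Rightarrow$(i) is trivial. For (i)$\Rightarrow$(iii): any dense orbit $G(w)$ is locally dense, so Theorem~\ref{T:1} gives density of $\mathrm{g}_{v_{0}}$. Density of $G(w)$ also forces the first coordinate of $w$ in each real block to be nonzero (else $G(w)$ would lie in a proper coordinate hyperplane) and forces $G(w) \cap C_{\epsilon} \neq \emptyset$ for every $\epsilon$, whence $\sigma$ is surjective and $\mathrm{ind}(G) = r$.

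For (iii)$\Rightarrow$(ii): since $\exp(\mathcal{K}_{\eta,r,s}(\mathbb{R})) \subset \mathcal{K}^{+}_{\eta,r,s}(\mathbb{R})$ (the scalars $\mu$ exponentiate to $e^{\mu}>0$ and each $C \in \mathbb{S}$ exponentiates within $\mathbb{S}$), the datum $\mathrm{g}$ attached to $G$ agrees with that attached to $G^{+}$. Theorem~\ref{T:1} applied to $G^{+}$ therefore says $G^{+}(v_{0})$ is locally dense in $\mathbb{R}^{n}$, so $\overline{G^{+}(v_{0})} \subset \overline{C^{+}}$ has nonempty interior (necessarily contained in $C^{+}$). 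The heart of the proof is the claim
\begin{equation*}
\overline{G^{+}(v_{0})} \;=\; \overline{C^{+}}.
\end{equation*}
Granting this, $\mathrm{ind}(G)=r$ supplies for each $\epsilon \in \{\pm 1\}^{r}$ a matrix $M_{\epsilon} \in G$ with $\sigma(M_{\epsilon}) = \epsilon$; then $M_{\epsilon}(\overline{C^{+}}) = \overline{C_{\epsilon}}$, and
\begin{equation*}
\overline{G(v_{0})} \;\supset\; \bigcup_{\epsilon \in \{\pm 1\}^{r}} M_{\epsilon}\cdot\overline{G^{+}(v_{0})} \;=\; \bigcup_{\epsilon} \overline{C_{\epsilon}} \;=\; \mathbb{R}^{n},
\end{equation*}
which is (ii).

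\textbf{Main obstacle.} The displayed identity $\overline{G^{+}(v_{0})} = \overline{C^{+}}$ is the technical core: one must upgrade the local density of $G^{+}(v_{0})$ inside the open cone $C^{+}$ to saturation of the closed half-space $\overline{C^{+}}$. The $G^{+}$-invariant set $\overline{G^{+}(v_{0})} \cap C^{+}$ is nonempty, closed in $C^{+}$, and carries a nonempty open interior; since $C^{+}$ is connected, it suffices to show this set is also open in $C^{+}$. The natural route is to exploit the additive density of $\mathrm{g}_{v_{0}}$ together with the $G^{+}$-action to translate a single open ball in $\overline{G^{+}(v_{0})}$ throughout $C^{+}$ via exponentiated perturbations $\exp(B)$, $B \in \mathrm{g}$. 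More cleanly, this identification should be extracted from the proof of Theorem~\ref{T:1} itself, which under condition (iii) effectively identifies the orbit closure of $G^{+}(v_{0})$ with $\overline{C^{+}}$; the sign-flipping argument above then finishes the corollary.
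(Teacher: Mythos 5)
Your approach is structurally correct and takes a somewhat different, arguably more transparent, route than the paper. The reformulation of $\mathrm{ind}(G) = r$ as surjectivity of the sign homomorphism $\sigma: G \to (\{\pm 1\}^{r},\cdot)$ is a clean repackaging of what Lemma~\ref{L:7}(vi) establishes, and your direct use of elements $M_{\epsilon} \in G$ realizing each sign pattern, together with $M_{\epsilon}(\overline{C^{+}}) = \overline{C_{\epsilon}}$, replaces the paper's auxiliary group $\Gamma$ and the minimality argument (Proposition~\ref{p:6}) hidden inside Proposition~\ref{p:7}(ii). Your (i)$\Rightarrow$(iii) direction is complete and correct, and your observation that $\mathrm{g}$ attached to $G$ coincides with $\mathrm{g}$ attached to $G^{+}$ (so that Theorem~\ref{T:1} can be applied to $G^{+}$) is exactly right.

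The one genuine gap, which you yourself flag, is the identity $\overline{G^{+}(v_{0})} = \overline{C^{+}}$ in the (iii)$\Rightarrow$(ii) step. The \emph{statement} of Theorem~\ref{T:1} applied to $G^{+}$ gives only that $G^{+}(v_{0})$ is locally dense, strictly weaker than the needed saturation of $C^{+}$, and your proposal stops at a sketch here. Your ``more cleanly'' suggestion is the right fix and deserves to be made explicit: inside the paper's proof of Theorem~\ref{T:1} (iii)$\Rightarrow$(ii), the open map $h$ of Theorem~\ref{Th.3}(2) satisfies $h(\mathrm{g}_{u_{0}}) = G^{+}(u_{0})$ and $h(\mathbb{R}^{n}) = C_{u_{0}}$, so by continuity $C_{u_{0}} = h(\overline{\mathrm{g}_{u_{0}}}) \subset \overline{G^{+}(u_{0})}$, and combined with $\overline{C_{u_{0}}} = \overline{C^{+}}$ this is precisely your claim. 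The paper itself obtains this density-in-a-component from Lemma~\ref{L:10} (cited from \cite{aAhM05}) inside Corollary~\ref{C:44}, and then does the sign-flipping via Proposition~\ref{p:7}(ii). Once you supply that single step, your argument closes, and the $\sigma$/cone bookkeeping makes the final union $\bigcup_{\epsilon} \overline{C_{\epsilon}} = \mathbb{R}^{n}$ cleaner than the paper's $\Gamma$-based version. (Small terminological slip: for $r \geq 2$, $\overline{C^{+}}$ is an intersection of $r$ closed half-spaces, not a single half-space; this does not affect the argument.)
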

\medskip

\begin{cor}\label{C:2}
\begin{itemize}
\item [(i)]Let $G$  be an abelian subgroup of
$\mathbb{B}_{n}^{*}(\mathbb{R})$. If  $\mathrm{g}_{e_{1}}$ is
an additive subgroup dense in  $\mathbb{R}^{2n}$  then  $G$ is
topological transitive.
\item [(ii)] Let $G$  be an abelian subgroup of $\mathbb{T}_{n}^{*}(\mathbb{R})$. If  $\mathrm{g}_{e_{1}}$ is
an additive subgroup dense in  $\mathbb{R}^{n}$ and ind($G)=1$ then  $G$ is
topological transitive.
\end{itemize}
 \end{cor}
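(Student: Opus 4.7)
The plan is to derive both parts as immediate specializations of Corollary~\ref{C:1}, once we identify the parameters $r$, $s$, $\eta$ and the distinguished vector $v_0$ in each setting. The overall strategy is: recognize that $G$ is already sitting inside some $\mathcal{K}^*_{\eta,r,s}(\mathbb{R})$, so we may take $P = I_n$ in Proposition~\ref{p:1}; then chase through the definitions to see that $u_0 = e_1$ and $v_0 = P u_0 = e_1$; finally verify that the index condition $\mathrm{ind}(G) = r$ is automatic or is exactly the stated hypothesis.

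For part~(i), I would set $r = 0$, $s = 1$, $m_1 = n$, so that $\mathcal{K}^*_{\eta,0,1}(\mathbb{R}) = \mathbb{B}^*_n(\mathbb{R})$ and the ambient space is $\mathbb{R}^{2n}$. With $P = I_{2n}$, the definition of $u_0$ reduces to $u_0 = f_{1,1} = [1,0,\dots,0]^T = e_1 \in \mathbb{R}^{2n}$, hence $v_0 = e_1$. The index of $G$ is defined to be $0$ whenever $r = 0$, so the requirement $\mathrm{ind}(G) = r$ is trivially met. Consequently the hypothesis that $\mathrm{g}_{e_1}$ is dense in $\mathbb{R}^{2n}$ is exactly condition~(iii) of Corollary~\ref{C:1}, so $G$ is topologically transitive.

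For part~(ii), I would instead choose $r = 1$, $s = 0$, $n_1 = n$, so that $\mathcal{K}^*_{\eta,1,0}(\mathbb{R}) = \mathbb{T}^*_n(\mathbb{R})$. Again $P = I_n$ works, and now $u_0 = e_{1,1} = e_1 \in \mathbb{R}^n$, hence $v_0 = e_1$. The hypothesis $\mathrm{ind}(G) = 1$ is precisely $\mathrm{ind}(G) = r$ in this setting, and the density of $\mathrm{g}_{e_1}$ is the remaining half of condition~(iii) of Corollary~\ref{C:1}. Applying that corollary yields topological transitivity.

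There is no real obstacle here; the content of the corollary is bookkeeping. The only thing worth being explicit about is that, under our specialization, the index definition does collapse as claimed: in case~(i) because $r = 0$ triggers the first branch of the definition of $\mathrm{ind}(\widetilde{G})$, and in case~(ii) because $r = 1$ selects the second branch, whose value $1$ is the stated assumption. Once these identifications are made, both implications follow immediately from Corollary~\ref{C:1}.
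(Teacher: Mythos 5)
Your proof is correct and matches the paper's approach exactly: the paper's proof is the single sentence "This follows directly from Corollary~\ref{C:1} by taking $u_0 = e_1$," and you have merely made explicit the bookkeeping (identifying $r$, $s$, $\eta$, $P = I$, $v_0 = e_1$, and the collapse of $\mathrm{ind}(G) = r$ to the stated index conditions) that this sentence suppresses.
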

\
\\
For finitely generated abelian subgroups $G$ of GL($n, \mathbb{R})$, we have the following theorem:
\begin{thm}\label{T:2} Let  $G$  be an abelian subgroup of GL($n, \mathbb{R})$ and $P\in \textrm{GL}(n, \mathbb{R})$ so that
 $P^{-1}GP\subset\mathcal{K}^{*}_{\eta,r,s}(\mathbb{R})$, for some $1\leq r,s\leq n$. Let $A_{1},\dots,A_{p}$ generate $G$ and let  $B_{1},\dots,B_{p}\in \mathrm{g}$
such that $A_{1}^{2}= e^{B_{1}},\dots, A_{p}^{2} = e^{B_{p}}$.
The following properties are equivalent:

\begin{enumerate}
\item [(i)] $G$ has a locally dense orbit in $\mathbb{R}^{n}$.
\item [(ii)] $G(v_{0})$ is locally dense in $\mathbb{R}^{n}$.
\item [(iii)]  $\mathrm{g}^{2}_{v_{0}}=\underset{k=1}{\overset{p}{\sum}}\mathbb{Z}(B_{k}v_{0}) +
\underset{l=1}{\overset{s}{\sum}}2\pi\mathbb{Z}Pf^{(l)}$ is dense in $\mathbb{R}^{n}$.
\end{enumerate}
\end{thm}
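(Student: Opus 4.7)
The plan is to derive Theorem~\ref{T:2} from Theorem~\ref{T:1}. The equivalence (i)$\Leftrightarrow$(ii) is immediate from Theorem~\ref{T:1}, and by (ii)$\Leftrightarrow$(iii) there, (ii) is equivalent to $\mathrm{g}_{v_{0}}$ being a dense additive subgroup of $\mathbb{R}^{n}$. Hence the task reduces to showing that this density is equivalent to (iii), which is the conjunction of a structural identity for $\mathrm{g}^{2}_{v_{0}}$ and the density of the resulting $\mathbb{Z}$-module.

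First I would verify the structural identity
$$\mathrm{g}^{2}_{v_{0}} \;=\; \sum_{k=1}^{p}\mathbb{Z}(B_{k}v_{0}) + \sum_{l=1}^{s}2\pi\mathbb{Z}Pf^{(l)}.$$
The key point is that $\mathcal{K}_{\eta,r,s}(\mathbb{R})$ is a \emph{commutative} subalgebra of $M_{n}(\mathbb{R})$: each $\mathbb{T}_{n_{k}}(\mathbb{R})$-block is a lower triangular Toeplitz algebra over $\mathbb{R}$, each $\mathbb{B}_{m_{l}}(\mathbb{R})$-block is a lower triangular block-Toeplitz algebra over $\mathbb{S}\cong\mathbb{C}$, and the blocks appear as direct summands. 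Consequently $\exp$ is additive-to-multiplicative on $\mathcal{K}_{\eta,r,s}(\mathbb{R})$, and on $P\mathcal{K}_{\eta,r,s}(\mathbb{R})P^{-1}$. Since $G$ is abelian and generated by $A_{1},\dots,A_{p}$, the group $G^{2}$ is generated by the $A_{k}^{2}=e^{B_{k}}$, so every element of $G^{2}$ takes the form $e^{\sum n_{k}B_{k}}$ for some $(n_{k})\in\mathbb{Z}^{p}$. Combined with commutativity,
$$\mathrm{g}^{2}\;=\;\Bigl\{\sum_{k=1}^{p}n_{k}B_{k}+K \;:\; n_{k}\in\mathbb{Z},\ K\in\ker(\exp)\cap P\mathcal{K}_{\eta,r,s}(\mathbb{R})P^{-1}\Bigr\}.$$

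Next I would compute $\ker(\exp)\cap\mathcal{K}_{\eta,r,s}(\mathbb{R})$. On a $\mathbb{T}_{n_{k}}$-block, splitting $X=\mu I+N$ with $N$ nilpotent gives $e^{X}=e^{\mu}e^{N}$; $e^{X}=I$ forces $\mu=0$ (as $\mu\in\mathbb{R}$) and then $N=0$. On a $\mathbb{B}_{m_{l}}$-block, splitting $X=D+N$ with $D$ the Toeplitz-constant diagonal $C\in\mathbb{S}$ and $N$ strictly block-lower (nilpotent), the same argument forces $N=0$ and $e^{D}=I$, so $C=2\pi kJ$ with $J=\left[\begin{smallmatrix}0&1\\-1&0\end{smallmatrix}\right]$. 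Therefore $\ker(\exp)\cap\mathcal{K}_{\eta,r,s}(\mathbb{R})=\bigoplus_{l=1}^{s}2\pi\mathbb{Z}J_{l}$, where $J_{l}$ has $J$ in every diagonal $2\times 2$ position of the $l$-th $\mathbb{B}$-block and zero elsewhere. A direct matrix-vector computation gives $J_{l}u_{0}=-f^{(l)}$; applying $B=PCP^{-1}$ to $v_{0}=Pu_{0}$ then converts the description of $\mathrm{g}^{2}$ into the claimed identity, the sign being absorbed into $\mathbb{Z}$.

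Finally I would prove that density of $\mathrm{g}^{2}_{v_{0}}$ is equivalent to density of $\mathrm{g}_{v_{0}}$. The inclusion $G^{2}\subset G$ yields $\mathrm{g}^{2}\subset\mathrm{g}$, whence $\mathrm{g}^{2}_{v_{0}}\subset\mathrm{g}_{v_{0}}$, so density of the former gives density of the latter. Conversely, for any $B\in\mathrm{g}$, $e^{2B}=(e^{B})^{2}\in G^{2}$ and $2B\in P\mathcal{K}_{\eta,r,s}(\mathbb{R})P^{-1}$, so $2B\in\mathrm{g}^{2}$, and consequently $2\mathrm{g}_{v_{0}}\subset\mathrm{g}^{2}_{v_{0}}$. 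Since multiplication by $2$ is a homeomorphism of $\mathbb{R}^{n}$, density of $\mathrm{g}_{v_{0}}$ implies density of $2\mathrm{g}_{v_{0}}$ and hence of $\mathrm{g}^{2}_{v_{0}}$. Combined with Theorem~\ref{T:1}, this establishes (ii)$\Leftrightarrow$(iii).

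The main obstacle is the kernel computation in the second step: one must carefully exploit the Toeplitz/commutative structure of each $\mathbb{B}_{m_{l}}$-block to show that all strictly block-lower entries of a kernel element must vanish while the block-diagonal part is constrained to $2\pi\mathbb{Z}J$. This is exactly what links the ``visible'' contribution $\sum\mathbb{Z}B_{k}v_{0}$ to the ``hidden'' contribution $\sum 2\pi\mathbb{Z}Pf^{(l)}$ arising from the many real logarithms in $P\mathcal{K}_{\eta,r,s}(\mathbb{R})P^{-1}$ of a single element of $G^{2}$.
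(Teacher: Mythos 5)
Your overall strategy matches the paper's: reduce to Theorem~\ref{T:1} for (i)$\Leftrightarrow$(ii), establish the structural identity $\mathrm{g}^{2}_{v_{0}}=\sum_k\mathbb{Z}(B_{k}v_{0}) + \sum_l 2\pi\mathbb{Z}Pf^{(l)}$, and show density of $\mathrm{g}^{2}_{v_{0}}$ is equivalent to density of $\mathrm{g}_{v_{0}}$ (your $2\mathrm{g}_{v_0}\subset\mathrm{g}^2_{v_0}\subset\mathrm{g}_{v_0}$ argument is exactly Lemma~\ref{L:14}). However, the central justification you give for the structural identity rests on a false premise.

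You assert that $\mathcal{K}_{\eta,r,s}(\mathbb{R})$ is a commutative subalgebra, describing each $\mathbb{T}_{n_k}(\mathbb{R})$ block as a ``lower triangular Toeplitz algebra.'' But $\mathbb{T}_{n}(\mathbb{R})$ as defined in the paper consists of lower triangular matrices with constant \emph{diagonal} only; the strictly sub-diagonal entries $a_{i,j}$ are arbitrary, not constrained to be constant along diagonals. This is not a Toeplitz algebra and is not commutative once $n\ge 3$ — for instance, the two matrices in $\mathbb{T}_3(\mathbb{R})$ with a single $1$ in position $(2,1)$, respectively $(3,2)$, fail to commute. (The paper itself notes in Section 5 that $\mathbb{T}_{n}^{*}(\mathbb{K})$ is non-abelian for $n>2$.) The same remark applies to $\mathbb{B}_{m}(\mathbb{R})$ for $m\ge 3$. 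So $\exp$ is not ``additive-to-multiplicative'' on $\mathcal{K}_{\eta,r,s}(\mathbb{R})$, and the claim that $e^M=e^{\sum n_kB_k}$ forces $M-\sum n_kB_k\in\ker\exp$ does not follow from your stated reason. The correct source of the commutativity you need is the abelian-ness of $G$ itself, propagated to $\mathrm{g}$ via Proposition~\ref{p:3} and Lemma~\ref{L:6}(iv): the matrices $M,B_1,\dots,B_p$ all lie in $\mathrm{g}$ and hence pairwise commute, and then Proposition~\ref{p:4} (applied blockwise) is what pins down $M$ up to the $2\pi J_{m_l}$ ambiguities. This is precisely what Proposition~\ref{p:11} does; the kernel description and the identity $J_{m_l}$-block times $u_0$ give $\pm f^{(l)}$ are fine in your write-up, but the commutativity must be drawn from $\mathrm{g}$, not from the ambient algebra $\mathcal{K}$, which is genuinely non-abelian.
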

\medskip

\begin{cor}\label{CC:2} Under the hypothesis of Theorem 1.4, the following properties are equivalent:

\begin{enumerate}
\item [(i)]  $G$  is topological transitive.
\item [(ii)]  $P^{-1}A_{1}P,\dots,P^{-1}A_{p}P$  satisfy the \emph{density property} and  $\textrm{ind}(G) = r$.
\item [(iii)] $\mathrm{g}^{2}_{v_{0}}=\underset{k=1}{\overset{p}{\sum}}\mathbb{Z}(B_{k}v_{0}) +
\underset{l=1}{\overset{s}{\sum}}2\pi\mathbb{Z}Pf^{(l)}$ is dense in $\mathbb{R}^{n}$ and  $\textrm{ind}(G) = r$.
\end{enumerate}
\end{cor}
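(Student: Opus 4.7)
The strategy is to derive the three-way equivalence by combining the topological-transitivity criterion of Corollary~\ref{C:1} with the locally-dense-orbit criterion of Theorem~\ref{T:2}, then recognizing the density property as a rank-theoretic reformulation of Kronecker's theorem on density of finitely generated additive subgroups of $\mathbb{R}^{n}$.

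First I would establish (i) $\Leftrightarrow$ (iii). By Corollary~\ref{C:1}, $G$ is topologically transitive iff $G(v_{0})$ is dense in $\mathbb{R}^{n}$, equivalently iff $G$ admits a locally dense orbit \emph{and} $\mathrm{ind}(G)=r$. Applying Theorem~\ref{T:2} to the finitely generated group $G$ rewrites ``$G$ admits a locally dense orbit'' as the density of $\mathrm{g}^{2}_{v_{0}}=\sum_{k=1}^{p}\mathbb{Z}(B_{k}v_{0})+\sum_{l=1}^{s}2\pi\mathbb{Z}Pf^{(l)}$ in $\mathbb{R}^{n}$. Juxtaposing these statements yields (i) $\Leftrightarrow$ (iii) immediately.

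For (ii) $\Leftrightarrow$ (iii), I would conjugate by $\widetilde{P}=\mathrm{diag}(P,1)$ to transport the density property for $P^{-1}A_{1}P,\dots,P^{-1}A_{p}P$---whose associated exponents in $\mathcal{K}_{\eta,r,s}(\mathbb{R})$ are $P^{-1}B_{k}P$ because $(P^{-1}A_{k}P)^{2}=e^{P^{-1}B_{k}P}$---into the rank-$(n+1)$ requirement on the matrix whose columns are $(B_{k}v_{0},s_{k})$ and $(2\pi Pf^{(l)},t_{l})$; this uses $(P^{-1}B_{k}P)u_{0}=P^{-1}(B_{k}v_{0})$ and left-multiplication of the top block by $P$, which preserves rank. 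It then suffices to prove the following general Kronecker-type lemma: a finitely generated subgroup $H=\sum_{i=1}^{N}\mathbb{Z}x_{i}\subset\mathbb{R}^{n}$ is dense iff for every $(k_{1},\dots,k_{N})\in\mathbb{Z}^{N}\setminus\{0\}$ the $(n+1)\times N$ matrix with columns $(x_{i},k_{i})$ has rank $n+1$. The ``dense $\Rightarrow$ rank'' direction is Pontryagin duality: a left null-vector $(\lambda,q)\neq 0$ for some nonzero integer $(k_{i})$ yields either $q=0$ with $\lambda\neq 0$ annihilating every $x_{i}$ (hence the dense $H$, so $\lambda=0$), or $q\neq 0$ with $\lambda/q$ realizing $\langle\lambda/q,x_{i}\rangle=-k_{i}\in\mathbb{Z}$, placing $\lambda/q$ in the annihilator of $H$ modulo $\mathbb{Z}$, again forcing $\lambda/q=0$ by density; either outcome is a contradiction. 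Conversely, if $H$ is not dense, a nonzero $\lambda$ with $\langle\lambda,x_{i}\rangle\in\mathbb{Z}$ for all $i$ supplies an integer $(k_{i})$ making $(\lambda,-1)$ a left null-vector and dropping the rank. Applying the lemma to the generators of $\mathrm{g}^{2}_{v_{0}}$ completes (ii) $\Leftrightarrow$ (iii).

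The main bookkeeping obstacle is tracking the $P$-conjugation through the various spaces, in particular matching $\ker(\exp)\cap P\mathcal{K}_{\eta,r,s}(\mathbb{R})P^{-1}$ with the $2\pi\mathbb{Z}$-lattice generated by the $Pf^{(l)}$, which is what legitimizes the explicit presentation of $\mathrm{g}^{2}_{v_{0}}$ used in Theorem~\ref{T:2}. Once this identification is in hand, the remainder reduces to the two duality arguments above and carries no further dynamical content.
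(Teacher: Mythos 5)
Your proposal is correct and follows essentially the same route as the paper: the equivalence (i) $\Leftrightarrow$ (iii) is exactly the combination of Corollary~\ref{C:1} with Theorem~\ref{T:2}, and (ii) $\Leftrightarrow$ (iii) is exactly the Waldschmidt rank criterion (Proposition~\ref{p:12}) transported through the $P$-conjugation. The only difference is cosmetic: the paper cites Proposition~\ref{p:12} as a black box, whereas you inline a Pontryagin-duality proof of that rank criterion and spell out the left-multiplication by $P$ that matches the density-property rank matrix with the generators of $\mathrm{g}^{2}_{v_{0}}$.
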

\medskip

\begin{cor}\label{C:3} Let  $G$  be an abelian subgroup of GL($n, \mathbb{R})$ and $P\in \textrm{GL}(n, \mathbb{R})$ so that
$P^{-1}GP\subset\mathcal{K}^{*}_{\eta,r,s}(\mathbb{R})$. If $G$ is generated by $p$ matrices with  $p\leq n-s$,
then it has nowhere dense orbit. In particular, $G$ is not topological transitive.
\end{cor}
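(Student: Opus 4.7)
The plan is to invoke Theorem~\ref{T:2} and reduce the claim to a dimension count for $\mathbb{Z}$-generated subgroups of $\mathbb{R}^{n}$. Let $A_{1},\dots,A_{p}$ generate $G$, and pick $B_{1},\dots,B_{p}\in\mathrm{g}$ with $A_{k}^{2}=e^{B_{k}}$. By the equivalence (i)$\Leftrightarrow$(iii) of Theorem~\ref{T:2}, $G$ has a locally dense orbit in $\mathbb{R}^{n}$ if and only if
$$\mathrm{g}^{2}_{v_{0}}\;=\;\sum_{k=1}^{p}\mathbb{Z}\bigl(B_{k}v_{0}\bigr)\;+\;\sum_{l=1}^{s}2\pi\mathbb{Z}\,Pf^{(l)}$$
is dense in $\mathbb{R}^{n}$. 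So it is enough to show that this additive subgroup fails to be dense whenever $p\leq n-s$.

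Under that hypothesis, $\mathrm{g}^{2}_{v_{0}}$ is generated as a $\mathbb{Z}$-module by $p+s\leq n$ vectors, say $w_{1},\dots,w_{p+s}$. The decisive elementary observation is that a subgroup $H$ of $\mathbb{R}^{n}$ generated by at most $n$ vectors cannot be dense. Indeed, the real linear span $V$ of $w_{1},\dots,w_{p+s}$ satisfies $\dim V\leq p+s\leq n$. If $\dim V<n$, then $H\subset V\subsetneq\mathbb{R}^{n}$, so $\overline{H}$ is contained in a proper subspace. If $\dim V=n$, then necessarily $p+s=n$ and the $w_{i}$ form an $\mathbb{R}$-basis of $\mathbb{R}^{n}$, whence $H=\mathbb{Z}w_{1}+\dots+\mathbb{Z}w_{n}$ is a lattice in $\mathbb{R}^{n}$ and is therefore discrete. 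In either case $\overline{H}\neq\mathbb{R}^{n}$, so $\mathrm{g}^{2}_{v_{0}}$ is not dense.

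Applying the contrapositive of Theorem~\ref{T:2}, we conclude that no orbit of $G$ is locally dense; equivalently, $\overline{G(v)}$ has empty interior for every $v\in\mathbb{R}^{n}$, which is to say that every orbit of $G$ is nowhere dense in $\mathbb{R}^{n}$. In particular $\overline{G(v)}\neq\mathbb{R}^{n}$ for all $v$, and $G$ fails to be topologically transitive. I do not anticipate any real obstacle here: the whole argument is a dimension count layered onto Theorem~\ref{T:2}, and the only non-trivial ingredient is the standard fact that a finitely generated subgroup of $\mathbb{R}^{n}$ needs strictly more than $n$ generators to be dense.
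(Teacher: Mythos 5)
Your proof is correct, and it follows the same overall strategy as the paper: both reduce to Theorem~\ref{T:2}, so that locally dense orbits exist iff $\mathrm{g}^{2}_{v_{0}}=\sum_{k}\mathbb{Z}(B_{k}v_{0})+\sum_{l}2\pi\mathbb{Z}Pf^{(l)}$ is dense, and both then observe that this $\mathbb{Z}$-module has at most $p+s\leq n$ generators and so cannot be dense. The difference is in how the non-density is justified. The paper cites Proposition~\ref{p:12} (Waldschmidt's rank criterion) and notes that the $(n+1)\times(p+s)$ matrix appearing there has at most $n$ columns, so its rank is at most $n<n+1$ and the criterion fails. You instead give a self-contained two-case argument: either the $\mathbb{R}$-span $V$ of the generators is a proper subspace, in which case $\overline{H}\subset V\subsetneq\mathbb{R}^{n}$, or $\dim V=n$ forces $p+s=n$ and the generators form a basis, so $H$ is a lattice, hence discrete and closed and still not all of $\mathbb{R}^{n}$. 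Your version is a bit more elementary in that it avoids invoking the cited rank criterion, at the modest cost of spelling out the lattice case; both are dimension counts in disguise, and both are fully correct.
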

\smallskip

\begin{cor}\label{C:4} Let  $G$  be an abelian subgroup of GL($n, \mathbb{R})$: If $G$ is generated by $p$ matrices with
 $p\leq [\frac{n+1}{2}]$, then it has nowhere dense orbit. In particular, $G$ is not topological transitive. $( [\ ]$ denotes the integer part.$)$
\end{cor}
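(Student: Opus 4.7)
The plan is to reduce this to Corollary~\ref{C:3} by bounding $s$ in terms of $n$. Recall that by Proposition~\ref{p:1} (the normal-form result invoked earlier), there exists $P\in\mathrm{GL}(n,\mathbb{R})$ and a partition $\eta$ of $n$ such that $P^{-1}GP\subset \mathcal{K}^{*}_{\eta,r,s}(\mathbb{R})$, where $(n_1,\dots,n_r;m_1,\dots,m_s)$ satisfies the key identity $(n_1+\cdots+n_r)+2(m_1+\cdots+m_s)=n$ from~(3). Since every $n_i\ge 1$ and every $m_j\ge 1$, this identity yields $r+2s\le n$, hence in particular
\[
s\le \tfrac{n-r}{2}\le \tfrac{n}{2},
\]
so $s\le \left[\tfrac{n}{2}\right]$.

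The next step is a purely arithmetical comparison: $n-\left[\tfrac{n}{2}\right]=\left[\tfrac{n+1}{2}\right]$, whether $n$ is even or odd. Combining this with the bound above gives
\[
n-s \;\ge\; n-\left[\tfrac{n}{2}\right] \;=\; \left[\tfrac{n+1}{2}\right] \;\ge\; p,
\]
where the final inequality is our hypothesis on the number of generators.

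With this bound in hand, I would simply apply Corollary~\ref{C:3} to the group $G$ (with the specific $P$ and $(\eta,r,s)$ furnished by Proposition~\ref{p:1}): the hypothesis $p\le n-s$ of that corollary is now verified, so $G$ has nowhere dense orbit in $\mathbb{R}^{n}$, and in particular $G$ is not topologically transitive.

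There is essentially no substantive obstacle here since all the geometric content is absorbed into Corollary~\ref{C:3}; the only thing to watch is the integer-part identity $n-[n/2]=[(n+1)/2]$, which one should verify in the two parities of $n$ to be safe. Note that the normal-form reduction is legitimate because conjugation by $P$ is a linear homeomorphism of $\mathbb{R}^{n}$ and therefore preserves the density (or nowhere-density) of every orbit, so the conclusion transfers back from $P^{-1}GP$ to $G$.
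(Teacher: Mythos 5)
Your proposal is correct and follows the same route as the paper: both reduce to Corollary~\ref{C:3} by using the constraint $r+2s\le n$ to show $p\le n-s$. The only difference is cosmetic --- you verify the integer-part identity $n-[n/2]=[(n+1)/2]$ directly, while the paper bounds $p+s\le n+\tfrac{1}{2}$ and then invokes integrality; both are sound.
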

\
\\
\\
\textbf{Remark 1}. Corollaries ~\ref{C:3}  and ~\ref{C:4} are not true in general if  $p> n-s$  (resp.  $p >
[\frac{n+1}{2}]$) as can be shown in Example 8.4.
\bigskip

This paper is organized as follows: In Section $2$,  we introduce
the triangular representation for an abelian subgroup of $GL(n, \mathbb{R})$. In Section $3$,  we give some basic properties
 of the matrix exponential map and the additive group $\mathrm{g}$ associated to the group $G$.
 Section 4 is devoted to some properties related to subgroups of $\mathcal{K}^{*}_{\eta,r,s}(\mathbb{R})$ with a dense orbit.
 A parametrization  of an abelian subgroup of $\mathbb{T}^{*}_{n}(\mathbb{R})$ and some related properties are given in Section 5.
 Section 6 (resp. Section 7) gives some properties of the abelian subgroups of $\mathbb{T}^{*}_{n}(\mathbb{K})$
  (resp. $\mathcal{K}^{*}_{\eta,r,s}(\mathbb{R})$) with locally dense orbit.
  The proof of Theorems ~\ref{T:1} and ~\ref{T:2}, Corollaries ~\ref{C:1}, ~\ref{C:2}, ~\ref{C:3} and ~\ref{C:4} are done in Section  $8$.
 Section 9 is devoted to the special case $n=2$ and some examples. In Section 10 , we have included as an appendix, a detailed proof
  of some results in Section 3 
 seem rather difficult to find in the literature.
\medskip

\section{\bf Normal form of  abelian  subgroups  of GL(n,
$\mathbb{R}$)}

In this section we introduce the triangular representation for an
abelian subgroup  $G\subset \textrm{GL}(n,\ \mathbb{R})$. As noted in the
introduction, this reduces the existence of a dense orbit to a
question concerning subgroups of
$\mathcal{K}^{*}_{\eta,r,s}(\mathbb{R})$.

\begin{lem}\label{L:01} Let G be an abelian subgroup of $GL(n,\mathbb{R})$. Then there exists a direct sum decomposition
$$\mathbb{R}^{n} = \underset{k=1}{\overset{r}{\bigoplus}}E_{k}\oplus \underset{l=1}{\overset{q}{\bigoplus}}F_{l}\ \ \ \ \ (1)$$
for some $r, \ q$, $0\leq r\leq n$, $0\leq q\leq \frac{n}{2}$, where $E_{k}$ $($resp. $F_{l})$ is a $G$-invariant
vector subspace of $\mathbb{R}^{n}$ of
dimension $n_{k}$ $($resp. 2$m_{l})$, $1\leq k\leq r$ $($resp. $1\leq l\leq q)$, such that, for each $A\in G$ the
 restriction $A_{k}$ $($resp.
$\widetilde{A}_{l})$ of $A$ to $E_{k}$ $($resp. $F_{l}$ $)$ has a unique real eigenvalue $\lambda_{A,k}$ $($resp.
two conjugates complex eigenvalues $\mu_{A,l}$ and $\overline{\mu}_{A,l}$).
\end{lem}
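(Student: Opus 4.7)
The plan is to pass to the complexification $\mathbb{C}^{n}=\mathbb{R}^{n}\otimes\mathbb{C}$, carry out a simultaneous generalized-eigenspace decomposition there using the fact that $G$ is abelian, and then descend back to $\mathbb{R}^{n}$ by pairing complex-conjugate characters.

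First I would recall that a finite-dimensional commuting family of operators over $\mathbb{C}$ admits a common generalized-eigenspace decomposition: for each $A\in G$, the generalized eigenspaces $\ker(A-\lambda I)^{n}$ are preserved by every $B\in G$ (since $AB=BA$), and refining by one element of $G$ at a time (the process stabilizes after finitely many refinements by dimension count) yields a direct sum
\[
\mathbb{C}^{n}=\bigoplus_{\chi\in\widehat{G}}V_{\chi},
\]
where $\widehat{G}$ is a finite set of multiplicative characters $\chi:G\to\mathbb{C}^{\ast}$, $V_{\chi}$ is $G$-invariant, and for each $A\in G$ the restriction $A|_{V_{\chi}}$ has $\chi(A)$ as its unique eigenvalue. (The fact that $\chi$ is multiplicative follows from the observation that on $V_{\chi}$ the product $AB$ has unique eigenvalue $\chi(A)\chi(B)$.)

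Next I would introduce complex conjugation $\sigma:\mathbb{C}^{n}\to\mathbb{C}^{n}$, $v+iw\mapsto v-iw$ for $v,w\in\mathbb{R}^{n}$. Because every $A\in G$ has real entries, $\sigma A=A\sigma$, hence $\sigma(V_{\chi})=V_{\overline{\chi}}$ (where $\overline{\chi}(A):=\overline{\chi(A)}$). The characters therefore split into those fixed by conjugation (i.e.\ $\chi(A)\in\mathbb{R}$ for all $A$) and pairs $\{\chi,\overline{\chi}\}$ with $\chi\neq\overline{\chi}$.

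Finally I would descend to $\mathbb{R}^{n}$ in two cases. If $\chi=\overline{\chi}$, then $\sigma$ preserves $V_{\chi}$ and $E_{k}:=V_{\chi}\cap\mathbb{R}^{n}$ is a real $G$-invariant subspace with $V_{\chi}=E_{k}\oplus iE_{k}$; setting $\lambda_{A,k}:=\chi(A)$, the restriction $A|_{E_{k}}$ has the unique real eigenvalue $\lambda_{A,k}$, and $n_{k}:=\dim_{\mathbb{R}}E_{k}=\dim_{\mathbb{C}}V_{\chi}$. If $\chi\neq\overline{\chi}$, the sum $V_{\chi}\oplus V_{\overline{\chi}}$ is $\sigma$-stable and I set $F_{l}:=(V_{\chi}\oplus V_{\overline{\chi}})\cap\mathbb{R}^{n}$; this is a real $G$-invariant subspace of even dimension $2m_{l}$ with $m_{l}:=\dim_{\mathbb{C}}V_{\chi}$, and the restriction $\widetilde{A}_{l}$ of $A$ to $F_{l}$ has precisely the conjugate pair of complex eigenvalues $\mu_{A,l}:=\chi(A)$, $\overline{\mu}_{A,l}=\overline{\chi}(A)$. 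Summing over all characters (one $E_{k}$ per real character, one $F_{l}$ per conjugate pair) yields the decomposition (1), and the dimension identity $\sum n_{k}+2\sum m_{l}=n$ follows from $\sum_{\chi}\dim_{\mathbb{C}}V_{\chi}=n$.

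The only real obstacle is verifying simultaneous generalized-eigenspace decomposition cleanly; this is a standard fact for commuting operators and the abelian hypothesis on $G$ is exactly what is needed. Everything else is bookkeeping about the real structure under $\sigma$.
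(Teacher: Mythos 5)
Your argument is correct; it proves the lemma, and no step fails. It does, however, take a different route from the paper: the paper never leaves $\mathbb{R}^{n}$ — for each $A\in G$ it uses the real generalized eigenspaces $\mathrm{Ker}(A-\lambda I_{n})^{n_{k}}$ for real eigenvalues and, for a nonreal conjugate pair $\mu,\overline{\mu}$, the kernel of powers of the \emph{real} matrix $(A-\mu I_{n})(A-\overline{\mu}I_{n})$, observes these are invariant under every $B\in G$ by commutativity, and takes the maximal common refinement — whereas you complexify, form the joint generalized-eigenspace decomposition $\mathbb{C}^{n}=\bigoplus_{\chi}V_{\chi}$ indexed by the joint-eigenvalue functions, and descend to $\mathbb{R}^{n}$ via the conjugation $\sigma$, taking $E_{k}=V_{\chi}\cap\mathbb{R}^{n}$ for $\chi=\overline{\chi}$ and $F_{l}=(V_{\chi}\oplus V_{\overline{\chi}})\cap\mathbb{R}^{n}$ for a nonreal pair. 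The underlying engine is the same (commuting operators preserve one another's generalized eigenspaces, and the refinement stabilizes since the number of summands is bounded by $n$), but the paper's version is shorter and stays elementary over $\mathbb{R}$, while yours makes explicit some points the paper leaves implicit: that the summands are simultaneously invariant for \emph{all} of $G$ with a single joint eigenvalue function on each, that the eigenvalues of $\widetilde{A}_{l}$ on $F_{l}$ are exactly the conjugate pair $\chi(A),\overline{\chi}(A)$ (possibly real and equal, e.g.\ for $A=I_{n}$, which the statement must and does tolerate), and that $\dim F_{l}$ is even. Your complexified picture also matches the conjugation used later in Section 2 (Lemmas 2.3--2.5) to put $G$ into $\mathbb{B}_{m}(\mathbb{R})$ form, so nothing is lost by taking this route.
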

\medskip

\begin{proof}Given $A\in G$, let $\lambda_{A,k}$ $($resp. $\mu_{A,l}$ and $\overline{\mu}_{A,l}$) be a real eigenvalue $($resp.
two nonreal conjugates complex eigenvalues$)$ and
$E_{A,k} = \textrm{Ker}(A - \lambda_{A,k}I_{n})^{n_{k}}$ (resp. $F_{A,l} = \textrm{Ker}\left((A - \mu_{A,l}I_{n})(A - \overline{\mu_{A,l}}I_{n})\right)^{m_{l}}$) the associated
generalized eigenspace. For any $B\in G$, the space
$E_{A,k}$ (resp. $F_{A,l}$) is invariant under $B$. If $B$ restricted to $E_{A,k}$ (resp. $F_{A,l}$) has
two distinct real eigenvalues (resp. two non conjugates complex eigenvalues), then it can be decomposed further. The decomposition (1) is the
maximal decomposition associated to all $A\in G$.
\end{proof}
\
\\
The restriction of the group $G$ to each subspace $E_{k}$ (resp. $F_{l}$) can be put into
triangular form (resp. in $\mathbb{B}_{m_{l}}(\mathbb{R})$'s form). This follows from the Lemmas 2.2 and 2.3 below.
\
\\
\begin{lem}\label{L:33}  Let $G$  be an abelian subgroup of GL($n, \mathbb{K})$. Assume that every element of $G$ has a unique
 eigenvalue. Then there exists a matrix $Q\in \textrm{GL}(n, \mathbb{K})$  such that  $Q^{-1}GQ$ is a subgroup of
 $\mathbb{T}^{*}_{n}(\mathbb{K})$.
\end{lem}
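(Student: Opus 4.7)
The plan is to decompose each element of $G$ as a scalar plus a nilpotent and then simultaneously trigonalize the commuting family of nilpotent parts. For each $A\in G$, the hypothesis forces the characteristic polynomial to be $(X-\lambda_A)^n$ with $\lambda_A\in\mathbb{K}$; hence by Cayley--Hamilton $N_A:=A-\lambda_A I$ is nilpotent, and $\lambda_A\ne 0$ because $A$ is invertible. Since $G$ is abelian, the $N_A$'s pairwise commute.

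The core step is to produce a single $Q\in\textrm{GL}(n,\mathbb{K})$ making every $N_A$ strictly lower triangular. I would consider the (non-unital) $\mathbb{K}$-subalgebra $\mathcal{A}\subset M_n(\mathbb{K})$ generated by $\{N_A:A\in G\}$. Any polynomial without constant term in finitely many commuting nilpotents is nilpotent, so every element of $\mathcal{A}$ is nilpotent; being finite-dimensional and commutative, $\mathcal{A}$ itself satisfies $\mathcal{A}^k=0$ for some $k$. Taking the largest $k$ with $\mathcal{A}^k\mathbb{K}^n\ne 0$, any nonzero $v\in\mathcal{A}^k\mathbb{K}^n$ is annihilated by all of $\mathcal{A}$, so the common kernel $W_0:=\bigcap_{A\in G}\ker N_A$ is nonzero. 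Pick any line $V_1\subset W_0$; the $N_A$'s descend to commuting nilpotent operators on $\mathbb{K}^n/V_1$, and induction on $n$ yields a common flag $0=V_0\subsetneq V_1\subsetneq\cdots\subsetneq V_n=\mathbb{K}^n$ with $N_A V_i\subset V_{i-1}$ for every $A\in G$ and every $i$.

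Choose a basis $(w_1,\dots,w_n)$ with $V_i=\mathrm{span}(w_n,\dots,w_{n-i+1})$, and let $Q$ be the corresponding change-of-basis matrix. In this basis each $Q^{-1}N_AQ$ is strictly lower triangular, so $Q^{-1}AQ=\lambda_A I + Q^{-1}N_AQ$ is lower triangular with the nonzero constant $\lambda_A$ on the diagonal, i.e., $Q^{-1}AQ\in\mathbb{T}^{*}_{n}(\mathbb{K})$.

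The main obstacle is carrying out the simultaneous triangularization across a possibly infinite family $G$; this is handled by passing to the finite-dimensional generated algebra $\mathcal{A}$, whose nilpotence gives a nonzero common kernel and hence the inductive step. This is essentially the commutative case of Engel's theorem, and the preservation of the constant-diagonal shape relies on the fact that the scalar part $\lambda_A I$ is intrinsically invariant under any conjugation.
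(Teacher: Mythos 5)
Your proof is correct. Where the paper argues by induction directly on the group $G$ -- it invokes without proof the standard fact that a commuting family of $\mathbb{K}$-triangularizable operators has a common eigenvector $u\in\mathbb{K}^n$, puts $u$ last in a basis so that $P^{-1}AP$ is block lower triangular with lower-right entry $\lambda_A$, and then applies the lemma inductively to the abelian group of upper-left $(n-1)\times(n-1)$ blocks -- you instead isolate the nilpotent parts $N_A=A-\lambda_A I$ at the outset and run an Engel-type argument on the commuting nilpotents: the finite-dimensional commutative nil algebra $\mathcal{A}$ they generate is nilpotent, which produces the nonzero common kernel $W_0=\bigcap_A\ker N_A$, and then quotienting by a line in $W_0$ and inducting gives a full common flag. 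The two inductions are essentially the same step (your line $V_1\subset W_0$ is exactly a common eigenvector), but your version is more self-contained: it proves the common-kernel fact that the paper cites, and by conjugating only the $N_A$'s it makes the constant-diagonal feature of $Q^{-1}AQ=\lambda_A I+Q^{-1}N_AQ$ automatic rather than something the induction must track. One small point you use tacitly, as does the paper: over $\mathbb{K}=\mathbb{R}$ the unique eigenvalue is necessarily real (a real matrix's nonreal eigenvalues come in distinct conjugate pairs), which is needed for $N_A$ to lie in $M_n(\mathbb{R})$ and for the flag to live in $\mathbb{R}^n$.
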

\medskip

\begin{proof}The proof is done by induction on $n\geq 1$.

For $n=1$, the Lemma is obvious. Suppose the Lemma is true for $n-1$, $n\geq 2$, and let $G$ be an abelian group of matrices in
GL($n, \mathbb{K})$ having only one real eigenvalue. Then there exists a common eigenvector $u\in\mathbb{K}^{n}$ for all matrices of $G$. Let
 $u_{1},\dots,u_{n-1}\in\mathbb{K}^{n}$ so that $\mathcal{B}:=(u_{1},\dots,u_{n-1},u)$ is a basis of $\mathbb{K}^{n}$.
  Let $P$ be the matrix of basis change from $\mathcal{B}_{0}$ to $\mathcal{B}$. Then, for every $A\in G$, we have
$$P^{-1}AP=\left[\begin{array}{cc}
A_{1} & 0 \\
L_{A} & \lambda_{A}
\end{array}
  \right],$$
where \ $A_{1}\in \textrm{GL}(n-1,\mathbb{K})$, \ $L_{A}= (a_{n,1},\dots,a_{n,n-1})\in M_{1,n-1}(\mathbb{K}).$

Denote by $G_{1}=\{A_{1}: \ A\in G\}$. One can check that $G_{1}$ is an abelian group of matrices in $\mathrm{GL}(n-1, \mathbb{K})$
having only one eigenvalue. By induction hypothesis, there exists $Q_{1}\in \textrm{GL}(n-1, \mathbb{K})$  such that
$Q_{1}^{-1}G_{1}Q_{1}$
is a subgroup of  $\mathbb{T}^{*}_{n-1}(\mathbb{K})$. Set $Q^{\prime} = \mathrm{diag}(Q_{1},1)$ and
$Q:=PQ^{\prime}\in \textrm{GL}(n, \mathbb{K})$. Then $Q\in \textrm{GL}(n, \mathbb{K})$ and for every $A\in G$, we have
 $$Q^{-1}AQ = (Q^{\prime})^{-1}(P^{-1}AP)Q^{\prime} = \left[\begin{array}{cc}
 A^{\prime}_{1} & 0 \\
 L^{\prime} & \lambda_{A}
 \end{array}
  \right],$$
where $$A^{\prime}_{1}:= Q_{1}^{-1}A_{1}Q_{1}= \begin{bmatrix}
  \lambda_{A} &  &   & 0 \\
  c_{2,1} & \lambda_{A} &   &  \\
  \vdots &  \ddots & \ddots &  \\
  c_{n,1} & \dots & c_{n,n-1} & \lambda_{A}
\end{bmatrix}\in \mathbb{T}^{*}_{n-1}(\mathbb{K})$$ and $L^{\prime}:= L_{A}Q_{1}
\in M_{1,n-1}(\mathbb{K})$. Therefore  and $Q^{-1}GQ$ is an abelian subgroup of
$\mathbb{T}^{*}_{n}(\mathbb{K})$. This completes the proof.
\end{proof}
\
\\
Let consider the following basis change:
\
\\
Assume that  $n=2m$, $m\in\mathbb{N}_{0}$.  For every  $k=1,\dots,m$, we let:  \\ $u_{k} =
\frac{e_{2k-1}-ie_{2k}}{2}$ and $\mathcal{C}_{0} =
(u_{1},\dots,u_{m}, \ \overline{u_{1}},\dots,\overline{u_{m}}),$   where $\overline{u}=(\overline{z_{1}},\dots,\overline{z_{m}})$
is the conjugate of $u=(z_{1},\dots, z_{m})$. Then $\mathcal{C}_{0}$ is a
basis of  $\mathbb{C}^{2m}$. Denote by $Q\in \textrm{GL}(2m, \mathbb{C})$  the matrix of basis change from
$\mathcal{B}_{0}$  to  $\mathcal{C}_{0}$.
\medskip

\begin{lem}\label{L:1} Under the notation above, for every $B\in\mathbb{B}_{m}(\mathbb{R})$,
$Q^{-1}BQ = \mathrm{diag}(B^{\prime}_{1},\
\overline{B^{\prime}_{1}} )$  where $B^{\prime}_{1}\in
\mathbb{T}_{m}(\mathbb{C})$.
\end{lem}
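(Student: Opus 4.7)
The plan is to reduce everything to the computation on a single $2\times 2$ block in $\mathbb{S}$, then exploit the common block-structure of $\mathbb{B}_m(\mathbb{R})$.

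First I would check the model case. For $C=\begin{bmatrix}\alpha&\beta\\-\beta&\alpha\end{bmatrix}\in\mathbb{S}$, a direct computation gives $C(e_1-ie_2)=(\alpha-i\beta)(e_1-ie_2)$, so $u_k=(e_{2k-1}-ie_{2k})/2$ is an eigenvector of the action of $C$ on the $k$-th copy of $\mathbb{R}^2$, with eigenvalue $\alpha-i\beta$. Because $C$ is real, conjugation then gives $C(e_1+ie_2)=(\alpha+i\beta)(e_1+ie_2)$, so $\overline{u_k}$ is an eigenvector with eigenvalue $\alpha+i\beta$. In particular every $C\in\mathbb{S}$ acts on $u_k$ (resp.\ $\overline{u_k}$) as multiplication by a complex scalar $\gamma(C)=\alpha-i\beta$ (resp.\ $\overline{\gamma(C)}$), and this operation $C\mapsto\gamma(C)$ is $\mathbb{R}$-linear on $\mathbb{S}$.

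Next I would pass to the block structure. View $\mathbb{R}^{2m}=\bigoplus_{i=1}^m \mathbb{R}^2_{(i)}$, so that $u_k$ is supported in the slot $\mathbb{R}^2_{(k)}$. For $B\in\mathbb{B}_m(\mathbb{R})$ with blocks $C_{ii}=C$ on the diagonal and $C_{i,j}\in\mathbb{S}$ below, one reads off
\[
Bu_k \;=\; \sum_{i\ge k}\, C_{i,k}\!\cdot\! \tfrac{e_1-ie_2}{2}\Big|_{(i)} \;=\; \sum_{i\ge k}\gamma(C_{i,k})\,u_i,
\]
and similarly $B\overline{u_k}=\sum_{i\ge k}\overline{\gamma(C_{i,k})}\,\overline{u_i}$. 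In particular the subspaces $V_+=\mathrm{span}(u_1,\dots,u_m)$ and $V_-=\mathrm{span}(\overline{u_1},\dots,\overline{u_m})$ are each $B$-invariant, so in the basis $\mathcal{C}_0$ the matrix $Q^{-1}BQ$ is block-diagonal: $Q^{-1}BQ=\mathrm{diag}(B'_1,B'_2)$.

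Finally I would identify the blocks. The formula above shows that $B'_1$ is lower triangular with $(i,k)$-entry $\gamma(C_{i,k})$; in particular all its diagonal entries equal $\gamma(C)=\alpha-i\beta$, so $B'_1\in\mathbb{T}_m(\mathbb{C})$. The analogous formula for $\overline{u_k}$ gives $B'_2=\overline{B'_1}$, as required. The only real ``work'' is the eigenvector computation on $\mathbb{S}$ and the index bookkeeping in the block sum; there is no conceptual obstacle, since the whole structure of $\mathbb{B}_m(\mathbb{R})$ was built so that each block $C_{i,j}\in\mathbb{S}$ is diagonalized simultaneously by the change of basis from $\mathcal{B}_0$ to $\mathcal{C}_0$.
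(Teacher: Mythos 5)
Your proof is correct and follows essentially the same route as the paper: a direct computation showing that the subspaces spanned by $(u_1,\dots,u_m)$ and by $(\overline{u_1},\dots,\overline{u_m})$ are $B$-invariant and that $B$ acts by lower-triangular matrices with constant diagonal on each. Your factoring-out of the single-block eigenvalue computation $C(e_1-ie_2)=(\alpha-i\beta)(e_1-ie_2)$ is a clean organizational touch, and in fact gives the correct scalar $\gamma(C)=\alpha-i\beta$ where the paper's displayed formula introduces a spurious factor $\tfrac12$ (a typo that does not affect the lemma's conclusion).
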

\medskip

\begin{proof} Let $B\in\mathbb{B}_{m}(\mathbb{R})$. Then  $B$ has the form: $$B=\left[\begin{array}{cccc}
                               C  & \ & \ & 0 \\
                               C_{2,1} & \ddots & \ & \ \\
                               \vdots & \ddots & \ddots  & \  \\
                               C_{m,1} & \dots & C_{m,m-1} & C
                             \end{array}
\right]$$ where $C=\left[\begin{array}{cc}
                           \alpha & \beta \\
                           -\beta & \alpha
                         \end{array}
\right]$ and $C_{i,j}=\left[\begin{array}{cc}
                           \alpha_{i,j} & \beta_{i,j} \\
                           -\beta_{i,j} & \alpha_{i,j}
                         \end{array}
\right]$; $1\leq j<i\leq m $. For every $1\leq j \leq m$, we have $$\left\{\begin{array}{c}
                                               Be_{2j-1}=\alpha e_{2j-1}-\beta e_{2j} +\underset{k=j+1}{\overset{m}{\sum}}\left(\alpha_{k,j} e_{2k-1}-\beta_{k,j} e_{2k}\right) \\
                                               Be_{2j}=\beta e_{2j-1}+\alpha e_{2j} +\underset{k=j+1}{\overset{m}{\sum}}\left(\beta_{k,j} e_{2k-1}+\alpha_{k,j} e_{2k}\right)\ \ \ \
                                             \end{array}\right.$$
So \begin{align*}
Bu_{j} & =B\left(\frac{e_{2j-1}-ie_{2j}}{2}\right)\\
\ & =\frac{1}{2}\left((\alpha -i\beta)e_{2j-1}-(\beta +i\alpha) e_{2j}\right) +\frac{1}{2}\underset{k=j+1}{\overset{m}{\sum}}\left((\alpha_{k,j} -i\beta_{k,j})e_{2k-1}-(\beta_{k,j} +i\alpha_{k,j}) e_{2k}\right)\\
\ &  =\left(\frac{\alpha -i\beta}{2}\right)\left(\frac{e_{2j-1}-ie_{2j}}{2}\right) +\underset{k=j+1}{\overset{m}{\sum}}\left(\frac{\alpha_{k,j} -i\beta_{k,j}}{2}\right)\left(\frac{e_{2k-1}-ie_{2k}}{2}\right)
\end{align*}
\
\\
 Write   $\lambda=\frac{\alpha -i\beta}{2}$  and  $\lambda_{k,j}=\frac{\alpha_{k,j} -i\beta_{k,j}}{2}$.  It follows that for every  $1\leq j \leq m-1$,
$$Bu_{j}=\lambda u_{j} +\underset{k=j+1}{\overset{m}{\sum}}\lambda_{k,j}u_{k}$$ \ \ \ \ and
$$B\overline{u_{j}}=\overline{\lambda} \overline{u_{j}} +\underset{k=j+1}{\overset{m}{\sum}}\overline{\lambda_{k,j}}\overline{u_{k}}.$$

 In the basis \ $\mathcal{C}_{0}=(u_{1},\dots,u_{m};\ \overline{u_{1}},\dots,\overline{u_{m}})$ \ of \ $\mathbb{C}^{2m}$, \ $Q^{-1}BQ=\mathrm{diag}(B^{\prime}_{1},\
\overline{B^{\prime}_{1}} )$  where $B^{\prime}_{1}=\left[\begin{array}{cccc}
                                                            \lambda & \ & \ & 0 \\
                                                            \lambda_{2,1} & \ddots & \ & \ \\
                                                            \vdots & \ddots & \ddots & \ \\
                                                            \lambda_{m,1}& \dots & \lambda_{m,m-1} & \lambda
                                                          \end{array}\right]\in
\mathbb{T}_{m}(\mathbb{C})$.
 \end{proof}
\
\\
\begin{lem}\label{r:01} Let $G$ be an abelian subgroup of GL($2m, \mathbb{C})$ and let $\mathcal{C}_{0}:=(v_{1},\dots,v_{m};\ \overline{v_{1}},\dots,\overline{v_{m}})$
 be a basis of $\mathbb{C}^{2m}$. Then:
\begin{itemize}
  \item [(i)]  $\mathcal{C}: =(\textrm{Re}(v_{1}), \textrm{Im}(v_{1}),\dots,\textrm{Re}(v_{m}), \textrm{Im}(v_{m}))$ is a basis of $\mathbb{R}^{2m}$.
  \item [(ii)] Set $P_{0}$ $($resp. $P)$ be the matrix of basis change from $\mathcal{B}_{0}$ to
$\mathcal{C}_{0}$ $($resp. $\mathcal{C})$. If for every $B\in G$,
$P_{0}^{-1}BP_{0}= \mathrm{diag}(B_{1},\overline{B_{1}})$ with
$B_{1}\in\mathbb{T}_{m}(\mathbb{C})$ then $P^{-1}BP\in \mathbb{B}_{m}(\mathbb{R})$.
\end{itemize}
\end{lem}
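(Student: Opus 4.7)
The plan has two independent parts, one for each item.

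For (i), the idea is to verify that the $2m$ real vectors $\mathrm{Re}(v_j),\mathrm{Im}(v_j)$ are linearly independent over $\mathbb{R}$, which (by cardinality) forces them to be a basis of $\mathbb{R}^{2m}$. I would start from a real dependence relation $\sum_{j=1}^{m}\bigl(a_j\,\mathrm{Re}(v_j)+b_j\,\mathrm{Im}(v_j)\bigr)=0$, substitute the identities $\mathrm{Re}(v_j)=\tfrac{1}{2}(v_j+\overline{v_j})$ and $\mathrm{Im}(v_j)=\tfrac{1}{2i}(v_j-\overline{v_j})$, and rewrite the relation as a complex linear combination of $v_1,\dots,v_m,\overline{v_1},\dots,\overline{v_m}$. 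Because $\mathcal{C}_0$ is a $\mathbb{C}$-basis of $\mathbb{C}^{2m}$, the coefficients must vanish separately; solving the resulting $2\times 2$ system yields $a_j=b_j=0$ for every $j$.

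For (ii), the plan is essentially to reverse the computation performed in Lemma~\ref{L:1}. Writing $B_1$ with diagonal entry $\lambda$ and strictly lower entries $\lambda_{k,j}$, the hypothesis $P_0^{-1}BP_0=\mathrm{diag}(B_1,\overline{B_1})$ in the basis $\mathcal{C}_0$ translates into
$$
Bv_j=\lambda v_j+\sum_{k>j}\lambda_{k,j}v_k,\qquad
B\overline{v_j}=\overline{\lambda}\,\overline{v_j}+\sum_{k>j}\overline{\lambda_{k,j}}\,\overline{v_k}.
$$
Setting $\lambda=\alpha+i\beta$ and $\lambda_{k,j}=\alpha_{k,j}+i\beta_{k,j}$, I would then compute $B\,\mathrm{Re}(v_j)=\tfrac{1}{2}(Bv_j+B\overline{v_j})$ and $B\,\mathrm{Im}(v_j)=\tfrac{1}{2i}(Bv_j-B\overline{v_j})$, separate the $j$-th diagonal contribution from the $k>j$ terms, and observe that each diagonal block in the $\mathcal{C}$-coordinates has the form $\left[\begin{smallmatrix}\alpha&\beta\\-\beta&\alpha\end{smallmatrix}\right]$ and each strictly lower block has the form $\left[\begin{smallmatrix}\alpha_{k,j}&\beta_{k,j}\\-\beta_{k,j}&\alpha_{k,j}\end{smallmatrix}\right]$, both in $\mathbb{S}$. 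Since the $k<j$ blocks vanish (lower triangular structure of $B_1$ preserved), this is exactly the shape $(2)$ defining $\mathbb{B}_m(\mathbb{R})$.

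The argument is a direct, linear-algebraic computation and there is no conceptual obstacle; the only thing to watch out for is careful bookkeeping of the factors of $\tfrac{1}{2}$ and $\tfrac{1}{2i}$ and keeping the signs straight so that the anti-diagonal sign pattern of $\mathbb{S}$ emerges correctly. In particular, the diagonal $2\times 2$ block arises purely from the $\lambda v_j+\overline{\lambda}\,\overline{v_j}$ contribution while the off-diagonal $2\times 2$ blocks arise from the corresponding $\lambda_{k,j}v_k+\overline{\lambda_{k,j}}\,\overline{v_k}$ pieces, so the two computations are formally identical and can be handled in one pass.
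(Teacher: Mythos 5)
Your proposal is correct and follows the same route as the paper: for (i) you convert a real dependence into a $\mathbb{C}$-linear relation in the basis $\mathcal{C}_0$ and conclude each coefficient vanishes, and for (ii) you compute $B\,\mathrm{Re}(v_j)$ and $B\,\mathrm{Im}(v_j)$ from the triangular action of $B_1$ and read off the $\mathbb{S}$-shaped $2\times 2$ blocks. (Incidentally, the paper's printed proof has a small sign inconsistency — it sets $\lambda=\alpha-i\beta$ but $\lambda_{k,j}=\alpha_{k,j}+i\beta_{k,j}$ yet computes as if both carried a plus sign; your uniform convention $\lambda=\alpha+i\beta$ is the clean one.)
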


\begin{proof}(i) Let $\alpha_{1},\dots,\alpha_{m},\beta_{1},\dots,\beta_{m}\in\mathbb{R}$ such that
 $$\underset{k=1}{\overset{m}{\sum}}\alpha_{k} Re(v_{k})+\underset{k=1}{\overset{m}{\sum}}\beta_{k} \textrm{Im}(v_{k})=0.$$
So
\begin{align*}
 0 & \ =\underset{k=1}{\overset{m}{\sum}}\alpha_{k} \frac{v_{k}+\overline{v_{k}}}{2}-i\underset{k=1}{\overset{m}{\sum}}\beta_{k}\frac{v_{k}-\overline{v_{k}}}{2}\\
 \ & \ =\underset{k=1}{\overset{m}{\sum}}\frac{\alpha_{k}-i\beta_{k}}{2} v_{k}+\underset{k=1}{\overset{m}{\sum}}\frac{\alpha_{k}+i\beta_{k}}{2}\overline{v_{k}}
\end{align*}\
\\
Since $\mathcal{C}_{0}$ is a basis of $\mathbb{C}^{2m}$, $\alpha_{k}-i\beta_{k}=\alpha_{k}+i\beta_{k}=0$ for every $1\leq k\leq m$.
So $\alpha_{k}=\beta_{k}=0$, for every $1\leq k\leq m$, this proves that $\mathcal{C}$ is a basis of
$\mathbb{R}^{2m}$.
\
\\
\\
(ii) Write $B^{\prime}= P^{-1}BP =\textrm{diag}(B_{1},\overline{B_{1}})\in G^{\prime}$ where $$B_{1}=\left[\begin{array}{cccc}
                                                            \lambda & \ & \ & 0 \\
                                                            \lambda_{2,1} & \ddots & \ & \ \\
                                                            \vdots & \ddots & \ddots & \ \\
                                                            \lambda_{m,1}& \dots & \lambda_{m,m-1} & \lambda
                                                          \end{array}
\right]\in\mathbb{T}_{m}(\mathbb{C}).$$ It follows that for every  $j=1,\dots, m-1$, $$Bv_{j}=\lambda v_{j} +\underset{k=j+1}{\overset{m}{\sum}}\lambda_{k,j}v_{k} \ \ \ \ \mathrm{and} \ \ \  \ \
B\overline{v_{j}}=\overline{\lambda} \overline{v_{j}} +\underset{k=j+1}{\overset{m}{\sum}}\overline{\lambda_{k,j}}\overline{v_{k}}.$$
Write  $v_{k}=a_{k}+ib_{k}$,  $a_{k},b_{k}\in\mathbb{R}^{n}$, $\lambda=\alpha -i\beta$, $\lambda_{k,j}=\alpha_{k,j} +i\beta_{k,j}$
with $\alpha,\beta,\alpha_{k,j},\beta_{k,j}\in \mathbb{R}$.
So $a_{k}=\frac{v_{k}+\overline{v_{k}}}{2}$ and $b_{k}=\frac{v_{k}-\overline{v_{k}}}{2i}$. Then

\begin{align*}
Ba_{j} & =B\left(\frac{v_{j}+\overline{v_{j}}}{2}\right)\\
\ & = \frac{1}{2}\left(\lambda v_{j} +\underset{k=j+1}{\overset{m}{\sum}}\lambda_{k,j}v_{k}\right)+\frac{1}{2}\left(\overline{\lambda} \overline{v_{j}} +\underset{k=j+1}{\overset{m}{\sum}}\overline{\lambda_{k,j}}\overline{v_{k}}\right)\\
\ & = \frac{1}{2}\left((\lambda v_{j}+\overline{\lambda v_{j}}) +\underset{k=j+1}{\overset{m}{\sum}}(\lambda_{k,j}v_{k}+\overline{\lambda_{k,j}v_{k}})\right)\\
\ & =(\alpha a_{j}-\beta b_{j}) +\underset{k=j+1}{\overset{m}{\sum}}(\alpha_{k,j}a_{k} -\beta_{k,j}b_{k})\\
\end{align*}
and

\begin{align*}
Bb_{j} & = B\left(\frac{v_{j}-\overline{v_{j}}}{2i}\right)\\
\ & = \frac{1}{2i}\left(\lambda v_{j} +\underset{k=j+1}{\overset{m}{\sum}}\lambda_{k,j}v_{k}\right)-\frac{1}{2i}\left(\overline{\lambda} \overline{v_{j}} +\underset{k=j+1}{\overset{m}{\sum}}\overline{\lambda_{k,j}}\overline{v_{k}}\right)\\
\ & = \frac{1}{2i}\left((\lambda v_{j}-\overline{\lambda v_{j}}) +\underset{k=j+1}{\overset{m}{\sum}}(\lambda_{k,j}v_{k}-\overline{\lambda_{k,j}v_{k}})\right)\\
\ & =(\beta a_{j}+\alpha b_{j}) +\underset{k=j+1}{\overset{m}{\sum}}(\alpha_{k,j}b_{k} +\beta_{k,j}a_{k})\\
\end{align*}

Therefore   $$P^{-1}BP= \left[\begin{array}{cccc}
                               C  & \ & \ & 0 \\
                               C_{2,1} & \ddots & \ & \ \\
                               \vdots & \ddots & \ddots  & \  \\
                               C_{m,1} & \dots & C_{m,m-1} & C
                             \end{array}
\right]$$ where $C= \left[\begin{array}{cc}
                           \alpha & \beta \\
                           -\beta & \alpha
                         \end{array}
\right]$ and $C_{i,j}= \left[\begin{array}{cc}
                           \alpha_{i,j} & \beta_{i,j} \\
                           -\beta_{i,j} & \alpha_{i,j}
                         \end{array}
\right]$; $1\leq j<i\leq m$. Hence $P^{-1}BP\in\mathbb{B}_{m}(\mathbb{R})$.
\end{proof}
\bigskip

\begin{lem}\label{L:333+} Let $G$ be an abelian subgroup of
GL($2m,\mathbb{R}).$ Assume that every element of $G$ has two conjugates complex eigenvalues with one element $A\in G$
having two nonreal conjugates complex eigenvalues. Then there exists a $P\in \textrm{GL}(2m, \mathbb{R})$ such that
$P^{-1}GP\subset\mathbb{B}^{*}_{m_{1}}(\mathbb{R})\oplus\dots\oplus\mathbb{B}^{*}_{m_{s}}(\mathbb{R})$,
for some $1\leq s\leq m$ and $m_{1},\dots, m_{s}\in\mathbb{N}_{0}$ where $m_{1}+\dots +m_{s}=m$.
\end{lem}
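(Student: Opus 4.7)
My plan is to combine Lemma~\ref{L:01} (to obtain the real block decomposition), Lemma~\ref{L:33} applied in the complex setting, and Lemma~\ref{r:01} (to descend back to the real $\mathbb{B}$-form). First, I apply Lemma~\ref{L:01} to get a $G$-invariant direct sum $\mathbb{R}^{2m}=\bigoplus_{l=1}^{s}F_{l}$ with $\dim F_{l}=2m_{l}$ and $\sum_{l}m_{l}=m$. Under the hypothesis of the lemma, this decomposition has no $E_{k}$-summands, and the existence of $A\in G$ with a non-real conjugate pair forces $s\geq 1$ with at least one non-trivial $F_{l}$. It suffices to produce, for each $l$, a matrix $P_{l}\in\textrm{GL}(2m_{l},\mathbb{R})$ with $P_{l}^{-1}(G|_{F_{l}})P_{l}\subset\mathbb{B}_{m_{l}}^{\ast}(\mathbb{R})$; then $P=\textrm{diag}(P_{1},\ldots,P_{s})$ will do the job.

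Fix $l$ and complexify $F_{l}^{\mathbb{C}}=F_{l}\otimes_{\mathbb{R}}\mathbb{C}$. By the construction in the proof of Lemma~\ref{L:01}, some $A_{0}\in G$ has non-real conjugate eigenvalues $\mu,\overline{\mu}$ on $F_{l}$. Setting $V_{l}=\textrm{Ker}(A_{0}-\mu I)^{m_{l}}\subset F_{l}^{\mathbb{C}}$, one has $F_{l}^{\mathbb{C}}=V_{l}\oplus\overline{V_{l}}$, with $\overline{V_{l}}$ the literal complex-conjugate subspace (since $A_{0}$ is real). Each $B\in G$ commutes with $A_{0}$ and hence preserves $V_{l}$. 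The key claim is that $B|_{V_{l}}$ has a single eigenvalue: if it had several, the generalized $B$-eigenspace splitting of $V_{l}$, combined summand-by-summand with its complex conjugate inside $\overline{V_{l}}$, would yield a proper $G$-invariant \emph{real} decomposition of $F_{l}$, contradicting the maximality of Lemma~\ref{L:01}'s decomposition. Given this, Lemma~\ref{L:33} applied to $\{B|_{V_{l}}:B\in G\}\subset\textrm{GL}(m_{l},\mathbb{C})$ produces a basis $(w_{1},\ldots,w_{m_{l}})$ of $V_{l}$ in which every $B|_{V_{l}}$ lies in $\mathbb{T}_{m_{l}}^{\ast}(\mathbb{C})$.

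Taking conjugates, $\mathcal{C}_{0}^{(l)}=(w_{1},\ldots,w_{m_{l}},\overline{w_{1}},\ldots,\overline{w_{m_{l}}})$ is a basis of $F_{l}^{\mathbb{C}}$ in which each $B\in G$ acts as $\textrm{diag}(B_{1}^{(l)},\overline{B_{1}^{(l)}})$ with $B_{1}^{(l)}\in\mathbb{T}_{m_{l}}(\mathbb{C})$, the lower block being the complex conjugate of the upper by reality of $B$. Lemma~\ref{r:01} then produces the real basis $\mathcal{C}^{(l)}=(\textrm{Re}(w_{1}),\textrm{Im}(w_{1}),\ldots,\textrm{Re}(w_{m_{l}}),\textrm{Im}(w_{m_{l}}))$ of $F_{l}$ and a matrix $P_{l}\in\textrm{GL}(2m_{l},\mathbb{R})$ with $P_{l}^{-1}(G|_{F_{l}})P_{l}\subset\mathbb{B}_{m_{l}}(\mathbb{R})$; the invertibility of the elements of $G$ lands the image in $\mathbb{B}_{m_{l}}^{\ast}(\mathbb{R})$. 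Assembling $P=\textrm{diag}(P_{1},\ldots,P_{s})$ completes the proof.

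The main obstacle is justifying the unique-eigenvalue claim for $B|_{V_{l}}$, since this is exactly what unlocks Lemma~\ref{L:33} and hence Lemma~\ref{r:01}. The crux is the transfer of any hypothetical $G$-invariant complex splitting $V_{l}=V'\oplus V''$ back to a real refinement of $F_{l}$ via the conjugate-stable subspaces $(V'\oplus\overline{V'})\cap F_{l}$ and $(V''\oplus\overline{V''})\cap F_{l}$; this contradicts the maximality of Lemma~\ref{L:01}'s decomposition and settles the claim. Once this is in hand, the remaining content is routine gluing of complex-triangular data back to real $\mathbb{B}$-form via Lemmas~\ref{L:1} and~\ref{r:01}.
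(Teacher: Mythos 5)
Your proof is correct and follows essentially the same route as the paper's: split by the generalized eigenspace of an element with non-real conjugate eigenvalues, apply Lemma~\ref{L:33} over $\mathbb{C}$ to get $\mathbb{T}^{\ast}_{m_l}(\mathbb{C})$ form on each half, and descend to $\mathbb{B}^{\ast}_{m_l}(\mathbb{R})$ via Lemmas~\ref{L:1} and~\ref{r:01}. The only organizational difference is that you invoke the maximal real decomposition of Lemma~\ref{L:01} up front and derive the single-eigenvalue property from its maximality, whereas the paper performs the equivalent further splitting of $F=\mathrm{Ker}(A-\lambda I)^{m}$ internally as its "Case 2"; the content is the same.
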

\medskip

\begin{proof} We considered $G$ as an abelian subgroup of GL($2m, \mathbb{C})$.
Let $\lambda, \ \overline{\lambda}$ be two nonreal conjugates complex eigenvalues of $A$.
Let $F: = \textrm{Ker}(A-\lambda I_{2m})^{m}$ denote the generalized eigenspace of $A$ associated to
$\lambda$, so $\overline{F}: = \textrm{Ker}(A-\overline{\lambda} I_{2m})^{m}$ is the generalized eigenspace
 of $A$ associated to $\overline{\lambda}$ and we have
$\mathbb{C}^{2m}= F\oplus\overline{F}$. It is plain that $F$ and $\overline{F}$ are $G$-invariant. Let
$\mathcal{C}_{0}=(v_{1},\dots, v_{m})$ be a basis of $F$. So $\mathcal{C}: =
(v_{1},\dots, v_{m};\ \overline{v_{1}},\dots, \overline{v_{m}})$ is a basis of $\mathbb{C}^{2m}$. Let $R$ be the matrix of basis
change from  $\mathcal{B}_{0}$  to
$\mathcal{C}$. Then for every $B\in G$, $R^{-1}BR =  \mathrm{diag}(B^{\prime}_{1}, \overline{B^{\prime}_{1}})
\in \textrm{GL}(2m, \mathbb{C}),$
where $B^{\prime}_{1}\in \textrm{GL}(m, \mathbb{C})$.
 We distinguish two cases:
\medskip

- {\it Case 1:} for every $B\in G$, the restriction of $B$ to $F$ has only one eigenvalue.

Write $G^{\prime}_{1}=\{B^{\prime}_{1}: \ B\in G\}$. Then all element of $G^{\prime}_{1}$ has only one eigenvalue. By Lemma ~\ref{L:33},
 there exists a basis
$\mathcal{C}_{0}^{\prime}: =(v_{1}^{\prime},\dots,v_{m}^{\prime})$ of $F$ such that $R^{-1}_{1}G^{\prime}_{1}R_{1}\subset\mathbb{T}^{*}_{m}(\mathbb{C})$,  where $R_{1}$ is the matrix of basis change from
 $\mathcal{C}_{0}$  to  $\mathcal{C}_{0}^{\prime}$. Hence, if  $R^{\prime}= \textrm{diag}(R_{1}, \overline{R_{1}})$ then
$G^{\prime}=
(R^{\prime})^{-1}R^{-1}GRR^{\prime}\subset \mathbb{T}^{*}_{m}(\mathbb{C})\oplus \mathbb{T}^{*}_{m}(\mathbb{C})$. It follows by
Lemma ~\ref{L:1}, that $QG^{\prime}Q^{-1}\subset\mathbb{B}_{m}^{*}(\mathbb{R})$ and hence
$P_{0}^{-1}GP_{0}\subset\mathbb{B}_{m}^{*}(\mathbb{R})$ where $P_{0}= RR^{\prime}Q^{-1}\in \textrm{GL}(2m, \mathbb{C})$ is
 the matrix of basis change from
   $\mathcal{B}_{0}$ to $\mathcal{C}_{0}^{\prime}= (v_{1}^{\prime},\dots,v_{m}^{\prime}; \overline{v_{1}^{\prime}},\dots,
\overline{v_{m}^{\prime}})$. By Lemma ~\ref{r:01}, if $P$ is the matrix of basis change from  $\mathcal{B}_{0}$  to
\ $\mathcal{C}^{\prime}:= \left(\textrm{Re}(v_{1}^{\prime}), \textrm{Im}(v_{1}^{\prime}),\dots,\textrm{Re}(v_{m}^{\prime}),
\textrm{Im}(v_{m}^{\prime})\right)$ then $P\in \textrm{GL}(2m, \mathbb{R})$ and
$P^{-1}GP\subset \mathbb{B}^{*}_{m}(\mathbb{R})$.
\medskip

- {\it Case 2:} for every $B\in G$, the restriction of $B$ to $F$  has two distinct nonreal conjugates complex eigenvalues. Then $F$ can be decomposed further so that

$F= F_{1}\oplus\dots\oplus F_{s}$ is the
maximal decomposition associated to all $B\in G$ where the restriction of every element of $G$ to $F_{l}$ has only one eigenvalue. Write
 $\widetilde{F_{l}}=F_{l}\oplus\overline{F_{l}}$. Then we have
$\mathbb{C}^{2m}=
\widetilde{F_{1}}\oplus\dots \oplus\widetilde{F_{s}}$. Let
$\mathcal{C}_{l}: =(v_{l,1},\dots,v_{l,m_{l}};\overline{v_{l,1}},\dots,\overline{v_{l,m_{l}}})$ be a basis of $\widetilde{F_{l}}$. Then
$$\widetilde{\mathcal{C}_{l}}= (\textrm{Re}(v_{l,1}),\textrm{Im}(v_{l,1}),\dots,\textrm{Re}(v_{l,m_{l}}),\textrm{Im}(v_{l,m_{l}}))$$
is a real
basis of $\widetilde{F_{l}}$. Let $P_{l}\in \textrm{GL}(2m_{l}, \mathbb{R})$ denote the matrix of basis change from
$\mathcal{B}_{0,l}$  to $\widetilde{\mathcal{C}_{l}}$, where $\mathcal{B}_{0,l}$ is the canonical basis of $\mathbb{C}^{2m_{l}}$ and $m_{l}=\textrm{dim}(F_{l})$. Therefore by the case 1, for every $1\leq l\leq s$,
 $P_{l}^{-1}G_{/\widetilde{F_{l}}}P_{l}\subset \mathbb{B}^{*}_{m_{l}}(\mathbb{R})$.  Set
$\widetilde{\mathcal{C}}= (\widetilde{\mathcal{C}_{1}},\dots, \widetilde{\mathcal{C}_{s}})$ and let $\widetilde{Q}\in \textrm{GL}(2m, \mathbb{R})$ denote
 the matrix of basis change from
 $\mathcal{B}_{0}$ to $\mathcal{\widetilde{C}}$. It follows that
$$P=\widetilde{Q}\mathrm{diag}(P_{1},\dots, P_{s})\in \textrm{GL}(2m, \mathbb{R})$$

and

$$P^{-1}GP\subset \mathbb{B}^{*}_{m_{1}}(\mathbb{R})\oplus\dots\oplus\mathbb{B}^{*}_{m_{s}}(\mathbb{R}).$$ This completes the proof.
\end{proof}
\
\\
Combining Lemmas ~\ref{L:01}, ~\ref{L:33} and ~\ref{L:333+}, we obtain
\
\\
\begin{prop}\label{p:1} Let $G$ be an abelian subgroup of
GL($n,\mathbb{R})$. Then:
\begin{itemize}
  \item [(i)] $\mathbb{R}^{n} = \underset{k=1}{\overset{r}{\bigoplus}}E_{k}\oplus \underset{l=1}{\overset{s}{\bigoplus}}F_{l}$
for some $r, \ s$ $(0\leq r \leq n, \  0\leq s \leq \frac{n}{2})$, where $E_{k}$ $($resp. $F_{l}$) is a
$G$-invariant vector subspace of $\mathbb{R}^{n}$ of dimension
$n_{k}$ $($resp. $2m_{l})$, $1\leq k \leq r$ $($resp. $1\leq l\leq s)$.
  \item [(ii)] there exists a basis
$\mathcal{C} = (\mathcal{C}_{1},\dots,\mathcal{C}_{r};\mathcal{B}_{1},\dots,\mathcal{B}_{s})$ of
$\mathbb{R}^{n}$ where $\mathcal{C}_{k}$ $($resp. $\mathcal{B}_{l})$ is a basis of $E_{k}$ $($resp. $F_{l}$) such that if $P$
is the matrix of basis change from $\mathcal{B}_{0}$ to $\mathcal{C}$, we
have $P^{-1}GP$ is an abelian subgroup of
$\mathcal{K}_{\eta,r,s}^{*}(\mathbb{R})$, where $\eta =
 (n_{1},\dots,n_{r};\  m_{1},\dots,m_{s})$.
\end{itemize}
\end{prop}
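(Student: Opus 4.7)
The proposition is simply the combination of the three preceding lemmas, applied blockwise. The plan is first to use Lemma \ref{L:01} to cut $\mathbb{R}^n$ into $G$-invariant pieces on which the eigenvalue structure of every $A\in G$ is uniform, and then to normalize $G$ on each piece by Lemma \ref{L:33} or Lemma \ref{L:333+} according to whether the eigenvalues are real or non-real.

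Concretely, Lemma \ref{L:01} produces $\mathbb{R}^n=\bigoplus_{k=1}^{r}E_k\oplus\bigoplus_{l=1}^{q}F_l$ with $\dim E_k=n_k$ and $\dim F_l=2m_l$, such that for every $A\in G$, the restriction $A_{|E_k}$ has a unique real eigenvalue and $A_{|F_l}$ has a pair of conjugate complex eigenvalues. On each $E_k$ the restricted group $G_{|E_k}$ is abelian with the single-eigenvalue hypothesis of Lemma \ref{L:33}, so there is a basis $\mathcal{C}_k$ of $E_k$ in which $G_{|E_k}\subset\mathbb{T}_{n_k}^{*}(\mathbb{R})$ (invertibility is automatic from $G\subset\textrm{GL}(n,\mathbb{R})$). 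On each $F_l$ two cases occur: either every $A_{|F_l}$ has its conjugate pair real, in which case $F_l$ can be split further into one-dimensional (or unique-eigenvalue) $G$-invariant subspaces and reabsorbed into the $E_k$-list via Lemma \ref{L:33}; or some $A_{|F_l}$ has genuinely non-real eigenvalues, in which case Lemma \ref{L:333+} yields a further decomposition $F_l=F_{l,1}\oplus\cdots\oplus F_{l,s_l}$ and a real basis $\mathcal{B}_l$ in which $G_{|F_l}\subset\mathbb{B}_{m_{l,1}}^{*}(\mathbb{R})\oplus\cdots\oplus\mathbb{B}_{m_{l,s_l}}^{*}(\mathbb{R})$.

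After this relabeling one obtains the refined decomposition asserted in (i), with the final $s=\sum_l s_l$ and the correct integers $m_1,\dots,m_s$. Concatenating the bases into $\mathcal{C}=(\mathcal{C}_1,\dots,\mathcal{C}_r;\mathcal{B}_1,\dots,\mathcal{B}_s)$ and letting $P$ be the change-of-basis matrix from $\mathcal{B}_0$ to $\mathcal{C}$, the block-diagonal behavior of $G$ on the decomposition forces each $P^{-1}AP$ to be block-diagonal with the prescribed $\mathbb{T}_{n_k}^{*}(\mathbb{R})$-blocks and $\mathbb{B}_{m_l}^{*}(\mathbb{R})$-blocks, i.e.\ $P^{-1}GP\subset\mathcal{K}_{\eta,r,s}^{*}(\mathbb{R})$ with $\eta=(n_1,\dots,n_r;m_1,\dots,m_s)$, establishing (ii).

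There is no real obstacle here; the only delicate point is the bookkeeping in the second case of $F_l$, namely checking that a priori-complex summands which turn out to support only real eigenvalues are correctly moved into the triangular list so that the final partition $\eta$ really satisfies $n_1+\cdots+n_r+2(m_1+\cdots+m_s)=n$. Since abelianity and $G$-invariance are preserved under restriction to each $E_k$ and $F_{l,j}$, both lemmas apply verbatim on each summand, and no further argument is needed.
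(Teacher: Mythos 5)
Your proof is correct and follows essentially the same route the paper takes, namely chaining Lemma~\ref{L:01} (decomposition into $G$-invariant real and complex blocks) with Lemma~\ref{L:33} (triangularization of unique-real-eigenvalue blocks) and Lemma~\ref{L:333+} (real normal form on blocks carrying a nonreal conjugate pair), and the paper in fact gives no more detail than the phrase ``combining Lemmas \ref{L:01}, \ref{L:33}, and \ref{L:333+}.'' One small remark: your Case~1 on $F_l$ (all restrictions having a real collapsed pair) is actually vacuous, since each $F_l$ in the maximal decomposition of Lemma~\ref{L:01} arises from a generalized eigenspace of some $A\in G$ at a nonreal eigenvalue, and real matrices restricted to real invariant subspaces keep their conjugate pairs; so that branch never fires, but including it does no harm.
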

\medskip

\section{\bf Matrix exponential map}
The following results follow from basic properties of
the matrix exponential map. (The proofs of Lemmas ~\ref{L:2} and ~\ref{L:3}, Propositions ~\ref{p:2} and ~\ref{p:3}
are given in Section 9).
\medskip

\begin{lem}\label{L:2}
Let \ $B\in M_{n}(\mathbb{R})$  having one eigenvalue  $\mu$.
Then:
\begin{enumerate}
    \item [(i)] $\textrm{Ker}(B-\mu I_{n }) = \textrm{Ker}(e^{B}-e^{\mu}I_{n}).$
    \item [(ii)] if   $e^{B}\in \mathbb{T}_{n}^{+}(\mathbb{R})$  then
$B\in \mathbb{T}_{n}(\mathbb{R})$.
\end{enumerate}
 \end{lem}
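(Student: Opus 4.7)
The plan is to reduce both parts to properties of the nilpotent matrix $N := B - \mu I_n$. Since $B$ has $\mu$ as its only eigenvalue, $N$ is nilpotent, and because $\mu I_n$ commutes with $N$ we have $e^B = e^{\mu}\,e^{N}$, so
\[
e^{B} - e^{\mu} I_n \;=\; e^{\mu}\bigl(e^{N} - I_n\bigr).
\]
This reduces (i) to comparing $\textrm{Ker}(N)$ with $\textrm{Ker}(e^{N}-I_n)$, and reduces (ii) to showing that if $e^{N}$ is unipotent lower triangular then $N$ is strictly lower triangular.

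For (i), I would factor $e^{N} - I_n = N\cdot M$, where
\[
M \;=\; \sum_{k=0}^{\infty}\frac{N^{k}}{(k+1)!} \;=\; I_n + \tfrac{1}{2!}N + \tfrac{1}{3!}N^2 + \cdots
\]
is a polynomial in $N$ (the sum terminates by nilpotency). Since $N$ is nilpotent, $M = I_n + (\text{nilpotent})$ is invertible, and $M$ commutes with $N$. Hence $\textrm{Ker}(e^{N}-I_n) = \textrm{Ker}(NM) = M^{-1}\textrm{Ker}(N) = \textrm{Ker}(N)$, giving $\textrm{Ker}(e^{B}-e^{\mu}I_n) = \textrm{Ker}(B-\mu I_n)$.

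For (ii), assume $e^{B}\in\mathbb{T}_n^{+}(\mathbb{R})$. Since $e^{B}$ has the form (1) with positive diagonal and its only eigenvalue must be $e^{\mu}$, the common diagonal entry is $e^{\mu}>0$. Therefore $e^{N} = e^{-\mu}e^{B}$ is lower triangular with all diagonal entries equal to $1$, so $U := e^{N} - I_n$ is strictly lower triangular, hence nilpotent. I would now invoke the formal identity $\log(\exp X) = X$: applied to the nilpotent matrix $N$, it gives
\[
N \;=\; \log(I_n + U) \;=\; \sum_{k=1}^{\infty}\frac{(-1)^{k-1}}{k}\,U^{k},
\]
a \emph{finite} sum because $U$ is nilpotent. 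Each $U^{k}$ is strictly lower triangular, so $N$ is strictly lower triangular, and hence $B = \mu I_n + N \in \mathbb{T}_n(\mathbb{R})$.

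The only genuinely non-trivial point is justifying the identity $\log(e^{N}) = N$ for nilpotent $N$: both $\exp$ and $\log$ reduce to polynomials in $N$ (respectively in $U$, itself a polynomial in $N$), so the identity follows from the formal power series identity $\log(\exp X) = X$ evaluated on a commutative polynomial ring in one variable and then truncated modulo a power of $N$; no convergence issue arises. Everything else is routine manipulation with commuting polynomials in $N$.
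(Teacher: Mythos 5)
Your proof is correct, but it takes a genuinely different route from the paper's on both parts. For (i), you factor $e^{N}-I_n = N\,M$ with $M=\sum_{k\geq 0} N^{k}/(k+1)!$, observe that $M$ is an invertible polynomial in the nilpotent $N$, and deduce $\textrm{Ker}(NM)=\textrm{Ker}(N)$ directly (cleanest phrased as $NM=MN$ and $M$ invertible). The paper instead writes $e^{-t\mu}(e^{tB}-e^{t\mu}I_n)=e^{tN}-I_n$ as a polynomial in $t$ of degree at most $n$, uses that a polynomial vanishing on all of $\mathbb{N}$ is identically zero to get $\textrm{Ker}(e^{B}-e^{\mu}I_n)\subset\textrm{Ker}(e^{tB}-e^{t\mu}I_n)$ for all $t$, and then differentiates at $t=0$; your argument avoids both the interpolation step and the derivative. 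For (ii), you show $U:=e^{N}-I_n$ is strictly lower triangular and recover $N=\log(I_n+U)$ via the formal identity $\log\circ\exp=\mathrm{id}$, truncated by nilpotency; $N$ is then a finite $\mathbb{R}$-linear combination of strictly lower triangular $U^{k}$, hence strictly lower triangular. The paper instead proceeds by induction on $n$: from (i), $e_n\in\textrm{Ker}(B-\mu I_n)$, so $B$ is block lower triangular with $\mu$ in the corner, and the $(n-1)\times(n-1)$ block is handled by the induction hypothesis. Your version is shorter, does not need (i) as input to (ii), and is more algebraic; the paper's is more elementary in that it relies only on differentiating a matrix-valued function and on induction, avoiding any appeal to the composition identity for formal power series. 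Both are valid; yours reads more cleanly, provided the reader is comfortable with the (entirely standard) justification you give for $\log(e^{N})=N$ when $N$ is nilpotent.
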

\smallskip

\begin{prop}\label{p:2}  $\textrm{exp}(\mathcal{K}_{\eta,r,s}(\mathbb{R})) = \mathcal{K}^{+}_{\eta,r,s}(\mathbb{R})$.
\end{prop}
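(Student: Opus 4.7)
The plan is to reduce the identity to two single-block cases and treat each separately. Since both $\mathcal{K}_{\eta,r,s}(\mathbb{R})$ and $\mathcal{K}^{+}_{\eta,r,s}(\mathbb{R})$ are direct sums with respect to the same block partition of $n$, and the matrix exponential of a block-diagonal matrix is the block-diagonal of the exponentials of the blocks, the proposition reduces to the two identities $\textrm{exp}(\mathbb{T}_{n}(\mathbb{R})) = \mathbb{T}^{+}_{n}(\mathbb{R})$ and $\textrm{exp}(\mathbb{B}_{m}(\mathbb{R})) = \mathbb{B}^{*}_{m}(\mathbb{R})$.

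For the triangular identity I would proceed by direct computation. Given $B \in \mathbb{T}_{n}(\mathbb{R})$ with diagonal entry $\mu$, write $B = \mu I_{n} + N$ with $N$ strictly lower triangular (hence $N^{n}=0$) and commuting with $\mu I_{n}$; then $e^{B} = e^{\mu}\sum_{k=0}^{n-1} N^{k}/k!$ is lower triangular with each diagonal entry equal to $e^{\mu}>0$, so $e^{B}\in\mathbb{T}^{+}_{n}(\mathbb{R})$. Conversely, for $A \in \mathbb{T}^{+}_{n}(\mathbb{R})$ with diagonal entry $\mu>0$, I would write $A = \mu(I_{n}+M)$ with $M$ strictly lower triangular and set $B := (\log\mu)I_{n} + \sum_{k=1}^{n-1}(-1)^{k+1}M^{k}/k$; this is a finite sum lying in $\mathbb{T}_{n}(\mathbb{R})$, and $e^{B}=A$ follows from the formal identity $\exp\circ\log=\textrm{id}$, valid here since both series terminate in view of the nilpotence of $M$. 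The same argument over $\mathbb{C}$ yields the complex analogue $\textrm{exp}(\mathbb{T}_{m}(\mathbb{C})) = \mathbb{T}^{*}_{m}(\mathbb{C})$, now using that $\log$ is defined on all of $\mathbb{C}^{*}$ via a branch choice.

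For the block identity I would use the complexification machinery of Section 2. By Lemma~\ref{L:1}, conjugation by $Q$ sends $B\in\mathbb{B}_{m}(\mathbb{R})$ to $\mathrm{diag}(B_{1},\overline{B_{1}})$ with $B_{1}\in\mathbb{T}_{m}(\mathbb{C})$; since exponentiation commutes with conjugation and $e^{\overline{B_{1}}}=\overline{e^{B_{1}}}$, we get $Q^{-1}e^{B}Q = \mathrm{diag}(e^{B_{1}}, \overline{e^{B_{1}}})$ with $e^{B_{1}}\in\mathbb{T}^{*}_{m}(\mathbb{C})$ by the complex analogue, and Lemma~\ref{r:01}(ii) then identifies $e^{B}$ with an element of $\mathbb{B}^{*}_{m}(\mathbb{R})$. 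Conversely, given $A\in\mathbb{B}^{*}_{m}(\mathbb{R})$, I would take $A_{1}\in\mathbb{T}^{*}_{m}(\mathbb{C})$ from Lemma~\ref{L:1}, choose a complex logarithm $B_{1}$ of $A_{1}$, and assemble the corresponding $B\in\mathbb{B}_{m}(\mathbb{R})$ matching $\mathrm{diag}(B_{1},\overline{B_{1}})$ via Lemma~\ref{r:01}(ii); then $e^{B}=A$.

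The main obstacle I anticipate is the bookkeeping in the $\mathbb{B}_{m}$ case, since Lemma~\ref{r:01}(ii) only asserts $P^{-1}XP\in\mathbb{B}_{m}(\mathbb{R})$ rather than $X\in\mathbb{B}_{m}(\mathbb{R})$, and the matrices $Q$ of Lemma~\ref{L:1} and $P$ of Lemma~\ref{r:01} differ by a real diagonal factor with entries $\pm 1/2$. A short computation shows that conjugation by this diagonal factor preserves $\mathbb{B}_{m}(\mathbb{R})$ (changing only the sign of the $\beta$-entries in each $\mathbb{S}$-block), so the bijection between $\mathbb{B}_{m}(\mathbb{R})$ and matrices of the form $\mathrm{diag}(B_{1},\overline{B_{1}})$ is set up exactly as needed.
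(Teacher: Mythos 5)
Your proposal is correct and, after the same reduction to single blocks, diverges from the paper's proof exactly at the triangular case $\exp(\mathbb{T}_n(\mathbb{R}))=\mathbb{T}_n^+(\mathbb{R})$. The paper constructs a preimage by passing through Jordan canonical form: it forms a Jordan-like matrix $J'$ with eigenvalue $\mu=\log\lambda$ in place of $\lambda$, shows $e^{J'}$ has the same Jordan form as $A$, conjugates back to get some $B$ with $e^B=A$, and only then invokes Lemma~\ref{L:2}(ii) to conclude that this $B$ (which a priori need not be triangular) actually lies in $\mathbb{T}_n(\mathbb{R})$. You instead write $A=\mu(I_n+M)$ with $M$ strictly lower triangular and set $B=(\log\mu)I_n+\sum_{k=1}^{n-1}(-1)^{k+1}M^k/k$, a terminating series that is visibly in $\mathbb{T}_n(\mathbb{R})$ and satisfies $e^B=A$ by the formal identity $\exp\circ\log=\mathrm{id}$ applied to a nilpotent argument. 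This route is shorter and more elementary, and it makes Lemma~\ref{L:2}(ii) unnecessary for this proposition. For the $\mathbb{B}_m(\mathbb{R})$ block both proofs run the same complexification through Lemma~\ref{L:1} and the complex identity $\exp(\mathbb{T}_m(\mathbb{C}))=\mathbb{T}_m^*(\mathbb{C})$; your bookkeeping concerning the discrepancy between the $Q$ of Lemma~\ref{L:1} and the $P$ of Lemma~\ref{r:01}, together with the observation that conjugation by the resulting $\pm\tfrac12$ diagonal factor preserves $\mathbb{B}_m(\mathbb{R})$, fills in a step the paper glosses over when it simply asserts that $B=Q\,\mathrm{diag}(B_1,\overline{B_1})\,Q^{-1}$ lies in $\mathbb{B}_m(\mathbb{R})$.
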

\smallskip

\begin{prop}\label{p:3} Let   $A, \ B\in \mathcal{K}_{\eta,r,s}(\mathbb{R})$. If  $e^{A}e^{B} =
e^{B}e^{A}$ then   $AB = BA$.
\end{prop}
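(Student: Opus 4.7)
The plan is to decompose each element of $\mathcal{K}_{\eta,r,s}(\mathbb{R})$, block by block, into a central ``scalar'' summand and a nilpotent summand, and then invoke the fact that $\exp$ is inverted on nilpotent matrices by the (terminating) $\log$ series, so that the nilpotent summand is a polynomial in its own exponential. Every matrix in $\mathcal{K}_{\eta,r,s}(\mathbb{R})$ is block-diagonal with blocks in the factors $\mathbb{T}_{n_k}(\mathbb{R})$ and $\mathbb{B}_{m_l}(\mathbb{R})$, and matrix multiplication, exponentiation, and the commutator all respect this block structure. Consequently both $[e^A,e^B]=0$ and $[A,B]=0$ decouple block-by-block, and it is enough to handle the cases $A,B\in \mathbb{T}_n(\mathbb{R})$ and $A,B\in \mathbb{B}_m(\mathbb{R})$ separately.

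Within a single block, write $A=D_A+N_A$, where $D_A$ is the ``scalar'' part and $N_A$ is (block-)strictly lower triangular: for $A\in \mathbb{T}_n(\mathbb{R})$ set $D_A=\mu_A I_n$ (with $\mu_A$ the common eigenvalue), while for $A\in \mathbb{B}_m(\mathbb{R})$ set $D_A=I_m\otimes C_A$, with $C_A\in \mathbb{S}$ the common $2\times 2$ diagonal block of $A$. In both cases $D_A$ lies in the center of the ambient block algebra: for $\mathbb{T}_n$ this is trivial, while for $\mathbb{B}_m$ it uses that $\mathbb{S}$ is a commutative subalgebra of $M_2(\mathbb{R})$ (isomorphic as an $\mathbb{R}$-algebra to $\mathbb{C}$). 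Moreover $N_A$ is nilpotent, $D_A$ and $N_A$ commute, and similarly for $B$, so
\[
e^A=e^{D_A}\,e^{N_A},\qquad e^B=e^{D_B}\,e^{N_B},
\]
with $e^{D_A},e^{D_B}$ central, invertible and commuting. Cancelling the central factor $e^{D_A}e^{D_B}=e^{D_B}e^{D_A}$ in the hypothesis $e^Ae^B=e^Be^A$ reduces the problem to proving
\[
e^{N_A}e^{N_B}=e^{N_B}e^{N_A}\ \Longrightarrow\ N_AN_B=N_BN_A.
\]

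For this implication, since $N_A$ is nilpotent, so is $X_A:=e^{N_A}-I$, and the series
\[
\log(I+X_A)\;=\;\sum_{k\ge 1}\frac{(-1)^{k+1}}{k}\,X_A^k
\]
is a \emph{finite} sum equal to $N_A$. Hence $N_A$ is a polynomial (with rational coefficients) in $e^{N_A}$, and similarly $N_B$ in $e^{N_B}$. The commutativity of $e^{N_A}$ and $e^{N_B}$ therefore forces $[N_A,N_B]=0$, and centrality of $D_A,D_B$ gives
\[
[A,B]\;=\;[D_A+N_A,\,D_B+N_B]\;=\;[N_A,N_B]\;=\;0.
\]

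The only delicate point is the centrality of $D_A=I_m\otimes C_A$ inside $\mathbb{B}_m(\mathbb{R})$, which is the place where the specific ``complex-like'' structure of $\mathbb{S}$ is essential; once that is settled, the rest of the argument is formal, resting only on the standard fact that $\exp$ is bijective on nilpotent matrices with inverse given by the terminating $\log$ series.
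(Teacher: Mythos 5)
Your proof is correct, but it follows a genuinely different route from the paper's. The paper reduces each $\mathbb{B}_{m_l}$-block to the complex triangular case via the conjugation $Q^{-1}BQ=\mathrm{diag}(B'_1,\overline{B'_1})$ of Lemma 2.3, and then proves the key implication for $\mathbb{T}_n(\mathbb{K})$ by an interpolation-and-differentiation argument: from $e^Ae^B=e^Be^A$ it first gets $e^{mA}e^B=e^Be^{mA}$ for all integers $m$, observes that the relevant matrix entries (after stripping the scalar eigenvalue factors) are polynomials in $t$, concludes $e^{tA}e^{tB}=e^{tB}e^{tA}$ for all real $t$, and then differentiates $\Phi(t)=e^{tA}e^{tB}e^{-tA}e^{-tB}$ twice at $t=0$ to extract $AB=BA$. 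You instead stay entirely inside each real block algebra: you split $A=D_A+N_A$ with $D_A$ central (which for $\mathbb{B}_m(\mathbb{R})$ hinges, as you correctly flag, on the commutativity of $\mathbb{S}\cong\mathbb{C}$, so that $\mathrm{diag}(C_A,\dots,C_A)$ commutes with every matrix whose blocks lie in $\mathbb{S}$), cancel the central invertible factors $e^{D_A}e^{D_B}$, and then recover $N_A$ as the terminating $\log$ of the unipotent matrix $e^{N_A}$, i.e.\ as a polynomial in $e^{N_A}$, so that commutativity of the exponentials forces $[N_A,N_B]=0$. Your argument is purely algebraic, avoids complexifying the $\mathbb{B}_m$ blocks, and is arguably shorter and more self-contained; the paper's approach, by contrast, reuses machinery it has already set up (Lemma 2.3) and its intermediate step $e^{tA}e^{tB}=e^{tB}e^{tA}$ for all $t\in\mathbb{R}$ is a slightly stronger by-product, at the cost of an analytic detour. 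Both arguments ultimately rest on the same structural fact, namely that within each block the matrix is a central part plus a nilpotent part.
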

\smallskip

\begin{lem} \label{L:3} Let  $M\in \mathbb{T}_{n}(\mathbb{C})$  be nilpotent such
that  $e^{M} = I_{n}$. Then  $M = 0$.
\end{lem}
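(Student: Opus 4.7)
The plan is to reduce to the case of a strictly lower triangular $M$ and then track a single nonzero ``diagonal'' of $M$ through the exponential series. Since $M\in\mathbb{T}_n(\mathbb{C})$ is lower triangular with constant diagonal $\mu$, and since $M$ is nilpotent, its only eigenvalue is $0$; hence $\mu=0$ and $M$ is strictly lower triangular. In particular $M^n=0$, so
$$e^M = I_n + M + \frac{M^2}{2!} + \cdots + \frac{M^{n-1}}{(n-1)!}$$
is a finite sum.

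The key lemma is a subdiagonal-degree inequality. For a strictly lower triangular matrix $N$, let $d(N)\in\{1,2,\dots,\infty\}$ denote the smallest index $j\geq 1$ such that some entry on the $j$-th subdiagonal (i.e.\ some $N_{i+j,i}$) is nonzero, with the convention $d(0)=\infty$. A direct calculation from $(N_1N_2)_{i+j,i} = \sum_k (N_1)_{i+j,k}(N_2)_{k,i}$ shows that $d(N_1N_2)\geq d(N_1)+d(N_2)$ for strictly lower triangular $N_1,N_2$, since a nonzero summand requires both $k-i\geq d(N_2)$ and $(i+j)-k\geq d(N_1)$. Iterating gives $d(M^k)\geq k\,d(M)$.

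Now assume for contradiction that $M\neq 0$, and set $j:=d(M)\geq 1$. For each $k\geq 2$, $d(M^k)\geq kj\geq 2j > j$, so the $j$-th subdiagonal of $M^k$ vanishes. Hence the $j$-th subdiagonal of $e^M$ coincides with the $j$-th subdiagonal of $M$, which by definition of $j$ contains a nonzero entry. But $e^M=I_n$ has all off-diagonal entries zero, a contradiction. Therefore $M=0$.

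There is no serious obstacle here; the only technical point is the multiplicativity statement $d(N_1N_2)\geq d(N_1)+d(N_2)$, which is immediate from the product formula. An equivalent route, if preferred, is a short induction on $n$: writing $M$ as a $2\times 2$ block with upper-left $(n-1)\times(n-1)$ strictly lower triangular block $M_1$ and last row $w^T$, one computes that $e^M$ has upper-left block $e^{M_1}$ and last row $w^T\bigl(\sum_{k\geq 1}M_1^{k-1}/k!\bigr)$; by induction $M_1=0$, and the last row of $e^M=I_n$ then forces $w=0$.
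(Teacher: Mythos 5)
Your proof is correct, and it takes a genuinely different route from the paper's. The paper observes that since $e^{mM}=(e^M)^m=I_n$ for all $m\in\mathbb{N}$, each entry function $P_{i,j}(t)=\langle(e^{tM}-I_n)e_i,e_j\rangle$ is a polynomial vanishing at infinitely many integers, hence identically zero; then $e^{tM}=I_n$ for all real $t$, and differentiating at $t=0$ gives $M=0$. This is a mildly analytic argument (polynomials plus a derivative) that reuses the same polynomial trick appearing in the paper's proofs of Lemma 3.1 and Proposition 3.3, so it fits the local toolkit. Your argument is purely combinatorial: after noting $\mu=0$ so $M$ is strictly lower triangular, you track the lowest nonzero subdiagonal via $d(N_1N_2)\geq d(N_1)+d(N_2)$ and conclude immediately that the $d(M)$-th subdiagonal of $e^M$ equals that of $M$, which must therefore vanish. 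This is shorter, avoids any limiting process, and isolates the exact structural reason the result is true; the paper's route, on the other hand, proves the slightly stronger intermediate fact $e^{tM}\equiv I_n$ and stays stylistically uniform with the neighboring proofs. Both are complete; your secondary induction sketch (peeling off the last row) is also a valid variant.
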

\smallskip

\begin{lem}[\cite{aAhM06}, Proposition 2.4]\label{L:4} Let $A, \ B\in \mathbb{T}_{n}(\mathbb{C})$ such that
$AB=BA$. If  $e^{A} = e^{B}$  then  $A = B +2ik\pi I_{n}$  for
some \ $k\in \mathbb{Z}$.
\end{lem}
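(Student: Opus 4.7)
The plan is to decompose each of $A$ and $B$ into its scalar part and its nilpotent part, and then to reduce the statement to the nilpotent case treated in Lemma \ref{L:3}.

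First, since $A, B \in \mathbb{T}_n(\mathbb{C})$, each has a single eigenvalue on the diagonal. I would write $A = \mu I_n + N_A$ and $B = \nu I_n + N_B$, where $\mu,\nu\in\mathbb{C}$ and $N_A,N_B$ are strictly lower triangular (hence nilpotent). The scalar matrices $\mu I_n$ and $\nu I_n$ commute with everything, so $e^A = e^{\mu} e^{N_A}$ and $e^B = e^{\nu} e^{N_B}$. Comparing the diagonal entries of $e^A = e^B$ (noting that $e^{N_A}, e^{N_B}$ are lower triangular with all $1$'s on the diagonal) yields $e^\mu = e^\nu$, hence $\mu - \nu = 2ik\pi$ for some $k\in\mathbb{Z}$.

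Next, dividing out the scalar factor gives $e^{N_A} = e^{N_B}$. From the hypothesis $AB = BA$, and since $I_n$ is central, I obtain $N_AN_B = N_BN_A$. Consequently, $e^{N_A}$ and $e^{-N_B}$ commute, and I can combine them:
\[
e^{N_A - N_B} \;=\; e^{N_A}\,e^{-N_B} \;=\; I_n.
\]
Now $N_A - N_B$ is strictly lower triangular, so it is nilpotent and lies in $\mathbb{T}_n(\mathbb{C})$. Applying Lemma \ref{L:3} to $M := N_A - N_B$, I conclude $N_A = N_B$.

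Putting the pieces together, $A - B = (\mu - \nu)I_n = 2ik\pi I_n$, which is the desired identity. The only mildly delicate point is the commutativity step: one must invoke $[A,B]=0$ to ensure that the nilpotent parts commute so that $e^{N_A}e^{-N_B}$ collapses to $e^{N_A-N_B}$; without this, one cannot apply Lemma \ref{L:3} directly. Everything else is bookkeeping with the scalar--nilpotent decomposition and the standard identity $e^{X+Y}=e^Xe^Y$ when $[X,Y]=0$.
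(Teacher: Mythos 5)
Your proof is correct and mirrors exactly the argument the paper uses for the closely analogous real statement, Proposition~\ref{p:4}(i): write each matrix as its scalar plus nilpotent part, match eigenvalues via $e^{\mu_A}=e^{\mu_B}$, use commutativity to collapse $e^{\widetilde{A}}e^{-\widetilde{B}}=e^{\widetilde{A}-\widetilde{B}}=I_n$, and invoke Lemma~\ref{L:3}. The paper itself does not reprove Lemma~\ref{L:4} (it cites \cite{aAhM06}), but your argument is precisely the expected one and is sound.
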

\medskip

\begin{prop}\label{p:4}\
\begin{itemize}
  \item [(i)] Let $A, \ B\in \mathbb{T}_{n}(\mathbb{R})$ such that
$AB=BA$. If  $e^{A} = e^{B}$ then  $A = B$.
  \item [(ii)] Let  $A$,  $B\in \mathbb{B}_{m}(\mathbb{R})$  such that
$AB = BA$. If  $e^{A} = e^{B}$  then $A = B + 2k\pi J_{m}$, for some $k\in
\mathbb{Z}$  where
 $$J_{m} :=\mathrm{diag}(J_{2},\dots,J_{2})\in \textrm{GL}(2m, \ \mathbb{R})
 \ \ \mathrm{and} \ \ J_{2}=
  \left[\begin{array}{cc}
    0 & -1 \\
    1 & 0 \\
  \end{array}\right].$$
\end{itemize}
\end{prop}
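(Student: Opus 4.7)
The strategy is to reduce both parts to the complex triangular case already handled by Lemma~\ref{L:4}, by observing that $\mathbb{T}_n(\mathbb{R}) \subset \mathbb{T}_n(\mathbb{C})$ in part (i), and using the explicit basis change of Lemma~\ref{L:1} in part (ii).

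For part (i), since $A,B\in\mathbb{T}_n(\mathbb{R})\subset\mathbb{T}_n(\mathbb{C})$ commute and $e^A=e^B$, Lemma~\ref{L:4} applied in $\mathbb{T}_n(\mathbb{C})$ yields $A = B + 2ik\pi I_n$ for some $k\in\mathbb{Z}$. Because $A-B$ has real entries, the scalar $2ik\pi$ must be real, forcing $k=0$ and hence $A=B$. This is essentially immediate.

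For part (ii), I would use the matrix $Q\in\textrm{GL}(2m,\mathbb{C})$ of basis change from $\mathcal{B}_0$ to $\mathcal{C}_0=(u_1,\dots,u_m,\overline{u_1},\dots,\overline{u_m})$ introduced before Lemma~\ref{L:1}. By Lemma~\ref{L:1}, $Q^{-1}AQ=\mathrm{diag}(A_1,\overline{A_1})$ and $Q^{-1}BQ=\mathrm{diag}(B_1,\overline{B_1})$ with $A_1,B_1\in\mathbb{T}_m(\mathbb{C})$. The hypotheses $AB=BA$ and $e^A=e^B$ transfer to $A_1B_1=B_1A_1$ and $e^{A_1}=e^{B_1}$ (and their conjugates, automatically). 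Applying Lemma~\ref{L:4} to the pair $(A_1,B_1)\in\mathbb{T}_m(\mathbb{C})$ produces an integer $k\in\mathbb{Z}$ with $A_1=B_1+2ik\pi I_m$; taking complex conjugates gives the consistent relation $\overline{A_1}=\overline{B_1}-2ik\pi I_m$. Therefore
$$
A-B \;=\; Q\,\mathrm{diag}\bigl(2ik\pi I_m,\,-2ik\pi I_m\bigr)\,Q^{-1}.
$$

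The remaining and main computational point is to show that the right-hand side equals $2k\pi J_m$ in the canonical basis $\mathcal{B}_0$. I would verify this by acting on the basis vectors $e_{2j-1}=u_j+\overline{u_j}$ and $e_{2j}=i(u_j-\overline{u_j})$: denoting the operator on the right by $T$, a direct calculation gives $Te_{2j-1}=2k\pi e_{2j}$ and $Te_{2j}=-2k\pi e_{2j-1}$, so the $j$-th diagonal $2\times 2$ block of $A-B$ is exactly $2k\pi J_2$, while all other blocks vanish. This yields $A-B=2k\pi J_m$, as required. The only step that involves any genuine work is this last coordinate computation; everything else is a direct application of the earlier lemmas.
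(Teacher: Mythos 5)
Your proof is correct, and part (ii) follows exactly the paper's route: conjugate by $Q$ using Lemma~\ref{L:1}, apply Lemma~\ref{L:4} to the complex triangular block $A_1'$, $B_1'$, and transport back. The paper asserts the identity $Q(iL_m)Q^{-1}=\mathrm{diag}(J_2,\dots,J_2)$ without verification, so your explicit check on the basis vectors $e_{2j-1}=u_j+\overline{u_j}$ and $e_{2j}=i(u_j-\overline{u_j})$ is a useful addition and is computed correctly. For part (i) you take a slightly different path from the paper: the paper subtracts the (necessarily equal) eigenvalues to reduce to nilpotent $\widetilde{A},\widetilde{B}$, notes $e^{\widetilde{A}-\widetilde{B}}=I_n$, and invokes Lemma~\ref{L:3}; you instead embed $\mathbb{T}_n(\mathbb{R})\subset\mathbb{T}_n(\mathbb{C})$, apply Lemma~\ref{L:4} directly to obtain $A=B+2ik\pi I_n$, and conclude $k=0$ from the reality of $A-B$. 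Both are sound applications of the Section~3 machinery; yours trades Lemma~\ref{L:3} for Lemma~\ref{L:4} and is marginally shorter, while the paper's argument keeps part~(i) self-contained over $\mathbb{R}$.
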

\medskip

\begin{proof}(i) Let $\mu_{A}$ (resp.
$\mu_{B}$)  be the only eigenvalue of $A$ (resp. $B$). Write
$\widetilde{A}=A-\mu_{A}I_{n}$ and $\widetilde{B}=B-\mu_{B}I_{n}$. Then $\widetilde{A}$ and $\widetilde{B}$
are nilpotent matrices. Since  $e^{A} = e^{B}$, we have  $e^{\mu_{A}} =
e^{\mu_{B}}$ and then $\mu_{A}= \mu_{B}$. It follows that
$e^{\widetilde{A}} = e^{\widetilde{B}}$. Since  $AB = BA$, we have
$\widetilde{A}\widetilde{B} = \widetilde{B}\widetilde{A}$  and
therefore  $e^{\widetilde{A} -
\widetilde{B}}=e^{\widetilde{A}}e^{-\widetilde{B}} = I_{n}$. By
Lemma ~\ref{L:3},  $\widetilde{A} = \widetilde{B}$  and therefore
$A = B$.
\
\\
(ii) Let  $Q\in \textrm{GL}(2m, \mathbb{C})$  be the matrix of basis change from
$\mathcal{B}_{0}$  to  $\mathcal{C}_{0}$. By Lemma ~\ref{L:1}, $A^{\prime}: = Q^{-1}AQ=
\mathrm{diag}(A^{\prime}_{1}, \ \overline{A^{\prime}_{1}})$  and
$B^{\prime}: = Q^{-1}BQ =
\mathrm{diag}(B^{\prime}_{1}, \ \overline{B^{\prime}_{1}})$  where
$A^{\prime}_{1}, \ B^{\prime}_{1}\in \mathbb{T}_{m}(\mathbb{C})$. Since $e^{A}=e^{B}$ and $AB=BA$, we have
 $A^{\prime}B^{\prime}=B^{\prime}A^{\prime}$ and
\  $e^{A^{\prime}}=e^{B^{\prime}}$, so $e^{A^{\prime}_{1}} =
e^{B^{\prime}_{1}}$  and  $A^{\prime}_{1}B^{\prime}_{1} =
B^{\prime}_{1}A^{\prime}_{1}$. By Lemma ~\ref{L:4},  $A^{\prime}_{1} =
B^{\prime}_{1} +2ik\pi I_{m}$  for some  $k\in \mathbb{Z}$
and so  $A^{\prime} = B^{\prime} + 2ik\pi L_{m}$  where
$L_{m} := \mathrm{diag}(I_{m}, -I_{m})$. It follows that  $A = B +
2k\pi Q(iL_{m})Q^{-1}$.  Write  \ $J_{m}=Q(iL_{m})Q^{-1}$. We see that $J_{m}=\mathrm{diag}(J_{2},\dots,J_{2})$ where
$J_{2}=\left[\begin{array}{cc}
               0 & -1 \\
               1 & 0
             \end{array}
\right]$ and therefore \ $A = B + 2k\pi J_{m}$.
\end{proof}
\medskip

We also require the following result:
\medskip

\begin{prop}\label{p:5}$($\cite{wR}, Proposition $7^{\prime}$, page 17$)$ Let  $A\in M_{n}(\mathbb{R})$.  Then if no two eigenvalues of
$A$  have a difference of the form  2i$\pi k$,
$k\in\mathbb{Z}\backslash\{0\}$, then  $\textrm{exp} : M_{n}(\mathbb{R})\
\longrightarrow\ \textrm{GL}(n, \ \mathbb{R})$  is a local diffeomorphism
at  $A$.
\end{prop}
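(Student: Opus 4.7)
Since this statement is cited verbatim from Rudin's book, I would outline the standard Lie-theoretic proof and rely on the inverse function theorem. The plan is to compute the differential $d\,\mathrm{exp}_A : M_n(\mathbb{R}) \to M_n(\mathbb{R})$ at the point $A$ and show that under the hypothesis it is a linear isomorphism; the conclusion then follows from the inverse function theorem applied to the smooth map $\mathrm{exp}$, whose image at $A$ sits in the open set $\textrm{GL}(n,\mathbb{R})$.

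First I would establish the classical formula
$$d\,\mathrm{exp}_A(X) \;=\; e^{A}\cdot \varphi\!\left(\mathrm{ad}(A)\right)(X), \qquad \varphi(z):=\frac{1-e^{-z}}{z},$$
where $\mathrm{ad}(A):M_n(\mathbb{R})\to M_n(\mathbb{R})$ is the commutator operator $\mathrm{ad}(A)(X)=AX-XA$. The formula is obtained by differentiating the series $\sum_{k\geq 0}(A+tX)^{k}/k!$ at $t=0$ and rearranging the nested commutators; equivalently, one uses the identity $\frac{d}{dt}\bigl|_{t=0}e^{A+tX}=\int_0^1 e^{sA}Xe^{(1-s)A}ds$ and expands the inner kernel as a series in $\mathrm{ad}(A)$.

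Next I would recall the spectral fact that the eigenvalues of $\mathrm{ad}(A)$ acting on $M_n(\mathbb{C})$ are exactly the differences $\lambda_i-\lambda_j$ of eigenvalues of $A$, counted with suitable multiplicities; this is a standard computation using a basis of $M_n(\mathbb{C})$ adapted to the Jordan form of $A$, or, more cleanly, from the decomposition $\mathrm{ad}(A)=L_A-R_A$ with $L_A$ and $R_A$ commuting and having the same spectrum as $A$. Since the entire function $\varphi(z)=\sum_{k\geq 0}(-z)^{k}/(k+1)!$ vanishes precisely at $z=2\pi i k$ with $k\in\mathbb{Z}\setminus\{0\}$, the hypothesis on $A$ guarantees that $0\notin \mathrm{spec}(\varphi(\mathrm{ad}(A)))$, so $\varphi(\mathrm{ad}(A))$ is invertible. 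Composition with left multiplication by $e^{A}$ (which is invertible as an element of $\textrm{GL}(n,\mathbb{R})$) then shows that $d\,\mathrm{exp}_A$ is a linear bijection of $M_n(\mathbb{R})$.

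The main delicate point, and the place I expect to spend the most care, is justifying the identification of the spectrum of $\mathrm{ad}(A)$ over the real matrix space: one must check that the real-linear operator $\varphi(\mathrm{ad}(A))$ on $M_n(\mathbb{R})$ is invertible, not merely its complexification. This is handled by noting that $\mathrm{ad}(A)$ has real entries, so its complexification has spectrum closed under complex conjugation; hence the hypothesis (which is itself symmetric under conjugation) excludes zero from the spectrum of $\varphi$ applied to both the operator and its complexification, and invertibility descends to the real space. Once this is in hand, the inverse function theorem yields open neighborhoods $U$ of $A$ in $M_n(\mathbb{R})$ and $V$ of $e^{A}$ in $\textrm{GL}(n,\mathbb{R})$ such that $\mathrm{exp}:U\to V$ is a smooth diffeomorphism, which is exactly the conclusion.
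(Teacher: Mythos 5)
The paper cites Proposition 3.6 directly from Rossmann's book (\cite{wR}, Proposition $7'$, page 17) and gives no proof of its own, so there is no internal argument to compare against. Your outline is the standard and correct Lie-theoretic proof: the derivative-of-exponential formula $d\,\mathrm{exp}_A = e^A\,\varphi(\mathrm{ad}\,A)$ with $\varphi(z)=(1-e^{-z})/z$, the fact that $\mathrm{spec}(\mathrm{ad}\,A)=\{\lambda_i-\lambda_j\}$, the observation that $\varphi$ vanishes exactly at $2\pi ik$, $k\neq 0$, and the inverse function theorem. The one small remark I would add is that the descent from $\mathbb{C}$ to $\mathbb{R}$ is even simpler than you indicate: a real-linear endomorphism of $M_n(\mathbb{R})$ is invertible if and only if its complexification is (same determinant), so the conjugation-symmetry of the spectrum, while true, is not actually needed.
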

\medskip

As a consequence:
\begin{cor}\label{C:5} The restriction
$\textrm{exp}_{/\mathbb{T}_{n}(\mathbb{R})}: \
\mathbb{T}_{n}(\mathbb{R})\ \longrightarrow\
\mathbb{T}_{n}^{*}(\mathbb{R})$  is a local diffeomorphism, in particular
it is an open map.
\end{cor}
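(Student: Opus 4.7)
The key observation is that every $A\in\mathbb{T}_{n}(\mathbb{R})$ has a single eigenvalue $\mu_{A}\in\mathbb{R}$ of multiplicity $n$, so the gap hypothesis of Proposition~\ref{p:5} is trivially satisfied: the difference of any two eigenvalues of $A$ is $0$, hence not of the form $2i\pi k$ with $k\in\mathbb{Z}\setminus\{0\}$. Thus $\textrm{exp}\colon M_{n}(\mathbb{R})\to\textrm{GL}(n,\mathbb{R})$ is a local diffeomorphism at $A$: one can find open neighbourhoods $U$ of $A$ in $M_{n}(\mathbb{R})$ and $V$ of $e^{A}$ in $\textrm{GL}(n,\mathbb{R})$ such that $\textrm{exp}\colon U\to V$ is a diffeomorphism; write $\log\colon V\to U$ for its inverse.

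The plan is to show that, after possibly shrinking $U$ and $V$, this diffeomorphism restricts to a diffeomorphism between the open set $U\cap\mathbb{T}_{n}(\mathbb{R})$ of $\mathbb{T}_{n}(\mathbb{R})$ and the open set $V\cap\mathbb{T}_{n}^{*}(\mathbb{R})$ of $\mathbb{T}_{n}^{*}(\mathbb{R})$. Since $e^{A}\in\mathbb{T}_{n}^{+}(\mathbb{R})$ has strictly positive diagonal entry $e^{\mu_{A}}$, by continuity of the diagonal-entry map I may shrink $V$ so that every $T\in V\cap\mathbb{T}_{n}^{*}(\mathbb{R})$ still has positive diagonal entry; in other words, $V\cap\mathbb{T}_{n}^{*}(\mathbb{R})=V\cap\mathbb{T}_{n}^{+}(\mathbb{R})$.

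The main claim is then $\textrm{exp}(U\cap\mathbb{T}_{n}(\mathbb{R}))=V\cap\mathbb{T}_{n}^{+}(\mathbb{R})$. The inclusion $\subseteq$ is Proposition~\ref{p:2} specialised to $r=1$, $s=0$, which gives $\textrm{exp}(\mathbb{T}_{n}(\mathbb{R}))=\mathbb{T}_{n}^{+}(\mathbb{R})$. The reverse inclusion is where the triangular structure really enters: given $T\in V\cap\mathbb{T}_{n}^{+}(\mathbb{R})$, set $B:=\log(T)\in U$; then $e^{B}=T\in\mathbb{T}_{n}^{+}(\mathbb{R})$, and Lemma~\ref{L:2}(ii) forces $B\in\mathbb{T}_{n}(\mathbb{R})$, so $B\in U\cap\mathbb{T}_{n}(\mathbb{R})$ with $\textrm{exp}(B)=T$.

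Combining both inclusions, $\textrm{exp}$ and $\log$ restrict to mutually inverse smooth bijections between $U\cap\mathbb{T}_{n}(\mathbb{R})$ and $V\cap\mathbb{T}_{n}^{*}(\mathbb{R})$, which establishes the local diffeomorphism property at $A$; openness of $\textrm{exp}_{/\mathbb{T}_{n}(\mathbb{R})}$ follows. The step I expect to be most delicate is ensuring that the local inverse of $\textrm{exp}$ actually sends triangular matrices back to triangular matrices; this is precisely the content of Lemma~\ref{L:2}(ii), so the hard work has already been done upstream.
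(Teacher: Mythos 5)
Your opening move is exactly right and matches what the paper intends: every $A\in\mathbb{T}_{n}(\mathbb{R})$ has a single eigenvalue, so the hypothesis of Proposition~\ref{p:5} is vacuous and $\textrm{exp}$ is a local diffeomorphism of the ambient $M_{n}(\mathbb{R})$ at each such $A$. The paper states Corollary~\ref{C:5} with no written proof (``As a consequence''), so it is leaving precisely this observation implicit. Where your write-up has a genuine gap is in the appeal to Lemma~\ref{L:2}(ii): that lemma is stated only for $B\in M_{n}(\mathbb{R})$ \emph{having one eigenvalue}, and you never verify this for $B=\log(T)$. It is not automatic: e.g.\ $B=\bigl[\begin{smallmatrix}0 & -2\pi\\ 2\pi & 0\end{smallmatrix}\bigr]$ has $e^{B}=I_{2}\in\mathbb{T}_{2}^{+}(\mathbb{R})$ yet $B\notin\mathbb{T}_{2}(\mathbb{R})$, so without the one-eigenvalue hypothesis the conclusion of Lemma~\ref{L:2}(ii) simply fails, and your ``reverse inclusion'' step is not yet justified.

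The gap is fixable but needs to be said. Shrink $U$ further so that every $B\in U$ has all its (complex) eigenvalues within distance $<\pi$ of $\mu_{A}$; this is possible by continuity of eigenvalues. Then for $T\in V\cap\mathbb{T}_{n}^{+}(\mathbb{R})$ with unique eigenvalue $e^{\mu_{T}}$ and $B=\log(T)\in U$, each eigenvalue $\nu$ of $B$ satisfies $e^{\nu}=e^{\mu_{T}}$, i.e.\ $\nu\in\mu_{T}+2\pi i\mathbb{Z}$; since both $\nu$ and $\mu_{T}$ lie within $\pi$ of $\mu_{A}$, only $\nu=\mu_{T}$ is possible, so $B$ has the single eigenvalue $\mu_{T}$ and Lemma~\ref{L:2}(ii) applies. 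Alternatively, you can bypass Lemma~\ref{L:2}(ii) entirely: $\mathbb{T}_{n}(\mathbb{R})$ is a linear subspace of $M_{n}(\mathbb{R})$, Proposition~\ref{p:2} shows $\textrm{exp}$ maps it into itself, and Proposition~\ref{p:5} shows $d(\textrm{exp})_{A}$ is injective on all of $M_{n}(\mathbb{R})$; hence $d(\textrm{exp})_{A}$ restricts to an injective, therefore bijective, linear endomorphism of $\mathbb{T}_{n}(\mathbb{R})$, and the inverse function theorem applied on the subspace gives the local diffeomorphism directly, with openness following. That route avoids any eigenvalue bookkeeping for the local inverse.
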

\
\begin{lem}\label{L:5} Let $G$ be an abelian subgroup of $\mathcal{K}^{*}_{\eta,r,s}(\mathbb{R})$. Then  $\mathrm{g}=\textrm{exp}^{-1}(G)\cap \mathcal{K}_{\eta,r,s}(\mathbb{R})$ is an additive subgroup of $\mathcal{K}_{\eta,r,s}(\mathbb{R})$.
In particular,  for every  $v\in \mathbb{R}^{n}$, $\mathrm{g}_{v}$  is an additive subgroup of  $\mathbb{R}^{n}$.
\end{lem}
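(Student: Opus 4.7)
The plan is to verify the two subgroup axioms directly, exploiting that $\mathcal{K}_{\eta,r,s}(\mathbb{R})$ is a vector subspace of $M_n(\mathbb{R})$ (being a direct sum of the spaces $\mathbb{T}_{n_k}(\mathbb{R})$ and $\mathbb{B}_{m_l}(\mathbb{R})$, each of which is closed under addition and scalar multiplication). Since $0 \in \mathcal{K}_{\eta,r,s}(\mathbb{R})$ and $e^{0} = I_n \in G$, we have $0 \in \mathrm{g}$, so $\mathrm{g}$ is nonempty.

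For closure under addition, let $A, B \in \mathrm{g}$. Then $A, B \in \mathcal{K}_{\eta,r,s}(\mathbb{R})$, so $A + B \in \mathcal{K}_{\eta,r,s}(\mathbb{R})$. Moreover, $e^{A}, e^{B} \in G$ and $G$ is abelian, so $e^{A} e^{B} = e^{B} e^{A}$. Here is the one nontrivial input: Proposition~\ref{p:3} applied to $A, B \in \mathcal{K}_{\eta,r,s}(\mathbb{R})$ upgrades commutation of the exponentials to commutation $AB = BA$ of the matrices themselves. With $A$ and $B$ commuting we then have $e^{A+B} = e^{A} e^{B} \in G$, so $A + B \in \textrm{exp}^{-1}(G) \cap \mathcal{K}_{\eta,r,s}(\mathbb{R}) = \mathrm{g}$.

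For closure under inverses, let $A \in \mathrm{g}$. Then $-A \in \mathcal{K}_{\eta,r,s}(\mathbb{R})$ (it is a vector space), and $e^{-A} = (e^{A})^{-1} \in G$ (since $G$ is a subgroup of $\textrm{GL}(n,\mathbb{R})$), hence $-A \in \mathrm{g}$. This shows $\mathrm{g}$ is an additive subgroup of $\mathcal{K}_{\eta,r,s}(\mathbb{R})$.

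Finally, for the second claim, fix $v \in \mathbb{R}^{n}$ and consider the evaluation map $\varphi_v : \mathrm{g} \to \mathbb{R}^{n}$, $B \mapsto Bv$. This is $\mathbb{R}$-linear, so in particular a homomorphism of additive groups, and its image is precisely $\mathrm{g}_v$. Since the image of a subgroup under an additive homomorphism is a subgroup, $\mathrm{g}_v$ is an additive subgroup of $\mathbb{R}^{n}$. The only step requiring any care is the use of Proposition~\ref{p:3}, which is exactly what prevents the circular trap of trying to use $e^{A+B} = e^A e^B$ without first knowing $A$ and $B$ commute.
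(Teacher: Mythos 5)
Your proof is correct and follows essentially the same route as the paper's: use that $\mathcal{K}_{\eta,r,s}(\mathbb{R})$ is a vector space, invoke Proposition~\ref{p:3} to pass from $e^A e^B = e^B e^A$ to $AB = BA$, and then conclude $e^{A+B} = e^A e^B \in G$; closure under negation and the observation that $v \mapsto Bv$ is an additive homomorphism are as in the paper (the latter is left implicit there). The extra detail you supply (nonemptiness via $0 \in \mathrm{g}$, the explicit evaluation-map argument) only makes the paper's argument more explicit.
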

\medskip

\begin{proof} If $A,\ B\in \mathrm{g}$,  then  $e^{A}e^{B} = e^{B}e^{A}$
 and $e^{A}, e^{B}\in\mathcal{K}^{*}_{\eta,r,s}(\mathbb{R})\cap G$. By Proposition ~\ref{p:3},  $AB = BA$. So
$e^{A+B} = e^{A}e^{B}\in G$  and hence $A+B\in
\mathcal{K}_{\eta,r,s}(\mathbb{R})$. It follows that  $A+B\in
\mathrm{g}$. Moreover, if $A\in \mathrm{g}$,  then $e^{A}\in G$
and so $e^{-A}=(e^{A})^{-1}\in G$. Hence  $-A\in \mathrm{g}$. This proves the Lemma.
\end{proof}
\bigskip

 Let  $G$  be an abelian subgroup of
$\mathcal{K}^{*}_{\eta,r,s}(\mathbb{R})$. Denote by
\
\\
$\bullet$ $\mathcal{C}(G):= \{A\in\mathcal{K}_{\eta,r,s}(\mathbb{R}):\ AB=BA,
\ \forall\ B\in G \}$
\
\\
$\bullet$ $\mathcal{C}(\mathrm{g}):= \{A\in\mathcal{K}_{\eta,r,s}(\mathbb{R}):\
AB = BA, \ \forall\ B\in \mathrm{g} \}.$
\bigskip

\begin{lem}\label{L:6}Let $G$ be an abelian subgroup of $\mathcal{K}^{*}_{\eta,r,s}(\mathbb{R})$.
 Under the notation above, we have:
\begin{enumerate}
\item [(i)]  $\textrm{exp}(\mathrm{g})= G^{+}$.
\item [(ii)]  $\textrm{exp}(\mathcal{C}(G))= \mathcal{C}(G)\cap
\mathcal{K}_{\eta,r,s}^{+}(\mathbb{R})$
\item [(iii)]  $\mathcal{C}(\mathrm{g})= \mathcal{C}(G^{+})$.
\item[(iv)]  $\mathrm{g}\subset \mathcal{C}(\mathrm{g})$  and  $\mathrm{g}\subset \mathcal{C}(G)$, moreover, all matrix
of $\mathrm{g}$ commute.
\end{enumerate}
\end{lem}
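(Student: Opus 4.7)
My plan is to treat the four parts in turn, relying on Proposition~\ref{p:2} (which gives $\exp(\mathcal{K}_{\eta,r,s}(\mathbb{R})) = \mathcal{K}^{+}_{\eta,r,s}(\mathbb{R})$), Proposition~\ref{p:3} (which promotes commutativity of exponentials in $\mathcal{K}_{\eta,r,s}(\mathbb{R})$ to commutativity of the matrices themselves), and the abelianness of $G$. One non-routine maneuver will recur throughout: whenever $X, Y \in \mathcal{K}_{\eta,r,s}(\mathbb{R})$ and $X$ commutes with $e^{Y}$, I can replace $X$ by $e^{X}$, noting that $e^{X}$ is a convergent power series in $X$ and hence still commutes with $e^{Y}$, at which point Proposition~\ref{p:3} delivers $XY = YX$.

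For (i) I will unpack definitions directly: $B \in \mathrm{g}$ gives $e^{B} \in G$ and $e^{B} \in \exp(\mathcal{K}_{\eta,r,s}(\mathbb{R})) = \mathcal{K}^{+}_{\eta,r,s}(\mathbb{R})$, so $e^{B} \in G \cap \mathcal{K}^{+}_{\eta,r,s}(\mathbb{R}) = G^{+}$; conversely any $M \in G^{+}$ lies in $\mathcal{K}^{+}_{\eta,r,s}(\mathbb{R})$, so Proposition~\ref{p:2} supplies $B \in \mathcal{K}_{\eta,r,s}(\mathbb{R})$ with $e^{B} = M \in G$, placing $B \in \mathrm{g}$.

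For (iv) I will apply Proposition~\ref{p:3} twice. Given $A, B \in \mathrm{g}$, the exponentials $e^{A}, e^{B}$ sit in the abelian group $G$ and so commute, hence $AB = BA$; this yields both that all elements of $\mathrm{g}$ commute and $\mathrm{g} \subset \mathcal{C}(\mathrm{g})$. Given $A \in \mathrm{g}$ and $N \in G$, abelianness of $G$ gives $N e^{A} = e^{A} N$, so $N$ commutes with $e^{A}$; the recurring maneuver (with $X = N$, $Y = A$) then yields $NA = AN$ and $A \in \mathcal{C}(G)$.

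For (ii) and (iii), the easy inclusion in each case follows because commutation is preserved under the convergent power series defining $\exp$: if $A$ commutes with every element of $G$ (resp.\ of $\mathrm{g}$) then so does $e^{A}$, and $e^{A}$ lies in $\mathcal{K}^{+}_{\eta,r,s}(\mathbb{R})$ by Proposition~\ref{p:2}. The reverse inclusions are where the maneuver is essential. For (ii), I will take $M \in \mathcal{C}(G) \cap \mathcal{K}^{+}_{\eta,r,s}(\mathbb{R})$, use Proposition~\ref{p:2} to write $M = e^{B}$ with $B \in \mathcal{K}_{\eta,r,s}(\mathbb{R})$, observe that each $N \in G$ commutes with $M = e^{B}$, and apply the maneuver to obtain $NB = BN$, placing $B \in \mathcal{C}(G)$. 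For (iii), given $A \in \mathcal{C}(G^{+})$, part (i) lets me rewrite the hypothesis as ``$A$ commutes with $e^{B}$ for every $B \in \mathrm{g}$'', and the maneuver yields $AB = BA$, so $A \in \mathcal{C}(\mathrm{g})$. The only real technical weight lies in this recurring maneuver; beyond granting Proposition~\ref{p:3}, nothing more subtle than careful bookkeeping is required.
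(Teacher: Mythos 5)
Your proof is correct and follows essentially the same route as the paper's: Proposition~\ref{p:2} handles surjectivity of $\exp$ onto $\mathcal{K}^{+}_{\eta,r,s}(\mathbb{R})$, and the "recurring maneuver" (commutation with $e^{Y}$ promotes, via the power-series argument and Proposition~\ref{p:3}, to commutation with $Y$) is exactly the mechanism the paper deploys at the corresponding points of each part. The only cosmetic quibble is that for the easy inclusion $\mathcal{C}(\mathrm{g})\subset\mathcal{C}(G^{+})$ in (iii) the power-series step should be applied to the element $B\in\mathrm{g}$ (so $A$ commutes with $e^{B}$, hence with all of $G^{+}=\exp(\mathrm{g})$) rather than to $A$ itself, but this is a trivial rephrasing and does not affect the substance.
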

\medskip

\begin{proof} (i) By
Proposition ~\ref{p:2},  $\textrm{exp}(\mathrm{g})\subset
\mathcal{K}^{+}_{\eta,r,s}(\mathbb{R})$,  hence
$\textrm{exp}(\mathrm{g})\subset G^{+}$.
\
\\
Conversely, let  $A\in G^{+}$. By Proposition  ~\ref{p:2}, there exists
$B\in\mathcal{K}_{\eta,r,s}(\mathbb{R})$  so that  $e^{B} = A$.
Hence $B\in \textrm{exp}^{-1}(G)\cap \mathcal{K}_{\eta,r,s}(\mathbb{R}) =
\mathrm{g}$, and then  $A\in \textrm{exp}(\mathrm{g})$. So
$G^{+}\subset \textrm{exp}(\mathrm{g})$, this proves (i).
\\
(ii) Let  $A = e^{B}$  where  $B\in
\mathcal{C}(G)$ and let $C\in G$. Then $BC = CB$  and
therefore  $Ce^{B} = e^{B}C$, or also  $AC = CA$. It follows that
 $A\in \mathcal{C}(G)$. Since $B\in \mathcal{K}_{\eta,r,s}(\mathbb{R})$, so is $A\in \mathcal{K}_{\eta,r,s}^{+}(\mathbb{R})$
  by Proposition 3.2. Conversely, let  $A\in \mathcal{C}(G)\cap
\mathcal{K}_{n,r,s}^{+}(\mathbb{R})$.  By Proposition ~\ref{p:2}, there
exists  $B\in \mathcal{K}_{n,r,s}(\mathbb{R})$  so that
$e^{B} = A$. Let  $C\in G$. Then  $Ce^{B}=e^{B}C$ and hence  $e^{C}e^{B} = e^{B}e^{C}$.
Since  $B, C\in \mathcal{K}_{n,r,s}(\mathbb{R})$, it follows by
Proposition ~\ref{p:3}, that  $BC=CB$. Therefore  $B\in
\mathcal{C}(G)$ and hence  $A\in \textrm{exp}(\mathcal{C}(G))$.
\
\\
(iii) Let  $B\in \mathcal{C}(G^{+})$  and  $A\in \mathrm{g}$. Then by (i),
 $e^{A}\in G^{+}$  and so $e^{A}B = Be^{A}$. It follows that  $e^{A}e^{B} =
e^{B}e^{A}$. Since $A, \ B\in \mathcal{K}_{n,r,s}(\mathbb{R})$, it follows
 by Proposition ~\ref{p:3}, that  $AB = BA$ and therefore $B\in \mathcal{C}(\mathrm{g})$. Hence
$\mathcal{C}(G^{+})\subset \mathcal{C}(\mathrm{g})$. Conversely, let $B\in \mathcal{C}(\mathrm{g})$ and  $A\in G^{+}$.
By $(i)$  there exists  $C\in \mathrm{g}$  so that  $e^{C}=A$. Hence $BC = CB$ and so $Be^{C}=e^{C}B$. It follows that $B\in \mathcal{C}(G^{+})$ and $\mathcal{C}(\mathrm{g})
 \subset \mathcal{C}(G^{+})$.
\
\\
(iv) By Proposition ~\ref{p:3}, all elements of  $\mathrm{g}$  commute, hence
 $\mathrm{g}\subset \mathcal{C}(\mathrm{g})$. Let $B\in\mathrm{g}$ and $A\in G$, so $e^{B}\in G^{+}\subset G$. As $G$ is abelian,
 $Ae^{B}=e^{B}A$, hence $e^{A}e^{B}=e^{B}e^{A}$. Since $A, \ B\in \mathcal{K}_{n,r,s}(\mathbb{R})$, it follows
 by Proposition ~\ref{p:3}, that  $AB = BA$ and therefore $B\in \mathcal{C}(G)$. We conclude that $\mathrm{g}\subset
\mathcal{C}(G)$.
\end{proof}
\medskip
\section{{\bf Some results for subgroup of $\mathcal{K}^{\ast}_{\eta,r,s}(\mathbb{R})$}}

We let
\
\\
 \textbullet \;$$U:=\begin{cases}

\left(\underset{k=1}{\overset{r}{\prod}}\mathbb{R}^{*}\times\mathbb{R}^{n_{k}-1}\right)\times\left(\underset{l=1}{\overset{s}{\prod}}
(\mathbb{R}^{2}\backslash \{(0,0)\})\times \mathbb{R}^{2m_{l}-2}\right), &\ if \ {r\geq 1} \\

 \underset{l=1}{\overset{s}{\prod}}
(\mathbb{R}^{2}\backslash \{(0,0)\})\times \mathbb{R}^{2m_{l}-2}, &\ if \ {r=0}.
  \end{cases}$$
Then  $U$  is dense in $\mathbb{R}^{n}$, connected if $r=0$, and having $2^{r}$ connected components if $r\geq 1$.
If $G$ is an abelian subgroup of $\mathcal{K}_{\eta,r,s}^{*}(\mathbb{R})$ then a simple calculation from the definition yields
that $U$ is a $G$-invariant.
\bigskip

\begin{prop}\label{p:6} Let $G$ be an abelian subgroup of $\mathcal{K}^{*}_{\eta,r,s}(\mathbb{R})$. Then all orbits of
 $G$ in $U$  are minimal in  $U$.
\end{prop}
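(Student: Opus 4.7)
Fix $v\in U$ and $w\in\overline{G(v)}\cap U$; we must show $\overline{G(v)}\cap U=\overline{G(w)}\cap U$. For the easy inclusion, each $A\in G$ is a linear automorphism satisfying $A\cdot G(v)=G(v)$, so $\overline{G(v)}$ is $G$-invariant; since $w\in\overline{G(v)}$, we deduce $G(w)\subseteq\overline{G(v)}$ and hence $\overline{G(w)}\cap U\subseteq\overline{G(v)}\cap U$.

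For the reverse inclusion, by the same invariance argument it suffices to prove $v\in\overline{G(w)}$. Pick $A_n\in G$ with $A_nv\to w$ and consider $A_n^{-1}\in G$. Using the identity $A_n^{-1}w=v+A_n^{-1}(w-A_nv)$, it will be enough to exhibit a subsequence along which $A_n^{-1}(w-A_nv)\to 0$. Because $v,w\in U$, the leading real entry (resp.\ leading $2$-vector) of both $v$ and $w$ on every one of the $r+s$ diagonal blocks of the decomposition attached to $\mathcal{K}_{\eta,r,s}^*(\mathbb{R})$ is nonzero. Block-by-block inspection of $A_nv\to w$ then forces the scalar eigenvalue $\mu_{A_n,k}$ on each real block to converge to a nonzero $\mu_k$, and the $\mathbb{S}$-block $C_{A_n,l}$ on each complex block to converge to an invertible $C_l$; in particular the diagonal blocks of $A_n^{-1}$ stay bounded.

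Setting $\delta_n:=w-A_nv\to 0$, the conclusion $A_n^{-1}\delta_n\to 0$ is obtained by exploiting the block-lower-triangular shape of $\mathcal{K}_{\eta,r,s}^*(\mathbb{R})$: entries of $A_n^{-1}$ are polynomial in the entries of $A_n$ divided by the now-bounded diagonal blocks, while the coordinates of $\delta_n$ arise from the same matrix-times-vector product involving $v$'s nonzero leading entries. An induction on blocks, and on columns within each block, couples each potentially divergent off-diagonal entry of $A_n$ against a correspondingly decaying coordinate of $\delta_n$, using the nonzero leading entry (resp.\ leading $2$-vector) of $v$ as a pivot, so that each coordinate of $A_n^{-1}\delta_n$ is controlled. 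After extracting a subsequence, $A_n^{-1}w\to v$, whence $v\in\overline{G(w)}$ and the reverse inclusion follows.

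The main obstacle is that individual off-diagonal entries of $A_n$ in stabilizer directions of $v$ need not be bounded—only linear combinations of them, weighted by entries of $v$, are directly constrained by $A_nv\to w$. The required cancellations between divergent parts of $A_n^{-1}$ and decaying parts of $\delta_n$ are forced by the block-triangular structure and by the role of $v$'s nonzero pivots, and must be verified level by level within each block.
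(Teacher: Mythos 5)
Your overall plan coincides with the paper's: reduce minimality to the claim that $A_n v\to w$ with $v,w\in U$ forces $A_n^{-1}w\to v$, working block-by-block along the $\mathbb{T}^*_{n_k}(\mathbb{R})$ and $\mathbb{B}^*_{m_l}(\mathbb{R})$ factors. The paper, however, does not prove that implication from scratch; it cites it as Lemma 4.2 (Corollary 3.3 of \cite{aAhM05}) for the $\mathbb{T}^*_n$ blocks and derives the $\mathbb{B}^*_m$ case (Lemma 4.4) by conjugating into $\mathbb{T}^*_m(\mathbb{C})\oplus\mathbb{T}^*_m(\mathbb{C})$ via Lemma 2.3. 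Your Step $A_n^{-1}w=v+A_n^{-1}(w-A_nv)$ and the observation that the diagonal (scalar, resp.\ $\mathbb{S}$-) blocks converge to invertible limits are both correct. The problem is what comes next.

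The induction you sketch never invokes the hypothesis that $G$ is abelian, and without it the key claim $A_n^{-1}(w-A_nv)\to 0$ is simply false, even for $\mathbb{T}^*_3(\mathbb{R})$. Take $v=e_1$, $w=(1,0,1)^T$ (both in $U=\mathbb{R}^*\times\mathbb{R}^2$), and
$$A_n=\begin{bmatrix}1&0&0\\ \tfrac1n&1&0\\ 1&n&1\end{bmatrix},\qquad
A_n^{-1}=\begin{bmatrix}1&0&0\\ -\tfrac1n&1&0\\ 0&-n&1\end{bmatrix}.$$
Then $A_nv=(1,\tfrac1n,1)^T\to w$ and $\delta_n=w-A_nv=(0,-\tfrac1n,0)^T\to 0$, yet $A_n^{-1}\delta_n=(0,-\tfrac1n,1)^T\not\to 0$ and $A_n^{-1}w\to w\neq v$. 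Here a divergent entry of $A_n^{-1}$ (the $(3,2)$-entry $-n$) is ``coupled'' against a decaying coordinate of $\delta_n$ (the $-\tfrac1n$), exactly as in your scheme, but the rates do not mesh and the product is $1$. What is missing is the rigidity supplied by commutativity: in an abelian subgroup of $\mathbb{T}^*_3(\mathbb{R})$ the $(2,1)$- and $(3,2)$-entries of all elements are proportional with a fixed ratio (or one of them is identically zero), which is precisely what rules out the divergence above. The matrices $A_n$ in the counterexample pairwise fail to commute. So your ``level-by-level'' coupling argument has a genuine gap: it needs to identify and use the constraints on the off-diagonal entries coming from $G$ being abelian, not merely the block-lower-triangular shape and the nonvanishing pivots of $v$ and $w$. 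Either supply that (it is the actual content of Corollary 3.3 in \cite{aAhM05}, and it is not a one-line induction), or cite the lemma as the paper does.
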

 \
 \\
To prove Proposition ~\ref{p:6}, we need the following lemmas:
\\
\begin{lem}[\cite{aAhM05}, Corollary 3.3] \label{L:8} Suppose  $G$  is an abelian subgroup of $\mathbb{T}_{n}^{*}(\mathbb{K})$.
Then for every $v, \ w\in
\mathbb{K}^{*}\times \mathbb{K}^{n-1}$  and any sequence
$(A_{m})_{m\in \mathbb{N}}\subset G$  such that
$\underset{m\to+\infty}{\lim}A_{m}v = w$,  we have  $\underset{m\to+\infty}{\lim}A^{-1}_{m}w = v$.
\end{lem}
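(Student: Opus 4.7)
\emph{Reduction to the unipotent case.} From the first coordinate $\mu_m v_1 \to w_1$ of $A_m v \to w$, and using $v_1, w_1 \neq 0$, we get $\mu_m \to \mu^* := w_1/v_1 \neq 0$. Writing $A_m = \mu_m(I+M_m)$ with $M_m$ strictly lower triangular, $A_m^{-1} = \mu_m^{-1}(I+M_m)^{-1}$ and $\mu_m^{-1}$ is bounded, so the problem reduces to the case where each $A_m$ has diagonal $1$ (with $w$ replaced by $w/\mu^*$). Henceforth assume $A_m \in \mathbb{T}_n^*(\mathbb{K})$ is unipotent, and proceed by induction on $n$; the base $n=1$ is immediate.

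\emph{Inductive step via block decomposition.} Write
$$A_m=\begin{pmatrix} A_m'' & 0 \\ \beta_m & 1\end{pmatrix},\quad v=\begin{pmatrix}v''\\ v_n\end{pmatrix},\quad w=\begin{pmatrix}w''\\w_n\end{pmatrix},$$
where $A_m''\in\mathbb{T}_{n-1}^*(\mathbb{K})$ is unipotent. Since $A\mapsto A''$ is a group homomorphism, $\{A_m''\}$ lies in an abelian subgroup of $\mathbb{T}_{n-1}^*(\mathbb{K})$. The first $n-1$ coordinates of $A_m v\to w$ give $A_m''v''\to w''$, with $(v'')_1=v_1\neq 0$ and $(w'')_1=w_1\neq 0$, so by induction $(A_m'')^{-1}w''\to v''$. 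From the block inversion
$$A_m^{-1}=\begin{pmatrix} (A_m'')^{-1} & 0 \\ -\beta_m(A_m'')^{-1} & 1\end{pmatrix},$$
the first $n-1$ coordinates of $A_m^{-1}w$ converge to $v''$. Setting $v''_m:=(A_m'')^{-1}w''\to v''$, the last coordinate equals $w_n-\beta_m v''_m$, so the task is $\beta_m v''_m\to w_n-v_n$. The last coordinate of $A_m v\to w$ already gives $\beta_m v''\to w_n-v_n$, and everything reduces to showing $\beta_m(v''_m-v'')\to 0$.

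\emph{Using the abelian relation.} Comparing the last rows of $A_m A_{m'}=A_{m'}A_m$ yields
$$\beta_m(A_{m'}''-I)=\beta_{m'}(A_m''-I),$$
that is, $\beta_m N'_{m'}=\beta_{m'}N'_m$ with $N'_m:=A_m''-I$ (strictly lower triangular). Writing $v''_m-v''=-(A_m'')^{-1}\epsilon''_m$ with $\epsilon''_m:=A_m''v''-w''\to 0$ and expanding $(A_m'')^{-1}=\sum_{k=0}^{n-2}(-N'_m)^k$ (a finite sum by nilpotency), the target becomes showing $\beta_m(N'_m)^k\epsilon''_m\to 0$ for each $k\geq 1$. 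Iterating the commutation $\beta_m N'_{m_0}=\beta_{m_0}N'_m$ against suitable fixed references $m_0\in G$ trades the possibly unbounded factor $\beta_m$ for the bounded $\beta_{m_0}$ at the cost of extra powers of $N'_m$ acting on $\epsilon''_m$; the inductive convergence of $A_m''v''$ keeps these resulting factors controlled, forcing each term to vanish in the limit.

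The main obstacle is this final bookkeeping: the references must be chosen so that the vectors $\beta_{m_0}N'_{m_0}$ collectively span enough of $\mathbb{K}^{n-1}$ to recover every entry of $\beta_m(N'_m)^k$ via the commutation identity, and one must verify that after all substitutions the expressions decompose into products of bounded inductively-controlled factors with sequences tending to zero. Handling the degenerate case where $N'_m\equiv 0$ (which forces $A_m''\equiv I$ and thus $v''=w''$, making the claim trivial for the top block) separately keeps the argument clean. Once these cases are carried out, $\beta_m(v''_m-v'')\to 0$, completing the induction and hence the lemma.
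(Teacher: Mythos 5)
Your preliminary reductions are correct: the passage to the unipotent case (the normalized matrices $I+M_m$ still pairwise commute, and $\mu_m\to w_1/v_1\neq 0$ is legitimate because $v_1,w_1\neq 0$), the block decomposition with the induction hypothesis applied to $A''_m$, the commutativity identity $\beta_m N'_{m'}=\beta_{m'}N'_m$, and the reduction of the lemma to $\beta_m(v''_m-v'')\to 0$. Note, for comparison, that the paper does not prove this lemma at all: it imports it from \cite{aAhM05} (Corollary 3.3), so your argument has to stand on its own. It does not yet, because the single step that carries the entire difficulty of the statement is exactly the one you leave as ``bookkeeping'' and acknowledge as the main obstacle, and it is not a routine verification.

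Concretely, after expanding $(A''_m)^{-1}=\sum_{k=0}^{n-2}(-N'_m)^k$ you must prove $\beta_m(N'_m)^k\epsilon''_m\to 0$ for every $k\geq 0$, not only $k\geq 1$: the omitted $k=0$ term $\beta_m\epsilon''_m$ is a possibly unbounded row against a vanishing vector and is exactly as problematic as the rest. More seriously, the identity $\beta_m N'_{m_0}=\beta_{m_0}N'_m$ only controls $\beta_m$ on vectors of the form $N'_{m_0}x$, i.e.\ on $\sum_{m_0}\mathrm{Im}(N'_{m_0})$; you give no argument that $\epsilon''_m$ or $(N'_m)^k\epsilon''_m$ lies in (or can be split along) this span, and when it does not, the correct conclusion is of a different nature: one must show the corresponding error components vanish identically. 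Already for $n=3$ (unipotent $N_m$ with entries $a_m,b_m,c_m$, commutativity giving $b_ma_{m'}=b_{m'}a_m$) the proof is a genuine dichotomy: either some $a_{m_0}\neq 0$, in which case $b_m=b_{m_0}a_m/a_{m_0}$ is bounded because $a_m$ converges, or all $a_{m'}=0$, in which case $b_m$ is totally uncontrolled but the error factor $a_mv_1-d_2$ is identically zero. Nothing in your sketch performs this case analysis in general, and the claim that ``the inductive convergence of $A''_mv''$ keeps these resulting factors controlled'' is not available as stated: for $n-1\geq 3$ the block $N'_m$ itself has unbounded entries, so even $(N'_m)^k\epsilon''_m\to 0$ is not a consequence of the induction hypothesis, which only yields $(A''_m)^{-1}w''\to v''$. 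Until the spanning/degeneracy argument is actually carried out, what you have is a plausible strategy (whose skeleton is sound) rather than a proof of the lemma.
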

\
\\
Notice that the notation  $S_{n}(\mathbb{C})$  in \cite{aAhM05}
corresponds to our notation $\mathbb{T}_{n}^{*}(\mathbb{\mathbb{C}})$.
\
\\
\begin{lem}\label{L:001} Let $Q\in \textrm{GL}(2m, \mathbb{C})$  be the matrix of basis change from $\mathcal{B}_{0}$  to  $\mathcal{C}_{0}$.
Then  for every $u=[x_{1}+iy_{1},\dots, x_{m}+iy_{m}]^{T}\in\mathbb{C}^{m}$, we have
  $Q(u,\overline{u})=[x_{1},y_{1},\dots, x_{m},y_{m}]^{T}$. In particular,
 $Q(H)=\mathbb{R}^{2m},$ where $H:=\{(u,\overline{u}): \ u\in \mathbb{C}^{m}\}$.
 \end{lem}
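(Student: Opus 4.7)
The plan is to do the computation directly, then read off the conclusion about $Q(H)$.

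First I recall the convention: since $Q$ is the matrix of basis change from $\mathcal{B}_{0}$ to $\mathcal{C}_{0}$, its columns are the $\mathcal{B}_{0}$-coordinates of $u_{1},\dots,u_{m},\overline{u_{1}},\dots,\overline{u_{m}}$. Hence, for a column vector $v=[a_{1},\dots,a_{m},b_{1},\dots,b_{m}]^{T}\in\mathbb{C}^{2m}$, we have
$$Qv=\sum_{k=1}^{m}a_{k}u_{k}+\sum_{k=1}^{m}b_{k}\overline{u_{k}}.$$

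Next, I would apply this with $a_{k}=x_{k}+iy_{k}$ and $b_{k}=x_{k}-iy_{k}$, so that $v=(u,\overline{u})$. Using $u_{k}=\frac{e_{2k-1}-ie_{2k}}{2}$ and $\overline{u_{k}}=\frac{e_{2k-1}+ie_{2k}}{2}$, I expand
$$Q(u,\overline{u})=\sum_{k=1}^{m}\left[(x_{k}+iy_{k})\frac{e_{2k-1}-ie_{2k}}{2}+(x_{k}-iy_{k})\frac{e_{2k-1}+ie_{2k}}{2}\right].$$
For each $k$ the coefficient of $e_{2k-1}$ is $\tfrac{1}{2}\bigl((x_{k}+iy_{k})+(x_{k}-iy_{k})\bigr)=x_{k}$, while that of $e_{2k}$ is $\tfrac{1}{2}\bigl(-i(x_{k}+iy_{k})+i(x_{k}-iy_{k})\bigr)=y_{k}$. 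So $Q(u,\overline{u})=[x_{1},y_{1},\dots,x_{m},y_{m}]^{T}$, proving the first assertion.

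For the ``in particular'' part, as $u=[x_{1}+iy_{1},\dots,x_{m}+iy_{m}]^{T}$ ranges over $\mathbb{C}^{m}$, the pairs $(x_{k},y_{k})\in\mathbb{R}^{2}$ range independently over $\mathbb{R}^{2}$, so the image tuple $[x_{1},y_{1},\dots,x_{m},y_{m}]^{T}$ fills out $\mathbb{R}^{2m}$; conversely every element of $\mathbb{R}^{2m}$ is attained in this way. Hence $Q(H)=\mathbb{R}^{2m}$.

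There is no real obstacle here: the lemma is essentially an unwinding of the definition of the basis $\mathcal{C}_{0}$, and the only point requiring care is keeping track of the convention that the columns of $Q$ are the new basis vectors expressed in $\mathcal{B}_{0}$-coordinates, so that applying $Q$ to a coordinate vector in $\mathcal{C}_{0}$ returns the corresponding vector in $\mathcal{B}_{0}$-coordinates.
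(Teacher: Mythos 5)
Your proof is correct and is essentially the same computation the paper performs: both unwind the definition of $Q$ as the matrix whose columns are $u_{1},\dots,u_{m},\overline{u_{1}},\dots,\overline{u_{m}}$ in $\mathcal{B}_{0}$-coordinates, expand using $u_{k}=\tfrac{e_{2k-1}-ie_{2k}}{2}$, and read off the real coefficients. The final observation that $(x_{k},y_{k})$ ranges over all of $\mathbb{R}^{2}$ to get $Q(H)=\mathbb{R}^{2m}$ is likewise identical in spirit to the paper's.
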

\medskip

\begin{proof} Let $v=(u,\overline{u})\in H$ with $u=[z_{1},\dots,z_{m}]^{T}\in  \mathbb{C}^{m}$. Write $z_{j}=x_{j}+iy_{j}$, $x_{j}, y_{j} \in\mathbb{R}$, $j=1,\dots, m$.
We have $$v=\underset{k=1}{\overset{m}{\sum}}(x_{k}+iy_{k})e_{k}+\underset{k=1}{\overset{m}{\sum}}(x_{k}-iy_{k})e_{m+k}.$$ Hence
 $$Qv =\underset{k=1}{\overset{m}{\sum}}(x_{k}+iy_{k})Qe_{k}+\underset{k=1}{\overset{m}{\sum}}(x_{k}-iy_{k})Qe_{m+k}.$$
Since   $$Qe_{k}= \left\{\begin{array}{c}
u_{k}, \ \ \   if \ \ 1\leq k \leq m \ \ \ \ \ \ \\

\overline{u_{k-m}}, \ \  \ if \ \ m+1\leq k \leq 2m
\end{array}
\right.,$$
\
\\
it follows that

\begin{align*}
Qv& =\underset{k=1}{\overset{m}{\sum}}(x_{k}+iy_{k})\left(\frac{e_{2k-1}-ie_{2k}}{2}\right)+\underset{k=1}{\overset{m}{\sum}}(x_{k}-iy_{k})\left(\frac{e_{2k-1}+ie_{2k}}{2}\right)\\
\ & =\underset{k=1}{\overset{m}{\sum}}\left(x_{k}e_{2k-1}+y_{k}e_{2k}\right)\\
\ &= [x_{1},y_{1},\dots, x_{m},y_{m}]^{T}.
\end{align*}
In particular, we see that  $Q(H)=\mathbb{R}^{2m}$.
 \end{proof}
\bigskip

\begin{lem}\label{L:9} Suppose $G$ is an abelian subgroup of
$\mathbb{B}^{*}_{m}(\mathbb{R})$. Then for all  $v, \ w\in
\left(\mathbb{R}^{2}\backslash\{(0,0)\}\right)\times \mathbb{R}^{2m-2}$
 and any sequence  $(B_{k})_{k\in \mathbb{N}}\subset G$  such
that  $\underset{k\to+\infty}{lim}B_{k}v = w$,  we
have  $\underset{k\to+\infty}{lim}B^{-1}_{k}w = v$.
\end{lem}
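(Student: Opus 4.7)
The plan is to reduce this statement to Lemma 4.2 (the analogous fact for abelian subgroups of $\mathbb{T}^{*}_{n}(\mathbb{K})$) by passing through the complex change of basis $Q$ from $\mathcal{B}_{0}$ to $\mathcal{C}_{0}$ introduced in Section 2. By Lemma 2.3, for every $B \in G \subset \mathbb{B}^{*}_{m}(\mathbb{R})$ one has $Q^{-1}BQ = \mathrm{diag}(B'_{1},\overline{B'_{1}})$ with $B'_{1} \in \mathbb{T}^{*}_{m}(\mathbb{C})$. Consequently $G'_{1} := \{B'_{1} : B \in G\}$ is an abelian subgroup of $\mathbb{T}^{*}_{m}(\mathbb{C})$, to which Lemma 4.2 will apply.

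Next, I would transfer $v$ and $w$ through $Q^{-1}$ using Lemma 4.5. If $v = [x_{1},y_{1},\dots,x_{m},y_{m}]^{T} \in \mathbb{R}^{2m}$ and one sets $v_{1} := [x_{1}+iy_{1},\dots,x_{m}+iy_{m}]^{T} \in \mathbb{C}^{m}$, then $Q^{-1}v = (v_{1},\overline{v_{1}})$. The hypothesis $(x_{1},y_{1}) \neq (0,0)$ is exactly the assertion $x_{1}+iy_{1} \neq 0$, i.e. $v_{1} \in \mathbb{C}^{*}\times \mathbb{C}^{m-1}$; similarly one obtains $w_{1} \in \mathbb{C}^{*}\times \mathbb{C}^{m-1}$. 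The convergence $B_{k}v \to w$ then rewrites as
$$\mathrm{diag}(B'_{k,1},\overline{B'_{k,1}})\,(v_{1},\overline{v_{1}}) \longrightarrow (w_{1},\overline{w_{1}}),$$
whose first block gives $B'_{k,1}v_{1} \to w_{1}$ in $\mathbb{C}^{m}$.

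Applying Lemma 4.2 to the abelian subgroup $G'_{1} \subset \mathbb{T}^{*}_{m}(\mathbb{C})$ with the pair $v_{1}, w_{1} \in \mathbb{C}^{*}\times \mathbb{C}^{m-1}$ yields $(B'_{k,1})^{-1}w_{1} \to v_{1}$, and hence by conjugation also $\overline{(B'_{k,1})^{-1}}\,\overline{w_{1}} \to \overline{v_{1}}$. Recombining the two blocks and multiplying by $Q$ on the left produces $B_{k}^{-1}w \to v$, which is the desired conclusion. The only delicate point is the coordinate bookkeeping: one must verify that the condition \emph{``first two real coordinates nonzero''} on $\mathbb{R}^{2m}$ corresponds exactly to \emph{``first complex coordinate nonzero''} on $\mathbb{C}^{m}$ after passing through $Q$, so that the hypothesis of Lemma 4.2 is genuinely available. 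Lemma 4.5 provides precisely this dictionary, so once it is invoked the reduction is mechanical and no further estimates are needed.
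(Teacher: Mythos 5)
Your proof is correct and follows the paper's own approach: conjugate by $Q$ to reduce to the triangular setting over $\mathbb{C}$ (Lemmas 2.3 and 4.3), apply Lemma 4.2, and conjugate back. If anything your write-up is slightly cleaner, since you apply Lemma 4.2 to the genuine triangular subgroup $G'_{1}=\{B'_{1}:B\in G\}\subset\mathbb{T}^{*}_{m}(\mathbb{C})$ and then recover the second block by conjugation, whereas the paper invokes Lemma 4.2 a bit loosely on $Q^{-1}GQ$, which lives in $\mathbb{T}^{*}_{m}(\mathbb{C})\oplus\mathbb{T}^{*}_{m}(\mathbb{C})$ rather than in a single $\mathbb{T}^{*}_{n}(\mathbb{C})$.
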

\medskip

\begin{proof} Let  $v = [x_{1},y_{1},\dots,x_{m},y_{m}]^{T}$  with $(x_{1},y_{1})\in
\mathbb{R}^{2}\backslash\{(0,0)\}$,  $w\in
\left(\mathbb{R}^{2}\backslash\{(0,0)\}\right)\times \mathbb{R}^{2m-2}$
and a sequence  $(B_{k})_{k\in \mathbb{N}}\subset G$ be such that
$\underset{k\to+\infty}{lim}B_{k}v=w$. Then by Lemma ~\ref{L:001}, we have $$Q^{-1}v=
[x_{1}+iy_{1},\dots,x_{m}+iy_{m}; \ x_{1}-iy_{1},\dots,x_{m}-iy_{m}]^{T}\in
\left(\mathbb{C}^{*}\times \mathbb{C}^{m-1}\right)^{2}$$ where $Q$ is the matrix of basis change from $\mathcal{B}_{0}$ to
$\mathcal{C}_{0}$. Therefore  $Q^{-1}v, \ Q^{-1}w\in\left(\mathbb{C}^{*}\times
\mathbb{C}^{m-1}\right)^{2}$. From
$\underset{k\to+\infty}{lim}B_{k}v=w$,  we have
$$\underset{k\to+\infty}{lim}Q^{-1}B_{k}Q(Q^{-1}v)=Q^{-1}w.$$
By Lemma ~\ref{L:1}, $Q^{-1}GQ\subset \mathbb{T}^{*}_{m}(\mathbb{C})\oplus  \mathbb{T}^{*}_{m}(\mathbb{C})$, so by
 applying Lemma ~\ref{L:8} to the group  $Q^{-1}GQ$, we obtain  $$\underset{k\to+\infty}{lim}(Q^{-1}B_{k}Q)^{-1}(Q^{-1}w)=Q^{-1}v$$ and therefore
$\underset{k\to+\infty}{lim}B^{-1}_{k}w = v$.
\end{proof}
\medskip

\begin{proof}[Proof of Proposition  ~\ref{p:6}] Let $u\in U$ and $v\in \overline{G(u)}\cap U$. Write
  $$u= [x_{1},\dots, x_{r}; \ u_{1},\dots, u_{s}]^{T}\ \  \mathrm{and} \ \  v = [y_{1},\dots, y_{r}; v_{1},\dots, v_{s}]^{T},$$ \ so \   $x_{k}, y_{k}\in
\mathbb{R}^{*}\times\mathbb{R}^{n_{k}}$ \ and  $u_{l},v_{l}\in
(\mathbb{R}^{2}\backslash\{(0,0)\})\times\mathbb{R}^{2m_{l}-2}$,
 $k=1,\dots,r$,  $l=1,\dots,s$. \ Let  $(A_{p})_{p\in \mathbb{N}}\subset G$ so
that $\underset{p\to +\infty}{lim}A_{p}u = v$. Write $$A_{p} = \mathrm{diag}(A_{1,p},\dots, A_{r,p};
B_{1,p},\dots, B_{s,p})$$ where

$A_{k,p}\in
\mathbb{T}_{n_{k}}^{*}(\mathbb{R}), \
 B_{l,p}\in \mathbb{B}_{m_{l}}^{*}(\mathbb{R}), \ k = 1,\dots,r, \ l= 1,\dots,s$. Therefore
$$\underset{p\to +\infty}{lim}A_{k,p}x_{k} = y_{k} \ \
\mathrm{and} \ \ \underset{p\to
+\infty}{lim}B_{l,p}u_{l} = v_{l}.$$ By Lemmas ~\ref{L:8} and ~\ref{L:9}, we have
  $\underset{p\to+\infty}{lim}A^{-1}_{k,p}y_{k} =
 x_{k}$ \  and \ $\underset{p\to+\infty}{lim}B^{-1}_{l,p}v_{l}=u_{l}$. So  $\underset{p\to+\infty}{lim}A^{-1}_{p}v = u$ and hence  $u\in \overline{G(v)}\cap U$.
  We conclude that  $\overline{G(u)}\cap U =\overline{G(v)}\cap U$. This completes the proof.
\end{proof}
\
\begin{cor}\label{C:6} Let $G$ be an abelian subgroup of $\mathcal{K}_{\eta,r,s}^{*}(\mathbb{R})$. If  $G$ has a dense orbit in  $\mathbb{R}^{n}$  then all
orbits of $G$ in $U$  are dense in  $\mathbb{R}^{n}$.
\end{cor}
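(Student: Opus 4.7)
The plan is to deduce this directly from Proposition \ref{p:6} (minimality of orbits in $U$) combined with the fact that $U$ is an open, dense, $G$-invariant subset of $\mathbb{R}^{n}$.

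First, suppose $G(v)$ is a dense orbit in $\mathbb{R}^{n}$. My first step is to replace $v$ by a point in $U$. Since $U$ is open and nonempty and $G(v)$ is dense, some $Av$ belongs to $U$; then $G(Av)=G(v)$ is still dense. Moreover, since $U$ is $G$-invariant and $G$ is a group (so $A^{-1}\in G$ too), the entire orbit through this point lies in $U$. Thus without loss of generality we may assume the dense orbit $G(v)$ is contained in $U$.

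Next, I take an arbitrary $w\in U$ and aim to show $\overline{G(w)}=\mathbb{R}^{n}$. Because $G(v)$ is dense we have $w\in\overline{G(v)}\cap U$. Now I invoke Proposition \ref{p:6}: the orbit $G(v)\subset U$ is minimal in $U$, which yields
$$\overline{G(v)}\cap U \;=\; \overline{G(w)}\cap U.$$
The left-hand side equals $U$ (since $\overline{G(v)}=\mathbb{R}^{n}$), so $\overline{G(w)}\supset U$. Finally, since $U$ is dense in $\mathbb{R}^{n}$, taking closures gives $\overline{G(w)}\supset\overline{U}=\mathbb{R}^{n}$, hence $G(w)$ is dense.

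The proof is essentially immediate once Proposition \ref{p:6} is in hand; there is no real obstacle. The only point that needs a brief verification is the initial reduction to a dense orbit sitting inside $U$, which uses both the openness and the $G$-invariance of $U$ (the latter was noted in the paragraph preceding Proposition \ref{p:6}). Everything else is a one-line consequence of the minimality property together with the density of $U$ in $\mathbb{R}^{n}$.
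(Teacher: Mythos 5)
Your proposal is correct and follows essentially the same route as the paper: reduce to the case where the dense-orbit point lies in $U$ (you do this by replacing $v$ with some $Av\in U$, while the paper notes directly that $G$-invariance forces $v\in U$), then apply the minimality from Proposition \ref{p:6} and the density of $U$. The small difference in the reduction step is cosmetic; the substance is identical.
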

\smallskip

\begin{proof} Let  $u\in \mathbb{R}^{n}$ so that $\overline{G(u)} = \mathbb{R}^{n}$. Since  $U$  is a  $G$-invariant
open subset of $\mathbb{R}^{n}$, we see that  $u\in U$. Now, for every  $v\in U$, we have, by Proposition ~\ref{p:6},
  $\overline{G(v)}\cap U=\overline{G(u)}\cap U=U$. It follows that  $\overline{G(v)} =
\mathbb{R}^{n}$  since  $\overline{U} = \mathbb{R}^{n}$.
\end{proof}
\medskip
\
\\
Denote by
\begin{itemize}
 \item [$\bullet$]  $C_{u_{0}}$ the connected component of $U$  containing  $u_{0}$.\\
\item [$\bullet$]  $\Gamma$  the subgroup of
$\mathcal{K}^{*}_{\eta,r,s}(\mathbb{R})$  generated by
$(S_{k})_{1\leq k\leq r}$ where $$S_{k}: = \mathrm{diag}\left(\varepsilon_{1,k}I_{n_{1}},\dots,
\varepsilon_{r,k}I_{n_{r}}; \ I_{2m_{1}} ,\dots, I_{2m_{s}}\right)\in \mathcal{K}^{*}_{\eta,r,s}(\mathbb{R}),$$
$$\varepsilon_{i,k}:=
\begin{cases}
  -1, & \ if \ {i=k} \\
  1, & \ if \ {i\neq k}\end{cases},\ 1\leq i, k \leq r.$$
\end{itemize}
\bigskip

\begin{lem}\label{L:7} Let $G$ be an abelian subgroup of $\mathcal{K}_{\eta,r,s}^{*}(\mathbb{R})$. Under the notation above, we have
\begin{itemize}
 \item [(i)]  $C_{u_{0}}: = \left(\underset{k=1}{\overset{r}{\prod}}\mathbb{R}^{*}_{+}\times\mathbb{R}^{n_{k}-1}\right)\times
\left(\underset{l=1}{\overset{s}{\prod}}(\mathbb{R}^{2}\backslash\{(0,0)\})\times \mathbb{R}^{2m_{l}-2}\right)$.\\

 \item [(ii)] for every  $M\in \mathcal{K}_{\eta,r,s}(\mathbb{R})$,
$S_{k}M = MS_{k}, \ k=1,\dots,r$.\\

 \item [(iii)] $U:= \underset{S\in
\Gamma}{\bigcup}S(C_{u_{0}})$.\\

 \item [(iv)] $G^{+}(u_{0}):=G(u_{0})\cap C_{u_{0}}$.\\

 \item [(v)] $S\mathcal{C}(G)\subset \mathcal{C}(G)$ for every $S\in \Gamma$.\\

 \item [(vi)] if $\textrm{ind}(G)=r$ then $G(u_{0})\cap S(C_{u_{0}})\neq\emptyset$ for every $S\in\Gamma$.
\end{itemize}
\end{lem}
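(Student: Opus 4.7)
The plan is to handle (i)--(v) by straightforward block-matrix unpacking and to reserve the real work for (vi), where the index hypothesis must be converted into the existence of group elements producing prescribed sign patterns on $u_0$.

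For (i), I would observe that $u_0$ has first coordinate $+1$ in each real block $\mathbb{R}^{n_k}$ and first pair $(1,0)$ in each complex block $\mathbb{R}^{2m_l}$. Since $\mathbb{R}^{*}$ splits into two components while $\mathbb{R}^2\setminus\{(0,0)\}$ is connected, the component of $U$ through $u_0$ is forced to be the product with $\mathbb{R}_+^{*}$ in every triangular block. For (ii), note that $S_k$ is a scalar on each diagonal block ($-I_{n_k}$ on block $k$, identity elsewhere), and any $M\in\mathcal{K}_{\eta,r,s}(\mathbb{R})$ is block-diagonal with those same block supports, so scalar blocks commute trivially with the corresponding diagonal blocks of $M$. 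For (iii), the group $\Gamma$ generated by $S_1,\dots,S_r$ is isomorphic to $(\mathbb{Z}/2\mathbb{Z})^r$ since the $S_k$'s are involutions that commute (by (ii)); its $2^r$ elements act on $C_{u_0}$ by flipping the signs of the first coordinates of chosen triangular blocks, producing exactly the $2^r$ connected components of $U$.

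For (iv), I would write $\widetilde{M}=\mathrm{diag}(M_1,\dots,M_r;\widetilde{M}_1,\dots,\widetilde{M}_s)$ for $M\in G$ and compute $Mu_0$ blockwise: the $k$-th triangular block contributes first coordinate $\mu_k$, and the $l$-th $\mathbb{B}^*$-block contributes first pair $(\alpha_l,-\beta_l)\neq(0,0)$ automatically since $\widetilde{M}_l\in\mathbb{B}^*_{m_l}(\mathbb{R})$. Thus $Mu_0\in C_{u_0}$ iff $\mu_k>0$ for every $k$, i.e., iff $M\in G^{+}$. For (v), let $S\in\Gamma$ and $A\in\mathcal{C}(G)$. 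Since $S\in\mathcal{K}^{*}_{\eta,r,s}(\mathbb{R})$ and its blocks are scalar on the triangular pieces and identity on the $\mathbb{B}$-pieces, $SA$ remains in $\mathcal{K}_{\eta,r,s}(\mathbb{R})$; for any $B\in G$, (ii) gives $SB=BS$, hence $(SA)B=S(AB)=S(BA)=B(SA)$, so $SA\in\mathcal{C}(G)$.

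The main obstacle is (vi). The hypothesis $\mathrm{ind}(G)=r$ produces, for each $k\in\{1,\dots,r\}$, an element $M_k\in G$ whose $k$-th triangular-block eigenvalue $\mu_k^{(k)}$ is negative while all other eigenvalues $\mu_i^{(k)}$ with $i\neq k$ are positive. Any $S\in\Gamma$ can be written uniquely as $S=\prod_{k\in K}S_k$ for some $K\subset\{1,\dots,r\}$, since the $S_k$ are commuting involutions. My plan is to set $M:=\prod_{k\in K}M_k\in G$ (an empty product being $I_n$, handling $S=I$); then the $i$-th triangular-block eigenvalue of $M$ is $\prod_{k\in K}\mu_i^{(k)}$, which is negative precisely when $i\in K$ (exactly one negative factor) and positive otherwise. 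By the blockwise computation in (iv), $Mu_0$ then has first coordinate $\mathrm{sign}=-1$ in block $i$ iff $i\in K$, while the $\mathbb{B}$-blocks automatically contribute nonzero pairs. The vector $Su_0$ has precisely the same sign pattern in the triangular blocks, so $Mu_0$ and $Su_0$ lie in the same component $S(C_{u_0})$ of $U$, which gives $M u_0\in G(u_0)\cap S(C_{u_0})$ and finishes the proof.
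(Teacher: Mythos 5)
Your proof is correct and follows essentially the same route as the paper: blockwise sign computations for (i)--(v), and for (vi) the hypothesis $\mathrm{ind}(G)=r$ is used to produce elements of $G$ realizing each prescribed sign pattern on the triangular blocks. Your product construction $M=\prod_{k\in K}M_k$ in (vi) merely makes explicit the passage from the generators $S_k$ to an arbitrary $S\in\Gamma$ that the paper asserts in one phrase, and your use of (ii) to verify (v) is a slightly slicker version of the paper's blockwise check, so these are refinements of the same argument rather than a different approach.
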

\medskip

\begin{proof}Assertions (i), (ii) and (iii) are easier to prove.
\
\\
\textit{Proof of (iv)}: Let
$A=\textrm{diag}(A_{1},\dots,A_{r}; \widetilde{A}_{1},\dots, \widetilde{A}_{s})\in G^{+}= G\cap \mathcal{K}_{\eta,r,s}^{+}(\mathbb{R})$. Then $$A_{k}=\begin{bmatrix}
  \lambda_{k} &  &   & 0 \\
  a^{(k)}_{2,1} & \lambda_{k} &   &  \\
  \vdots &  \ddots & \ddots &  \\
  a^{(k)}_{n_{k},1} & \dots & a^{(k)}_{n_{k},n_{k}-1} & \lambda_{k}
\end{bmatrix}, \ \lambda_{k}>0, \ k=1,\dots,r.$$ Since
$$A_{k}e_{k,1}=[\lambda_{k},a^{(k)}_{2,1},\dots,a^{(k)}_{n_{k},1}]^{T}\in\mathbb{R}^{*}_{+}\times \mathbb{R}^{n_{k}-1}$$
 and $$\widetilde{A}_{l}f_{l,1}\in(\mathbb{R}^{2}\backslash\{(0,0)\})\times \mathbb{R}^{2m_{l}-2},$$ it follows by  (i)  that
  $Au_{0}=[A_{1}e_{1,1},\dots,A_{r}e_{r,1};\ \widetilde{A}_{1}f_{1,1},\dots,\widetilde{A}_{s}f_{s,1}]^{T}\in C_{u_{0}}$
   and so $G^{+}(u_{0})\subset G(u_{0})\cap C_{u_{0}}$. Conversely, if $A=\textrm{diag}(A_{1},\dots,A_{r}; \widetilde{A}_{1},\dots, \widetilde{A}_{s})\in G$
    and $Au_{0} \in C_{u_{0}}$ then  $A_{k}e_{k,1}=[\lambda_{k},a^{(k)}_{2,1},\dots,a^{(k)}_{n_{k},1}]^{T}\in\mathbb{R}^{*}_{+}\times \mathbb{R}^{n_{k}-1}$,
    hence $\lambda_{k}>0$, and so  $A\in G^{+}$. Therefore $G(u_{0})\cap C_{u_{0}}\subset G^{+}(u_{0})$.
This proves that $G^{+}(u_{0})=G(u_{0})\cap C_{u_{0}}$.
\
 \\
 \textit{Proof of (v)}: Let $B=\textrm{diag}(B_{1},\dots,B_{r}; \widetilde{B}_{1},\dots, \widetilde{B}_{s})\in \mathcal{C}(G)$ and
  $$A=\textrm{diag}(A_{1},\dots,A_{r}; \widetilde{A}_{1},\dots, \widetilde{A}_{s})\in G.$$ So $AB=BA$ and thus $A_{k}B_{k}=B_{k}A_{k}$ and
  $\widetilde{A}_{l}\widetilde{B}_{l}=\widetilde{B}_{l}\widetilde{A}_{l}$, $k=1,\dots,r; \ l=1,\dots,s$.
  Let $$S: =S_{j} = \mathrm{diag}\left(\varepsilon_{1,j}I_{n_{1}},\dots,
\varepsilon_{r,j}I_{n_{r}}; \ I_{2m_{1}} ,\dots, I_{2m_{s}}\right), \ \mathrm{for \ some}\ \ j=1,\dots,r.$$
Then $SB= \mathrm{diag}\left(\varepsilon_{1,j}B_{1},\dots,
\varepsilon_{r,j}B_{r}; \widetilde{B}_{1},\dots, \widetilde{B}_{s}\right)$. Since $\varepsilon_{k,j}B_{k}A_{k}=A_{k}\varepsilon_{k,j}B_{k}$, $k=1,\dots,r$, $j=1,\dots,r$, it follows that
 $(SB)A=A(SB)$.
This proves that  $SB\in \mathcal{C}(G)$.
\
\\
\textit{ Proof of (vi)}: There are three cases:
\
 \\
 - If $r=0$, then $\Gamma=\{I_{n}\}$ and obviously we have (vi): G($u_{0})\cap C_{u_{0}}\neq\emptyset$.
\
 \\
 - If $r=1$, then $\Gamma=\{S_{1},I_{n}\}$.\ Here
 $C_{u_{0}}=\mathbb{R}^{*}_{+}\times\mathbb{R}^{n_{1}-1}\times\left(\underset{l=1}{\overset{s}{\prod}}(\mathbb{R}^{2}\backslash\{(0,0)\})\times\mathbb{R}^{2m_{l}-2}\right)$.
  By definition of ind($G)$, there exists $B\in G$ such that $B_{1}$
  has an eigenvalue $\mu_{1}<0$. So $Bu_{0}\in B(C_{u_{0}})\subset
\mathbb{R}^{*}_{-}\times\mathbb{R}^{n_{1}-1}\times\left(\underset{l=1}{\overset{s}{\prod}}(\mathbb{R}^{2}\backslash\{(0,0)\})
\times\mathbb{R}^{2m_{l}-2}\right)=S_{1}(C_{u_{0}})$, and thus $G(u_{0})\cap S_{1}(C_{u_{0}})\neq\emptyset$, this proves (vi).
\
\\
- If $r\notin\{ 0,1\}$, here $C_{u_{0}}=\left(\underset{k=1}{\overset{r}{\prod}}\mathbb{R}^{*}_{+}\times\mathbb{R}^{n_{k}-1}\right)\times
\left(\underset{l=1}{\overset{s}{\prod}}(\mathbb{R}^{2}\backslash\{(0,0)\})\times \mathbb{R}^{2m_{l}-2}\right)$. By definition of ind($G)$,
for every $1\leq k\leq r$ there exists $B^{(k)}\in G$ such that $B^{(k)}_{k}$ has only one eigenvalue
$\mu_{k}<0$ and all its other real eigenvalues $\mu_{i}>0$, $i\neq k$. So
$B^{(k)}(C_{u_{0}})=
\left(\underset{i=1}{\overset{k-1}{\prod}}\mathbb{R}^{*}_{+}\times\mathbb{R}^{n_{i}-1}\right)\times
\left(\mathbb{R}^{*}_{-}\times\mathbb{R}^{n_{k}-1}\right)\times\left(\underset{i=k+1}{\overset{r}{\prod}}\mathbb{R}^{*}_{+}
\times\mathbb{R}^{n_{i}-1}\right)\times
\left(\underset{l=1}{\overset{s}{\prod}}(\mathbb{R}^{2}\backslash\{(0,0)\})\times \mathbb{R}^{2m_{l}-2}\right) =S_{k}(C_{u_{0}})$, thus
$G(u_{0})\cap S_{k}(C_{u_{0}})\neq\emptyset$, for every $1\leq k \leq r$ and so
G($u_{0})\cap S(C_{u_{0}})\neq\emptyset$, for every $S\in\Gamma$. This completes the proof.
\end{proof}
\bigskip

\begin{lem}\label{L:10} $($\cite{aAhM05}, Corollary 1.3$)$. Let $G$ be an abelian subgroup of $GL(n, \mathbb{R})$.
If G has a locally dense orbit O in $\mathbb{R}^{n}$ and $C$ is a
connected component of $U$ meeting $O$ then $O$ is dense in $C$.
\end{lem}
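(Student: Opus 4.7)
The plan is to show that $E := \overline{O} \cap C$ equals $C$ by a connectedness argument. The hypothesis that $C$ meets $O$ gives $E \neq \emptyset$, and $E$ is closed in $C$, so since $C$ is connected it suffices to prove that $E$ is open in $C$.

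Fix $u \in E$. Because $O$ is locally dense, $\mathrm{int}(\overline{O})$ is a nonempty open subset of $\mathbb{R}^{n}$, and since $U$ is open and dense, $\mathrm{int}(\overline{O}) \cap U$ is nonempty and contains an open ball $W$. Any $w \in W$ lies in $\overline{O} \cap U$, and so does $u$ (as $u \in C \subset U$). Proposition 4.1 (minimality of orbits in $U$) therefore yields
$$\overline{G(u)} \cap U \;=\; \overline{O} \cap U \;=\; \overline{G(w)} \cap U;$$
in particular $u \in \overline{G(w)}$, so there is a sequence $(A_n) \subset G$ with $A_n w \to u$.

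Next, I would invoke reversibility of orbits on $U$: writing $A_n = \mathrm{diag}(A_{1,n},\dots,A_{r,n}; \widetilde{A}_{1,n},\dots,\widetilde{A}_{s,n})$ and applying Lemmas 4.2 and 4.4 blockwise, exactly as in the proof of Proposition 4.1, one obtains $A_n^{-1} u \to w$. Since $W$ is open and $w \in W$, for all sufficiently large $n$ we have $A_n^{-1} u \in W$, equivalently $u \in A_n W$. The set $A_n W$ is open in $\mathbb{R}^{n}$ (as $A_n$ is a linear automorphism) and $A_n W \subset A_n \overline{O} = \overline{O}$, since $\overline{O}$ is $G$-invariant. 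Hence $A_n W \cap C$ is an open neighborhood of $u$ in $C$ contained in $\overline{O} \cap C = E$, proving that $E$ is open in $C$.

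The main subtlety is the reversibility step: one needs both $u$ and $w$ to lie in the generic set $U$ in order for Lemmas 4.2 and 4.4 to apply simultaneously on every diagonal block of $A_n$. This is guaranteed by the hypothesis $u \in C \subset U$ and by our choice $W \subset U$. Granted reversibility, the rest is a clean combination of minimality in $U$, the open-mapping property of the elements of $G$, and connectedness of $C$.
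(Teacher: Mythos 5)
The paper does not prove this lemma; it simply cites it as \cite{aAhM05}, Corollary 1.3, so there is no in-paper proof to compare against. Your reconstruction is correct and fits cleanly into the machinery of Section 4. The connectedness setup ($E=\overline{O}\cap C$ is nonempty, closed in $C$, and you show it is open in $C$) is standard. The key steps all check out: since $O$ is an orbit, $\overline{O}$ is $G$-invariant; since $U$ is $G$-invariant, open, and dense while $\overline{O}$ has nonempty interior, $O\cap U\neq\emptyset$ and hence $O\subset U$, so Proposition~\ref{p:6} applies to give $\overline{O}\cap U=\overline{G(w)}\cap U$ for any $w\in W\subset\mathrm{int}(\overline{O})\cap U$; the blockwise reversibility via Lemmas~\ref{L:8} and~\ref{L:9}, applied exactly as in the proof of Proposition~\ref{p:6}, is legitimate because both $u$ and $w$ lie in $U$; and $A_{n}W$ is an open subset of $\overline{O}$ containing $u$ for large $n$. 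One small point worth making explicit if you wrote this up formally: the lemma is stated for $G\subset\mathrm{GL}(n,\mathbb{R})$, but $U$ and Proposition~\ref{p:6} are defined only after conjugating into $\mathcal{K}^{*}_{\eta,r,s}(\mathbb{R})$, so you should say at the outset (as in the proofs of the main theorems) that one may assume $G\subset\mathcal{K}^{*}_{\eta,r,s}(\mathbb{R})$ by Proposition~\ref{p:1}.
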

\medskip

\begin{prop}\label{p:7} Let  $G$  be an abelian subgroup of
$\mathcal{K}^{*}_{\eta,r,s}(\mathbb{R})$. Then:
\begin{enumerate}
    \item [(i)] $\overset{\circ}{\overline{G(u_{0})}}\neq \emptyset$  if and only if
      \; $\overset{\circ}{\overline{G^{+}(u_{0})}}\neq
 \emptyset.$
    \item [(ii)] $\overline{G(u_{0})} = \mathbb{R}^{n}$ if and only if \; $\overline{G^{+}(u_{0})}\cap C_{u_{0}}  =
C_{u_{0}}$  and  $\textrm{ind}(G) = r$.
\end{enumerate}
\end{prop}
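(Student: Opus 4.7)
The plan is to combine the decomposition $U=\bigcup_{S\in\Gamma}S(C_{u_{0}})$ from Lemma ~\ref{L:7} with the minimality of $G$-orbits in $U$, conveniently packaged as Lemma ~\ref{L:10} (a locally dense orbit that meets a connected component of $U$ is dense in that component).

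For (i), the implication from $G^{+}(u_{0})$ to $G(u_{0})$ is immediate from $G^{+}(u_{0})\subset G(u_{0})$. For the converse, if $\overline{G(u_{0})}$ has non-empty interior then $G(u_{0})$ is a locally dense orbit. Since $u_{0}\in G(u_{0})\cap C_{u_{0}}$, Lemma ~\ref{L:10} applied to the connected component $C_{u_{0}}$ yields $C_{u_{0}}\subset\overline{G(u_{0})}$. Because $C_{u_{0}}$ is open, every open $V\subset C_{u_{0}}$ meets $G(u_{0})$, and such an intersection point necessarily lies in $G(u_{0})\cap C_{u_{0}}=G^{+}(u_{0})$ by Lemma ~\ref{L:7}(iv). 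Hence $C_{u_{0}}\subset\overline{G^{+}(u_{0})}$, producing the required non-empty interior.

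For (ii), assume first $\overline{G^{+}(u_{0})}\cap C_{u_{0}}=C_{u_{0}}$ and $\mathrm{ind}(G)=r$. Then $G(u_{0})$ is dense in the open set $C_{u_{0}}$, so it is locally dense in $\mathbb{R}^{n}$. For each $S\in\Gamma$, Lemma ~\ref{L:7}(vi) furnishes some $A_{S}\in G$ with $A_{S}u_{0}\in S(C_{u_{0}})$, so Lemma ~\ref{L:10} yields $G(u_{0})$ dense in each component $S(C_{u_{0}})$, hence in $U=\bigcup_{S\in\Gamma}S(C_{u_{0}})$, and finally in $\mathbb{R}^{n}$ since $\overline{U}=\mathbb{R}^{n}$. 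Conversely, assume $\overline{G(u_{0})}=\mathbb{R}^{n}$. The same openness argument used in (i) gives $G^{+}(u_{0})$ dense in $C_{u_{0}}$; it remains to check $\mathrm{ind}(G)=r$. The case $r=0$ is trivial; if $r=1$ and $\mathrm{ind}(G)=0$ then $G=G^{+}$, so $G(u_{0})\subset C_{u_{0}}\subsetneq\mathbb{R}^{n}$, a contradiction; if $r\geq 2$, then for each $k\in\{1,\dots,r\}$ density of $G(u_{0})$ in the non-empty open set $S_{k}(C_{u_{0}})$ supplies $A^{(k)}\in G$ with $A^{(k)}u_{0}\in S_{k}(C_{u_{0}})$, and reading off the leading entry of each diagonal block of $A^{(k)}=\mathrm{diag}(A_{1},\dots,A_{r};\widetilde{A}_{1},\dots,\widetilde{A}_{s})$ shows $\mu_{k}<0$ while $\mu_{i}>0$ for $i\neq k$, putting $k$ into the index set. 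Hence $\mathrm{ind}(G)\geq r$, and the trivial bound $\mathrm{ind}(G)\leq r$ forces equality.

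No serious obstacle arises: once the dichotomy $U=\bigsqcup_{S\in\Gamma}S(C_{u_{0}})$ is combined with minimality of orbits in $U$, all four implications reduce to transferring density statements between $C_{u_{0}}$ and its $\Gamma$-translates, together with a sign-of-eigenvalue reading from the block structure of $\mathcal{K}^{*}_{\eta,r,s}(\mathbb{R})$.
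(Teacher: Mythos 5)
Your proof is correct, and in (i) and the forward half of (ii) it is essentially the paper's argument (Lemma~\ref{L:10} gives $\overline{G(u_0)}\cap C_{u_0}=C_{u_0}$, then Lemma~\ref{L:7}(iv) converts this into density of $G^+(u_0)$ in $C_{u_0}$; density forces $\mathrm{ind}(G)=r$ by looking at the sign of the leading eigenvalue in each block).

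Where you differ is in the converse of (ii). The paper starts from Lemma~\ref{L:7}(vi) exactly as you do, but then invokes Proposition~\ref{p:6} (all orbits in $U$ are minimal in $U$) to get $\overline{G(w)}\cap C_{u_0}=\overline{G(u_0)}\cap C_{u_0}$ for $w=S^{-1}v$, and then uses the $\Gamma$-equivariance $G(Sw)=S\bigl(G(w)\bigr)$ from Lemma~\ref{L:7}(ii) to transport the density from $C_{u_0}$ to $S(C_{u_0})$. You instead observe that $G(u_0)$, being dense in $C_{u_0}$, is already a locally dense orbit, and then apply Lemma~\ref{L:10} once per translate $S(C_{u_0})$; since $A_S u_0\in G(u_0)\cap S(C_{u_0})$, Lemma~\ref{L:10} gives density of $G(u_0)$ directly in each component. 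This is a genuinely shorter route: it sidesteps Proposition~\ref{p:6} and the equivariance argument entirely, buying simplicity at the cost of invoking the somewhat heavier quoted Lemma~\ref{L:10} one extra time. Both approaches are sound.

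Two very small points of exposition: in the $r=1$ case you write ``$G(u_0)\subset C_{u_0}\subsetneq\mathbb{R}^n$, a contradiction,'' but the contradiction actually requires $\overline{C_{u_0}}\neq\mathbb{R}^n$ (true, since $\overline{C_{u_0}}\subset[0,\infty)\times\mathbb{R}^{n-1}$ when $r\geq1$), which is slightly more than $C_{u_0}\subsetneq\mathbb{R}^n$. And your phrasing ``reading off the leading entry of each diagonal block shows $\mu_k<0$ while $\mu_i>0$'' is fine, since $A^{(k)}u_0\in S_k(C_{u_0})$ pins down the signs of the first coordinate of each $n_i$-block of $A^{(k)}u_0$, which is exactly the eigenvalue $\mu_i$.
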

\medskip

\begin{proof}
(i) Suppose that
$\overset{\circ}{\overline{G^{+}(u_{0})}}\neq \emptyset$. Since
$G^{+}(u_{0})\subset G(u_{0})$,  we see that
$\overset{\circ}{\overline{G(u_{0})}}\neq \emptyset$. Conversely,
suppose that  $\overset{\circ}{\overline{G(u_{0})}}\neq
\emptyset$. Then by Lemma ~\ref{L:10}, $\overline{G(u_{0})}\cap C_{u_{0}}= C_{u_{0}}$. From Lemma ~\ref{L:7}, (i), we have
$\overset{\circ}{\overline{C_{u_{0}}}}\supset C_{u_{0}}$. As  $G^{+}(u_{0})=G(u_{0})\cap C_{u_{0}}$ (Lemma ~\ref{L:7}, (iv)), then
$\overset{\circ}{\overline{G^{+}(u_{0})}}= \overset{\circ}{\overline{C_{u_{0}}}}\supset C_{u_{0}}$, hence $\overset{\circ}{\overline{G^{+}(u_{0})}}\neq\emptyset$.
\
\\
(ii) Suppose that  $\overline{G(u_{0})} = \mathbb{R}^{n}$. By Lemma ~\ref{L:7}, (iv),
$G^{+}(u_{0})=G(u_{0})\cap C_{u_{0}}$, and then
$\overline{G^{+}(u_{0})}\cap C_{u_{0}}=\overline{G(u_{0})\cap C_{u_{0}}}\cap C_{u_{0}}=C_{u_{0}}$.
Now, suppose that  $\textrm{ind}(G) < r$. Then there exists $1 \leq k_{0} \leq r$ such that for every 
$B = \mathrm{diag}(B_{1},\dots,B_{r};\widetilde{B_{1}},\dots,\widetilde{B_{s}})\in G$ with $B_{k}\in 
\mathbb{T}_{n_{k}} (\mathbb{R})$, $k = 1,\dots,r$ having an eigenvalue $\mu_{k}$ and $\widetilde{B_{l}}\in
 \mathbb{B}_{m_{l}}(\mathbb{R})$, $l = 1,\dots,s$, we have $\mu_{k_{0}} > 0$ or 
 $\mu_{i} < 0$ for some $i\neq k_{0}$. Therefore $G(u_{0})\subset \mathbb{R}^{n}\backslash \mathcal{C}^{\prime}_{u_{0},k_{0}}$ 
 where

$\mathcal{C}^{\prime}_{u_{0},k_{0}}:=\left(\underset{i=1}{\overset{k_{0}-1}{\prod}}\mathbb{R}^{*}_{+}\times\mathbb{R}^{n_{i}-1}\right)\times\left(\mathbb{R}^{*}_{-}\times\mathbb{R}^{n_{k_{0}}-1}\right)
\times\left(\underset{i=k_{0}+1}{\overset{r}{\prod}}\mathbb{R}^{*}_{+}\times\mathbb{R}^{n_{i}-1}\right)\times
\left(\underset{l=1}{\overset{s}{\prod}}(\mathbb{R}^{2}\backslash\{(0,0)\})\times \mathbb{R}^{2m_{l}-2}\right)$ and thus $\overline{\mathbb{R}^{n}\backslash \mathcal{C}^{\prime}_{u_{0},k_{0}}}= \mathbb{R}^{n}$, a
contradiction.
\
\\
Conversely, suppose that  $\overline{G^{+}(u_{0})}\cap C_{u_{0}} =
C_{u_{0}}$  and  $\textrm{ind}(G) = r$. We have
$C_{u_{0}} \subset \overline{G^{+}(u_{0})} \subset
\overline{G(u_{0})}$. By Lemma ~\ref{L:7},(vi),   $G(u_{0})\cap
S(C_{u_{0}})\neq\emptyset,$  for every
$S\in\Gamma$. So let  $v\in G(u_{0})\cap \left(S(C_{u_{0}})\right)$ and
 $w = S^{-1}(v)\in C_{u_{0}}$. By Proposition ~\ref{p:6},  $\overline{G(w)}\cap C_{u_{0}} =
\overline{G(u_{0})}\cap C_{u_{0}}$, so
$\overline{G(w)}\cap C_{u_{0}} = C_{u_{0}}.$ By Lemma ~\ref{p:6}, (ii),   $G(u_{0}) =G(v)= G(Sw) = S(G(w))$.
It follows that  $$\overline{G(u_{0})}\cap
S(C_{u_{0}}) = S\left(\overline{G(w)}\cap
C_{u_{0}}\right) = S(C_{u_{0}}),$$ and hence $S(C_{u_{0}})\subset \overline{G(u_{0})}$. As $U =
\underset{S\in\Gamma}{\bigcup}S(C_{u_{0}})$ (Lemma ~\ref{L:7}, (iii)), then  $U\subset \overline{G(u_{0})}$
 and therefore  $\overline{G(u_{0})} = \mathbb{R}^{n}$ since $\overline{U} =
\mathbb{R}^{n}$.
\end{proof}
\bigskip

\begin{cor}\label{C:44} Let  $G$  be an abelian subgroup of $\mathcal{K}_{\eta,r,s}^{*}(\mathbb{R})$. The following are equivalent:
\begin{enumerate}
    \item [(i)]  $\overline{G(u_{0})}= \mathbb{R}^{n}$
      \item [(ii)] $\overset{\circ}{\overline{G(u_{0})}}\neq
 \emptyset$ \ and \ $\mathrm{ind}(G) = r$.
 \end{enumerate}
\end{cor}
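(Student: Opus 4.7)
The plan is to derive Corollary~\ref{C:44} as a short assembly of Proposition~\ref{p:7} and Lemma~\ref{L:10}, without reopening any ingredient of Section~4.

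The implication (i) $\Rightarrow$ (ii) should be essentially immediate: if $\overline{G(u_0)} = \mathbb{R}^n$, then certainly its interior is nonempty, and the ``only if'' half of Proposition~\ref{p:7}(ii) delivers $\mathrm{ind}(G) = r$.

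For (ii) $\Rightarrow$ (i), the strategy is to run the ``if'' half of Proposition~\ref{p:7}(ii) in reverse: it suffices to establish $\overline{G^{+}(u_0)} \cap C_{u_0} = C_{u_0}$, since $\mathrm{ind}(G) = r$ is given by hypothesis. From $\overset{\circ}{\overline{G(u_0)}} \neq \emptyset$, Proposition~\ref{p:7}(i) upgrades the local density over to $G^{+}$: $\overset{\circ}{\overline{G^{+}(u_0)}} \neq \emptyset$. Thus $G^{+}(u_0)$ is a locally dense orbit of the abelian subgroup $G^{+} \subset \mathrm{GL}(n,\mathbb{R})$. Since $I_n \in G^{+}$ and $u_0 \in C_{u_0}$, Lemma~\ref{L:7}(iv) gives $u_0 \in G^{+}(u_0) = G(u_0) \cap C_{u_0} \subset C_{u_0}$, so $C_{u_0}$ is a connected component of $U$ meeting this locally dense orbit. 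Lemma~\ref{L:10} then forces $G^{+}(u_0)$ to be dense in $C_{u_0}$, i.e.\ $\overline{G^{+}(u_0)} \cap C_{u_0} = C_{u_0}$, and Proposition~\ref{p:7}(ii) closes the argument.

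No serious obstacle is anticipated; the only point requiring mention is the inclusion $u_0 \in G^{+}(u_0) \cap C_{u_0}$, which reduces to observing that the blocks of $I_n$ are $I_{n_k} \in \mathbb{T}^{+}_{n_k}(\mathbb{R})$ and $I_{2m_l} \in \mathbb{B}^{*}_{m_l}(\mathbb{R})$, and that $u_0$ has been engineered to sit in the positive-axis component described in Lemma~\ref{L:7}(i).
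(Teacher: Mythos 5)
Your proof is correct and essentially matches the paper's: both reduce (ii)$\Rightarrow$(i) to establishing $\overline{G^{+}(u_0)}\cap C_{u_0}=C_{u_0}$ and then invoke Proposition~\ref{p:7}(ii). The only (harmless) deviation is in the middle step: the paper applies Lemma~\ref{L:10} to the orbit $G(u_0)$ itself and uses the identity $\overline{G^{+}(u_0)}\cap C_{u_0}=\overline{G(u_0)}\cap C_{u_0}$ (from Lemma~\ref{L:7}(iv) plus openness of $C_{u_0}$), whereas you first pass through Proposition~\ref{p:7}(i) and then apply Lemma~\ref{L:10} to the subgroup $G^{+}$ and its orbit $G^{+}(u_0)$ -- a marginally longer route with the same ingredients and the same conclusion.
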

\medskip

\begin{proof}
 $(i)\Longrightarrow(ii):$ results from Proposition ~\ref{p:7}, (ii). $(ii)\Longrightarrow (i)$: By
Lemma ~\ref{L:7},(iv),  $\overline{G^{+}(u_{0})}\cap
C_{u_{0}}= \overline{G(u_{0})}\cap
C_{u_{0}}$  and by Lemma ~\ref{L:10},  $\overline{G(u_{0})}\cap
C_{u_{0}}=C_{u_{0}}$. It follows that $\overline{G^{+}(u_{0})}\cap
C_{u_{0}}=C_{u_{0}}$. Since $\mathrm{ind}(G)=r$, it follows by Proposition ~\ref{p:7},(ii) that $\overline{G(u_{0})}=\mathbb{R}^{n}$.
\end{proof}
\medskip

\begin{lem}\label{L:101} Let  $G$  be an abelian subgroup of $\mathcal{K}^{*}_{\eta,r,s}(\mathbb{R})$. Then $G$ has
 a dense $($resp. locally dense$)$ orbit if and only if $G(u_{0})$ is dense $($resp. locally dense$)$.
\end{lem}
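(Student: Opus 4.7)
The $(\Leftarrow)$ implications are trivial, since $G(u_0)$ is itself an orbit of $G$. For the $(\Rightarrow)$ direction in the dense case, the result is immediate from Corollary~\ref{C:6}: any dense orbit forces every orbit of $G$ through a point of $U$ to be dense, and $u_0 \in U$ by inspection. So the content of the lemma is the locally dense case, which is the step I will outline.

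Assume $G(v)$ is locally dense for some $v \in \mathbb{R}^n$. The first step is to show that we may take $v \in U$. The complement $\mathbb{R}^n\setminus U$ is a finite union of proper linear subspaces (namely, the ones forcing the first coordinate of some $\mathbb{T}^{*}_{n_k}$-block to vanish, or both first coordinates of some $\mathbb{B}^{*}_{m_l}$-block to vanish), hence it is closed with empty interior. Since $U$ is $G$-invariant, so is $\mathbb{R}^n\setminus U$, and if $v \notin U$ then $\overline{G(v)} \subset \mathbb{R}^n\setminus U$ would have empty interior, contradicting local density.

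Next, I exploit the structure of the connected components of $U$ from Lemma~\ref{L:7}. By part (iii) there exists $S \in \Gamma$ with $v \in S(C_{u_0})$; write $v = Sw$ with $w \in C_{u_0}$. The key point is part (ii): each $S \in \Gamma$ commutes with every matrix of $\mathcal{K}_{\eta,r,s}(\mathbb{R})$, in particular with every element of $G$. Therefore $G(v) = S\,G(w)$, and since $S$ is a linear automorphism of $\mathbb{R}^n$, the orbit $G(w)$ is again locally dense.

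Finally, Lemma~\ref{L:10} applied to $G(w)$ and the component $C_{u_0}$ that meets it at $w$ shows $G(w)$ is dense in $C_{u_0}$; in particular $u_0 \in \overline{G(w)}\cap U$. Applying the minimality of $G$-orbits in $U$ from Proposition~\ref{p:6} with the starting point $w \in U$ gives
\[
\overline{G(u_0)}\cap U \;=\; \overline{G(w)}\cap U \;\supset\; C_{u_0},
\]
so $\overline{G(u_0)}$ contains the open set $C_{u_0}$ and $G(u_0)$ is locally dense. The only real obstacle is the a~priori possibility that $v$ sits in a component of $U$ different from $C_{u_0}$; the commutativity step is exactly what transfers local density from $G(v)$ to an orbit based in $C_{u_0}$, after which Proposition~\ref{p:6} delivers the conclusion.
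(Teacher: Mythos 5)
Your proof is correct and follows essentially the same route as the paper: reduce to $v\in U$ by $G$-invariance and density of $U$, move the orbit into $C_{u_0}$ via the commuting involution $S\in\Gamma$ from Lemma~\ref{L:7}, apply Lemma~\ref{L:10} to get density in $C_{u_0}$, and finish with minimality of orbits in $U$ from Proposition~\ref{p:6}; the dense case via Corollary~\ref{C:6} is also as in the paper.
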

\medskip

\begin{proof} Suppose that $G(v)$ is locally dense in $\mathbb{R}^{n}$, for some $v\in \mathbb{R}^{n}$.
 Then $G(v)\cap U\neq\emptyset$, so $v\in U$ since $U$ is a $G$-invariant, dense open set in $\mathbb{R}^{n}$.
By Lemma ~\ref{L:7},(iii), we have $U=\underset{S\in\Gamma}{\bigcup}S(C_{u_{0}})$, hence there is $S\in\Gamma$ such that $v\in S(C_{u_{0}})$.
 Set $v^{\prime}:= S^{-1}v\in C_{u_{0}}$. So, by Lemma ~\ref{L:7},(ii), we have G($v^{\prime})=S^{-1}(G(v))$, hence G($v^{\prime})$ is
a locally dense orbit in $\mathbb{R}^{n}$ meeting $C_{u_{0}}$, it follows that $G(v^{\prime})$ is dense in  $C_{u_{0}}$, by Lemma
~\ref{L:10}, and therefore $u_{0}\in \overline{G(v^{\prime})}$. By Proposition ~\ref{p:6},
   $\overline{G(u_{0})}\cap U=\overline{G(v^{\prime})}\cap U$ and so $C_{u_{0}}\subset\overline{G(u_{0})}$. Therefore G($u_{0})$ is
 locally dense  in $\mathbb{R}^{n}$.

If $G(v)$ is dense in $\mathbb{R}^{n}$ then G($u_{0})$ is dense in $\mathbb{R}^{n}$ by Corollary ~\ref{C:6}.
\end{proof}
\medskip

\section{{\bf Parametrization of a subgroup  of
$\mathbb{T}_{n}^{*}(\mathbb{K})$}}

Assume that  $G$  is a subgroup of
$\mathbb{T}_{n}^{*}(\mathbb{K})$  and  $\mathrm{g} =
\textrm{exp}^{-1}(G)\cap \mathbb{T}_{n}(\mathbb{K})$. For  $n > 2$  the
group  $\mathbb{T}_{n}^{*}(\mathbb{K})$  is non abelian, so the
assumption  $G$  is abelian imposes restrictions on how it is
embedded in $\mathbb{T}_{n}^{*}(\mathbb{K})$. While there is no
general classification of the abelian subgroups of
$\mathbb{T}_{n}^{*}(\mathbb{K})$  for  $n$  large (see Chapter
3, \cite{daSriT68}), under the assumption that  $G$  is
``sufficiently large", there is a special canonical form for the
matrices of  $G$  which yields a parametrization of an  $n$
dimensional subspace  $\varphi(\mathbb{K}^{n})\subset
\mathbb{T}^{*}_{n}(\mathbb{K})$  containing  $G$.
\\
Recall that for a matrix  $B\in\mathbb{T}_{n}(\mathbb{K})$, the matrix
 $\widetilde{B} = (B - \mu_{B}I_{n})$  where  $\mu_{B}$
denote the unique eigenvalue of  $B$,  is a singular matrix, so
has range of dimension at most  $n-1$.
\\
Introduce the vector subspaces of  $\mathbb{K}^{n}$  generated
by the ranges of all the singular matrices  $\widetilde{B}$  for
$B\in G$  (resp.  $\mathrm{g}$)

$$F_{G} := \textrm{vect}\left \{\widetilde{B}e_{i} : \
 B\in G,\ \
1\leq i\leq n-1\ \right\}$$

$$F_{\mathrm{g}} := \textrm{vect}\left \{\widetilde{B}e_{i} :  \
 B\in \mathrm{g},\ \
1\leq i \leq n-1\ \right\}$$
\
\\
Denote by rank$(F_{G})$ (resp. rank$(F_{\mathrm{g}})$) the rank of  $F_{G}$ (resp. $F_{\mathrm{g}}$).
\bigskip

\begin{lem}\label{L:12}$($\cite{aAhM05}, Lemma 2.3$)$ Let $G$  be an abelian subgroup of
 $\mathbb{T}_{n}^{*}(\mathbb{K})$. Under the notation above, let $r=\mathrm{rank}(F_{G})$, $1\leq r \leq n-1$ and
  $(v_{1},\dots,v_{r})$ be a basis of $F_{G}$. Then for every
$u\in \mathbb{K}^{n}$, the vector subspace

$H_{u}:=\mathrm{vect}(u,v_{1},\dots,v_{r})$  is $G$-invariant. In particular, $F_{G}:=H_{0}$ is $G$-invariant.
\end{lem}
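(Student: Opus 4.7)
The plan is to exploit the fact that any $B\in G\subset\mathbb{T}_{n}^{*}(\mathbb{K})$ splits canonically as a scalar plus a strictly lower triangular (hence nilpotent) part, and to show that the nilpotent part sends every vector of $\mathbb{K}^{n}$ into $F_{G}$. Since $F_{G}=\mathrm{vect}(v_{1},\dots,v_{r})$ is by construction contained in $H_{u}$, and since the scalar part of $B$ obviously preserves $H_{u}$, the invariance then follows at once.

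More precisely, I would first fix $B\in G$ with unique eigenvalue $\mu_{B}$ and set $\widetilde{B}=B-\mu_{B}I_{n}$. Because $B$ has the lower triangular form $(1)$ with all diagonal entries equal to $\mu_{B}$, the matrix $\widetilde{B}$ is \emph{strictly} lower triangular; in particular, its last column is zero, so $\widetilde{B}e_{n}=0$. Together with the definition of $F_{G}$, this gives the key observation
\[
\widetilde{B}e_{i}\in F_{G}\qquad\text{for every }1\leq i\leq n.
\]
By linearity, $\widetilde{B}w\in F_{G}$ for every $w\in\mathbb{K}^{n}$.

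Applying this to the generators of $H_{u}$, I would then write
\[
Bu=\mu_{B}u+\widetilde{B}u\in \mathrm{vect}(u)+F_{G}=H_{u},
\]
and, for each $1\leq j\leq r$,
\[
Bv_{j}=\mu_{B}v_{j}+\widetilde{B}v_{j}\in F_{G}+F_{G}=F_{G}\subset H_{u},
\]
where the first term lies in $H_{u}$ because $v_{j}\in F_{G}\subset H_{u}$ and the second because $\widetilde{B}v_{j}\in F_{G}$ by the key observation. Since $B$ preserves a spanning set of $H_{u}$, it preserves $H_{u}$. As $B\in G$ was arbitrary, $H_{u}$ is $G$-invariant, and the special case $u=0$ gives the invariance of $F_{G}$ itself.

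There is no serious obstacle here; the argument is essentially a bookkeeping exercise once one notices the small but crucial point that $\widetilde{B}e_{n}=0$, which allows the spanning condition on $F_{G}$ to be extended from the indices $1\leq i\leq n-1$ in the definition to all $1\leq i\leq n$, so that $\widetilde{B}$ maps the whole of $\mathbb{K}^{n}$ into $F_{G}$.
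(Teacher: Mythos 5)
Your proof is correct, and the paper does not give its own argument for this lemma---it is quoted directly from \cite{aAhM05}, Lemma 2.3---so there is no internal proof to compare against. Your key observation is exactly right: since $\widetilde{B}=B-\mu_{B}I_{n}$ is strictly lower triangular, $\widetilde{B}e_{n}=0$, so $\widetilde{B}e_{i}\in F_{G}$ for \emph{all} $1\leq i\leq n$ and hence $\widetilde{B}(\mathbb{K}^{n})\subset F_{G}$ by linearity; combined with $F_{G}\subset H_{u}$ this immediately yields $B(H_{u})\subset\mu_{B}H_{u}+F_{G}\subset H_{u}$. One small remark: the argument never actually uses that $G$ is abelian, so it proves the invariance for an arbitrary subgroup of $\mathbb{T}_{n}^{*}(\mathbb{K})$; the abelian hypothesis is needed elsewhere in the paper but not for this particular lemma.
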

\bigskip

\begin{prop} \label{p:8} Let  $G$ be an abelian subgroup of $\mathbb{T}_{n}^{*}(\mathbb{K})$. If
$\mathrm{rank}(F_{G})=n-1$ $($resp.
$\mathrm{rank}(F_{\mathrm{g}})=n-1)$  then there exist injective
linear maps

$\varphi: \mathbb{K}^{n } \ \longrightarrow \
\ \mathbb{T}_{n}(\mathbb{K})$,  $($resp. $\psi: \ \mathbb{K}^{n } \
\longrightarrow \ \mathbb{T}_{n}(\mathbb{K}))$  such that
\begin{itemize}
  \item [(i)] for
every \; $v\in\mathbb{K}^{n}$,  $\varphi(v)e_{1} = v$ $($resp.
$\psi(v)e_{1} = v)$.
  \item [(ii)] $\mathcal{C}(G)\subset \varphi(\mathbb{K}^{n})$ $($resp. $\mathcal{C}(\mathrm{g})\subset\psi(\mathbb{R}^{n}))$.
  \end{itemize}
\end{prop}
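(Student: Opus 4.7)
The plan is to parameterize $\mathcal{C}(G)$ by its image under the first-column map $A\mapsto Ae_{1}$, prove this map is injective on $\mathcal{C}(G)$, and then extend its inverse to an injective linear map $\varphi:\mathbb{K}^{n}\to\mathbb{T}_{n}(\mathbb{K})$ defined on all of $\mathbb{K}^{n}$ while preserving $\varphi(v)e_{1}=v$.

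First I would reduce the commutation relations: writing each $A\in\mathbb{T}_{n}(\mathbb{K})$ as $A=\alpha_{A}I_{n}+N_{A}$ with $N_{A}$ strictly lower triangular, the identity $AB=BA$ for $B\in G$ becomes $[N_{A},\widetilde{B}]=0$. Since every $\widetilde{B}$ is strictly lower triangular, its columns lie in $\mathrm{vect}(e_{2},\dots,e_{n})$, hence $F_{G}\subseteq\mathrm{vect}(e_{2},\dots,e_{n})$, and the hypothesis $\mathrm{rank}(F_{G})=n-1$ forces equality. Thus $\mathbb{K}^{n}=\mathbb{K}e_{1}\oplus F_{G}$, and by Lemma~\ref{L:12} applied with $u=0$ the subspace $F_{G}$ is itself $G$-invariant.

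The heart of the argument is to show that $\Phi:\mathcal{C}(G)\to\mathbb{K}^{n}$, $A\mapsto Ae_{1}$, is injective. If $Ae_{1}=0$ then $\alpha_{A}=0$ and $N_{A}e_{1}=0$; since $N_{A}\widetilde{B}=\widetilde{B}N_{A}$ for every $B\in G$, one obtains inductively that $N_{A}(\widetilde{B_{1}}\cdots\widetilde{B_{k}}e_{1})=\widetilde{B_{1}}\cdots\widetilde{B_{k}}(N_{A}e_{1})=0$, so $N_{A}$ annihilates the cyclic submodule $\mathcal{M}:=\mathbb{K}[\widetilde{G}]\cdot e_{1}$ generated by $e_{1}$. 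The principal technical obstacle is then to show that the rank hypothesis forces $\mathcal{M}=\mathbb{K}^{n}$---equivalently, $e_{1}$ is a cyclic vector for the commutative nilpotent algebra generated by $\{\widetilde{B}:B\in G\}$. My approach would be to prove directly that $F_{G}\subseteq\mathcal{M}$ by expressing each basis vector of $F_{G}$ as an iterated application of $\widetilde{B}$'s to $e_{1}$, where commutativity inside $\widetilde{G}$ is essential to keep the iterates organized and the dimension count honest. Once $\mathcal{M}=\mathbb{K}^{n}$, we conclude $N_{A}=0$ and hence $A=0$, giving injectivity.

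With $\Phi$ injective, the image $V:=\Phi(\mathcal{C}(G))$ is a subspace of $\mathbb{K}^{n}$ of dimension $d\leq n$. I would choose any linear complement $W'$ of $V$ in $\mathbb{K}^{n}$, and for $w'\in W'$ set $\ell(w')=w'_{1}I_{n}+S(w')$, where $S(w')$ is the strictly lower triangular matrix whose only nonzero column is the first, equal to $(0,w'_{2},\dots,w'_{n})^{T}$. Defining $\varphi|_{V}=\Phi^{-1}$ and $\varphi|_{W'}=\ell$ and extending by linearity produces a linear map $\varphi:\mathbb{K}^{n}\to\mathbb{T}_{n}(\mathbb{K})$ satisfying $\varphi(v)e_{1}=v$ for every $v\in\mathbb{K}^{n}$; this identity forces $\varphi$ to be injective, and by construction $\mathcal{C}(G)\subseteq\varphi(\mathbb{K}^{n})$ since every $A\in\mathcal{C}(G)$ equals $\varphi(Ae_{1})$. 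The $\psi$-case is identical after replacing $G$ by $\mathrm{g}$: all elements of $\mathrm{g}$ commute by the analogue of Proposition~\ref{p:3} in the $\mathbb{T}_{n}^{*}(\mathbb{K})$ setting, so the same framework goes through with $F_{\mathrm{g}}$ playing the role of $F_{G}$.
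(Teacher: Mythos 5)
Your framing is correct: the proposition reduces to showing the first-column map $\Phi\colon\mathcal{C}(G)\to\mathbb{K}^n$, $A\mapsto Ae_1$, is injective, and your preliminary reductions (the commutation identity $[N_A,\widetilde B]=0$, the fact that $\mathrm{rank}(F_G)=n-1$ forces $F_G=\mathrm{vect}(e_2,\dots,e_n)$, the routine linear extension to $\varphi$) are all sound. But the proof stops precisely at the load-bearing step. You observe that $N_A$ annihilates $\mathcal{M}:=\mathbb{K}[\widetilde G]\cdot e_1$, identify "the principal technical obstacle" as showing $\mathcal{M}=\mathbb{K}^n$, and then write "My approach would be to prove directly that $F_G\subseteq\mathcal{M}$ by expressing each basis vector of $F_G$ as an iterated application of $\widetilde B$'s to $e_1$." That is an announcement of a plan, not a proof, and the plan is not obviously workable: $F_G$ is spanned by the vectors $\widetilde B e_i$ for \emph{all} $i=1,\dots,n-1$, not merely $i=1$, so rewriting each of them as $\widetilde{B_1}\cdots\widetilde{B_k}e_1$ is essentially the cyclicity statement you are trying to establish, not a route to it. (Note also that the paper itself gives no argument here --- it simply cites \cite{aAhM06} for $\mathbb{K}=\mathbb{C}$ and asserts the real case is analogous --- so you cannot lean on the paper's proof; the burden of the crucial step is entirely on you, and it is not discharged.)

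The gap can be closed without ever proving $\mathcal{M}=\mathbb{K}^n$. Let $\mathcal{N}$ denote the linear span of $\{\widetilde B:B\in G\}$, a space of strictly lower triangular matrices commuting with $N:=N_A$, and suppose $Ne_1=0$. Then
$\mathrm{range}(N)=N\bigl(\mathbb{K}e_1\oplus F_G\bigr)=N(F_G)=\mathrm{vect}\{\,N\widetilde Be_i:B\in G,\ 1\le i\le n-1\,\}=\mathrm{vect}\{\,\widetilde B(Ne_i)\,\}$.
The $i=1$ terms vanish, and for $i\ge 2$ each $Ne_i$ lies in $\mathrm{range}(N)$, so $\mathrm{range}(N)\subseteq\mathcal{N}\cdot\mathrm{range}(N)$. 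Iterating gives $\mathrm{range}(N)\subseteq\mathcal{N}^{\,n-1}\cdot\mathrm{range}(N)=\{0\}$ because a product of $n-1$ strictly lower triangular $n\times n$ matrices is zero; hence $N=0$ and $\Phi$ is injective. Supplying this (together with the verbatim analogue with $\mathrm g$ in place of $G$, which you correctly note is legitimate because the elements of $\mathrm g$ pairwise commute) would make your proposal complete; as written it is not.
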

\bigskip

\begin{proof} For $\mathbb{K}=\mathbb{C}$, the proposition is proved in (\cite{aAhM06}, Proposition 5.1). For $\mathbb{K}=\mathbb{R}$,
the proof is similar by the same methods.
\end{proof}
\medskip

Condition (i) asserts that the projection of the embedding
$\varphi$ (resp.  $\psi$)  to the first column of the matrix is
the identity map.
\bigskip

\begin{cor}\label{C:7} \ Under the hypothesis of Proposition ~\ref{p:8}, we have:
\begin{enumerate}
    \item [(i)] $\varphi(Be_{1}) = B$ $($resp.  $\psi(Be_{1}) = B)$  for every  $B\in \mathcal{C}(G)$ $($resp. $\mathcal{C}(\mathrm{g}))$.
    \item [(ii)] $\varphi(G(e_{1})) = G$ $($resp. \ $\psi(G(e_{1})) = G)$.
    \item [(iii)] $\varphi(\mathrm{g}_{e_{1}}) = \mathrm{g}$ $($resp.  $\psi(\mathrm{g}_{e_{1}}) = \mathrm{g})$.
\end{enumerate}
\end{cor}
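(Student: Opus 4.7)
The plan is to derive each of the three assertions directly from the two key properties of $\varphi$ (resp.\ $\psi$) guaranteed by Proposition~\ref{p:8}: the normalization $\varphi(v)e_{1}=v$ (resp.\ $\psi(v)e_{1}=v$), the injectivity, and the inclusion $\mathcal{C}(G)\subset\varphi(\mathbb{K}^{n})$ (resp.\ $\mathcal{C}(\mathrm{g})\subset\psi(\mathbb{K}^{n})$). The whole argument is essentially a book-keeping of these definitions; no new computation is needed.

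For part (i) I would fix $B\in\mathcal{C}(G)$. Since $\mathcal{C}(G)\subset\varphi(\mathbb{K}^{n})$, there exists $v\in\mathbb{K}^{n}$ with $\varphi(v)=B$, and $v$ is unique by injectivity of $\varphi$. Right-multiplying by $e_{1}$ and using the identity $\varphi(v)e_{1}=v$ yields $v=Be_{1}$, hence $\varphi(Be_{1})=\varphi(v)=B$. The same argument, word for word, handles $\psi$ on $\mathcal{C}(\mathrm{g})$.

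Part (ii) follows at once: since $G$ is abelian we have $G\subset\mathcal{C}(G)$, so (i) applies to every $B\in G$ and gives $\varphi(Be_{1})=B$. Taking the image of $G(e_{1})=\{Be_{1}:B\in G\}$ under $\varphi$ therefore returns exactly $G$. For part (iii) I would use the same idea once I know that $\mathrm{g}\subset\mathcal{C}(\mathrm{g})$: given $A,B\in\mathrm{g}$, the elements $e^{A}, e^{B}$ lie in the abelian group $G$ and hence commute, so Proposition~\ref{p:3} forces $AB=BA$ (this is the content of Lemma~\ref{L:6}(iv) in the special case $\mathcal{K}^{*}_{(n),1,0}(\mathbb{R})=\mathbb{T}^{*}_{n}(\mathbb{R})$, and the complex case is entirely analogous). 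Once $\mathrm{g}\subset\mathcal{C}(\mathrm{g})$ is in hand, (i) applied to $\psi$ gives $\psi(Be_{1})=B$ for every $B\in\mathrm{g}$, whence $\psi(\mathrm{g}_{e_{1}})=\mathrm{g}$.

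I do not foresee any real obstacle; the only point requiring a moment of care is justifying $\mathrm{g}\subset\mathcal{C}(\mathrm{g})$ in the ambient setting of $\mathbb{T}^{*}_{n}(\mathbb{K})$ rather than of $\mathcal{K}^{*}_{\eta,r,s}(\mathbb{R})$, which as indicated above causes no genuine difficulty.
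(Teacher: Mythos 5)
Your proof is correct and follows essentially the same route as the paper: use the normalization $\varphi(v)e_{1}=v$ together with the inclusion $\mathcal{C}(G)\subset\varphi(\mathbb{K}^{n})$ to get (i), and then reduce (ii) and (iii) to (i) via the commutation facts of Lemma~\ref{L:6}. One small point of incompleteness: you verify only $\varphi(G(e_{1}))=G$ in (ii) and only $\psi(\mathrm{g}_{e_{1}})=\mathrm{g}$ in (iii), leaving the parenthetical parts $\psi(G(e_{1}))=G$ and $\varphi(\mathrm{g}_{e_{1}})=\mathrm{g}$ unaddressed; both of these need, respectively, $G\subset\mathcal{C}(\mathrm{g})$ and $\mathrm{g}\subset\mathcal{C}(G)$, which do hold (the second is literally Lemma~\ref{L:6}(iv), and the first follows from it since ``$B$ commutes with $A$'' is symmetric), but should be said explicitly rather than left implicit.
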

\medskip

\begin{proof}
(i) Suppose that  $B\in \mathcal{C}(G)$. By Proposition ~\ref{p:8},  there exists
 $v\in \mathbb{K}^{n}$ such that  $\varphi(v)=B$  and so $\varphi(v)e_{1} = v$. Then  $Be_{1} = v$  and  $\varphi(Be_{1})
= B$. Analogous by Proposition ~\ref{p:8}, if $B\in \mathcal{C}(\mathrm{g})$, we have $\psi(Be_{1})=B$.
\
\\
Assertions (ii) follows from (\cite{aAhM05}, Lemma 4.2, iii)) if $\mathbb{K}=\mathbb{C}$ since $\mathcal{C}(\mathrm{g})=\mathcal{C}(G)$.
If $\mathbb{K}=\mathbb{R}$, it follows from Lemma ~\ref{L:6}, (iii) since $G\subset\mathcal{C}(G)$ (resp. $G\subset\mathcal{C}(\mathrm{g})$).

Assertion (iii): If  $\mathbb{K}=\mathbb{R}$ then by (Lemma ~\ref{L:6}, (iv)), $\mathrm{g}\subset \mathcal{C}(\mathrm{g})$ and
$\mathrm{g}\subset \mathcal{C}(G)$ and so by (i),
$\psi(\mathrm{g}_{e_{1}})=\mathrm{g}$ and $\varphi(\mathrm{g}_{e_{1}})=\mathrm{g}$. For $\mathbb{K}=\mathbb{C}$, it is obvious.
\end{proof}
\medskip

\section{Locally dense orbit for subgroups of  $\mathbb{T}^{*}_{n}(\mathbb{K})$}
\
Note that if $G$ is an abelian subgroup of $\mathbb{T}^{*}_{n}(\mathbb{K})$, $U=\mathbb{K}^{*}\times \mathbb{K}^{n-1}$.
\begin{lem}\label{L:13} Let  $G$  be an abelian subgroup of
$\mathbb{T}^{*}_{n}(\mathbb{K})$. If  $\overset{\circ}{\overline{G(e_{1})}}\neq\emptyset$ $($resp.
$\overset{\circ}{\overline{\mathrm{g}_{e_{1}}}}\neq\emptyset)$,  then $\mathrm{rank}(F_{G})=n-1$ $($resp.
 $\mathrm{rank}(F_{\mathrm{g}})=n-1)$.
  \end{lem}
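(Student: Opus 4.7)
The plan is to show that the orbit $G(e_{1})$ (resp.\ the additive orbit $\mathrm{g}_{e_{1}}$) is contained in the linear subspace $H_{e_{1}} := \mathbb{K}e_{1} + F_{G}$ (resp.\ $\mathbb{K}e_{1} + F_{\mathrm{g}}$) of $\mathbb{K}^{n}$. Since a proper linear subspace is nowhere dense, the nonempty-interior hypothesis will force $H_{e_{1}} = \mathbb{K}^{n}$, and a direct dimension count then gives $\textrm{rank}(F_{G}) = n-1$ (resp.\ $\textrm{rank}(F_{\mathrm{g}}) = n-1$).

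First, I would record the a priori bound $F_{G},\, F_{\mathrm{g}} \subset \textrm{vect}(e_{2},\dots,e_{n})$. This is immediate from the triangular form: for any $B \in \mathbb{T}_{n}(\mathbb{K})$ the matrix $\widetilde{B} = B - \mu_{B} I_{n}$ is strictly lower triangular, so $\widetilde{B}e_{i} \in \textrm{vect}(e_{i+1},\dots,e_{n})$ for $i < n$. In particular $e_{1} \notin F_{G}$, $e_{1} \notin F_{\mathrm{g}}$, and the bounds $\textrm{rank}(F_{G}),\, \textrm{rank}(F_{\mathrm{g}}) \leq n-1$ are automatic.

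For the $G$-case, the key observation is the one-line identity: for every $A \in G$,
\[
A e_{1} \;=\; \mu_{A} e_{1} + \widetilde{A} e_{1} \;\in\; \mathbb{K}e_{1} + F_{G} \;=\; H_{e_{1}},
\]
since $\widetilde{A}e_{1}$ is one of the generators of $F_{G}$. Thus $G(e_{1}) \subset H_{e_{1}}$, and so $\overline{G(e_{1})} \subset H_{e_{1}}$. Combined with $\overset{\circ}{\overline{G(e_{1})}} \neq \emptyset$, this forces $H_{e_{1}} = \mathbb{K}^{n}$. Since $e_{1} \notin F_{G}$, the sum $\mathbb{K}e_{1} + F_{G}$ is direct, so $n = \dim H_{e_{1}} = 1 + \textrm{rank}(F_{G})$, giving $\textrm{rank}(F_{G}) = n-1$.

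The $\mathrm{g}$-case is handled identically: the same identity $B e_{1} = \mu_{B} e_{1} + \widetilde{B} e_{1} \in \mathbb{K}e_{1} + F_{\mathrm{g}}$, valid for every $B \in \mathrm{g}$, yields $\mathrm{g}_{e_{1}} \subset \mathbb{K}e_{1} + F_{\mathrm{g}}$, and the dimension count proceeds without change. The argument is essentially a direct consequence of the triangular form, so there is no real obstacle; in particular, the more elaborate $G$-invariance of $H_{u}$ supplied by Lemma~\ref{L:12} is not needed for this particular statement.
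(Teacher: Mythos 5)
Your proof is correct, and it follows the same overall scheme as the paper's: contain $\overline{G(e_1)}$ in the subspace $H_{e_1} = \mathbb{K}e_1 + F_G$, use the nonempty-interior hypothesis to force $H_{e_1} = \mathbb{K}^n$, and then do a dimension count. The one genuine difference is how you establish $G(e_1) \subset H_{e_1}$: the paper cites Lemma~\ref{L:12} to get the stronger statement that $H_{e_1}$ is $G$-invariant, whereas you observe directly that $Ae_1 = \mu_A e_1 + \widetilde{A}e_1 \in \mathbb{K}e_1 + F_G$ for every $A \in G$, since $\widetilde{A}e_1$ is by definition a generator of $F_G$. You are right that this is all that is needed: one does not need $H_u$ to be invariant for all $u$, only that $e_1$ itself never leaves $H_{e_1}$, which is immediate from the triangular form. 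Your treatment of the $\mathrm{g}$-case is likewise uniform and self-contained, which is a slight improvement, since the paper's ``same proof works for $F_{\mathrm{g}}$'' implicitly relies on the $\mathrm{g}$-analogue of Lemma~\ref{L:12} that is not stated. You also make explicit the remark that $e_1 \notin F_G$ (because $F_G \subset \textrm{vect}(e_2,\dots,e_n)$), so the sum $\mathbb{K}e_1 + F_G$ is direct, which the paper leaves tacit when deducing $\mathrm{rank}(F_G) = n-1$ from $H_{e_1} = \mathbb{K}^n$.
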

\medskip

\begin{proof} Suppose that  $\overset{\circ}{\overline{G(e_{1})}}\neq\emptyset$. Let $H_{e_{1}}$ be
the vector subspace of $\mathbb{K}^{n}$ generated by $e_{1}$ and $F_{G}$. So by Lemma ~\ref{L:12}, $H_{e_{1}}$ is
$G$-invariant. Hence  $\overline{G(e_{1})}\subset H_{e_{1}}$ and therefore $\overset{\circ}{H_{e_{1}}}\neq\emptyset$.
Hence, $H_{e_{1}}=\mathbb{K}^{n}$ and so $\mathrm{rank}(F_{G})=n-1$.
\
\\
The same proof is true for $F_{g}$ in place of $F_{G}$.
\end{proof}
\smallskip

\begin{lem}\label{L:013} Let $G$ be an abelian subgroup of $\mathbb{T}^{*}_{n}(\mathbb{C})$. Then every locally dense orbit of $G$ is
 dense in $\mathbb{C}^{n}$.
\end{lem}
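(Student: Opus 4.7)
The idea is to show that $F := \overline{G(v)} \cap U$, with $U := \mathbb{C}^{*} \times \mathbb{C}^{n-1}$, is clopen in the \emph{connected} open set $U$, so that $F = U$; since $\overline{U} = \mathbb{C}^{n}$ this will force $\overline{G(v)} = \mathbb{C}^{n}$. The connectedness of $U$, stemming from the connectedness of $\mathbb{C}^{*}$, is the decisive topological difference from the real setting, where $\mathbb{R}^{*} \times \mathbb{R}^{n-1}$ splits into several components.

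First I would verify that $v \in U$ and that $F$ has nonempty interior in $U$. The hyperplane $\{0\} \times \mathbb{C}^{n-1}$ is $G$-invariant, because every $A \in \mathbb{T}^{*}_{n}(\mathbb{C})$ acts on the first coordinate by multiplication by its eigenvalue; so if $v_{1}$ were $0$ then $\overline{G(v)}$ would lie in a proper subspace and have empty interior, contradicting local density. Hence $v \in U$, and the nonempty interior of $\overline{G(v)}$ in $\mathbb{C}^{n}$ necessarily meets the dense open set $U$, giving $\mathrm{int}_{U}(F) \neq \emptyset$; closedness of $F$ in $U$ is by definition.

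The central step will be the complex analog of the minimality statement of Proposition~\ref{p:6}: for every $w \in F$, $\overline{G(w)} \cap U = F$. This is the place where one must invoke Lemma~\ref{L:8} with $\mathbb{K} = \mathbb{C}$: from $B_{m}v \to w$ one obtains $B_{m}^{-1}w \to v$, so $v \in \overline{G(w)}$; continuity of each $C \in G$ then propagates this to $G(v) \subset \overline{G(w)}$ and, by symmetry, $G(w) \subset \overline{G(v)}$, whence the two closures coincide on $U$. I expect this to be the main obstacle — not because the computation is hard, but because Proposition~\ref{p:6} is stated for the block form $\mathcal{K}^{*}_{\eta,r,s}(\mathbb{R})$ and one has to verify that its proof specializes cleanly to a single complex triangular block.

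Finally, to show $F$ is open: given $w \in F$, the minimality just proved implies that $F = \overline{G(w)} \cap U$ contains an open ball $B$ around some $v_{0}$. Choosing $A_{m} \in G$ with $A_{m}w \to v_{0}$, one has $A_{m}w \in B$ for large $m$, so $w \in A_{m}^{-1}(B)$, which is open. Since $F$ is $G$-invariant (as the intersection of the $G$-invariant sets $\overline{G(v)}$ and $U$), $A_{m}^{-1}(B) \subset A_{m}^{-1}(F) = F$, producing the required open neighborhood of $w$ in $F$. Combining this openness with closedness of $F$ in the connected set $U$ and the fact that $F$ is nonempty (it contains $v$) yields $F = U$, and density of $U$ in $\mathbb{C}^{n}$ finishes the proof.
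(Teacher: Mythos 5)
Your proposal is correct and follows essentially the same route as the paper: both show that $\overline{G(v)}\cap U$ is a nonempty clopen subset of the connected set $U=\mathbb{C}^{*}\times\mathbb{C}^{n-1}$, with openness coming from minimality of orbits in $U$ (the paper cites the complex analog of Proposition~\ref{p:6} from \cite{aAhM06}; you re-derive it from Lemma~\ref{L:8} with $\mathbb{K}=\mathbb{C}$, which is equivalent), and then use density of $U$ in $\mathbb{C}^{n}$. Your pull-back argument for openness merely spells out the step the paper states tersely as $v\in\overset{\circ}{\overline{G(v)}}\cap U$.
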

\medskip

\begin{proof}If $O$ is a locally dense orbit in $\mathbb{C}^{n}$ (i.e.
$\overset{\circ}{\overline{O}}\neq\emptyset$) then $O\subset U$.  Then $\overline{O}\cap U$ is a nonempty
closed subset in $U$. Let's show that $\overline{O}\cap U$ is open in $U$. Let $v \in\overline{O}\cap U$. Since $O$ is
minimal in $U$ (\cite{aAhM06}, Corollary 3.3) then $\overline{O}\cap U =\overline{G(v)}\cap U$. So,
$\overset{\circ}{\overline{O}}\cap U =\overset{\circ}{\overline{G(v)}}\cap U\neq\emptyset$.
Then $v \in\overset{\circ}{\overline{G(v)}}\cap U\subset \overline{O}\cap U$. Since $U$ is connected, $\overline{O}\cap U=U$,
so $\mathbb{C}^{n}=\overline{U}\subset\overline{O}$.
\end{proof}
\medskip

\begin{lem}\label{LL:13} Let  $G$  be an abelian subgroup of
$\mathbb{T}^{*}_{n}(\mathbb{K})$. If  $\overset{\circ}{\overline{G(e_{1})}}\neq\emptyset$ $($resp.
$\overset{\circ}{\overline{\mathrm{g}_{e_{1}}}}\neq\emptyset)$ then there exists an isomorphism $\varphi$ $($resp. $\psi)$ from
$\mathbb{K}^{n}$ to $\mathcal{C}(G)$. In particular,  $\mathcal{C}(G)=\varphi(\mathbb{K}^{n})$ $($resp. $\mathcal{C}(G)=\psi(\mathbb{K}^{n}))$.
\end{lem}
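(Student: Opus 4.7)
The plan is to invoke Proposition~\ref{p:8} to produce the candidate embedding and then close the loop using Corollary~\ref{C:7}. Given $\overset{\circ}{\overline{G(e_{1})}}\neq\emptyset$, Lemma~\ref{L:13} already yields $\mathrm{rank}(F_{G})=n-1$, so Proposition~\ref{p:8} hands us an injective linear map $\varphi\colon \mathbb{K}^{n}\to\mathbb{T}_{n}(\mathbb{K})$ satisfying $\varphi(v)e_{1}=v$ for every $v\in\mathbb{K}^{n}$ and $\mathcal{C}(G)\subset \varphi(\mathbb{K}^{n})$. The goal is to upgrade this inclusion to an equality; once this is done, the co-restriction $\varphi\colon \mathbb{K}^{n}\to\mathcal{C}(G)$ is automatically a linear isomorphism, which is precisely what is claimed.

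For the reverse inclusion I would first observe that $\mathcal{C}(G)$ is a closed vector subspace of $\mathbb{T}_{n}(\mathbb{K})$, being the intersection of the kernels of the linear maps $X\mapsto XB-BX$ for $B\in G$. Since $G$ is abelian, $G\subset \mathcal{C}(G)$, so $\overline{G}\subset \mathcal{C}(G)$. Next I would apply Corollary~\ref{C:7}(ii), which gives $\varphi(G(e_{1}))=G$; because $\varphi$ is an injective linear map between finite-dimensional spaces it is a homeomorphism onto its image $\varphi(\mathbb{K}^{n})$, hence
$$\overline{G}\;=\;\overline{\varphi(G(e_{1}))}\;=\;\varphi\bigl(\overline{G(e_{1})}\bigr).$$
The hypothesis that $\overline{G(e_{1})}$ has nonempty interior in $\mathbb{K}^{n}$ therefore translates into $\overline{G}$ containing a nonempty open subset of the $n$-dimensional space $\varphi(\mathbb{K}^{n})$. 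Since $\mathcal{C}(G)\supset \overline{G}$ is a vector subspace of $\varphi(\mathbb{K}^{n})$ that contains an open set of the ambient space, it must equal $\varphi(\mathbb{K}^{n})$, giving $\mathcal{C}(G)=\varphi(\mathbb{K}^{n})$.

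For the parenthetical ``$\psi$'' version the strategy is structurally identical: Lemma~\ref{L:13} again produces $\mathrm{rank}(F_{\mathrm{g}})=n-1$, Proposition~\ref{p:8} delivers $\psi$ with $\mathcal{C}(\mathrm{g})\subset \psi(\mathbb{K}^{n})$, and Lemma~\ref{L:6}(iv), which asserts $\mathrm{g}\subset \mathcal{C}(\mathrm{g})$, plays the role that abelianness of $G$ played above, so that $\overline{\mathrm{g}}\subset \mathcal{C}(\mathrm{g})$. Corollary~\ref{C:7}(iii) then gives $\psi(\mathrm{g}_{e_{1}})=\mathrm{g}$, and the same homeomorphism-and-subspace argument forces equality $\mathcal{C}(\mathrm{g})=\psi(\mathbb{K}^{n})$.

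The only step that could cause friction is the transfer of ``nonempty interior in $\mathbb{K}^{n}$'' for $\overline{G(e_{1})}$ to ``nonempty interior in $\varphi(\mathbb{K}^{n})$'' for $\overline{G}$; however, once one recognizes $\varphi$ as a linear bijection onto its $n$-dimensional image, hence a homeomorphism, this is automatic and no analytic input beyond what is already in Proposition~\ref{p:8} and Corollary~\ref{C:7} is required.
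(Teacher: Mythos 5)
Your argument is correct, and it is genuinely leaner than the one in the paper. Both begin the same way: Lemma~\ref{L:13} gives $\mathrm{rank}(F_{G})=n-1$ (resp.\ $\mathrm{rank}(F_{\mathrm{g}})=n-1$), Proposition~\ref{p:8} produces the injective linear map $\varphi$ (resp.\ $\psi$) with $\mathcal{C}(G)\subset\varphi(\mathbb{K}^{n})$, and Corollary~\ref{C:7} gives $\varphi(G(e_{1}))=G$ (resp.\ $\psi(\mathrm{g}_{e_{1}})=\mathrm{g}$), so that only the reverse inclusion remains. The paper handles this by splitting into the cases $\mathbb{K}=\mathbb{C}$ and $\mathbb{K}=\mathbb{R}$: for $\mathbb{C}$ it first invokes Lemma~\ref{L:013} to upgrade the nonempty-interior hypothesis to density of $G(e_{1})$ in all of $\mathbb{C}^{n}$, while for $\mathbb{R}$ it uses Lemma~\ref{L:10} to get density in the connected component $C_{e_{1}}$, then separately handles $-C_{e_{1}}$ via linearity, and finally takes closures using density of $U$ in $\mathbb{K}^{n}$. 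Your route sidesteps all of that: you observe that $\varphi$ is a homeomorphism onto its $n$-dimensional (hence closed) image, so $\overline{G}=\varphi(\overline{G(e_{1})})$ has nonempty interior in $\varphi(\mathbb{K}^{n})$, and since $\mathcal{C}(G)$ is a vector subspace of $\varphi(\mathbb{K}^{n})$ containing $\overline{G}$, it must be all of $\varphi(\mathbb{K}^{n})$. This uses only the raw nonempty-interior hypothesis (no density upgrade), needs no case split on $\mathbb{K}$, and dispenses with Lemmas~\ref{L:013} and~\ref{L:10}. One minor note: as you correctly identified via Corollary~\ref{C:7}(iii) and Lemma~\ref{L:6}(iv), the parenthetical ``resp.'' conclusion should read $\mathcal{C}(\mathrm{g})=\psi(\mathbb{K}^{n})$ (consistent with Proposition~\ref{p:8} and with how the result is used in Corollary~\ref{p:9}(2)); the statement as printed repeats $\mathcal{C}(G)$, which appears to be a typo.
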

\medskip

\begin{proof} Suppose that
$\overset{\circ}{\overline{G(e_{1})}}\neq\emptyset$. Then by Lemma ~\ref{L:13} and
Proposition ~\ref{p:8}, there exists an injective linear map  $\varphi\ :
\ \mathbb{K}^{n}\longrightarrow\ \mathbb{T}_{n}(\mathbb{K})$
such that  $\mathcal{C}(G)\subset \varphi(\mathbb{K}^{n})$. Let's
prove the inclusion  $\varphi(\mathbb{K}^{n})\subset \mathcal{C}(G)$:

Case $\mathbb{K}=\mathbb{C}$: By Corollary ~\ref{C:7}, (ii),  we have  $\varphi(G(e_{1})) = G$. Recall that here, $u_{0}=e_{1}$,
$U=\mathbb{C}^{*}\times\mathbb{C}^{n-1}$ and by Lemma \ref{L:013}, $G(e_{1})$ is dense in $U$, that is $U\subset \overline{G(e_{1})}$. Then since $\varphi$ is continuous,
we have $\varphi(U)\subset
\varphi(\overline{G(e_{1})})\subset\overline{\varphi(G(e_{1}))} =
\overline{G}$. Since $\mathcal{C}(G)$ is a vector subspace of $M_{n}(\mathbb{C})$, $\overline{G}\subset \mathcal{C}(G)$ and thus
 $\varphi(U)\subset \mathcal{C}(G)$. Since  $\overline{U} = \mathbb{C}^{n}$,
$\varphi\left(\mathbb{C}^{n}\right) =
\varphi\left(\overline{U}\right)\subset\overline{\varphi(U)}\subset
\mathcal{C}(G)$. As a consequence,  $\mathcal{C}(G)=\varphi(\mathbb{C}^{n})$.

Case $\mathbb{K}=\mathbb{R}$. By Corollary ~\ref{C:7}, (ii),  we have  $\varphi(G(e_{1})) = G$.  Recall that here, $u_{0}=e_{1}$, $U=\mathbb{R}^{*}\times\mathbb{R}^{n-1}$ and
 $C_{e_{1}}=\mathbb{R}^{*}_{+}\times\mathbb{R}^{n-1}$ is the connected component of $U$ containing $e_{1}$.
 By Lemma ~\ref{L:10}, $G(e_{1})$ is dense in $C_{e_{1}}$ hence $C_{e_{1}}\subset \overline{G(e_{1})}$. Since $\varphi$  is continuous, we have
$\varphi(C_{e_{1}})\subset
\varphi(\overline{G(e_{1})})\subset\overline{\varphi(G(e_{1}))} =
\overline{G}$.  Since $\mathcal{C}(G)$ is a vector subspace of $M_{n}(\mathbb{R})$, $\overline{G}\subset \mathcal{C}(G)$ and it follows that
 $\varphi(C_{e_{1}})\subset \mathcal{C}(G)$. Since $\varphi$ is linear,
 $\varphi(-C_{e_{1}}) = -\varphi(C_{e_{1}})\subset -\mathcal{C}(G)$. As $-\mathcal{C}(G) = \mathcal{C}(G)$ and $U = (-C_{e_{1}})\cup C_{e_{1}}$, it follows that
 $$\varphi(U) = \varphi(-C_{e_{1}}\cup C_{e_{1}}) = \varphi(-C_{e_{1}})\cup \varphi(C_{e_{1}})\subset \mathcal{C}(G).$$
  Since  $\overline{U} = \mathbb{R}^{n}$,
$\varphi\left(\mathbb{R}^{n}\right) =
\varphi\left(\overline{U}\right)\subset\overline{\varphi(U)}\subset
\mathcal{C}(G)$. As a consequence,  $\mathcal{C}(G)=\varphi(\mathbb{R}^{n})$.

The same proof is given for $\psi$.
\end{proof}

\begin{cor}\label{p:9} Let  $G$  be an abelian subgroup of
$\mathbb{T}^{*}_{n}(\mathbb{R})$. Then:
\begin{itemize}
  \item [(1)] If  $\overset{\circ}{\overline{G(e_{1})}}\neq\emptyset$  then  $f:=\varphi^{-1}\circ
\textrm{exp}_{/\mathbb{T}_{n}(\mathbb{R})}\circ\varphi: \mathbb{R}^{n}\
\longrightarrow \mathbb{R}^{n}$  is well defined and satisfies
\begin{itemize}
  \item [(i)] $f$ is a continuous open map
  \item[(ii)] $f(Be_{1})=e^{B}e_{1}$ \ for every $B\in \mathcal{C}(G)$. In particular, $f(\mathrm{g}_{e_{1}})=G^{+}(e_{1})$.
  \item [(iii)] $f^{-1}(G^{+}(e_{1})) = \mathrm{g}_{e_{1}}$.
 \item [(iv)]  $f(\mathbb{R}^{n})= C_{e_{1}} = \mathbb{R}^{*}_{+}\times\mathbb{R}^{n-1}$.
\end{itemize}

  \item [(2)] If
$\overset{\circ}{\overline{\mathrm{g}_{e_{1}}}}\neq\emptyset$  then  $h:=\psi^{-1}\circ
\textrm{exp}_{/\mathbb{T}_{n}(\mathbb{R})}\circ\psi\ : \mathbb{R}^{n}\
\longrightarrow \mathbb{R}^{n}$  is well defined and satisfies
\begin{itemize}
  \item [(i)] $h$ is a continuous open map
  \item[(ii)] $h(Be_{1})=e^{B}e_{1}$  for every $B\in \mathcal{C}(\mathrm{g})$. In particular, $h(\mathrm{g}_{e_{1}})=G^{+}(e_{1})$.
\item [(iii)] $h(\mathbb{R}^{n}) =  C_{e_{1}} = \mathbb{R}^{*}_{+}\times\mathbb{R}^{n-1}$.
\end{itemize}
\end{itemize}
\end{cor}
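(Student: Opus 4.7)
The plan is to combine the parametrization of $\mathcal{C}(G)$ by $\varphi$ (from Lemma~\ref{LL:13} and Corollary~\ref{C:7}) with the behavior of the exponential on $\mathcal{C}(G)$ (Lemma~\ref{L:6}) and its local-diffeomorphism property on $\mathbb{T}_n(\mathbb{R})$ (Corollary~\ref{C:5}). Since $\overset{\circ}{\overline{G(e_{1})}}\neq\emptyset$, Lemma~\ref{LL:13} gives that $\varphi:\mathbb{R}^{n}\to\mathcal{C}(G)$ is a linear isomorphism. For any $v\in\mathbb{R}^{n}$, the matrix $\varphi(v)\in\mathcal{C}(G)\subset\mathbb{T}_{n}(\mathbb{R})$ and $e^{\varphi(v)}\in\mathbb{T}_{n}^{+}(\mathbb{R})$ by Proposition~\ref{p:2}; a power-series argument shows that $e^{\varphi(v)}$ commutes with every element of $G$, so it lies in $\mathcal{C}(G)\cap\mathbb{T}_{n}^{+}(\mathbb{R})\subset\mathcal{C}(G)=\varphi(\mathbb{R}^{n})$, making $\varphi^{-1}$ applicable. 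This establishes that $f$ is well defined.

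For part~(i), since $\varphi$ is a linear homeomorphism onto $\mathcal{C}(G)$, it suffices to show that $\exp|_{\mathcal{C}(G)}:\mathcal{C}(G)\to\mathcal{C}(G)$ is a local diffeomorphism. First, $\mathcal{C}(G)$ is closed under the commutator bracket (for $A,X\in\mathcal{C}(G)$ and $C\in G$, $[A,X]C=C[A,X]$), and left-multiplication by $e^{A}\in\mathcal{C}(G)$ preserves $\mathcal{C}(G)$; by the standard formula for $d\exp_{A}$, the differential therefore maps $\mathcal{C}(G)$ into itself. Being invertible on the ambient $\mathbb{T}_{n}(\mathbb{R})$ (Corollary~\ref{C:5}), its restriction to the finite-dimensional subspace $\mathcal{C}(G)$ is injective, hence a linear automorphism of $\mathcal{C}(G)$. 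The inverse function theorem then gives that $\exp|_{\mathcal{C}(G)}$, and consequently $f$, is continuous and open.

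Properties (ii)--(iv) follow by translating through $\varphi$. For (ii), Corollary~\ref{C:7}(i) yields $\varphi(Be_{1})=B$ whenever $B\in\mathcal{C}(G)$, so $f(Be_{1})=\varphi^{-1}(e^{B})$; applying Corollary~\ref{C:7}(i) a second time to $e^{B}\in\mathcal{C}(G)$ gives $\varphi^{-1}(e^{B})=e^{B}e_{1}$. Combining this identity with $\mathrm{g}\subset\mathcal{C}(G)$ (Lemma~\ref{L:6}(iv)) and $\exp(\mathrm{g})=G^{+}$ (Lemma~\ref{L:6}(i)) yields $f(\mathrm{g}_{e_{1}})=G^{+}(e_{1})$. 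For (iii), if $f(v)=Ae_{1}$ with $A\in G^{+}$, then $e^{\varphi(v)}=\varphi(Ae_{1})=A\in G$, forcing $\varphi(v)\in\mathrm{g}$ and hence $v=\varphi(v)e_{1}\in\mathrm{g}_{e_{1}}$. For (iv), Lemma~\ref{L:6}(ii) identifies $\exp(\mathcal{C}(G))$ with $\mathcal{C}(G)\cap\mathbb{T}_{n}^{+}(\mathbb{R})$, and since $\varphi^{-1}(B)=Be_{1}$ has first coordinate equal to the (positive) diagonal entry of $B\in\mathbb{T}_{n}^{+}(\mathbb{R})$, the image of $f$ is exactly $C_{e_{1}}=\mathbb{R}^{*}_{+}\times\mathbb{R}^{n-1}$.

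Part~(2) for $h$ proceeds identically, with $\psi$ and $\mathcal{C}(\mathrm{g})$ in place of $\varphi$ and $\mathcal{C}(G)$; the inclusion $\mathrm{g}\subset\mathcal{C}(\mathrm{g})$ of Lemma~\ref{L:6}(iv) plays the role of $\mathrm{g}\subset\mathcal{C}(G)$ in step~(ii). I expect the only non-routine point to be the openness assertion in~(i): although $\exp$ is a local diffeomorphism on all of $\mathbb{T}_{n}(\mathbb{R})$, transferring this to the restricted domain $\mathcal{C}(G)$ requires verifying the Lie-algebra closure of $\mathcal{C}(G)$ and that the ambient differential, once restricted, remains an automorphism of $\mathcal{C}(G)$.
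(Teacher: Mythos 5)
Your proposal is correct and follows essentially the same route as the paper's proof: use Lemma~\ref{LL:13} to identify $\mathcal{C}(G)=\varphi(\mathbb{R}^{n})$, Lemma~\ref{L:6}(ii) to see that $\exp$ stabilizes $\mathcal{C}(G)$, Corollary~\ref{C:5} for openness, and Corollary~\ref{C:7} plus Lemma~\ref{L:6}(i),(iv) for (ii)--(iv). The only place you diverge is in part (i), where you expand a point the paper leaves terse: the paper jumps from ``$\exp$ is a local diffeomorphism on $\mathbb{T}_{n}(\mathbb{R})$'' to ``hence $f$ is a local diffeomorphism,'' while you explicitly verify that $d\exp$ preserves the proper subspace $\mathcal{C}(G)\subset\mathbb{T}_{n}(\mathbb{R})$ before applying the inverse function theorem. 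Your verification is valid, but the Lie-bracket and $\mathrm{ad}$-series machinery is heavier than needed: once one knows $\exp(\mathcal{C}(G))\subset\mathcal{C}(G)$ and that $\mathcal{C}(G)$ is a linear subspace, the directional derivative $\lim_{t\to 0}\bigl(\exp(A+tX)-\exp(A)\bigr)/t$ automatically lies in $\mathcal{C}(G)$ for $A,X\in\mathcal{C}(G)$, so $d\exp_{A}(\mathcal{C}(G))\subset\mathcal{C}(G)$ without invoking the closed-under-bracket structure; injectivity on the ambient space then forces it to be an automorphism of $\mathcal{C}(G)$. Either way the conclusion is the same, and the remainder of your argument matches the paper's.
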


\begin{proof} {\it Proof of $(1)$}. (i): By Lemma ~\ref{LL:13},  $\varphi:  \ \mathbb{R}^{n } \ \longrightarrow \ \
\mathcal{C}(G)$ is an isomorphism. By Lemma ~\ref{L:6}, (ii),   $\mathrm{exp}(\mathcal{C}(G))\subset \mathcal{C}(G)$, \
so Lemma \ref{LL:13} implies that  $\mathrm{exp}_{/\mathbb{T}_{n}(\mathbb{R})}(\varphi(\mathbb{R}^{n}))\subset
\varphi(\mathbb{R}^{n})$. Thus the map  $f: = \varphi^{-1}\circ
\mathrm{exp}_{/\mathbb{T}_{n}(\mathbb{R})}\circ\varphi: \mathbb{R}^{n}\
\longrightarrow \mathbb{R}^{n}$  is well defined.
\
\\
(ii): By Corollary
~\ref{C:5},  $\textrm{exp}_{/\mathbb{T}_{n}(\mathbb{R})}: \mathbb{T}_{n}(\mathbb{R})\longrightarrow
\mathbb{T}^{*}_{n}(\mathbb{R})$ is a local diffeomorphism. Hence
$f$ is a local diffeomorphism and therefore $f$ is a continuous open map. By Corollary ~\ref{C:7}, $\varphi(Be_{1})=B$,
for every $B\in \mathcal{C}(G)$. Therefore, for every $B\in \mathcal{C}(G)$, we have

\begin{align*}
  f(Be_{1}) & =\varphi^{-1}\left[
\textrm{exp}_{/\mathbb{T}_{n}(\mathbb{R})}\left(B)\right)\right]
\\
  \  &  =\varphi^{-1}\left(e^{B}\right)\\
  \ & =e^{B}e_{1}.
\end{align*}
(iii): Since  $\varphi(G^{+}(e_{1}))=G^{+}$, $\varphi^{-1}(\mathrm{g})=\mathrm{g}_{e_{1}}$ and by Lemma ~\ref{L:6},
$\textrm{exp}^{-1}_{/\mathbb{T}_{n}(\mathbb{R})}(G^{+})=\mathrm{g}$, it follows that
\begin{align*}
  f^{-1}(G^{+}(e_{1})) & =\varphi^{-1}\left[
\mathrm{exp}^{-1}_{/\mathbb{T}_{n}(\mathbb{R})}\left(\varphi(G^{+}(e_{1}))\right)\right]
\\
  \  &  =\varphi^{-1}\left(
\textrm{exp}^{-1}_{/\mathbb{T}_{n}(\mathbb{R})}(G^{+})\right)\\
 \ & =\varphi^{-1}(\mathrm{g})\\
 \ & =\mathrm{g}_{e_{1}}.
\end{align*}
\
\\
(iv): First, we have $\varphi^{-1}(\mathcal{C}(G)\cap \mathbb{T}_{n}^{+}(\mathbb{R}))=\mathbb{R}^{*}_{+}\times\mathbb{R}^{n-1}$: indeed,
by Proposition ~\ref{p:8},(i), for every $v=[v_{1},\dots,v_{n}]^{T}\in\mathbb{R}^{n}$, we have
$\varphi(v)\in\mathbb{T}_{n}(\mathbb{R})$ and $\varphi(v)e_{1}=v$, so $\varphi(v)$ has the following form
$$\varphi(v)=\left[\begin{array}{cccc}
                  v_{1} & \ & \ & 0 \\
                  v_{2} & v_{1} & \ & \ \\
                  \vdots & * & \ddots & \ \\
                  v_{n} & * & * & v_{1}
                \end{array}
\right].$$
It follows that $\varphi(v)\in\mathbb{T}^{+}_{n}(\mathbb{R})$ if and only if $v_{1}>0$, that is
$v\in\mathbb{R}^{*}_{+}\times\mathbb{R}^{n-1}$.
By Lemma ~\ref{LL:13}, $\varphi(\mathbb{R}^{n})=\mathcal{C}(G)$, it follows that
$\varphi^{-1}(\mathcal{C}(G)\cap \mathbb{T}_{n}^{+}(\mathbb{R}))=\mathbb{R}^{*}_{+}\times\mathbb{R}^{n-1}$.\

Now, as $\varphi:\mathbb{R}^{n}\longrightarrow \mathcal{C}(G)$ is an isomorphism then
\begin{align*}
  h(\mathbb{R}^{n}) & =\varphi^{-1}\left[\textrm{exp}_{/\mathbb{T}_{n}(\mathbb{R})}\left(\mathcal{C}(G)\right)\right] \\
 \ & =\varphi^{-1}(\mathcal{C}(G)\cap \mathbb{T}_{n}^{+}(\mathbb{R})), \textrm{by Lemma}~\ref{L:6},(ii)\\
 \ & =\mathbb{R}^{*}_{+}\times\mathbb{R}^{n-1}.
\end{align*}
\
\\
{\it Proof of $(2)$}. (i): Analogous to the proof of $(1)$, the map $$h:=\psi^{-1}\circ
\mathrm{exp}_{/\mathbb{T}_{n}(\mathbb{R})}\circ\psi: \mathbb{R}^{n}\
\longrightarrow \mathbb{R}^{n}$$  is well defined and it is a local diffeomorphism, hence $h$ is an open map.
\
\\
(ii): For every $B\in \mathcal{C}(\mathrm{g})$  we have
 $e^{B}\in \mathcal{C}(G^{+})\cap \mathbb{T}_{n}^{+}(\mathbb{R})$ since $\mathcal{C}(\mathrm{g})= \mathcal{C}(G^{+})$ (Lemma  ~\ref{L:6}, (iii)) and
by applying Lemma ~\ref{L:6}, (ii) to $G^{+}$. As by
Corollary ~\ref{C:7}, (i), $\psi(Be_{1})=B$, we obtain:
\begin{align*}
  h(Be_{1}) & =\psi^{-1}\left[
\textrm{exp}_{/\mathbb{T}_{n}(\mathbb{R})}\left(B)\right)\right]
\\
  \  & =\psi^{-1}(e^{B})\\
  \ & =e^{B}e_{1}.
\end{align*}
Hence  $$h(\mathrm{g}_{e_{1}})= \textrm{exp}(\mathrm{g})e_{1}=G^{+}(e_{1}), \ \ \ \mathrm{by \ Lemma}~\ref{L:6},(i).$$
\
\\
(iii): we apply the same proof as for (1), (iv) above.
\end{proof}
\medskip

\begin{cor}\label{p:97} Let  $G$  be an abelian subgroup of
$\mathbb{T}^{*}_{n}(\mathbb{C})$. Then:
\begin{itemize}
  \item [(1)] If  $\overset{\circ}{\overline{G(e_{1})}}\neq\emptyset$  then  $f:=\varphi^{-1}\circ
\textrm{exp}_{/\mathbb{T}_{n}(\mathbb{C})}\circ\varphi: \mathbb{C}^{n}\
\longrightarrow \mathbb{C}^{n}$  is well defined and satisfies
\begin{itemize}
  \item [(i)] $f$ is a continuous open map
  \item[(ii)] $f(Be_{1})=e^{B}e_{1}$ \ for every $B\in \mathcal{C}(G)$. In particular, $f(\mathrm{g}_{e_{1}})=G(e_{1})$.
  \item [(iii)] $f^{-1}(G(e_{1})) = \mathrm{g}_{e_{1}}$.
 \item [(iv)]  $f(\mathbb{C}^{n})= \mathbb{C}^{*}\times\mathbb{C}^{n-1}$.
\end{itemize}

  \item [(2)] If
$\overset{\circ}{\overline{\mathrm{g}_{e_{1}}}}\neq\emptyset$  then  $h:=\psi^{-1}\circ
\textrm{exp}_{/\mathbb{T}_{n}(\mathbb{C})}\circ\psi\ : \mathbb{C}^{n}\
\longrightarrow \mathbb{C}^{n}$  is well defined and satisfies
\begin{itemize}
  \item [(i)] $h$ is a continuous open map
  \item[(ii)] $h(Be_{1})=e^{B}e_{1}$  for every $B\in \mathcal{C}(\mathrm{g})$. In particular, $h(\mathrm{g}_{e_{1}})=G(e_{1})$.
\item [(iii)] $h(\mathbb{C}^{n}) = \mathbb{C}^{*}\times\mathbb{C}^{n-1}$.
\end{itemize}
\end{itemize}
\end{cor}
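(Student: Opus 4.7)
The plan is to follow essentially the same argument as in the proof of Corollary~\ref{p:9}, noting that in the complex setting several complications collapse: $\mathbb{C}^{*}$ is connected so there is no analog of the component $C_{e_{1}}$ versus $U$, and the map $\textrm{exp}: \mathbb{T}_{n}(\mathbb{C}) \longrightarrow \mathbb{T}_{n}^{*}(\mathbb{C})$ is surjective (complex version of Proposition~\ref{p:2}), so $G^{+}$ is replaced throughout by $G$ itself.

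For part (1)(i), I would first invoke Lemma~\ref{LL:13} (whose proof already treats $\mathbb{K} = \mathbb{C}$) to obtain that $\varphi: \mathbb{C}^{n} \longrightarrow \mathcal{C}(G)$ is a linear isomorphism. Combined with the complex analog of Lemma~\ref{L:6}(ii), namely $\textrm{exp}(\mathcal{C}(G)) \subset \mathcal{C}(G) \cap \mathbb{T}_{n}^{*}(\mathbb{C})$ (which follows from Proposition~\ref{p:3} applied in $\mathbb{T}_{n}(\mathbb{C})$), the composition $f = \varphi^{-1} \circ \textrm{exp}_{/\mathbb{T}_{n}(\mathbb{C})} \circ \varphi$ is well defined. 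That $f$ is a continuous open map then follows from the fact that $\textrm{exp}_{/\mathbb{T}_{n}(\mathbb{C})}$ is a local diffeomorphism (by Proposition~\ref{p:5}, whose hypothesis is met for any $A \in \mathbb{T}_{n}(\mathbb{C})$ since all eigenvalues of such $A$ coincide), conjugated by the linear isomorphism $\varphi$.

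For (1)(ii), Corollary~\ref{C:7}(i) gives $\varphi(Be_{1}) = B$ for $B \in \mathcal{C}(G)$, hence
$$f(Be_{1}) = \varphi^{-1}\bigl(e^{B}\bigr) = e^{B}e_{1}.$$
Since $G \subset \mathcal{C}(G)$ (as $G$ is abelian) and $\textrm{exp}(\mathrm{g}) = G$ (the complex analog of Proposition~\ref{p:2} makes $\textrm{exp}: \mathrm{g} \to G$ surjective), the particular case $B \in \mathrm{g}$ yields $f(\mathrm{g}_{e_{1}}) = \textrm{exp}(\mathrm{g})e_{1} = G(e_{1})$. For (1)(iii), using Corollary~\ref{C:7}(ii) we compute
$$f^{-1}(G(e_{1})) = \varphi^{-1}\bigl(\textrm{exp}^{-1}(\varphi(G(e_{1})))\bigr) = \varphi^{-1}\bigl(\textrm{exp}^{-1}(G)\bigr) = \varphi^{-1}(\mathrm{g}) = \mathrm{g}_{e_{1}}.$$
For (1)(iv), the key observation is that $\varphi(v)$ has $v_{1}$ on its diagonal, so $\varphi(v) \in \mathbb{T}_{n}^{*}(\mathbb{C})$ iff $v_{1} \neq 0$; hence $\varphi^{-1}(\mathcal{C}(G) \cap \mathbb{T}_{n}^{*}(\mathbb{C})) = \mathbb{C}^{*} \times \mathbb{C}^{n-1}$, and combining with surjectivity of $\textrm{exp}$ we conclude $f(\mathbb{C}^{n}) = \varphi^{-1}(\textrm{exp}(\mathcal{C}(G))) = \mathbb{C}^{*} \times \mathbb{C}^{n-1}$.

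Part (2) follows by the same template with $\psi$ in place of $\varphi$ and $\mathcal{C}(\mathrm{g})$ in place of $\mathcal{C}(G)$, using the complex version of Lemma~\ref{L:6}(iii) which identifies $\mathcal{C}(\mathrm{g})$ with $\mathcal{C}(G)$ in this setting (since $\textrm{exp}(\mathrm{g}) = G$). The only place requiring real vigilance is verifying that the commutativity and exponential identities of Section~3, originally stated for $\mathcal{K}_{\eta,r,s}(\mathbb{R})$, transport verbatim to the $\mathbb{T}_{n}(\mathbb{C})$ setting; this is where Proposition~\ref{p:3} and Lemmas~\ref{L:3}, \ref{L:4} do the genuine work, and they are already available in complex form. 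No step presents a genuine obstacle; the argument is a direct translation of the real case with simplifications.
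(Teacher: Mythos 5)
Your proof is correct and follows essentially the same approach as the paper: Lemma~\ref{LL:13} to get $\varphi$ as an isomorphism onto $\mathcal{C}(G)$, the local diffeomorphism property of $\textrm{exp}_{/\mathbb{T}_{n}(\mathbb{C})}$ for openness, Corollary~\ref{C:7} for the identity $\varphi(Be_{1})=B$, the shape of $\varphi(v)$ for~(iv), and the collapse $G^{+}=G$, $\mathcal{C}(\mathrm{g})=\mathcal{C}(G)$ over $\mathbb{C}$. The only cosmetic difference is that you spell out (i) and (iii) directly, whereas the paper delegates those parts to Corollaries~6.4 and~6.5 of~\cite{aAhM06}.
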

\medskip

\begin{proof} {\it Proof of $(1)$.} Since $\overset{\circ}{\overline{G(e_{1})}}\neq\emptyset$ then by Lemma~\ref{L:013}, $\overline{G(e_{1})}=\mathbb{C}^{n}$.
 So (by Corollary 6.5, in ~\cite{aAhM06}), $f:=\varphi^{-1}\circ
\textrm{exp}_{/\mathbb{T}_{n}(\mathbb{C})}\circ\varphi: \mathbb{C}^{n}\
\longrightarrow \mathbb{C}^{n}$  is well defined and satisfies (i) and (iii).
\
\\
(ii): By (\cite{aAhM06}, Corollary 3.7),  $\textrm{exp}_{/\mathbb{T}_{n}(\mathbb{C})}: \mathbb{T}_{n}(\mathbb{C})\longrightarrow
\mathbb{T}^{*}_{n}(\mathbb{C})$ is a local diffeomorphism. Hence
$f$ is a local diffeomorphism and therefore $f$ is a continuous open map. By Corollary ~\ref{C:7}, $\varphi(Be_{1})=B$,
for every $B\in \mathcal{C}(G)$. Therefore, for every $B\in \mathcal{C}(G)$, we have

\begin{align*}
  f(Be_{1}) & =\varphi^{-1}\left[
\textrm{exp}_{/\mathbb{T}_{n}(\mathbb{R})}\left(B)\right)\right]
\\
  \  &  =\varphi^{-1}\left(e^{B}\right)\\
  \ & =e^{B}e_{1}.
\end{align*}
\
\\
(iv): First, we have $\varphi^{-1}(\mathcal{C}(G)\cap \mathbb{T}_{n}^{*}(\mathbb{C}))=\mathbb{C}^{*}\times\mathbb{C}^{n-1}$: indeed,
by Proposition ~\ref{p:8},(i), for every $v=[v_{1},\dots,v_{n}]^{T}\in\mathbb{C}^{n}$, we have
$\varphi(v)\in\mathbb{T}_{n}(\mathbb{C})$ and $\varphi(v)e_{1}=v$, so $\varphi(v)$ has the following form
$$\varphi(v)=\left[\begin{array}{cccc}
                  v_{1} & \ & \ & 0 \\
                  v_{2} & v_{1} & \ & \ \\
                  \vdots & * & \ddots & \ \\
                  v_{n} & * & * & v_{1}
                \end{array}
\right].$$
It follows that $\varphi(v)\in\mathbb{T}^{*}_{n}(\mathbb{C})$ if and only if $v_{1}\neq 0$, that is
$v\in\mathbb{C}^{*}\times\mathbb{C}^{n-1}$.
By Lemma ~\ref{LL:13}, $\varphi(\mathbb{C}^{n})=\mathcal{C}(G)$, it follows that
$\varphi^{-1}(\mathcal{C}(G)\cap \mathbb{T}_{n}^{*}(\mathbb{C}))=\mathbb{C}^{*}\times\mathbb{C}^{n-1}$.\

Now, as $\varphi:\mathbb{C}^{n}\longrightarrow \mathcal{C}(G)$ is an isomorphism then
\begin{align*}
  h(\mathbb{C}^{n}) & =\varphi^{-1}\left[\textrm{exp}_{/\mathbb{T}_{n}(\mathbb{C})}\left(\mathcal{C}(G)\right)\right] \\
 \ & =\varphi^{-1}(\mathcal{C}(G)\cap \mathbb{T}_{n}^{*}(\mathbb{C})), \textrm{by (\cite{aAhM06}, Lemma 4.2, ii))}\\
 \ & =\mathbb{C}^{*}\times\mathbb{C}^{n-1}.
\end{align*}
\
\\
(2): Similar considerations apply for $\overset{\circ}{\overline{\mathrm{g}_{e_{1}}}}\neq\emptyset$ and by using (Corollary 6.4, in ~\cite{aAhM06}).
\end{proof}
\medskip

\section{{\bf Locally dense orbit for subgroups of  $\mathcal{K}^{*}_{\eta,r,s}(\mathbb{R})$}}
\bigskip

\subsection{Case where $G$ is a subgroup of  $\mathbb{B}_{m}^{*}(\mathbb{R})$}
\
\\
Let $G$ be an abelian subgroup of
 $\mathbb{B}_{m}^{*}(\mathbb{R})$. In this case $G^{+}=G$. Recall that $H=\{(u,\overline{u}): \ u\in
\mathbb{C}^{m}\}\subset\mathbb{C}^{2m}$ and  that for every  $B\in G$, $$B^{\prime}:= Q^{-1}BQ =
\mathrm{diag}\left(B^{\prime}_{1},\ \overline{B^{\prime}_{1}}\right)$$  where \;
 $B^{\prime}_{1}\in \mathbb{T}_{m}^{*}(\mathbb{C})$ and $Q\in \textrm{GL}(2m, \mathbb{C})$ is the
matrix of basis change from $\mathcal{B}_{0}$ to $\mathcal{C}_{0}$. Moreover, $Q^{-1}(\mathbb{R}^{2m})=H$ (see Lemma ~\ref{L:001}). Denote by
 $G^{\prime}_{1}:=\{B^{\prime}_{1}\in\mathbb{T}^{*}_{m}(\mathbb{C}): \ B\in G\}$. Then $G^{\prime}_{1}$ is an abelian subgroup of $\mathbb{T}^{*}_{m}(\mathbb{C})$.
\medskip

\begin{prop}\label{p:10} Let  $G$ be an abelian subgroup of
$\mathbb{B}_{m}^{*}(\mathbb{R})$.
\begin{itemize}
  \item [(1)] If \;
$\overset{\circ}{\overline{G(e_{1})}}\neq\emptyset$ then there exists a map  $f: \mathbb{R}^{2m}\
\longrightarrow \mathbb{R}^{2m}$  satisfying
\begin{itemize}
  \item [(i)] $f$ is continuous and open
  \item[(ii)] $f(Be_{1}) = e^{B}e_{1}$ \ for every $B\in \mathcal{C}(G)$.
  \item [(iii)]$f^{-1}(G(e_{1})) = \mathrm{g}_{e_{1}}$.
 \item [(iv)]  $f(\mathbb{R}^{2m}) = (\mathbb{R}^{2}\backslash\{(0,0)\})\times\mathbb{R}^{2m-2}$.
\end{itemize}

  \item [(2)] If $\overset{\circ}{\overline{\mathrm{g}_{e_{1}}}}\neq\emptyset$
then there exists a map \ $h: \mathbb{R}^{2m}\
\longrightarrow \mathbb{R}^{2m}$  satisfying
\begin{itemize}
\item [(i)] $h$ is continuous and open
  \item [(ii)]
$h(Be_{1})=e^{B}e_{1}$ for every $B\in\mathcal{C}(\mathrm{g})$. In particular,
$h(\mathrm{g}_{e_{1}})=G(e_{1})$.
 \item [(iii)] $h(\mathbb{R}^{2m}) = (\mathbb{R}^{2}\backslash\{(0,0)\})\times\mathbb{R}^{2m-2}$.
\end{itemize}
\end{itemize}
\end{prop}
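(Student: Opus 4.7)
The plan is to reduce Proposition~\ref{p:10} to the complex case established in Corollary~\ref{p:97} via the basis change $Q$ from $\mathcal{B}_{0}$ to $\mathcal{C}_{0}$. By Lemma~\ref{L:1}, for every $B\in G\subset\mathbb{B}_{m}^{*}(\mathbb{R})$ one has $Q^{-1}BQ=\mathrm{diag}(B_{1}',\overline{B_{1}'})$ with $B_{1}'\in\mathbb{T}_{m}^{*}(\mathbb{C})$, so $G_{1}':=\{B_{1}':B\in G\}$ is an abelian subgroup of $\mathbb{T}_{m}^{*}(\mathbb{C})$, to which Corollary~\ref{p:97} can be applied.

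First I would introduce the real-linear homeomorphism
$\Phi:=\pi\circ Q^{-1}:\mathbb{R}^{2m}\longrightarrow\mathbb{C}^{m}$,
where $\pi(u,\overline{u})=u$; that $\Phi$ is well defined and bijective is exactly Lemma~\ref{L:001}, which computes $Q^{-1}v=(u,\overline{u})$ with $u=[x_{1}+iy_{1},\dots,x_{m}+iy_{m}]^{T}$ for any $v=[x_{1},y_{1},\dots,x_{m},y_{m}]^{T}\in\mathbb{R}^{2m}$. A direct check then shows the intertwining property
$\Phi(Bv)=B_{1}'\Phi(v)$ for all $B\in G$ and $v\in\mathbb{R}^{2m}$, and in particular $\Phi(e_{1})=e_{1}\in\mathbb{C}^{m}$ and $\Phi(G(e_{1}))=G_{1}'(e_{1})$. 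The same conjugation yields bijections $\mathcal{C}(G)\leftrightarrow\mathcal{C}(G_{1}')$, $B\mapsto B_{1}'$, and (using $e^{Q^{-1}BQ}=Q^{-1}e^{B}Q$ together with Proposition~\ref{p:2}) $\mathrm{g}\leftrightarrow \mathrm{g}_{1}':=\exp^{-1}(G_{1}')\cap\mathbb{T}_{m}(\mathbb{C})$, hence $\Phi(\mathrm{g}_{e_{1}})=\mathrm{g}_{1,e_{1}}'$.

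For part~(1), the hypothesis $\overset{\circ}{\overline{G(e_{1})}}\neq\emptyset$ transports via the homeomorphism $\Phi$ to $\overset{\circ}{\overline{G_{1}'(e_{1})}}\neq\emptyset$ in $\mathbb{C}^{m}$. Apply Corollary~\ref{p:97}~(1) to $G_{1}'$ to obtain a continuous open map $f_{1}':\mathbb{C}^{m}\to\mathbb{C}^{m}$ with $f_{1}'(B_{1}'e_{1})=e^{B_{1}'}e_{1}$ for every $B_{1}'\in\mathcal{C}(G_{1}')$, with $(f_{1}')^{-1}(G_{1}'(e_{1}))=\mathrm{g}_{1,e_{1}}'$, and with $f_{1}'(\mathbb{C}^{m})=\mathbb{C}^{*}\times\mathbb{C}^{m-1}$. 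Define $f:=\Phi^{-1}\circ f_{1}'\circ\Phi$. Properties (i) and (iv) are immediate from the homeomorphism property of $\Phi$ (noting that $\Phi^{-1}(\mathbb{C}^{*}\times\mathbb{C}^{m-1})=(\mathbb{R}^{2}\setminus\{(0,0)\})\times\mathbb{R}^{2m-2}$). For (ii), observe that for $B\in\mathcal{C}(G)$ one has $e^{B}=Q\,\mathrm{diag}(e^{B_{1}'},\overline{e^{B_{1}'}})\,Q^{-1}$, so $\Phi(e^{B}e_{1})=e^{B_{1}'}e_{1}$, which combined with $\Phi(Be_{1})=B_{1}'e_{1}$ gives $f(Be_{1})=\Phi^{-1}(e^{B_{1}'}e_{1})=e^{B}e_{1}$. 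For (iii), apply $\Phi^{-1}$ to $(f_{1}')^{-1}(G_{1}'(e_{1}))=\mathrm{g}_{1,e_{1}}'$ and use $\Phi(G(e_{1}))=G_{1}'(e_{1})$ and $\Phi(\mathrm{g}_{e_{1}})=\mathrm{g}_{1,e_{1}}'$.

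Part~(2) is proved by the same transport-of-structure argument, starting from $\overset{\circ}{\overline{\mathrm{g}_{e_{1}}}}\neq\emptyset$, which via $\Phi$ becomes $\overset{\circ}{\overline{\mathrm{g}_{1,e_{1}}'}}\neq\emptyset$, and then invoking Corollary~\ref{p:97}~(2) to produce $h_{1}':\mathbb{C}^{m}\to\mathbb{C}^{m}$; one sets $h:=\Phi^{-1}\circ h_{1}'\circ\Phi$ and verifies (i)--(iii) exactly as above, using that $\mathcal{C}(\mathrm{g})$ corresponds bijectively to $\mathcal{C}(\mathrm{g}_{1}')$ and that $h_{1}'(\mathrm{g}_{1,e_{1}}')=G_{1}'(e_{1})$ by Corollary~\ref{p:97}~(2)(ii). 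The only substantive obstacle is the bookkeeping in establishing the correspondences $G\leftrightarrow G_{1}'$, $\mathcal{C}(G)\leftrightarrow\mathcal{C}(G_{1}')$, and $\mathrm{g}\leftrightarrow\mathrm{g}_{1}'$ together with their compatibility with $\exp$ and with the basepoint $e_{1}$; once these are laid out, the proposition is a direct consequence of Corollary~\ref{p:97}.
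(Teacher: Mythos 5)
Your proposal is correct and takes essentially the same route as the paper: both conjugate by the basis-change matrix $Q$ to pass to $G_{1}'\subset\mathbb{T}_{m}^{*}(\mathbb{C})$, invoke Corollary~\ref{p:97}, and transport the resulting map back. Your $f=\Phi^{-1}\circ f_{1}'\circ\Phi$ with $\Phi=\pi\circ Q^{-1}$ is exactly the paper's $f=Q\circ\widetilde{f}\circ Q^{-1}$ with $\widetilde{f}(u,\overline{u})=(f_{1}(u),\overline{f_{1}(u)})$, so the two constructions coincide; you have merely packaged the bookkeeping more cleanly by treating $\Phi$ as a single real-linear homeomorphism $\mathbb{R}^{2m}\to\mathbb{C}^{m}$.
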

\smallskip

\begin{proof} Suppose that $\overset{\circ}{\overline{G(e_{1})}}\neq \emptyset$. The proof for
$\overset{\circ}{\overline{\mathrm{g}_{e_{1}}}}\neq\emptyset$  is analogous. By Lemma ~\ref{L:001},  we have
$Q^{-1}(e_{1})=(e^{\prime}_{1},\overline{e^{\prime}_{1}})= (e^{\prime}_{1},e^{\prime}_{1})$,
 where $e^{\prime}_{1}=[1,0,\dots,0]^{T}\in\mathbb{C}^{m}$. Write $G^{\prime}: = Q^{-1}GQ\subset \textrm{GL}(2m,\mathbb{C})$.
Then $G^{\prime}(e^{\prime}_{1}, e^{\prime}_{1})=Q^{-1}(G(e_{1}))=\{(u,\overline{u}):\ u\in G^{\prime}_{1}(e^{\prime}_{1})\}\subset H$. Hence
$\overset{\circ}{\overline{G^{\prime}(e^{\prime}_{1},e^{\prime}_{1})}}\neq\emptyset$ and
 $H$ is $G^{\prime}$-invariant.  Define  $p_{1}: H\longrightarrow
\mathbb{C}^{m}$  by $p_{1}(u,\overline{u})=u$. We have
$p_{1}\left(G^{\prime}(e^{\prime}_{1},e^{\prime}_{1})\right)=G^{\prime}_{1}(e^{\prime}_{1})$. Since $p_{1}$ is open and continuous,
it follows that
$\emptyset\neq p_{1}\left(\overset{\circ}{\overline{G^{\prime}(e^{\prime}_{1},e^{\prime}_{1})}}\right)\subset
\overset{\circ}{\overline{G^{\prime}_{1}(e^{\prime}_{1})}}$,
hence $\overset{\circ}{\overline{G^{\prime}_{1}(e^{\prime}_{1})}}\neq \emptyset$. By Corollary \ref{p:97}, there exists a continuous open map
$f_{1}: \mathbb{C}^{m}\ \longrightarrow
\mathbb{C}^{m}$ that satisfies $f_{1}(B^{\prime}_{1}e^{\prime}_{1})= e^{B^{\prime}_{1}}e^{\prime}_{1}$ for every
$B^{\prime}_{1}\in \mathcal{C}
(G^{\prime}_{1})$ and
$f^{-1}_{1}(G^{\prime}_{1}(e^{\prime}_{1})) =
\mathrm{g^{\prime}_{1}}_{e^{\prime}_{1}}$  where
$\mathrm{g}^{\prime}_{1}=\textrm{exp}^{-1}_{/\mathbb{T}_{m}(\mathbb{C})}(G^{\prime}_{1})$. Let  $\widetilde{f} : H\ \longrightarrow H$ defined by  $\widetilde{f}(u,\overline{u})=(f_{1}(u), \
\overline{f_{1}(u)})$  and set  $f:= Q\circ \widetilde{f}\circ Q^{-1}
: \mathbb{R}^{2m}\ \longrightarrow \mathbb{R}^{2m}$. The map  $\widetilde{f}$
is continuous and open, so is $f$. For every $B\in \mathcal{C}(G)$, we have $B^{\prime}_{1}\in \mathcal{C}
(G^{\prime}_{1})$ and then
 $$f(Be_{1})= Q\circ \widetilde{f}(Q^{-1}(Be_{1}))= Q\left(f_{1}(B^{\prime}_{1}e^{\prime}_{1}),
\overline{f_{1}(B^{\prime}_{1}e^{\prime}_{1})}\right)= Q(e^{B^{\prime}_{1}}e^{\prime}_{1}, \overline{e^{B^{\prime}_{1}}e^{\prime}_{1}})=e^{B}e_{1}.$$ Set $F: = (\widetilde{f})^{-1}(G^{\prime}(e^{\prime}_{1},e^{\prime}_{1}))$. We have
\begin{align*}
F & =\{(z,\overline{z}):\ z\in f_{1}^{-1}(G^{\prime}_{1}(e^{\prime}_{1})\}\\
\ & =\{(z,\overline{z}):\ z\in (\mathrm{g}^{\prime}_{1})_{e^{\prime}_{1}}\}\\
\ & = \{(B^{\prime}_{1}e^{\prime}_{1},\overline{B^{\prime}_{1}e^{\prime}_{1}}):\ B^{\prime}_{1}\in \mathrm{g}^{\prime}_{1}\}\\
\ & = \{B^{\prime}(e^{\prime}_{1},e^{\prime}_{1}):\ B^{\prime}=\mathrm{diag}(B^{\prime}_{1},\overline{B^{\prime}_{1}})\in
Q^{-1}\mathrm{g}Q\}.
\end{align*}
 Then

\begin{align*}
f^{-1}(G(e_{1})) & \ = Q\left((\widetilde{f})^{-1}(Q^{-1}(G(e_{1})))\right)\\
\ & = Q\left((\widetilde{f})^{-1}\left(G^{\prime}(e^{\prime}_{1},e^{\prime}_{1})\right)\right) \\
 \ & =Q(F)\\
  & =\{Be_{1}:\ B\in \mathrm{g}\}\\
 \ & =\mathrm{g}_{e_{1}}
\end{align*}
Finally, $f(\mathbb{R}^{2m})=  Q\circ \widetilde{f}\circ Q^{-1}(\mathbb{R}^{2m})= Q\circ \widetilde{f}(H)= Q(\{(u, \overline{u}): u\in
\mathbb{C}^{\ast}
\times \mathbb{C}^{m-1}\})=  (\mathbb{R}^{2}\backslash\{(0,0)\})\times\mathbb{R}^{2m-2}$.
This completes the proof.
\end{proof}
\bigskip

\subsection{Case where $G$ is a subgroup of \ $\mathcal{K}^{*}_{\eta,r,s}(\mathbb{R})$}
\
\\
Denote by
\
\\
$\bullet$\; $G_{k} = G_{/E_{k}}:= \{M_{k}: \ M\in G\}$ $\left(resp. \ \widetilde{G}_{l}= G_{/F_{l}}:= \{\widetilde{M_{l}}:
\ M\in G\}\right)$, $1\leq k\leq r, \ 1\leq l\leq s$, where $M =\mathrm{diag}( M_{1}, \dots, M_{r}; \ \widetilde{M}_{1}, \dots,
\widetilde{M}_{s})$, \ with \ $M_{k}\in
\mathbb{T}_{n_{k}}^{*}(\mathbb{R})$  and
$\widetilde{M}_{l}\in \mathbb{B}_{m_{l}}^{*}(\mathbb{R})$. Then $G_{k}$ (resp. $\widetilde{G}_{l}$) is an abelian
subgroup of $\mathbb{T}_{n_{k}}^{*}(\mathbb{R})$  $\left(\mathrm{resp}. \
\mathbb{B}_{m_{l}}^{*}(\mathbb{R})\right)$.
\
\\
$\bullet$ $\mathrm{g}_{k}= \mathrm{g}_{/E_{k}}: = \textrm{exp}^{-1}(G_{k})\cap
\mathbb{T}_{n_{k}}(\mathbb{R})$  and  $\widetilde{\mathrm{g}}_{l}= \mathrm{g}_{/F_{l}}: =
\textrm{exp}^{-1}(\widetilde{G}_{l})\cap \mathbb{B}_{m_{l}}(\mathbb{R})$.
\
\\
Recall that  $u_{0} =
[e_{1,1},\dots, e_{r,1}; f_{1,1},\dots, f_{1,s}]^{T}\in
\mathbb{R}^{n}$  where $$e_{k,1} = [1,0,\dots,0]^{T}\in
\mathbb{R}^{n_{k}}, \ f_{l,1} = [1,0,\dots,0]^{T}\in
\mathbb{R}^{2m_{l}}, \ 1\leq k\leq r, 1\leq l\leq s.$$
\medskip

\begin{thm}\label{Th.3} Let  $G$  be an abelian subgroup of $\mathcal{K}^{*}_{\eta,r,s}(\mathbb{R})$.
\begin{itemize}
  \item [(1)] If \;
 $\overset{\circ}{\overline{G(u_{0})}}\neq\emptyset$ then there exists a map $f:\ \mathbb{R}^{n}\
\longrightarrow\ \mathbb{R}^{n}$ satisfying
\begin{itemize}
  \item [(i)] $f$ is continuous and open
  \item[(ii)]  $f(Bu_{0})=e^{B}u_{0}$  for every $B\in \mathcal{C}(G)$.
  \item [(iii)]$f^{-1}(G^{+}(u_{0}))=\mathrm{g}_{u_{0}}$.
 \item [(iv)]  $f(\mathbb{R}^{n}) =  C_{u_{0}}$.
\end{itemize}

  \item [(2)] If $\overset{\circ}{\overline{\mathrm{g}_{u_{0}}}}\neq\emptyset$
then there exists a map $h :\ \mathbb{R}^{n}\
\longrightarrow\ \mathbb{R}^{n}$ satisfying
\begin{itemize}
\item [(i)] $h$ is continuous and open
  \item [(ii)]
$h(Bu_{0})=e^{B}u_{0}$ for every $B\in\mathcal{C}(\mathrm{g})$. In particular,
$h(\mathrm{g}_{u_{0}}) = G^{+}(u_{0})$.
 \item [(iii)]  $h(\mathbb{R}^{n}) =  C_{u_{0}}$.
\end{itemize}
\end{itemize}
\end{thm}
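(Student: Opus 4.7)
The strategy is to build $f$ (resp.\ $h$) as a block direct sum of the block maps furnished by Corollary~\ref{p:9} on each triangular block $E_k$ and Proposition~\ref{p:10} on each complex block $F_l$. The canonical coordinate projections $p_k\colon\mathbb{R}^n\to\mathbb{R}^{n_k}$ and $q_l\colon\mathbb{R}^n\to\mathbb{R}^{2m_l}$ are continuous and open, and reading the block-diagonal form of elements of $\mathcal{K}^{*}_{\eta,r,s}(\mathbb{R})$ yields $p_k(G(u_0))=G_k(e_{k,1})$, $q_l(G(u_0))=\widetilde{G}_l(f_{l,1})$, and similarly for $\mathrm{g}_{u_0}$. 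Openness of $p_k$ and $q_l$ therefore transfers $\overset{\circ}{\overline{G(u_0)}}\neq\emptyset$ (resp.\ $\overset{\circ}{\overline{\mathrm{g}_{u_0}}}\neq\emptyset$) to the corresponding property on each block, so Corollary~\ref{p:9} and Proposition~\ref{p:10} apply to each $G_k,\widetilde{G}_l$ (resp.\ $\mathrm{g}_k,\widetilde{\mathrm{g}}_l$), producing maps $f_k,h_k$ on $\mathbb{R}^{n_k}$ and $\widetilde{f}_l,\widetilde{h}_l$ on $\mathbb{R}^{2m_l}$. I then set $f(v_1,\dots,v_r;w_1,\dots,w_s):=(f_1(v_1),\dots,f_r(v_r);\widetilde{f}_1(w_1),\dots,\widetilde{f}_s(w_s))$ and define $h$ analogously.

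\medskip

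Property (i) is inherited from the factors, since Cartesian products of continuous open maps are continuous and open. Property (1)(iv) (and the analog (2)(iii)) follows from the block image formulas $f_k(\mathbb{R}^{n_k})=\mathbb{R}^{*}_{+}\times\mathbb{R}^{n_k-1}$ and $\widetilde{f}_l(\mathbb{R}^{2m_l})=(\mathbb{R}^{2}\setminus\{(0,0)\})\times\mathbb{R}^{2m_l-2}$, combined with the description of $C_{u_0}$ in Lemma~\ref{L:7}(i). For (1)(ii), any $B=\mathrm{diag}(B_1,\dots;\widetilde{B}_1,\dots)\in\mathcal{C}(G)$ commutes with $G$ block-wise, so $B_k\in\mathcal{C}(G_k)$ and $\widetilde{B}_l\in\mathcal{C}(\widetilde{G}_l)$, and the block identities from Corollary~\ref{p:9}(1)(ii) and Proposition~\ref{p:10}(1)(ii) assemble to $f(Bu_0)=e^{B}u_0$. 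Case (2)(ii) is handled the same way after invoking $\mathcal{C}(\mathrm{g})=\mathcal{C}(G^{+})$ (Lemma~\ref{L:6}(iii)) to pass from the global commutant to the block commutants.

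\medskip

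The main obstacle is the preimage identity (1)(iii) $f^{-1}(G^{+}(u_0))=\mathrm{g}_{u_0}$. The inclusion $\supset$ is immediate from (ii): $\mathrm{g}\subset\mathcal{C}(G)$ (Lemma~\ref{L:6}(iv)) and $\exp(\mathrm{g})=G^{+}$ (Lemma~\ref{L:6}(i)). The reverse inclusion is the delicate step, because the block identities of Corollary~\ref{p:9}(iii) and its $\mathbb{B}^{*}_{m_l}$ analog only match orbits block by block, while membership in $\mathrm{g}_{u_0}$ imposes the global constraint $e^{B}\in G$. Given $v\in f^{-1}(G^{+}(u_0))$, I use Proposition~\ref{p:8} and its $\mathbb{B}^{*}_{m_l}$ counterpart to produce, block by block, the unique $B_k\in\mathcal{C}(G_k)$ with $B_ke_{k,1}=v_k$ (and similarly $\widetilde{B}_l$), giving $B=\mathrm{diag}(B_1,\dots;\widetilde{B}_1,\dots)\in\mathcal{C}(G)$ with $Bu_0=v$ and $f(v)=e^{B}u_0$. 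By hypothesis $f(v)=Au_0$ for some $A\in G^{+}$, so reading the equality $e^{B}u_0=Au_0$ block-wise yields $e^{B_k}e_{k,1}=A_ke_{k,1}$ with both $e^{B_k}$ and $A_k$ in $\mathcal{C}(G_k)$ (from Lemma~\ref{L:6}(ii) for $e^{B_k}$ and the inclusion $G_k\subset\mathcal{C}(G_k)$ for $A_k$). Since Corollary~\ref{C:7}(i) identifies any element of $\mathcal{C}(G_k)$ uniquely by its action on $e_{k,1}$, we conclude $e^{B_k}=A_k$, and the same argument in the complex blocks gives $e^{\widetilde{B}_l}=\widetilde{A}_l$. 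Assembling the block equalities yields $e^{B}=A\in G$, hence $B\in\mathrm{g}$ and $v=Bu_0\in\mathrm{g}_{u_0}$.
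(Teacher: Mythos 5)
Your proof follows the same route as the paper: build $f$ (resp.\ $h$) as the block direct sum of the maps $f_k$ on each $E_k$ from Corollary~\ref{p:9} and $\widetilde f_l$ on each $F_l$ from Proposition~\ref{p:10}, after transferring the local-density hypothesis block-by-block via the open coordinate projections. The paper's proof of Theorem~\ref{Th.3} stops at this assembly and simply asserts that (iii) holds, whereas you supply the argument that is actually needed: knowing $v_k\in f_k^{-1}(G_k^{+}(e_{k,1}))$ for every $k$ and $w_l\in\widetilde f_l^{-1}(\widetilde G_l(f_{l,1}))$ for every $l$ does \emph{not} by itself place $v$ in $\mathrm{g}_{u_0}$, since the resulting block data need not come from a single element of $G$. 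Your use of Lemma~\ref{LL:13} and Corollary~\ref{C:7}(i) to identify the blockwise preimage with a single $B\in\mathcal{C}(G)$ for which $e^{B}u_0=Au_0$, and then the uniqueness of elements of $\mathcal{C}(G_k)$ (resp.\ $\mathcal{C}(\widetilde G_l)$) by their action on $e_{k,1}$ (resp.\ $f_{l,1}$) to force $e^{B}=A\in G$, closes this gap correctly. So the proposal is correct, matches the paper's construction, and is in fact more complete than the paper's own proof of this theorem.
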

\medskip

\begin{proof} Let's prove the theorem for  $\overset{\circ}{\overline{G(u_{0})}}\neq\emptyset$. The proof for
$\overset{\circ}{\overline{\mathrm{g}_{u_{0}}}}\neq\emptyset$  is analogous. We have
$\overset{\circ}{\overline{G_{/E_{k}}(e_{k,1})}}\neq\emptyset$ and
$\overset{\circ}{\overline{G_{/F_{l}}(f_{l,1})}}\neq\emptyset$. By Corollary~\ref{p:9} (resp. Proposition ~\ref{p:10}),
there exists a continuous open map $$f_{k}: = f_{/E_{k}}:\
\mathbb{R}^{n_{k}}\ \longrightarrow\ \mathbb{R}^{n_{k}} (\textrm{resp}. \
\widetilde{f_{l}}:= \widetilde{f}_{/F_{l}}:\ \mathbb{R}^{2m_{k}}\ \longrightarrow\
\mathbb{R}^{2m_{k}})$$ satisfying, for every $k=1,\dots, r, \ l=1,\dots, s$:
$$f_{k}(B_{k}(e_{k,1}))= e^{B_{k}}e_{k,1}, \ f_{k}^{-1}(G^{+}_{k}(e_{k,1}))=(\mathrm{g}_{k})_{e_{k,1}} \textrm{and} \
f_{k}(\mathbb{R}^{n_{k}}) = \mathbb{R}^{*}_{+}\times\mathbb{R}^{n_{k}-1}$$
$$\left(\mathrm{resp.} \
\widetilde{f_{l}}(\widetilde{B}_{l}(f_{l,1}))= e^{B_{l}}f_{l,1}, \
 (\widetilde{f_{l}})^{-1}(\widetilde{G}_{l}(f_{l,1}))=(\widetilde{\mathrm{g}}_{l})_{f_{l,1}} \ \mathrm{and}\ \widetilde{f_{l}}(\mathbb{R}^{2m_{l}})
= (\mathbb{R}^{2}\backslash \{(0,0)\})\times\mathbb{R}^{2m_{l}-1}\right),
$$
where $B_{k}=B_{/E_{k}}$ and $\widetilde{B}_{l}=B_{/F_{l}}$. Let
$f:\ \mathbb{R}^{n}\ \longrightarrow\ \mathbb{R}^{n}$ denote the map defined by
 $$f(v)=\left[f_{1}(v_{1}),\dots, f_{r}(v_{r}); \
\widetilde{f_{1}}(\widetilde{v_{1}}),\dots, \widetilde{f_{s}}(\widetilde{v_{s}})\right]^{T}, \ v = [v_{1},\dots, v_{r};\
\widetilde{v_{1}},\dots, \widetilde{v_{s}}]^{T}\in\mathbb{R}^{n}$$
where $v_{k}\in \mathbb{R}^{n_{k}},  \ u_{l}\in \mathbb{R}^{2m_{l}}, \ k=1,\dots, r, \ l=1,\dots, s.$  Thus $f$ is a continuous
open map satisfying
 $$f(Bu_{0})=e^{B}u_{0}, \ f^{-1}(G^{+}(u_{0}))=\mathrm{g}_{u_{0}} \ \mathrm{and} \ f(\mathbb{R}^{n}) =  C_{u_{0}}.$$
\end{proof}
\bigskip

\section{{\bf Proof of main results}}
\smallskip
\emph{ Proof of Theorem  ~\ref{T:1}.} One can assume by Proposition ~\ref{p:1}
that  $G$  is an abelian subgroup of $\mathcal{K}^{*}_{\eta,r,s}(\mathbb{R})$.\
\
\\
$(ii)\Longrightarrow (i)$:  is clear.
\
\\
$(i)\Longrightarrow (ii)$: this follows directly from Lemma ~\ref{L:101}.
\
\\
$(iii)\Longrightarrow (ii)$:
 suppose that
$\overline{\mathrm{g}_{u_{0}}} = \mathbb{R}^{n}$. Then by Theorem
~\ref{Th.3}, there exists a continuous open map
$h:\mathbb{R}^{n}\longrightarrow \mathbb{R}^{n}$  such that
$h(\mathrm{g}_{u_{0}}) = G^{+}(u_{0})$ and $h(\mathbb{R}^{n})= C_{u_{0}}$. Hence, one has:
$$ C_{u_{0}}=h(\overline{\mathrm{g}_{u_{0}}}) \subset
\overline{G^{+}(u_{0})}\subset \overline{G(u_{0})}.$$ Therefore,
$\overset{\circ}{\overline{G(u_{0})}}\neq\emptyset$.
\
\\
$(ii)\Longrightarrow (iii)$:  suppose that  $\overset{\circ}{\overline{G(u_{0})}}\neq\emptyset$. By Theorem ~\ref{Th.3},
there exists a continuous open map
 $f:\mathbb{R}^{n}\longrightarrow \mathbb{R}^{n}$
such that $f^{-1}(G^{+}(u_{0})) = \mathrm{g}_{u_{0}}$. By
Proposition ~\ref{p:7}, (i), $\overset{\circ}{\overline{G^{+}(u_{0})}}\neq\emptyset$. Then by Lemma~\ref{L:10},
$\overline{G^{+}(u_{0})}\cap C_{u_{0}}=C_{u_{0}}$, so $\overset{\circ}{\overline{G^{+}(u_{0})}}
= \overset{\circ}{\overline{ C_{u_{0}}}}\supset C_{u_{0}}$, and by Theorem \ref{Th.3}, (iv), it follows that
$$ \mathbb{R}^{n}=f^{-1}( C_{u_{0}})\subset f^{-1}\left(\overline{G^{+}(u_{0})}\right)\subset \overline{f^{-1}(G^{+}(u_{0}))}
\subset\overline{\mathrm{g}_{u_{0}}}.$$ Hence, $\overline{\mathrm{g}_{u_{0}}} =
\mathbb{R}^{n}$.\qed
\bigskip

\emph{Proof of Corollary  ~\ref{C:1}.} One can assume by Proposition ~\ref{p:1}, that $G\subset \mathcal{K}^{*}_{\eta,r,s}(\mathbb{R})$,
in this case $v_{0}=u_{0}$.
$(i)\Longleftrightarrow (ii)$ follows directly from Lemma ~~\ref{L:101}.
$(iii)\Longrightarrow (ii)$: Suppose that $\overline{\mathrm{g}_{u_{0}}} = \mathbb{R}^{n}$  and $\textrm{ind}(G) = r$.
Then by Theorem ~\ref{T:1},
$\overset{\circ}{\overline{G(u_{0})}}\neq\emptyset$. By
Corollary \ref{C:44}, $\overline{G(u_{0})}=\mathbb{R}^{n}$.
\
\\
$(ii)\Longrightarrow(iii):$ results from Proposition ~\ref{p:7}, (ii).\qed
\bigskip

\emph{Proof of Corollary ~\ref{C:2}}. This follows directly from Corollary ~\ref{C:1} by taking $u_{0}=e_{1}$.
\bigskip

\begin{prop} \label{p:11}Let  $G$  be an abelian subgroup of $\mathcal{K}^{+}_{\eta,r,s}(\mathbb{R})$ and let
  $B_{1},\dots,B_{p}\in \mathcal{K}_{\eta,r,s}(\mathbb{R})$  such that  $e^{B_{1}},\dots, e^{B_{p}}$ generate $G$. Then we have
 $$\mathrm{g}_{u_{0}} =\underset{k=1}{\overset{p}{\sum}}\mathbb{Z}(B_{k}u_{0})+
\underset{k=1}{\overset{s}{\sum}}2\pi\mathbb{Z}f^{(l)}.$$ \qed
 \end{prop}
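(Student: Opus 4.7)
The plan is to prove each inclusion separately, using Proposition~\ref{p:3} to pass commutativity from $G$ to the $B_k$'s, Proposition~\ref{p:4} to measure the kernel of the exponential on each block, and a direct computation identifying $2\pi \widetilde{J}_{m_l} u_{0}$ with $2\pi f^{(l)}$.

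For the inclusion $\supset$, I would first observe that $B_k\in\mathrm{g}$ by definition, since $B_k\in\mathcal{K}_{\eta,r,s}(\mathbb{R})$ and $e^{B_k}\in G$, so $B_k u_{0}\in\mathrm{g}_{u_{0}}$. For the factors $2\pi f^{(l)}$, let $\widetilde{J}_{m_{l}}$ denote the matrix in $\mathcal{K}_{\eta,r,s}(\mathbb{R})$ whose blocks are all zero except for $J_{m_{l}}=\mathrm{diag}(J_{2},\dots,J_{2})\in\mathbb{B}_{m_{l}}(\mathbb{R})$ in the $l$-th $\mathbb{B}$-position. Since $J_{2}^{2}=-I_{2}$ gives $e^{2\pi J_{2}}=I_{2}$, we get $e^{2\pi \widetilde{J}_{m_{l}}}=I_{n}\in G$, so $2\pi\widetilde{J}_{m_{l}}\in\mathrm{g}$. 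A direct computation on $f_{l,1}$ shows $J_{m_{l}}f_{l,1}=[0,1,0,\dots,0]^{T}$, which matches the $l$-th block of $f^{(l)}$, so $2\pi\widetilde{J}_{m_{l}}u_{0}=2\pi f^{(l)}\in\mathrm{g}_{u_{0}}$. Since $\mathrm{g}_{u_{0}}$ is an additive subgroup of $\mathbb{R}^{n}$ (Lemma~\ref{L:5}), the full $\mathbb{Z}$-span lies inside.

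For the inclusion $\subset$, take $B\in\mathrm{g}$, so $e^{B}\in G$. Because $G$ is abelian, $e^{B_{j}}e^{B_{k}}=e^{B_{k}}e^{B_{j}}$, and by Proposition~\ref{p:3} this lifts to $B_{j}B_{k}=B_{k}B_{j}$; analogously $B$ commutes with every $B_{k}$. Since $G$ is generated by $e^{B_{1}},\dots,e^{B_{p}}$, there exist $m_{1},\dots,m_{p}\in\mathbb{Z}$ with
\[
e^{B}=\prod_{k=1}^{p}\bigl(e^{B_{k}}\bigr)^{m_{k}}=e^{\,m_{1}B_{1}+\cdots+m_{p}B_{p}},
\]
the last equality using that the $B_{k}$'s mutually commute. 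Writing the block decomposition $B=\mathrm{diag}(B^{(1)},\dots,B^{(r)};\widetilde{B}^{(1)},\dots,\widetilde{B}^{(s)})$ and similarly for $\sum m_{k}B_{k}$, Proposition~\ref{p:4}(i) applied to each triangular block gives $B^{(i)}=\bigl(\sum m_{k}B_{k}\bigr)^{(i)}$ for $1\leq i\leq r$, and Proposition~\ref{p:4}(ii) applied to each $\mathbb{B}_{m_{l}}$ block yields integers $t_{l}\in\mathbb{Z}$ with $\widetilde{B}^{(l)}=\bigl(\sum m_{k}\widetilde{B}_{k}\bigr)^{(l)}+2\pi t_{l}J_{m_{l}}$. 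Reassembling,
\[
B=\sum_{k=1}^{p}m_{k}B_{k}+2\pi\sum_{l=1}^{s}t_{l}\widetilde{J}_{m_{l}}.
\]
Evaluating on $u_{0}$ and invoking the identification $\widetilde{J}_{m_{l}}u_{0}=f^{(l)}$ established above gives $Bu_{0}=\sum m_{k}(B_{k}u_{0})+2\pi\sum t_{l}f^{(l)}$, which is the desired inclusion.

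The only mildly delicate point is the block-by-block application of Proposition~\ref{p:4}, since one must verify that the commutation hypotheses hold on every individual block (which is automatic because restrictions of commuting block-diagonal matrices commute) and that the $J_{m_{l}}$-correction sits in the right spot of $\mathcal{K}_{\eta,r,s}(\mathbb{R})$; once this bookkeeping is in place the identification with $f^{(l)}$ is just a one-line matrix computation, and no further analytic input is needed.
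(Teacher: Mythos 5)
Your proof is correct and follows essentially the same route as the paper: write an arbitrary $M\in\mathrm{g}$ as $e^{M}=e^{\sum k_jB_j}$ using commutativity from Proposition~\ref{p:3}, then apply Proposition~\ref{p:4} block by block to identify $M$ with $\sum k_jB_j+2\pi\sum t_lL_l$ (your $\widetilde{J}_{m_l}$ is the paper's $L_l$), and finish by evaluating at $u_0$. The one small improvement is that you explicitly verify the reverse inclusion $\supset$ (showing $B_k\in\mathrm{g}$, $2\pi\widetilde{J}_{m_l}\in\mathrm{g}$, and invoking Lemma~\ref{L:5} to close under $\mathbb{Z}$-combinations), whereas the paper only argues the forward containment in detail and asserts equality; your version is the more complete account.
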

\bigskip

\begin{proof}
$\bullet$  First we determine  $\mathrm{g}$. Let  $M\in \mathrm{g}$. Then
$$M = \mathrm{diag}(M_{1}, \dots, M_{r};\ \widetilde{M}_{1}, \dots, \widetilde{M}_{s})\in\mathcal{K}_{\eta,r,s}(\mathbb{R})$$ and
$e^{M}\in G$. So \ $e^{M} =e^{k_{1}B_{1}} \dots\ e^{k_{p}B_{p}}$ for some
 $k_{1},\dots,k_{p}\in \mathbb{Z}$. Since $B_{1},\dots,B_{p}\in \mathrm{g}$, they pairwise commute (Lemma ~\ref{L:6}, (iv)).
 Therefore \ $e^{M}= e^{k_{1}B_{1} + \dots + k_{p}B_{p}}$. Write $B_{j}=\mathrm{diag}(B_{j,1},\dots,B_{j,r};\
 \widetilde{B}_{j,1},\dots,\widetilde{B}_{j,s})$, then \ $e^{M_{i}}= e^{k_{1}B_{1,i} + \dots + k_{p}B_{p,i}}$, $i=1,\dots,r$ and
 $e^{\widetilde{M}_{l}}= e^{k_{1}\widetilde{B}_{1,l} + \dots + k_{p}\widetilde{B}_{p,l}}$, $l=1,\dots,s$. Moreover, as
 $M\in\mathrm{g}$, we have $MB_{j}=B_{j}M$ and hence $M_{i}B_{j,i}=B_{j,i}M_{i}$, $i=1,\dots,r$ and
 $\widetilde{M}_{l}\widetilde{B}_{j,l}=\widetilde{B}_{j,l}\widetilde{M}_{l}$, $l=1,\dots,r$, $j=1,\dots,p$.
  It follows by Proposition ~\ref{p:4}, that $M_{i}=k_{1}B_{1,i} + \dots
+ k_{p}B_{p,i}$ \ and  \ $\widetilde{M}_{l}=k_{1}\widetilde{B}_{1,l} +
\dots +  k_{p}\widetilde{B}_{p,l}+2\pi t_{l}J_{m_{l}}$ for some $t_{l}\in\mathbb{Z}$ where
$J_{m_{l}}= \mathrm{diag}(J_{2},\dots, J_{2})\in GL(2_{m_{l}}, \mathbb{R})$ with 
$J_{2} =\left[\begin{array}{cc}
0 & -1 \\
1 & 0 
\end{array}
\right].$

Therefore

\begin{align*}
M & =\mathrm{diag}\left(
   \underset{j=1}{\overset{p}{\sum}}k_{j}B_{j,1},\dots, \underset{j=1}{\overset{p}{\sum}}k_{j}B_{j,r};
   \underset{j=1}{\overset{p}{\sum}}k_{j}\widetilde{B}_{j,1}+2\pi
t_{1}J_{m_{1}},\dots,\underset{j=1}{\overset{p}{\sum}}k_{j}\widetilde{B}_{j,s}+2\pi
t_{s}J_{m_{s}}\right)\\
\ &=\underset{j=1}{\overset{p}{\sum}}k_{j}B_{j}+ \mathrm{diag}(0,\dots,0;\ 2\pi
t_{1}J_{m_{1}},\dots,2\pi
t_{p}J_{m_{p}}).
\end{align*}
\\
Set $$L_{l} := \mathrm{diag}(0,\dots, 0; \widetilde{L_{l,1}},\dots,\widetilde{L_{l,s}})$$
where
\ $$\widetilde{L_{l,i}} = \left\{\begin{array}{cc}
  0\in \mathbb{B}_{m_{i}}(\mathbb{R}) &  if \ \ \ {i\neq l}  \\
  J_{m_{l}}\ \ \ \ \ \ \ \ \ \ \  & \ if \ \ \ {i=l} \\
\end{array}\right.$$
\\
Then we have $\mathrm{diag}(0,\dots,0;\ 2\pi
t_{1}J_{m_{1}},\dots,2\pi
t_{p}J_{m_{p}})=\underset{l=1}{\overset{s}{\sum}}2\pi t_{l}L_{l}$
 and therefore $M=\underset{j=1}{\overset{p}{\sum}}k_{j}B_{j}+\underset{l=1}{\overset{s}{\sum}}2\pi t_{l}L_{l}.$
We conclude that $\mathrm{g}=
\underset{j=1}{\overset{p}{\sum}}\mathbb{Z}B_{j}+2\pi\underset{l=1}{\overset{s}{\sum}}\mathbb{Z}L_{l}.$
\
\\
$\bullet$  Second, we determine $\mathrm{g}_{u_{0}}$. Let $B\in \mathrm{g}$. We have $B=
\underset{j=1}{\overset{p}{\sum}}k_{j}B_{j}+2\pi\underset{l=1}{\overset{s}{\sum}}t_{l}L_{l}$
 for some $k_{1},\dots,k_{p}\in\mathbb{Z}$, and $t_{1},\dots,t_{s}\in\mathbb{Z}$. As $\widetilde{L_{l,i}}f_{i,1} = f^{(l)}_{i}$, 
 $i = 1,\dots,s$ then

\begin{align*}
L_{l}u_{0}& =\mathrm{diag}(0,\dots,0;\ \widetilde{L_{l,1}},\dots,\widetilde{L_{l,s}})[e_{1,1},\dots,e_{r,1};\ f_{1,1},\dots,f_{s,1}]^{T}\\
\ &  = [
0,\dots,0;f^{(l)}_{1},\dots, f^{(l)}_{s}]^{T}\\
\ & =f^{(l)}.
 \end{align*}
 Hence $Bu_{0}=\underset{j=1}{\overset{p}{\sum}}k_{j}B_{j}u_{0}+2\pi\underset{l=1}{\overset{r}{\sum}}t_{l}f^{(l)}$
and therefore $\mathrm{g}_{u_{0}}=
\underset{j=1}{\overset{p}{\sum}}\mathbb{Z}(B_{j}u_{0})+2\pi\underset{l=1}{\overset{s}{\sum}}\mathbb{Z}f^{(l)}.$
This proves the Proposition.
\end{proof}
\bigskip

\begin{lem}\label{L:14} Let  $G$  be an abelian subgroup of  $\mathcal{K}_{\eta,r,s}^{*}(\mathbb{R})$.
Then:
$$\overset{\circ}{\overline{G(u_{0})}}\neq \emptyset\ \ \ if \ and \ only\ if \ \ \overset{\circ}{\overline{G^{2}(u_{0})}}\neq
 \emptyset.$$
\end{lem}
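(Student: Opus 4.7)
My approach is to reduce both sides of the equivalence to a density statement via Theorem~\ref{T:1} and then to sandwich the two resulting subgroups. The first step is to verify that $G^{2}$ is itself an abelian subgroup of $\mathcal{K}^{*}_{\eta,r,s}(\mathbb{R})$: if $A=\mathrm{diag}(A_{1},\dots,A_{r};\widetilde{A}_{1},\dots,\widetilde{A}_{s})\in\mathcal{K}^{*}_{\eta,r,s}(\mathbb{R})$, then each $A_{k}^{2}\in\mathbb{T}_{n_{k}}^{+}(\mathbb{R})$ (its unique eigenvalue is $\mu_{k}^{2}>0$) and each $\widetilde{A}_{l}^{2}\in\mathbb{B}_{m_{l}}^{*}(\mathbb{R})$, so in fact $G^{2}\subset\mathcal{K}^{+}_{\eta,r,s}(\mathbb{R})$. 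Theorem~\ref{T:1} then applies to both $G$ and $G^{2}$ with the same distinguished vector $u_{0}$ (here $P=I_{n}$), producing
$$\overset{\circ}{\overline{G(u_{0})}}\neq\emptyset \ \iff\ \mathrm{g}_{u_{0}}\text{ dense in }\mathbb{R}^{n},\qquad \overset{\circ}{\overline{G^{2}(u_{0})}}\neq\emptyset \ \iff\ (\mathrm{g}^{2})_{u_{0}}\text{ dense in }\mathbb{R}^{n}.$$

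Next I would establish the two-sided inclusion $2\mathrm{g}\subset\mathrm{g}^{2}\subset\mathrm{g}$. The inclusion $\mathrm{g}^{2}\subset\mathrm{g}$ is immediate from $G^{2}\subset G$ and the definitions. For $2\mathrm{g}\subset\mathrm{g}^{2}$, take $B\in\mathrm{g}$; then $e^{B}\in G$, hence $e^{2B}=(e^{B})^{2}\in G^{2}$, and $2B\in\mathcal{K}_{\eta,r,s}(\mathbb{R})$ since this set is closed under scalar multiplication, so $2B\in\mathrm{g}^{2}$. Evaluating at $u_{0}$ yields $2\mathrm{g}_{u_{0}}\subset(\mathrm{g}^{2})_{u_{0}}\subset\mathrm{g}_{u_{0}}$.

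To finish, I would observe that multiplication by $2$ is a homeomorphism of $\mathbb{R}^{n}$, so $\mathrm{g}_{u_{0}}$ is dense iff $2\mathrm{g}_{u_{0}}$ is. Combined with the sandwich this forces $\mathrm{g}_{u_{0}}$ and $(\mathrm{g}^{2})_{u_{0}}$ to be simultaneously dense, and the two equivalences from the first paragraph then deliver the lemma. There is no serious obstacle here; the only point that needs care is the verification that $G^{2}\subset\mathcal{K}^{+}_{\eta,r,s}(\mathbb{R})$, which is what allows Theorem~\ref{T:1} to be invoked for $G^{2}$ with the same $u_{0}$.
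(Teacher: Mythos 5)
Your proposal is correct and follows essentially the same route as the paper: both hinge on Theorem~\ref{T:1} and the observation that $B\in\mathrm{g}$ implies $e^{2B}=(e^{B})^{2}\in G^{2}$, hence $2\mathrm{g}\subset\mathrm{g}^{2}$. The only cosmetic difference is that the paper dispenses with Theorem~\ref{T:1} for the easy implication (it uses $G^{2}(u_{0})\subset G(u_{0})$ directly), whereas you run both directions through the sandwich $2\mathrm{g}_{u_{0}}\subset\mathrm{g}^{2}_{u_{0}}\subset\mathrm{g}_{u_{0}}$.
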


\begin{proof}
(i) Suppose that
$\overset{\circ}{\overline{G^{2}(u_{0})}}\neq \emptyset$. Since
$G^{2}(u_{0})\subset G(u_{0})$, it follows that
$\overset{\circ}{\overline{G(u_{0})}}\neq \emptyset$. Conversely, suppose that  $\overset{\circ}{\overline{G(u_{0})}}\neq
\emptyset$. Then by Theorem
~\ref{T:1}, $\overline{\mathrm{g}_{u_{0}}} = \mathbb{R}^{n}$. As $\mathrm{g}\subset \dfrac{1}{2}\mathrm{g}^{2}$
(since if $B\in \mathrm{g}$, we have $e^{2B}= (e^{B})^{2}\in G^{2}$), then
 $\overline{\frac{1}{2}\mathrm{g}^{2}_{u_{0}}} = \mathbb{R}^{n}$ and so  $\overline{\mathrm{g}^{2}_{u_{0}}} = \mathbb{R}^{n}$.
By applying Theorem
~\ref{T:1} to the abelian subgroup $G^{2}$, it follows that $\overset{\circ}{\overline{G^{2}(u_{0})}}\neq\emptyset$.
\end{proof}
\medskip

\begin{cor}\label{C:45} Let  $G$  be an abelian subgroup of  $\mathcal{K}^{*}_{\eta,r,s}(\mathbb{R})$. Then $G$ has
 a locally dense orbit if and only if so is $G^{2}$.
\end{cor}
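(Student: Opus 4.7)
My plan is to derive the corollary directly from Lemma~\ref{L:14} combined with Lemma~\ref{L:101}, after first observing that $G^{2}$ satisfies the hypotheses needed to apply these lemmas. Concretely, since $G$ is abelian, for any $A,B\in G$ one has $A^{2}B^{2}=(AB)^{2}\in G^{2}$ and $(A^{2})^{-1}=(A^{-1})^{2}\in G^{2}$, so $G^{2}$ is an abelian subgroup of $G$. Because $\mathcal{K}^{*}_{\eta,r,s}(\mathbb{R})$ is itself a subgroup of $\mathrm{GL}(n,\mathbb{R})$ (the direct sum of the triangular pieces and the $\mathbb{B}^{*}_{m_{l}}$ pieces is closed under products and inverses), we immediately get $G^{2}\subset \mathcal{K}^{*}_{\eta,r,s}(\mathbb{R})$. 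Thus Lemma~\ref{L:101} is applicable to both $G$ and $G^{2}$.

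Next, applying Lemma~\ref{L:101} twice, the statement ``$G$ has a locally dense orbit'' is equivalent to $\overset{\circ}{\overline{G(u_{0})}}\neq\emptyset$, and similarly ``$G^{2}$ has a locally dense orbit'' is equivalent to $\overset{\circ}{\overline{G^{2}(u_{0})}}\neq\emptyset$. Finally, Lemma~\ref{L:14} provides exactly the equivalence between these two interior conditions, which closes the chain.

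The proof is essentially a one-line bookkeeping step; no real obstacle appears, provided one is careful about the point that $G^{2}$ is still an abelian subgroup of $\mathcal{K}^{*}_{\eta,r,s}(\mathbb{R})$ (so that Lemma~\ref{L:101} can be applied to $G^{2}$ verbatim, with the same $u_{0}$ and the same ambient block decomposition $\eta,r,s$). The genuine mathematical content already sits in Lemma~\ref{L:14}, whose key ingredient is the inclusion $\mathrm{g}\subset \tfrac{1}{2}\mathrm{g}^{2}$ together with Theorem~\ref{T:1}.
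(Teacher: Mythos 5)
Your proposal is correct and takes essentially the same route as the paper: the paper's proof is the single line ``This is a consequence from Lemmas~\ref{L:101} and~\ref{L:14},'' and you have simply unpacked that chain explicitly, including the (correct but routine) check that $G^{2}$ is again an abelian subgroup of $\mathcal{K}^{*}_{\eta,r,s}(\mathbb{R})$ so that Lemma~\ref{L:101} applies to it with the same $u_{0}$.
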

\medskip

\begin{proof} This is a consequence from Lemmas \ref{L:101} and \ref{L:14}.
\end{proof}
\
\\
{ \it Proof of Theorem  ~\ref{T:2}.} One can assume by Proposition ~\ref{p:1} that $G$ is an abelian subgroup of
$\mathcal{K}^{*}_{\eta,r,s}(\mathbb{R})$. By applying Theorem ~\ref{T:1} to the subgroup $G^{2}$ of $\mathcal{K}^{+}_{\eta,r,s}(\mathbb{R})$, then
$(ii) \Leftrightarrow (iii)$ follows from Proposition ~\ref{p:11} and Lemma \ref{L:14}. $(i) \Leftrightarrow (ii)$ follows
from Theorem \ref{T:1}.
\medskip

Recall the following proposition which was proven in \cite{mW}:

\begin{prop}\label{p:12}$($\cite{mW}, Proposition 4.3$)$ Let  $H = \mathbb{Z}u_{1} + \dots + \mathbb{Z}u_{m}$
with  $u_{k } = (u_{k,1},\dots,u_{k,n})\in\mathbb{R}^{n}$,
  $k=1,\dots,m$. Then  $H$  is dense in  $\mathbb{R}^{n}$  if and only if for every  $(s_{1},\dots,s_{m})\in
 \mathbb{Z}^{m}\backslash \{0\}$:
$$\mathrm{rank}\left(\left[\begin{array}{cccc }
 u_{1,1 } &\dots &\dots. & u_{m,1 } \\
 \vdots &\vdots &\vdots &\vdots \\
 u_{1, n } &\dots &\dots & u_{m,n } \\
 s_{1 } &\dots &\dots & s_{m }
 \end{array}\right]\right) = n+1.$$
\end{prop}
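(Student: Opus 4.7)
The plan is to prove the two directions via characters (equivalently, via the Pontryagin dual or the structure theorem for closed subgroups of $\mathbb{R}^n$), with the rank condition encoding precisely the non-existence of a non-trivial character vanishing on $H$.

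\medskip

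\noindent\textbf{Preliminary reduction.} First I would observe that if the $u_k$'s fail to span $\mathbb{R}^n$, then every column $(u_{k,1},\dots,u_{k,n},s_k)^{T}$ of the matrix lies in the $\leq n$-dimensional subspace $W\oplus \mathbb{R}$, where $W=\mathrm{vect}(u_1,\dots,u_m)\subsetneq \mathbb{R}^n$; in that case the rank is automatically $\leq n<n+1$, and simultaneously $H\subset W$ is not dense. So one may reduce to the case where $u_1,\dots,u_m$ span $\mathbb{R}^n$ (in particular $m\geq n$).

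\medskip

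\noindent\textbf{Direction $(\Rightarrow)$: density implies the rank condition.} I would argue by contrapositive. Suppose that for some $(s_1,\dots,s_m)\in\mathbb{Z}^m\setminus\{0\}$ the matrix has rank $\leq n$. Then its rows are linearly dependent, so there exists $(\alpha_1,\dots,\alpha_n,\beta)\in\mathbb{R}^{n+1}\setminus\{0\}$ with
\[
\sum_{i=1}^{n}\alpha_{i}u_{k,i}+\beta s_{k}=0,\qquad k=1,\dots,m.
\]
If $\beta=0$, then $\alpha=(\alpha_1,\dots,\alpha_n)\neq 0$ is orthogonal to every $u_k$, so $H\subset \alpha^{\perp}$, a proper closed subspace, hence $H$ is not dense. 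If $\beta\neq 0$, set $t=\alpha/(-\beta)$; then $\langle t,u_k\rangle=s_k\in\mathbb{Z}$ for every $k$, so $\langle t,H\rangle\subset\mathbb{Z}$. Since $(s_1,\dots,s_m)\neq 0$ forces $t\neq 0$, the linear form $\langle t,\cdot\rangle$ is surjective onto $\mathbb{R}$, so $\langle t,H\rangle$ is a proper subgroup of $\mathbb{R}$, contradicting the density of $H$.

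\medskip

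\noindent\textbf{Direction $(\Leftarrow)$: the rank condition implies density.} Again by contrapositive: assume $H$ is not dense. By the preliminary reduction I may assume $u_1,\dots,u_m$ span $\mathbb{R}^n$. Then $\overline{H}$ is a proper closed subgroup of $\mathbb{R}^n$, so by Pontryagin duality (or, equivalently, by the structure theorem expressing a closed subgroup of $\mathbb{R}^n$ as a direct sum of a vector subspace and a lattice) there exists a non-trivial character of $\mathbb{R}^n$ that is trivial on $\overline{H}$. Such a character has the form $x\mapsto e^{2i\pi\langle t,x\rangle}$ for some $t\in\mathbb{R}^n\setminus\{0\}$, so
\[
s_k:=\langle t,u_k\rangle\in\mathbb{Z},\qquad k=1,\dots,m.
\]
Because $u_1,\dots,u_m$ span $\mathbb{R}^n$ and $t\neq 0$, the scalars $s_k$ cannot all vanish, so $(s_1,\dots,s_m)\in\mathbb{Z}^m\setminus\{0\}$. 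The identity $\sum_{i}t_iu_{k,i}-s_k=0$ for every $k$ exhibits the row $(t_1,\dots,t_n,-1)\neq 0$ as a linear relation among the rows of the displayed $(n+1)\times m$ matrix, forcing its rank to be $\leq n$, contradicting the hypothesis.

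\medskip

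\noindent\textbf{Main obstacle.} The only non-elementary input is the existence, for every proper closed subgroup of $\mathbb{R}^n$, of a non-trivial continuous character annihilating it. This is where one invokes Pontryagin duality or, more concretely, the classification of closed subgroups of $\mathbb{R}^n$; once this is available both directions reduce to routine linear algebra on the matrix of coefficients. Everything else is bookkeeping.
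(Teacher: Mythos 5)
The paper does not supply a proof of Proposition~\ref{p:12}: the statement is imported verbatim from Waldschmidt~\cite{mW} (Proposition~4.3 there) and is used as a black box, so there is no in-paper argument to compare against. Your proof is correct and is, up to cosmetics, the classical argument for this criterion. The preliminary reduction to the case $\mathrm{vect}(u_{1},\dots,u_{m})=\mathbb{R}^{n}$ is handled cleanly; in the direction $(\Rightarrow)$ you correctly split on whether the coefficient $\beta$ of the integer row vanishes, obtaining either a hyperplane $\alpha^{\perp}$ containing $H$ or a linear form $\langle t,\cdot\rangle$ sending $H$ into $\mathbb{Z}$, each of which precludes density; in the direction $(\Leftarrow)$ you correctly invoke the structure theorem for closed subgroups of $\mathbb{R}^{n}$ (equivalently, Pontryagin duality) to produce a nontrivial character $x\mapsto e^{2i\pi\langle t,x\rangle}$ annihilating $\overline{H}$, observe that the spanning hypothesis forces $(s_{1},\dots,s_{m})\neq 0$, and read $(t_{1},\dots,t_{n},-1)$ as a linear relation among the rows of the $(n+1)\times m$ matrix. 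The only nonroutine ingredient is exactly the one you flag at the end, namely that every proper closed subgroup of $\mathbb{R}^{n}$ is killed by a nontrivial continuous character, and you apply it correctly.
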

\
\\
\\
{\it Proof of Corollary  ~\ref{CC:2}.} $(i)\Longrightarrow (iii)$ follows from Corollary \ref{C:1} and Theorem \ref{T:2}.
$(iii)\Longrightarrow (i)$ follows from Corollary \ref{C:1}. $(ii)\Longleftrightarrow (iii)$ follows from
Proposition \ref{p:12}.\qed
\
\\
\\
{\it Proof of Corollary  ~\ref{C:3}.} Let $A_{1},\dots,A_{p}$ generate $G$ and let  $B_{1},\dots,B_{p}\in \mathrm{g}$
so that $A_{1}^{2}= e^{B_{1}},\dots, A_{p}^{2} = e^{B_{p}}$. Set
$v_{k} = (B_{k}v_{0}; s_{k})$  and  $w_{l}= (2\pi
Pf^{l}; t_{l})$, $1\leq k\leq p$, $1\leq l\leq s$. If  $p\leq n-s$  then
$\mathrm{rank}(v_{1},\dots,v_{p}; \ w_{1},\dots,w_{s})\leq n$  and
hence by Proposition \ref{p:12}, $\mathrm{g}^{2}_{v_{0}}=\underset{k=1}{\overset{p}{\sum}}\mathbb{Z}(B_{k}v_{0}) +
\underset{l=1}{\overset{s}{\sum}}2\pi\mathbb{Z}Pf^{(l)}$ is not dense in $\mathbb{R}^{n}$. Hence, by Theorem ~\ref{T:2}, $G$
has nowhere dense orbit, in particular, it is not topologically transitive.\qed
\medskip
\
\\
\\
 {\it Proof of Corollary  ~\ref{C:4}.}  Since \ $r+2s\leq n$, it follows that
 $p+s\leq \left[\frac{n+1}{2}\right]+ \frac{n-r}{2}\leq n+ \frac{1-r}{2}\leq n+\frac{1}{2}$. Hence,
$p+s\leq n$ and therefore Corollary ~\ref{C:4} follows from Corollary ~\ref{C:3}. \ \qed

\section{The case $n=2$ and some examples}
\medskip

For a given partition $\eta=(n_{1},\dots,n_{r})$ of $n$, we see that $r,s\in\{0,1,2\}$ and $n_{i}\in\{0,1,2\}$.
 In this case, we have  $\mathcal{K}^{*}_{1,1,0}(\mathbb{R}) =
\mathbb{T}^{*}_{2}(\mathbb{R})$,  $\mathcal{K}^{*}_{(1,1),2,0}(\mathbb{R})
= \mathbb{D}^{*}_{2}(\mathbb{R})$  and
$\mathcal{K}^{*}_{1,0,1}(\mathbb{R}) = \mathbb{B}^{*}_{1}(\mathbb{R})=\mathbb{S}^{*}$  where

\begin{align*}
  \ & \mathbb{D}^{*}_{2}(\mathbb{R})= \left\{\left[\begin{array}{cc }
   a & 0 \\
   0 & b
   \end{array}\right]: \ a, b\in \mathbb{R}^{*}\right\}, \\
\\
  \ & \mathbb{T}^{*}_{2}(\mathbb{R})= \left\{\left[\begin{array}{cc }
   a & 0 \\
   b & a
   \end{array}\right]: \ a,b\in \mathbb{R}, a\neq0 \ \right\} \\
   \\
\mathrm{and}\ \ \ \  \ & \ \\
\ & \mathbb{S}^{*} = \left\{\left[\begin{array}{cc }
   \alpha & -\beta\\
   \beta & \alpha
   \end{array}\right]: \ \alpha, \beta\in \mathbb{R}, \ \alpha^{2}+\beta^{2}\neq0\right\}.
\end{align*}
\
\\
Note that $\mathbb{D}^{*}_{2}(\mathbb{R})$, $\mathbb{T}^{*}_{2}(\mathbb{R})$ and $\mathbb{S}^{*}$ are all abelian.\\
Let  $G$  be a subgroup of
$\mathcal{K}^{*}_{\eta,r,s}(\mathbb{R})$, $r,s= 0,1,2$. We distinguish three cases:
\
\\
\\
\emph{Case 1:  $G$  is a subgroup of
$\mathbb{D}^{*}_{2}(\mathbb{R})$.} Then we have the following proposition.
\smallskip

\begin{prop}\label{p:13} Let  $A_{k} =\mathrm{diag}( \lambda_{k}, \mu_{k} )$,  where  $\lambda_{k}, \
\mu_{k} \in \mathbb{R}^{*}$,  $k=1,\dots,p$ and  $G$  be the group that they
generate. Then  $G$  has a dense
orbit if and only if $\textrm{ind}(G)=2$ and for every  $(s_{1},\dots,s_{p
})\in \mathbb{Z}^{p}\backslash\{0\}$:
$$\mathrm{rank}\left(\left[\begin{array}{ccc}
  2\log|\lambda_{1}| & \dots & 2\log|\lambda_{p}|  \\
  2\log|\mu_{1}| & \dots & 2\log|\mu_{p}|  \\
  s_{1} & \dots & s_{p}  \\
\end{array}\right]\right)=3.$$
\end{prop}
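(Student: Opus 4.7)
The plan is to apply Corollary \ref{CC:2} to the finitely generated subgroup $G\subset \mathbb{D}^{*}_{2}(\mathbb{R})$ and then reduce the density condition to Waldschmidt's rank criterion (Proposition \ref{p:12}). First, I would identify the normal form: since $\mathbb{D}^{*}_{2}(\mathbb{R})\subset \mathbb{T}^{*}_{1}(\mathbb{R})\oplus\mathbb{T}^{*}_{1}(\mathbb{R})=\mathcal{K}^{*}_{\eta,2,0}(\mathbb{R})$ with $\eta=(1,1)$, one can take $P=I_{2}$, so $r=2$, $s=0$, and $v_{0}=u_{0}=[1,1]^{T}$.

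Next, for each generator $A_{k}=\mathrm{diag}(\lambda_{k},\mu_{k})$, observe that $A_{k}^{2}=\mathrm{diag}(\lambda_{k}^{2},\mu_{k}^{2})\in \mathcal{K}^{+}_{\eta,2,0}(\mathbb{R})$ (positive diagonal), so Proposition \ref{p:2} lets me pick
$$B_{k}=\mathrm{diag}(2\log|\lambda_{k}|,\ 2\log|\mu_{k}|)\in\mathrm{g}$$
with $e^{B_{k}}=A_{k}^{2}$. Then
$$B_{k}v_{0}=[2\log|\lambda_{k}|,\ 2\log|\mu_{k}|]^{T}.$$

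Because $s=0$, the $2\pi\mathbb{Z}Pf^{(l)}$ contribution in Corollary \ref{CC:2}(iii) is vacuous, and the statement becomes: $G$ is topologically transitive if and only if $\mathrm{ind}(G)=r=2$ and the additive subgroup $\mathrm{g}^{2}_{v_{0}}=\sum_{k=1}^{p}\mathbb{Z}(B_{k}v_{0})$ is dense in $\mathbb{R}^{2}$. Now I apply Proposition \ref{p:12} to the family of vectors $B_{1}v_{0},\dots,B_{p}v_{0}\in\mathbb{R}^{2}$: density is equivalent to the condition that, for every $(s_{1},\dots,s_{p})\in\mathbb{Z}^{p}\setminus\{0\}$, the $3\times p$ matrix
$$\left[\begin{array}{ccc}
2\log|\lambda_{1}| & \dots & 2\log|\lambda_{p}| \\
2\log|\mu_{1}| & \dots & 2\log|\mu_{p}| \\
s_{1} & \dots & s_{p}
\end{array}\right]$$
has rank $n+1=3$. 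Combining the two conditions yields the proposition.

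There is essentially no hard step; the proof is a direct specialisation of Corollary \ref{CC:2} and Proposition \ref{p:12} to $n=2$, and the only thing that requires a moment's attention is choosing the correct logarithms $B_{k}$ (which is why one must pass to $A_{k}^{2}$ rather than $A_{k}$, since $\lambda_{k}$ or $\mu_{k}$ may be negative — this is precisely what forces the factor $2$ and motivates the separate index condition $\mathrm{ind}(G)=2$ that handles the sign components of the orbit).
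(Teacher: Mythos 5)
Your proof is correct and follows exactly the same route as the paper: define $B_{k}=\mathrm{diag}(2\log|\lambda_{k}|,2\log|\mu_{k}|)$ so that $e^{B_{k}}=A_{k}^{2}$ and $B_{k}\in\mathrm{g}$, then invoke Corollary~\ref{CC:2} together with Waldschmidt's rank criterion (Proposition~\ref{p:12}). The paper states this very tersely; your version simply spells out the intermediate steps (the identification $\mathbb{D}^{*}_{2}(\mathbb{R})=\mathcal{K}^{*}_{\eta,2,0}(\mathbb{R})$ with $P=I_{2}$, $r=2$, $s=0$, $v_{0}=[1,1]^{T}$, and the disappearance of the $2\pi\mathbb{Z}Pf^{(l)}$ terms) that the paper leaves implicit.
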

\medskip

\begin{proof} We let  $B_{k} = \mathrm{diag}(
 2\log|\lambda_{k}|, 2\log|\mu_{k}|), \ k=1,\dots,p.$ \ One has  $e^{B_{k}} = A_{k}^{2}$  and
 $B_{k}\in \mathbb{D}^{*}_{2}(\mathbb{R})$, $k=1,\dots,p$. Then  $B_{k}\in \mathrm{g}$ and by Corollary ~\ref{CC:2},
  the proposition follows.
\end{proof}
\medskip

\begin{exe} Let $G$  be the group generated by:

$A_{1} = \mathrm{diag}( -e^{\frac{\sqrt{2}}{2}},1), \ A_{2} =
\mathrm{diag}(
 1, -e^{\frac{1}{2}})$  and  $A_{3} = \mathrm{diag}(e^{-\frac{\sqrt{3}}{2}}, e^{-\frac{\sqrt{2}}{2}})$.
\\
Then every orbit in  $\mathbb{R}^{*}\times \mathbb{R}^{*}$  is
dense in $\mathbb{R}^{2}$.
\end{exe}
\medskip

\begin{proof} We see that $\textrm{ind}(G)=2$ and $U=\mathbb{R}^{*}\times \mathbb{R}^{*}$.
 Moreover, for every  $(s_{1}, s_{2}, s_{3})\in\mathbb{Z}^{3}\backslash\{0\}$,
one has the determinant: $$\Delta =
\mathrm{det}\left[\begin{array}{ccc}
                 \sqrt{2} & 0 & -\sqrt{3} \\
                 0 & 1 &  -\sqrt{2} \\
                 s_{1} & s_{2} & s_{3}
               \end{array}\right]=s_{1}\sqrt{3}+ 2s_{2}+ s_{3}\sqrt{2}$$

 Since  $2$, $\sqrt{2}$  and  $\sqrt{3}$  are rationally independent, $\Delta\neq 0$.  Therefore:
$$\mathrm{rank}\left(\left[\begin{array}{ccc}
                 \sqrt{2} & 0 & -\sqrt{3} \\
                 0 & 1 &  -\sqrt{2} \\
                 s_{1} & s_{2} & s_{3}
               \end{array}\right]\right) = 3$$
 and by Proposition ~\ref{p:11},  $G$  has a dense orbit.  We conclude by
Corollary ~\ref{C:6}, that every orbit in $\mathbb{R}^{*}\times\mathbb{R}^{*}$  is  dense in $\mathbb{R}^{2}$.
\end{proof}
\
\\
\emph{ Case 2:  $G$  is a subgroup of
$\mathbb{T}_{2}^{*}(\mathbb{R})$.} Then we have the following proposition.

\begin{prop}\label{p:14} Let  $A_{k} = \left[\begin{array}{cc }
  \lambda_{k } & 0 \\
  \mu_{k } & \lambda_{k }
  \end{array}\right]$  where  $\lambda_{k}\in \mathbb{R}^{*}$, $\mu_{k}\in \mathbb{R}$,  $k=1,\dots, p$ and
    $G$  be the group that they generate. Then  $G$ has a dense orbit if and only if $\textrm{ind}(G)=1$ and for every
$(s_{1},\dots,s_{p})\in \mathbb{Z}^{p}\backslash \{0\}$:

$$\mathrm{rank}\left(\left[\begin{array}{ccc}
  2\log|\lambda_{1}| & \dots & 2\log|\lambda_{p }|  \\
   2\frac{\mu_{1}}{\lambda_{1}} & \dots  & 2\frac{\mu_{p}}{\lambda_{p}} \\
   s_{1} & \dots  & s_{p}   \\
\end{array}\right]\right)=3.$$
\end{prop}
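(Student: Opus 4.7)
The plan is to apply Corollary~\ref{CC:2} directly, with the simplifying observations that here $n=2$, $r=1$, $s=0$, $\eta=(2)$, $P=I_{2}$, and consequently $u_{0}=v_{0}=e_{1}$ and there are no $f^{(l)}$ terms in the expression for $\mathrm{g}^{2}_{v_{0}}$. So the task reduces to (a) exhibiting explicit logarithms $B_{k}$ of $A_{k}^{2}$ inside $\mathbb{T}_{2}(\mathbb{R})$, (b) computing $B_{k}u_{0}$, and (c) translating the density of $\sum_{k}\mathbb{Z}(B_{k}u_{0})$ in $\mathbb{R}^{2}$ into a rank condition via Proposition~\ref{p:12}.

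First, I would compute $A_{k}^{2}=\left[\begin{array}{cc}\lambda_{k}^{2} & 0\\ 2\lambda_{k}\mu_{k} & \lambda_{k}^{2}\end{array}\right]\in\mathbb{T}_{2}^{+}(\mathbb{R})$, which lies in $G^{+}$ and hence in $\textrm{exp}(\mathrm{g})$ by Lemma~\ref{L:6}(i). Seeking $B_{k}=\left[\begin{array}{cc}a_{k} & 0\\ b_{k} & a_{k}\end{array}\right]\in\mathbb{T}_{2}(\mathbb{R})$ with $e^{B_{k}}=A_{k}^{2}$, I decompose $B_{k}=a_{k}I_{2}+N_{k}$ where $N_{k}$ is strictly lower triangular and satisfies $N_{k}^{2}=0$; this yields $e^{B_{k}}=e^{a_{k}}(I_{2}+N_{k})$, so matching against $A_{k}^{2}$ forces $a_{k}=2\log|\lambda_{k}|$ and $b_{k}=2\mu_{k}/\lambda_{k}$. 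Then $B_{k}u_{0}=B_{k}e_{1}=[\,2\log|\lambda_{k}|,\;2\mu_{k}/\lambda_{k}\,]^{T}$.

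Next, by Corollary~\ref{CC:2} applied with $P=I_{2}$, $s=0$, $r=1$, the group $G$ is topologically transitive if and only if $\mathrm{ind}(G)=1$ and $\mathrm{g}^{2}_{v_{0}}=\sum_{k=1}^{p}\mathbb{Z}(B_{k}u_{0})$ is dense in $\mathbb{R}^{2}$. By Proposition~\ref{p:12}, this density is equivalent, for every $(s_{1},\dots,s_{p})\in\mathbb{Z}^{p}\setminus\{0\}$, to
$$\mathrm{rank}\left(\left[\begin{array}{ccc}2\log|\lambda_{1}| & \dots & 2\log|\lambda_{p}|\\ 2\mu_{1}/\lambda_{1} & \dots & 2\mu_{p}/\lambda_{p}\\ s_{1} & \dots & s_{p}\end{array}\right]\right)=3,$$
which is precisely the stated condition.

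The only nonroutine step is the explicit inversion of the exponential on $\mathbb{T}_{2}(\mathbb{R})$; everything else is a direct specialization of the general machinery (Lemma~\ref{L:6}, Proposition~\ref{p:11}, Corollary~\ref{CC:2}, Proposition~\ref{p:12}). I do not expect any genuine obstacle, since uniqueness of the logarithm within $\mathbb{T}_{2}(\mathbb{R})$ is guaranteed by Proposition~\ref{p:4}(i) (commuting triangular matrices with equal exponentials are equal), so the $B_{k}$ are in fact the correct representatives in $\mathrm{g}$ and no $2\pi\mathbb{Z}$-type correction appears—consistent with $s=0$.
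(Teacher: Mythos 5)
Your proof is correct and follows essentially the same route as the paper: the paper likewise takes $B_{k}=\left[\begin{smallmatrix} 2\log|\lambda_{k}| & 0\\ 2\mu_{k}/\lambda_{k} & 2\log|\lambda_{k}|\end{smallmatrix}\right]\in\mathbb{T}_{2}(\mathbb{R})$, checks $e^{B_{k}}=A_{k}^{2}$ so that $B_{k}\in\mathrm{g}$, and concludes by Corollary~\ref{CC:2}. Your write-up merely makes explicit the specialization $r=1$, $s=0$, $u_{0}=e_{1}$ and the appeal to Proposition~\ref{p:12}, which the paper leaves implicit inside Corollary~\ref{CC:2}.
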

\bigskip

\begin{proof} We let  $B_{k} = \left[\begin{array}{cc}
  2\mathrm{log}|\lambda_{k }|& 0 \\
  \\
  \frac{2\mu_{k}}{\lambda_{k}} & 2\log|\lambda_{k }| \
  \end{array}\right]\in \mathbb{T}_{2}(\mathbb{R})$,  $k=1,\dots,p$. One has  $e^{B_{k}} = A_{k}^{2}$
and  $B_{k}\in \mathbb{T}_{2}(\mathbb{R})$.
  Then  $B_{k}\in \mathrm{g}$. The proposition follows then from  Corollary ~\ref{CC:2}.
\end{proof}
\bigskip

\begin{exe} Let  $G$  be the group generated by:

$A_{1} = \left[\begin{array}{cc }
 -e^{\sqrt{2}} & 0 \\
 0 &  -e^{\sqrt{2}}
\end{array}\right], \ \ A_{2} = \left[\begin{array}{cc}
 1 & 0 \\
 1 & 1 \end{array}\right] \ \mathrm{and} \ A_{3} = \left[\begin{array}{cc}
 e^{-\sqrt{3}} & 0\\
 -\sqrt{2}e^{-\sqrt{3}}& e^{-\sqrt{3}}
\end{array}\right]$.
\\
Then every orbit in  $\mathbb{R}^{*}\times \mathbb{R}$  is dense
in $\mathbb{R}^{2}$.
\end{exe}
\bigskip

\begin{proof} We see that $\textrm{ind}(G)=1$ since $r=1$ and $-e^{\sqrt{2}}$ is an eigenvalue of $A_{1}$,
$U=\mathbb{R}^{*}\times \mathbb{R}$ and for every  $(s_{1}, s_{2},
s_{3})\in\mathbb{Z}^{3}\backslash\{0\}$:
$$\mathrm{rank}\left(\left[\begin{array}{ccc}
                 2\sqrt{2} & 0 & -2\sqrt{3} \\
                 0 & 2 &  -2\sqrt{2} \\
                 s_{1} & s_{2} & s_{3}
               \end{array}\right]\right)=3$$
So  $G$ has a dense orbit. By Corollary ~\ref{C:6}, every orbit in
$\mathbb{R}^{\ast}\times\mathbb{R}$  is dense in $\mathbb{R}^{2}$.
\end{proof}
\
\\
\emph{Case 3:  $G$  is a subgroup of  $\mathbb{S}^{\ast}$.} Then we have the following proposition.
\smallskip

 \begin{prop}\label{p:15} Let  $A_{k} = |\lambda_{k
 }|\left[\begin{array}{cc }
  \cos \theta_{k} & -\sin\theta_{k} \\
 \sin \theta_{k} & \cos \theta_{k}
  \end{array}\right]$  where  $\lambda_{k}\in \mathbb{R}^{*}$, $\theta_{k}\in \mathbb{R}$, \ $k=1,..., p$
  and $G$  the group that they generate. Then  $G$  has a dense orbit if and only if
for every  $(s_{1},\dots,s_{p})\in \mathbb{Z}^{p+1}\backslash\{0\}$:
$$\mathrm{rank}\left(\left[\begin{array}{cccc}
  2\log|\lambda_{1}| & \dots & 2\mathrm{log}|\lambda_{p}|& 0 \\
   2\theta_{1} & \dots  & 2\theta_{p}& 2\pi  \\
   s_{1} & \dots  & s_{p}& t   \\
\end{array}\right]\right)=3.$$
\end{prop}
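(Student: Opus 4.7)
The plan is to apply Corollary~\ref{CC:2} directly, since $G$ is already finitely generated and sits inside $\mathbb{S}^{*}=\mathbb{B}_{1}^{*}(\mathbb{R})=\mathcal{K}^{*}_{\eta,0,1}(\mathbb{R})$ with $\eta=(1)$, $r=0$, $s=1$, $n=2$. Because $r=0$, one can take the conjugating matrix $P=I_{2}$, so $v_{0}=u_{0}=f_{1,1}=[1,0]^{T}$, and the index condition $\textrm{ind}(G)=r$ is vacuous (both sides are $0$). Thus the work reduces entirely to computing the generators $B_{k}\in \mathrm{g}$ with $A_{k}^{2}=e^{B_{k}}$, the vectors $B_{k}u_{0}$, and the vector $f^{(1)}$, and then appealing to the rank criterion in Corollary~\ref{CC:2}.

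First I would square the generators: a direct computation gives
\[
A_{k}^{2}=|\lambda_{k}|^{2}\begin{bmatrix} \cos 2\theta_{k} & -\sin 2\theta_{k} \\ \sin 2\theta_{k} & \cos 2\theta_{k}\end{bmatrix},
\]
and I would then guess $B_{k}=\begin{bmatrix} 2\log|\lambda_{k}| & -2\theta_{k}\\ 2\theta_{k} & 2\log|\lambda_{k}|\end{bmatrix}\in\mathbb{S}=\mathbb{B}_{1}(\mathbb{R})$. A quick exponential-of-$\mathbb{S}$ check (decompose $B_{k}=2\log|\lambda_{k}|\,I_{2}+2\theta_{k}J_{2}$ with $J_{2}^{2}=-I_{2}$) confirms $e^{B_{k}}=A_{k}^{2}$, so $B_{k}\in \mathrm{g}$ is a valid choice. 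Applying $B_{k}$ to $u_{0}=[1,0]^{T}$ yields $B_{k}u_{0}=[2\log|\lambda_{k}|,\,2\theta_{k}]^{T}$.

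Next I would identify $f^{(1)}$. With $r=0$, $s=1$, $m_{1}=1$, the prescription $t_{1}=\sum_{j=1}^{r}n_{j}+2=2$ gives $f^{(1)}=e_{2}=[0,1]^{T}$, and $Pf^{(1)}=f^{(1)}$. By Corollary~\ref{CC:2}, $G$ is topologically transitive if and only if
\[
\mathrm{g}^{2}_{v_{0}}=\sum_{k=1}^{p}\mathbb{Z}\begin{bmatrix} 2\log|\lambda_{k}| \\ 2\theta_{k}\end{bmatrix}+2\pi\mathbb{Z}\begin{bmatrix} 0 \\ 1\end{bmatrix}
\]
is dense in $\mathbb{R}^{2}$. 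Finally, I would invoke the Kronecker-type criterion Proposition~\ref{p:12} with $m=p+1$ and $n=2$: denseness of this $\mathbb{Z}$-module is equivalent to the requirement that, for every $(s_{1},\dots,s_{p},t)\in\mathbb{Z}^{p+1}\setminus\{0\}$, the indicated $3\times(p+1)$ matrix has rank $3$. Since $\overline{G(v)}=\mathbb{R}^{2}$ for some $v$ is by Corollary~\ref{C:6} equivalent to $\overline{G(u_{0})}=\mathbb{R}^{2}$ (every orbit in $U$ is dense when one is), this yields the stated equivalence.

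There is no real obstacle: the only things to verify carefully are the formula $e^{B_{k}}=A_{k}^{2}$ (where one must remember that $B_{k}$ need only be \emph{some} logarithm of $A_{k}^{2}$ lying in $\mathcal{K}_{\eta,r,s}(\mathbb{R})$, not the principal branch, so adding multiples of $2\pi J_{m}$ to $B_{k}$ is absorbed by the $2\pi\mathbb{Z}f^{(1)}$ term in $\mathrm{g}^{2}_{v_{0}}$) and the bookkeeping identifying $f^{(1)}=e_{2}$. Once those are in place, Corollary~\ref{CC:2} and Proposition~\ref{p:12} do all the remaining work.
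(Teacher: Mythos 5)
Your proof is correct and follows essentially the same route as the paper: observe $G\subset\mathbb{S}^{*}=\mathbb{B}_{1}^{*}(\mathbb{R})$ so $r=0$ and the index condition is vacuous, choose the explicit logarithm $B_{k}=2\log|\lambda_{k}|I_{2}+2\theta_{k}J_{2}$ satisfying $e^{B_{k}}=A_{k}^{2}$, and invoke Corollary~\ref{CC:2} together with the rank criterion of Proposition~\ref{p:12}. Your write-up merely fills in details the paper leaves implicit — the computation of $B_{k}u_{0}$, the identification $f^{(1)}=e_{2}$, and the remark that the quantifier in the statement should actually read $(s_{1},\dots,s_{p},t)\in\mathbb{Z}^{p+1}\setminus\{0\}$.
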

\medskip

  \begin{proof} We see that $G\subset \mathbb{B}^{*}_{1}(\mathbb{R})=\mathbb{S}^{*}$ and so  ind($G)=r=0$. We let  $B_{k} = \left[\begin{array}{cc}
    2\log|\lambda_{k }| &-2\theta_{k} \\
   2\theta_{k} & 2\log|\lambda_{k }| \\
  \end{array}\right]$. For every $k=1,\dots,p$, one has  $e^{B_{k}} = A^{2}_{k}$  and  $B_{k}\in \mathbb{S}$. By Corollary ~\ref{CC:2},
Proposition ~\ref{p:15} follows.
\end{proof}
\medskip

\begin{exe}\label{e:1} Let  $G$  be the group generated by:

$$A_{1}=e^{\sqrt{2}}\left[\begin{array}{cc }
 \cos\left(-\sqrt{3}\right) & -\sin\left(-\sqrt{3}\right) \\
 \sin\left(-\sqrt{3}\right)&  \cos\left(- \sqrt{3}\right)
\end{array}\right]\ \ \ \mathrm{and} \ \ A_{2}=e^{\sqrt{3}}\left[\begin{array}{cc }
\cos \sqrt{2} & -\sin \sqrt{2} \\
 \sin \sqrt{2}&  \cos \sqrt{2}
  \end{array}\right].$$\\
Then every orbit in $\mathbb{R}^{2}\backslash\{0\}$ is dense in
$\mathbb{R}^{2}$.
\end{exe}
\smallskip

\begin{proof} We see that $G\subset \mathbb{S}^{*}$ and so  ind($G)=r=0$. For every
$(s_{1}, s_{2}, t)\in\mathbb{Z}^{3}\backslash\{0\}$, one has the determinant:
$$\mathrm{det}\left[\begin{array}{ccc}
                 2\sqrt{2} &  2\sqrt{3} & 0 \\
                 -2\sqrt{3} &  -2\sqrt{2} & 2\pi \\
                 s_{1} & s_{2} & t
               \end{array}\right]= 4\pi(\sqrt{3}s_{1}-\sqrt{2}s_{2})+4t\neq 0.$$
It follows that: $$\mathrm{rank}\left(\left[\begin{array}{ccc}
                  2\sqrt{2} &  2\sqrt{3} & 0 \\
                 -2\sqrt{3} &  -2\sqrt{2} & 2\pi \\
                 s_{1} & s_{2} & t
               \end{array}\right]\right)=3.$$
By  Corollary ~\ref{CC:2},  $G$  has a dense orbit. Since  $U =
\mathbb{R}^{2}\backslash\{0\}$, it follows by Corollary ~\ref{C:6} that every orbit in  $\mathbb{R}^{2}\backslash \{0\}$ \
is dense in $\mathbb{R}^{2}$.
\end{proof}
\smallskip

\begin{cor}\label{C:8} If $G$ is a finitely generated abelian subgroup of SL($2,\mathbb{R})$, it is not topologically transitive.
\end{cor}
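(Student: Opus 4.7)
The plan is to exploit the normal form from Proposition \ref{p:1} to reduce to the three explicit two-dimensional cases handled in Propositions \ref{p:13}, \ref{p:14}, \ref{p:15}, and in each case show that the determinant~$=1$ constraint destroys the rank condition needed for topological transitivity.

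First I would conjugate: by Proposition \ref{p:1} there exists $P\in\mathrm{GL}(2,\mathbb{R})$ such that $\widetilde{G}:=P^{-1}GP\subset\mathcal{K}^{*}_{\eta,r,s}(\mathbb{R})$ for some partition $\eta$ with $n_1+\cdots+n_r+2(m_1+\cdots+m_s)=2$. Since conjugation preserves determinants, $\widetilde{G}\subset\mathrm{SL}(2,\mathbb{R})$, and topological transitivity is preserved under conjugation, so it suffices to prove the statement for $\widetilde{G}$. The dimension constraint leaves exactly three possibilities: (a) $r=2$, $s=0$, $n_1=n_2=1$, giving $\widetilde{G}\subset\mathbb{D}^{*}_{2}(\mathbb{R})$; (b) $r=1$, $s=0$, $n_1=2$, giving $\widetilde{G}\subset\mathbb{T}^{*}_{2}(\mathbb{R})$; (c) $r=0$, $s=1$, $m_1=1$, giving $\widetilde{G}\subset\mathbb{S}^{*}$.

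Next I would treat each case by direct inspection of the rank matrix:

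In case (a), write the generators as $A_k=\mathrm{diag}(\lambda_k,\mu_k)$. The constraint $\lambda_k\mu_k=1$ forces $\log|\mu_k|=-\log|\lambda_k|$, so the first two rows of the $3\times p$ matrix in Proposition \ref{p:13} are antiproportional. Hence its rank is at most $2$, the rank condition fails for every nonzero $(s_1,\dots,s_p)$, and $\widetilde{G}$ has no dense orbit. In case (b), the single eigenvalue $\lambda_k$ of $A_k$ satisfies $\lambda_k^2=1$, so $|\lambda_k|=1$ and the top row $(2\log|\lambda_1|,\dots,2\log|\lambda_p|)$ in Proposition \ref{p:14} is identically zero; again rank $\leq 2$. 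In case (c), $A_k=|\lambda_k|R_{\theta_k}$ has determinant $|\lambda_k|^2=1$, so $|\lambda_k|=1$, and the top row of the matrix in Proposition \ref{p:15} vanishes, forcing rank $\leq 2$. (One may also remark, as in the introduction, that case (c) places $\widetilde{G}$ in $\mathrm{SO}(2)\subset U_{2}(\mathbb{R})$, so every orbit lies on a circle.)

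There is essentially no obstacle: the entire argument is the observation that in each $2\times 2$ normal form the determinant~$=1$ relation is a single linear dependence among the logarithms appearing in the top row(s) of the rank matrix, and this dependence is incompatible with rank $3$. The only thing to be careful about is enumerating the three cases of $(r,s)$ correctly and invoking the right proposition from Section~9 for each.
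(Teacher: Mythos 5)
Your proof is correct and follows essentially the same path as the paper's: conjugate into one of the three normal forms $\mathbb{D}^{*}_{2}(\mathbb{R})$, $\mathbb{T}^{*}_{2}(\mathbb{R})$, $\mathbb{S}^{*}$, observe that $\det=1$ forces $\log|\mu_k|=-\log|\lambda_k|$ in the diagonal case and $|\lambda_k|=1$ in the other two, and conclude via Propositions \ref{p:13}, \ref{p:14}, \ref{p:15} that the rank condition collapses. Your version is slightly more explicit about why conjugation is legitimate (determinant and transitivity are both preserved) and about the rank bound, but the argument is identical to the one in the paper.
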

\smallskip

\begin{proof} One can assume  by Proposition ~\ref{p:1} that $G$ is a subgroup of $\mathbb{D}^{*}_{2}(\mathbb{R})$,
 $\mathbb{T}^{*}_{2}(\mathbb{R})$ or  $\mathbb{S}^{*}$. Let $A_{1},\dots, A_{p}$ generate $G$.
\
\\
- If $G\subset \mathbb{D}^{*}_{2}(\mathbb{R})$, then, one can write
$A_{k} =\mathrm{diag}(\lambda_{k}, \frac{1}{\lambda_{k}})$,
where  $\lambda_{k} \in \mathbb{R}^{*}$, $k=1,\dots,p$. Then $\log\left|\frac{1}{\lambda_{k}}\right|=-\log|\lambda_{k}|$ and
by Proposition ~\ref{p:13},  $G$  has no dense orbit.
\
\\
- If $G\subset \mathbb{T}^{*}_{2}(\mathbb{R})$, then one can write  $A_{k} =\left[\begin{array}{cc}
                                                                                                           \lambda_{k} & 0 \\
                                                                                                           \mu_{k} & \lambda_{k}
                                                                                                         \end{array}
\right]$,
where  $\mu_{k} \in \mathbb{R}$ and $|\lambda_{k}|=1$, $k=1,\dots,p$. Then $\log|\lambda_{k}|=0$ and by
Proposition ~\ref{p:14},  $G$  has no dense orbit.
\
\\
- If $G\subset \mathbb{S}^{*}$, then one can write  $$A_{k} =|\lambda_{k}|\left[\begin{array}{cc}
                                                                                                           \cos\theta_{k} & -\sin\theta_{k}\\
                                                                                                           \sin\theta_{k} & \cos\theta_{k}
                                                                                                         \end{array}
\right],$$
where  $\theta_{k} \in \mathbb{R}$ and $|\lambda_{k}|=1$, $k=1,\dots,p$. Then $\log|\lambda_{k}|=0$ and
by Proposition ~\ref{p:15},  $G$  has no dense orbit.
\end{proof}

\textbf{Remark 2}. We proved in (\cite{aAhM06}, Corollary 1.5), that no abelian subgroup of  $\textrm{GL}(n,
\mathbb{C})$  generated by $n$ \ matrices $(n\geq 1$) is topological transitive. Example ~\ref{e:1}
 shows that this property is not true for abelian subgroup of
 $\mathrm{GL}(2, \mathbb{R})$. The following is another example for $n=4$:

\begin{exe} Let  $G$  be the abelian group generated by:

\begin{align*}
  A_{1} &  =\left[\begin{array}{cc }
 \left[\begin{array}{cc}
          e^{\frac{\sqrt{3}}{4\pi}}\cos(\frac{1}{2}) & -e^{\frac{\sqrt{3}}{4\pi}}\sin(\frac{1}{2}) \\
           e^{\frac{\sqrt{3}}{4\pi}}\sin(\frac{1}{2})&  e^{\frac{\sqrt{3}}{4\pi}}\cos(\frac{1}{2})
       \end{array}\right] & 0 \\
0&  \left[\begin{array}{cc}
          e^{\frac{\sqrt{5}}{2}}\cos(\frac{\sqrt{2}}{2}) & -e^{\frac{\sqrt{5}}{2}}\sin(\frac{\sqrt{2}}{2})  \\
           e^{\frac{\sqrt{5}}{2}}\sin(\frac{\sqrt{2}}{2}) &  e^{\frac{\sqrt{5}}{2}}\cos(\frac{\sqrt{2}}{2})
       \end{array}\right]
\end{array}\right] \\
\\
  A_{2}&  = \textrm{diag}(\sqrt{e} I_{2}, I_{2}) \ \  \ \mathrm{and} \ \ \  A_{3} = \textrm{diag}(I_{2}, \sqrt{e} I_{2}).
\end{align*}

Then every orbit in  $(\mathbb{R}^{2}\backslash \{0\})^{2}$  is dense in $\mathbb{R}^{4}$.
\end{exe}
\smallskip

\begin{proof} We see that $G\subset \mathcal{K}^{*}_{\eta,0,2}(\mathbb{R})=\mathbb{B}^{*}_{1}(\mathbb{R})\oplus
\mathbb{B}^{*}_{1}(\mathbb{R})$ where $\eta=(2,2)$. Hence $\textrm{ind}(G)=0$ and $U = (\mathbb{R}^{2}\backslash\{0\})^{2}$.  Write

\begin{align*}
  B_{1} &= \left[\begin{array}{cc}
 \left[\begin{array}{cc}
          \frac{\sqrt{3}}{2\pi} & -1 \\
          1 & \frac{\sqrt{3}}{2\pi}
       \end{array}\right] &  0\\
0&  \left[\begin{array}{cc}
          \sqrt{5} & -\sqrt{2} \\
           \sqrt{2} & \sqrt{5}
       \end{array}\right]
\end{array}\right] \\
\\
 B_{2}&  = \textrm{diag}(I_{2},\ 0_{2}) \ \ \ \ \ \mathrm{and}\ \ \ B_{3} = \textrm{diag}(0_{2}, I_{2})
\end{align*}

where $0_{2}=\left[\begin{array}{cc}
                     0 & 0 \\
                     0 & 0
                   \end{array}
\right]\in M_{2}(\mathbb{R})$.

One has  $e^{B_{k}} = A_{k}^{2}$, \ $k=1,2,3$. For every  $(s_{1}, s_{2},s_{3}, t_{1},
t_{2})\in\mathbb{Z}^{5}\backslash \{0\}$, we have
\begin{align*}
\Delta & :=\left[\begin{array}{ccccc}
                   \frac{\sqrt{3}}{2\pi} & 1 & 0 & 0 & 0 \\
                   1 & 0 & 0 & 2\pi & 0 \\
                  \sqrt{5} & 0 & 1& 0 & 0 \\
                  \sqrt{2}  & 0 & 0 & 0 & 2\pi \\
                  s_{1} & s_{2} & s_{3} & t_{1} & t_{2}
                \end{array}
\right]\\
\ &\ =\pi(\pi(-4s_{1}+4\sqrt{5}s_{3})+2\sqrt{3}s_{2}+2t_{1}+2\sqrt{2} t_{2})
\end{align*}
Since $\pi$ is a transcendent number, $\Delta \neq 0$.
It follows that $$\mathrm{rank}\left(\left[\begin{array}{ccccc}
                   \frac{\sqrt{3}}{2\pi} & 1 & 0 & 0 & 0 \\
                   1 & 0 & 0 & 2\pi & 0 \\
                  \sqrt{5} & 0 & 1& 0 & 0 \\
                  \sqrt{2}  & 0 & 0 & 0 & 2\pi \\
                  s_{1} & s_{2} & s_{3} & t_{1} & t_{2}
                \end{array}
\right]\right) = 5.$$  By  Corollary ~\ref{CC:2},  $G$  has a dense orbit in
$\mathbb{R}^{4}$. By Corollary ~\ref{C:6},
every orbit in  $(\mathbb{R}^{2}\backslash\{0\})^{2}$  is dense in
$\mathbb{R}^{4}$.
\end{proof}
\bigskip

\section{{\bf Appendix}}
\
\\
{\it Proof of Lemma ~\ref{L:2}.}\ $\bullet$ Let's show first that  $$\textrm{Ker}(e^{B}-e^{\mu } I_{n})\subset
\textrm{Ker}(e^{tB}-e^{t\mu } I_{n})\ \mathrm{for} \
\mathrm{all} \ t\in \mathbb{R}$$
\
\\
 Let $v\in \textrm{Ker}(e^{B}-e^{\mu }I_{n})$. Then
for all $m\in \mathbb{N}$, one has $$e^{mB}v=e^{m\mu}v \ \ \ \ \ \
(1).$$
\\
For all $i=1,\dots, n$, denote by $P_{i}(t)=<e^{-t\mu}(e^{tB}-e^{t\mu}I_{n})v,\ e_{i}>,$
 where $<,\ >$ is the scalar product on $\mathbb{R}^{n}$. Write  $N=B-\mu I_{n}$. Then $N$ is nilpotent and then  for all $t\in \mathbb{R}$,
\\
$$e^{-t\mu}(e^{tB}-e^{t\mu}I_{n})=e^{tN}-I_{n}=\underset{k=1}{\overset{k=n}{\sum}}\frac{t^{k}N^{k}}{k!}.$$
It follows that $P_{i}$  is a polynomial of degree at most $n$.
\
\\
\\
According to $(1)$, one has $P_{i}(m)=0$   for all $m\in
\mathbb{N}$ and $i=1.., n$. Therefore
$(e^{tB}-e^{t\mu}I_{n})v=0$, for all $t\in\mathbb{R}$ and so
$v\in \textrm{Ker}(e^{tB}-e^{t\mu } I_{n})$, for all $t\in \mathbb{R}$.
\
\\
\\
$\bullet$  {\it Proof of $(i)$:} Let's prove that $\textrm{Ker}(B-\mu I_{n})=\textrm{Ker}(e^{B}-e^{\mu}I_{n})$
\\
For any $v\in \textrm{Ker}(e^{B}-e^{\mu}I_{n})$, denote by $\varphi_{v }:  \ t \ \rightarrow \
(e^{tB}-e^{t\mu}I_{n})v$, one has $\varphi_{v}(t)=0$ for all
$t\in\mathbb{R}$. Thus $\frac{\partial \varphi_{v}}{\partial t}(t)=(Be^{tB}-\mu e^{t
\mu}I_{n})v=0$ for all $t\in\mathbb{R}$. In particular for $t=0$,
 $(B-\mu I_{n})v=0$ and so  $v\in \textrm{Ker}(B-\mu I_{n})$, this proves that $$Ker(e^{B}-e^{\mu}I_{n })\subset \textrm{Ker}(B-\mu I_{n}) \ \
\ (2).$$
\
\\
Conversely, let $v\in \textrm{Ker}(B-\mu I_{n})$. Then $Bv=\mu v$ and so
$$e^{B}v=\underset{k=0}{\overset{+\infty}{\sum}}\frac{B^{k}}{k!}v
=
\underset{k=0}{\overset{+\infty}{\sum}}\frac{\mu^{k}}{k!}v=e^{\mu}v.$$
\\
thus $v\in \textrm{Ker}(e^{B}-e^{\mu}I_{n})$, this proves that
$$Ker(B-\mu I_{n})\subset \textrm{Ker}(e^{B}-e^{\mu}I_{n }) \ \ \ (3).$$
It follows from $(2)$ and $(3)$,  that $\textrm{Ker}(B-\mu I_{n }) =
\textrm{Ker}(e^{B}-e^{\mu}I_{n})$.
\
\\
\\
$\bullet$ {\it Proof of $(ii)$:} The proof is done by induction on $n$.  For $n=1$, it is obvious. Suppose
 that $(ii)$ is true until the order $n-1$ and let $B\in M_{n}(\mathbb{R})$ having only one eigenvalue
$\mu$ so that $e^{B}\in\mathbb{T}^{+}_{n}(\mathbb{R})$. By $(i)$, one has  $$\textrm{Ker}(B-\mu I_{n})=\textrm{Ker}(e^{B}-e^{\mu}I_{n })$$
\\
 Since $e_{n}\in \textrm{Ker}(e^{B}-e^{\mu}I_{n})$, so $e_{n}\in \textrm{Ker}(B-\mu I_{n})$ and one can write
$$ B=\left[\begin{array}{cc }
 B^{(1) } & 0\\
 L_{B } & \mu
\end{array}\right] \ \ \mathrm{and } \ \ \ \ e^{B}=\left[\begin{array}{cc } (e^{B})^{(1) } & 0 \\ L_{e^{B } }
& e^{\mu }
\end{array}\right] \ $$
\\
with $ (e^{B})^{(1)}\in M_{n-1}(\mathbb{R})$ and $B^{(1)}\in
M_{n-1}(\mathbb{R})$.
\
\\
\\
Since $e^{B}\in \mathbb{T}^{+}_{n}(\mathbb{R})$, it follows that
$(e^{B})^{(1)}\in \mathbb{T}^{+}_{n-1}(\mathbb{R})$.  By the induction hypothesis,  $B^{(1)}\in
\mathbb{T}_{n-1}(\mathbb{R})$ and so $B\in\mathbb{T}_{n}(\mathbb{R})$. \qed
\\
\medskip

 \textit{Proof of Proposition ~\ref{p:2}}: The proof is a consequence of the following results.
\
\begin{itemize}
  \item [(i)] $\textrm{exp}(\mathbb{T}_{n}(\mathbb{R}))=\mathbb{T}^{+}_{n}(\mathbb{R})$.
  \item [(ii)] $\textrm{exp}(\mathbb{T}_{n}(\mathbb{C}))=\mathbb{T}^{\ast}_{n}(\mathbb{C})$.
  \item [(iii)] $\textrm{exp}(\mathbb{B}_{m}(\mathbb{R}))=\mathbb{B}^{\ast}_{m}(\mathbb{R})$ for every $m\in\mathbb{N}_{0}$.
\end{itemize}
\
\\
{\it Proof of  (i)}: Let $A\in \mathbb{T}_{n}(\mathbb{R})$ with eigenvalue $\lambda$.
Then $e^{A}\in \mathbb{T}_{n}(\mathbb{R})$ with eigenvalue
$e^{\lambda}> 0$. Hence $e^{A}\in \mathbb{T}^{+}_{n}(\mathbb{R})$ and
therefore $\textrm{exp}(\mathbb{T}_{n}(\mathbb{R}))\subset\mathbb{T}^{+}_{n}(\mathbb{R})$.
\
\\
Conversely, let $A\in \mathbb{T}^{+}_{n}(\mathbb{R})$ and
$J=\mathrm{diag}(J_{1},\dots,
J_{r})\in\mathbb{T}^{+}_{n}(\mathbb{R})$ its reduced Jordan form
  where $J_{k } = \left[\begin{array}{ccccc }
 \lambda & \ &\ &\ & 0 \\
 1 &\ddots &\ &\ & \ \\
0 &\ddots &\ddots & \  & \ \\
\vdots &\ddots &\ddots & \ddots  & \ \\
 0 & \dots & 0 &1& \lambda
\end{array}\right]\in \mathbb{T}^{+}_{n_{k}}(\mathbb{R})$ if
$n_{k}\geq2$ and $J_{k}=(\lambda)$ if $n_{k}=1$.
\
\\
Since $\lambda> 0$, there exists $\mu\in\mathbb{R}$, such that
$e^{\mu}=\lambda$. Denote by $$J^{\prime}=\mathrm{diag}( J^{\prime}_{1 },\dots,J^{\prime} _{r })\in
\mathbb{T}^{+}_{n}(\mathbb{R})$$ where

$J^{\prime}_{k } = \left[\begin{array}{ccccc }
 \mu & \ &\ &\ & 0 \\
 1 &\ddots &\ &\ & \ \\
0 &\ddots &\ddots & \  & \ \\
\vdots &\ddots &\ddots & \ddots  & \ \\
 0 & \dots & 0 &1& \mu
\end{array}\right]\in\mathbb{T}_{n_{k}}(\mathbb{R})$ if $n_{k}\geq 2$ and $J_{k}=(\mu)$
if $n_{k}=1$.  Then $e^{J'}=\mathrm{diag}( e^{J^{\prime}_{1}},\dots, e^{J^{\prime}_{r}})$ with
$$e^{J^{\prime}_{k}}=e^{\mu}\left[\begin{array}{cccccc }
 1 & \ &\ &\ &\ & 0 \\
 1 &\ddots &\ &\ &\ & \ \\
 \frac{1}{2 } &\ddots &\ddots &\  &\ & \ \\
 \vdots &\ddots &\ddots &\ddots  &\  & \ \\
 \vdots &\ddots &\ddots &\ddots &\ddots & \ \\
\frac{1}{(n_{k}-2)! }  &\dots &\dots & \frac{1}{2}&1& 1 \\
\end{array}\right]$$
 \
 \\
 One can check that $\textrm{dim}
\left(\textrm{Ker}(e^{J^{\prime}_{k}}-e^{\mu}I_{n_{k}})\right)=1$, which gives
that the reduced Jordan form associated to $e^{J^{\prime}_{k}}$ is
  $$\left[\begin{array}{ccccc }
 e^{\mu} & \ &\ &\ & 0 \\
 1 &\ddots &\ &\ & \ \\
0 &\ddots &\ddots & \  & \ \\
\vdots &\ddots &\ddots & \ddots  & \ \\
 0 & \dots & 0 &1& e^{\mu}
\end{array}\right]=J_{k}, \ \ \ \mathrm{for \ all } \ k=1,\dots,
r.$$
\\
 Thus there exists $P_{k}\in \textrm{GL}(n_{k }, \mathbb{R})$ such that $P_{k}e^{J^{\prime}_{k}}P_{k}^{-1}=J_{k}$, for
all $k=1,\dots, r$. Denote by $P= \mathrm{diag}( P_{1},\dots, P_{r})$, one has $Pe^{J^{\prime}}P^{-1}=J$.
\\
 As $J$ is the reduced Jordan form associated to $A$ then there exists $Q\in \textrm{GL}(n,
\mathbb{R})$ such that $QAQ^{-1}=J$. It follows that  $e^{B}=A$ where $B=Q^{-1}PJ^{\prime}P^{-1}Q$
having only eigenvalue $\mu$. By Lemma ~\ref{L:2}, (ii), $B\in \mathbb{T}_{n}(\mathbb{R})$. This
proves the other inclusion $\mathbb{T}^{+}_{n}(\mathbb{R})\subset \textrm{exp}(\mathbb{T}_{n}(\mathbb{R}))$.
\medskip

{\it Proof of (ii)}: the proof is analogous to that of (i).
\medskip

{\it Proof of (iii)}:  By Lemma ~\ref{L:1}, there exists $Q\in \textrm{GL}(m,\mathbb{C})$ such that
$Q^{-1}\mathbb{B}_{m}(\mathbb{R})Q\subset\mathbb{T}_{m}(\mathbb{C})\oplus\mathbb{T}_{m}(\mathbb{C})$.\\
 So
$$\mathrm{exp}(\mathbb{B}_{m}(\mathbb{R}))\subset Q \left(\mathrm{exp}\left(\mathbb{T}_{m}(\mathbb{C})\right)
\oplus\mathrm{exp}\left(\mathbb{T}_{m}(\mathbb{C})\right)\right)Q^{-1}.$$ By (ii),
$\mathrm{exp}\left(\mathbb{T}_{m}(\mathbb{C})\right)
=\mathbb{T}^{*}_{m}(\mathbb{C})$,
so $$\mathrm{exp}(\mathbb{B}_{m}(\mathbb{R})) \subset Q\left(\mathbb{T}^{*}_{m}(\mathbb{C})\oplus \mathbb{T}^{*}_{m}(\mathbb{C})\right)Q^{-1}=\mathbb{B}^{*}_{m}(\mathbb{R}).$$
\
\\
Conversely, let $A\in \mathbb{B}^{*}_{m}(\mathbb{R})$, by Lemma ~\ref{L:1},  there exist $Q\in \textrm{GL}(m,\mathbb{C})$ such that
$Q^{-1}AQ=\mathrm{diag}(A_{1}, \overline{A_{1}})$, where $A_{1}\in \mathbb{T}^{*}_{m}(\mathbb{C})$. By (ii), there exists
 $B_{1}\in \mathbb{T}_{m}(\mathbb{C})$ such that $e^{B_{1}}=A_{1}$. Set $B=Q\mathrm{diag}(B_{1},\overline{B_{1}})Q^{-1}$. Then $B\in \mathbb{B}_{m}(\mathbb{R})$\\ and
 $$e^{B}=Q\mathrm{diag}(e^{B_{1}}, \overline{e^{B_{1}}})Q^{-1}= Q\mathrm{diag}(A_{1}, \overline{A_{1}})Q^{-1}=A.$$\
 This completes the proof.\qed
\\
\medskip
\medskip

\textit{Proof of Proposition ~\ref{p:3}}.  We need the following Lemma:
\medskip

\begin{lem}
 Let $A,B\in \mathbb{T}_{n}(\mathbb{K})$ $(\mathbb{K}=\mathbb{R}\ or \ \mathbb{C})$ so that $e^{A}e^{B}=e^{B}e^{A}$.
Then:
\begin{itemize}
  \item [(i)] $e^{tA}e^{tB}=e^{tB}e^{tA}$, for all \ $t\in\mathbb{R}$
  \item [(ii)] $AB=BA$
\end{itemize}
\end{lem}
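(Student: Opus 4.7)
My plan is to prove (i) by identifying every entry of the matrix difference
$F(t) := e^{tA}e^{tB} - e^{tB}e^{tA}$
as a fixed exponential times a polynomial of bounded degree, and then to deduce (ii) by Taylor-expanding the identity in (i) at $t=0$.

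For (i): since $A\in\mathbb{T}_n(\mathbb{K})$, write $A = \mu_A I_n + N_A$, where $\mu_A$ is the unique eigenvalue of $A$ and $N_A$ is strictly lower triangular, hence nilpotent with $N_A^n = 0$; similarly $B = \mu_B I_n + N_B$. Because $\mu_A I_n$ commutes with $N_A$, one has
\[ e^{tA} \;=\; e^{t\mu_A}\sum_{k=0}^{n-1}\frac{(tN_A)^k}{k!}, \]
and analogously for $e^{tB}$. Multiplying the two products out, each entry of $F(t)$ takes the form $e^{t(\mu_A+\mu_B)}\, P_{ij}(t)$ for some $P_{ij}\in\mathbb{K}[t]$ of degree at most $2(n-1)$. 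The hypothesis $e^A e^B = e^B e^A$, together with $e^{mA}=(e^A)^m$ and $e^{mB}=(e^B)^m$, yields $F(m)=0$ for every $m\in\mathbb{Z}$. Since $e^{m(\mu_A+\mu_B)}\neq 0$, each $P_{ij}$ vanishes on all of $\mathbb{Z}$; a polynomial with infinitely many zeros is identically zero, so $F\equiv 0$, which is (i).

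For (ii): expand $e^{tA}=I_n+tA+\frac{t^{2}}{2}A^{2}+O(t^{3})$ and $e^{tB}=I_n+tB+\frac{t^{2}}{2}B^{2}+O(t^{3})$; the coefficient of $t^{2}$ in $e^{tA}e^{tB}$ is $\frac{1}{2}A^{2}+AB+\frac{1}{2}B^{2}$, while in $e^{tB}e^{tA}$ it is $\frac{1}{2}B^{2}+BA+\frac{1}{2}A^{2}$. Matching these via (i) gives $AB = BA$.

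The only delicate step is the polynomial-times-exponential representation of the entries of $F(t)$, which relies essentially on the triangular form of $A$ and $B$: without it, one could not pass from vanishing on $\mathbb{Z}$ to vanishing on all of $\mathbb{R}$. Beyond that, no serious obstacle is expected.
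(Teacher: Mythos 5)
Your proof is correct and follows essentially the same strategy as the paper: factor out the scalar exponential, reduce to a polynomial identity using nilpotency of $A-\mu_A I_n$ and $B-\mu_B I_n$, deduce vanishing on $\mathbb{Z}$ implies vanishing everywhere, then extract the commutator from the second-order Taylor coefficient (the paper does this by differentiating $\Phi(t)=e^{tA}e^{tB}e^{-tA}e^{-tB}\equiv I_n$ twice, which amounts to the same computation). Your one-line justification of $F(m)=0$ via $(e^A)^m(e^B)^m=(e^B)^m(e^A)^m$ is a clean replacement for the paper's explicit induction and is correct.
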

\medskip

\begin{proof}
$\bullet$ {\it Proof of (i)}:

\textit{Step 1}. We prove by induction on $m$ that for all $m\in
\mathbb{N}^{*}$, one has $$e^{mA}e^{B}=e^{B}e^{mA} \ \ \ \ \ (\ast)$$
\
\\
For $m=1$, the formula $(\ast)$ is obvious by hypothesis. Supposed that $(\ast)$ is true for $m-1$
 and let $A, \ B\in \mathbb{T}_{n}(\mathbb{K})$ so that $e^{A}e^{B}=e^{B}e^{A}$. We have:
$$e^{mA}e^{B}=e^{(m-1)A}e^{A}e^{B}=e^{(m-1)A}e^{B}e^{A}=e^{B}e^{(m-1)A}e^{A}=e^{B}e^{mA}.$$
\
\\
 \textit{Step 2}.  We prove that for all $t\in\mathbb{R}$, one has
$e^{tN}e^{tM}=e^{tM}e^{tN}$, where $N=A-\lambda I_{n}$ and $M=B-\mu
I_{n}$,  $\lambda$ (resp. $\mu$) is the only eigenvalue of
$A$ (resp. $B$).
\
\\
Since $N$ and $M$ are nilpotent of order $n$, so for all $t\in
\mathbb{R}$, one has
$$e^{tN}=\underset{k=0}{\overset{n}{\sum}}\frac{t^{k}N^{k}}{k!}\ \ \
\mathrm{and} \ \ \ e^{tM}=\underset{k=0}{\overset{n}{\sum}}\frac{t^{k}M^{k}}{k!}.$$ Thus
\begin{align*}
e^{tN}e^{tM}-e^{tM}e^{tN} & \ =\underset{i=0}{\overset{n}{\sum}}\underset{j=0}{\overset{n}{\sum}}\frac{t^{i+j}N^{i}M^{j}}{(i!)(j!)}-\underset{i=0}{\overset{n}{\sum}}\underset{j=0}{\overset{n}{\sum}}\frac{t^{i+j}M^{i}N^{j}}{(i!)(j!)}\\
\ & \ =\underset{i=0}{\overset{n}{\sum}}\underset{j=0}{\overset{n}{\sum}}\frac{t^{i+j}(N^{i}M^{j}-M^{i}N^{j})}{(i!)(j!)}
\end{align*}
\\
For all $1\leq i, j \leq n$, the function $P_{i,j}: \mathbb{R}
 \longrightarrow \mathbb{K}$, defined by  $P_{i,j}(t)=<
(e^{tN}e^{tM}-e^{tM}e^{tN})e_{i},\ e_{j}>$  is a polynomial of
degree  $\leq 2n$, where $ <,\ > $ is the scalar product on $\mathbb{K}^{n}$. By  step 1, one has for all $1\leq i, j \leq n$ and all
$m\in\mathbb{N}$, $P_{i,j}(m)=0$,  thus for all $1\leq i, j \leq
n$, $P_{i,j}=0$. It follows that for all $t\in \mathbb{R}$,
$e^{tN}e^{tM}=e^{tM}e^{tN}$.
\
\\
\textit{ Step 3}. Since
$A=N+\lambda I_{n}$ and $B=M+\mu I_{n}$, we have
$e^{tA}=e^{t\lambda}e^{tN}$ and $e^{tB}=e^{t\mu}e^{tM}$, and by step 2, we have
 $e^{tA}e^{tB}=e^{tB}e^{tA}$. The proof is complete.
\
\\
\\
$\bullet$ {\it Proof of (ii)}: denote by $\Phi: \  \mathbb{R } \
\rightarrow \ \mathbb{T}^{*}_{n}(\mathbb{R})$ defined by
$\Phi(t)=e^{tA}e^{tB}e^{-tA}e^{-tB}$.
\\
According to $(i)$, one has for all $t\in\mathbb{R}$, $\Phi(t)=I_{n}$. In particular,
 the second derivative of $\Phi$ vanishes. We have

$$\frac{\partial
\Phi(t)}{\partial
t}=A+e^{tA}Be^{-tA}-e^{tB}Ae^{-tB}-B,
\ \ \ \forall t\in\mathbb{R }. $$

and
$$\frac{\partial^{2 }
\Phi(t)}{\partial
t^{2}}=Ae^{tA}Be^{-tA}-e^{tA}BAe^{-tA}-Be^{tB}Ae^{-tB}+e^{tB}ABe^{-tB},
\ \ \ \forall t\in\mathbb{R }. $$
Then for $t=0$, one  has  $\frac{\partial^{2}\Phi}{\partial t^{2}}(0)=2(AB-BA)=0$, hence  $AB=BA$.
\end{proof}
\medskip
\
\emph{Proof of Proposition ~\ref{p:3}}. Let $A, B \in \mathcal{K}_{\eta,r,s}(\mathbb{R})$ such that  $e^{A}e^{B}=e^{B}e^{A}$.
Write \ $A=\mathrm{diag}(A_{1},\dots,A_{r};\widetilde{A}_{1},\dots,\widetilde{A}_{s})$ \  and
\ $B=\mathrm{diag}(B_{1},\dots,B_{r};\widetilde{B}_{1},\dots,\widetilde{B}_{s})$. So  $e^{A_{k}}e^{B_{k}}=e^{B_{k}}e^{A_{k}}$ and
  $e^{\widetilde{A}_{l}}e^{\widetilde{B}_{l}}=e^{\widetilde{B}_{l}}e^{\widetilde{A}_{l}}$ , $k=1,\dots, r$, $l=1,\dots,s$.
By (ii), $A_{k}B_{k}=B_{k}A_{k}$, $k=1,\dots,r$. By Lemma ~\ref{L:1} for every $l=1,\dots,s$, there exists $Q_{l}\in
GL(2m_{l},\mathbb{C})$ such that $A^{\prime}_{l}=Q_{l}^{-1}\widetilde{A}_{l}Q_{l}=\mathrm{diag}(A^{\prime}_{l,1},
\overline{A^{\prime}_{l,1}})$ and $B^{\prime}_{l}=Q_{l}^{-1}\widetilde{B}_{l}Q_{l}=\mathrm{diag}(B^{\prime}_{l,1},
 \overline{B^{\prime}_{l,1}})$, where $A^{\prime}_{1}, B^{\prime}_{1}\in \mathbb{T}_{m_{l}}(\mathbb{C})$.  We see that
  $e^{A^{\prime}_{l}}=Q_{l}^{-1}e^{\widetilde{A}_{l}}Q_{l}$ and  $e^{B^{\prime}_{l}}=Q_{l}^{-1}e^{\widetilde{B}_{l}}Q_{l}$.
  Since $e^{A^{\prime}_{l}}e^{B^{\prime}_{l}}=e^{B^{\prime}_{l}}e^{A^{\prime}_{l}}$, we have
$e^{A^{\prime}_{l,1}}e^{B^{\prime}_{l,1}}=e^{B^{\prime}_{l,1}}e^{A^{\prime}_{l,1}}$. By  (ii),
 $A^{\prime}_{l,1}B^{\prime}_{l,1}=B^{\prime}_{l,1}A^{\prime}_{l,1}$,
 so $A^{\prime}_{l}B^{\prime}_{l}=B^{\prime}_{l}A^{\prime}_{l}$,
  and so $\widetilde{A}_{l}\widetilde{B}_{l}=\widetilde{B}_{l}\widetilde{A}_{l}$, $k=1,\dots,s$. We conclude that
   $AB=BA$. \qed
\\
   \bigskip

\textit{Proof of Lemma ~\ref{L:3}}. Let $M\in \mathbb{T}_{n}(\mathbb{C})$. Since $M$ is nilpotent of order $n$, so for all $t\in
\mathbb{R}$, one has $e^{tM}=\underset{k=0}{\overset{n}{\sum}}\frac{t^{k}M^{k}}{k!}.$ \  For all $1\leq i, j \leq n$, the function $P_{i,j}: \mathbb{R}
 \longrightarrow \mathbb{C}$, defined by  $P_{i,j}(t)=<
(e^{tM}-I_{n})e_{i},\ e_{j}>$  is a polynomial of
degree  $\leq 2n$. We have $P_{i,j}(m)=0$, for every $m\geq n$.
 Therefore, for all $1\leq i, j \leq n$ and all
$m\in\mathbb{N}$, $P_{i,j}(m)=0$,  thus for all $1\leq i, j \leq
n$, $P_{i,j}=0$. It follows that for all $t\in \mathbb{R}$, $e^{tM}=I_{n}$. By derivation, one has $Me^{tM}=0$,
 for all $t\in \mathbb{R}$, in particular, for $t=0$, $M=0$. This completes the proof. \qed
\bigskip

\vskip 0,4 cm

\bibliographystyle{amsplain}
\vskip 0,4 cm

\end{document}